\newcommand{\cE}{\mathcal{E}}
\newcommand{\cG}{\mathcal{G}}
\newcommand{\cH}{\mathcal{H}}
\newcommand{\cI}{\mathcal{I}}
\newcommand{\cM}{\mathcal{M}}
\newcommand{\cP}{\mathcal{P}}
\newcommand{\cS}{\mathcal{S}}
\newcommand{\cU}{\mathcal{U}}
\newcommand{\cV}{\mathcal{V}}
\newcommand{\cW}{\mathcal{W}}
\newcommand{\bH}{\mathbf{H}}
\newcommand{\bp}{\mathbf{p}}
\newcommand{\bq}{\mathbf{q}}
\newcommand{\bv}{\mathbf{v}}
\newcommand{\bw}{\mathbf{w}}
\newcommand{\bt}{\mathbf{t}}
\newcommand{\bs}{\mathbf{s}}
\newcommand{\btau}{{\boldsymbol\tau}}
\newcommand{\bmu}{{\boldsymbol\mu}}
\newcommand{\bkappa}
{{\boldsymbol\kappa}}
\newcommand{\bzeta}{{\boldsymbol\zeta}}
\newcommand{\bsigma}{{\boldsymbol\sigma}}
\newcommand{\bomega}
{{\boldsymbol\omega}}
\newcommand{\Db}{D}
\renewcommand{\H}{\mathcal H}
\newcommand{\R}{\mathbb R}
\newcommand{\N}{\mathbb N}
\newcommand{\area}{\operatorname{area}}
\newcommand{\Hess}{\operatorname{Hess}}
\newcommand{\dist}{\operatorname{dist}}
\newcommand{\loc}{\mathrm{loc}}
\newcommand{\UConf}{\mathrm{UConf}}
\renewcommand{\tilde}{\widetilde}
\newcommand{\tr}{\operatorname{tr}}
\newcommand{\x}{\times}
\newcommand{\Ric}{\operatorname{Ric}}
\newcommand{\InjRad}{\mathrm{InjRad}}
\newtheorem{thm}{Theorem}[section]
\newtheorem{prop}[thm]{Proposition}
\newtheorem{lem}[thm]{Lemma}
\newtheorem{cor}[thm]{Corollary}
\theoremstyle{definition}
\newtheorem{definition}[thm]{Definition}
\theoremstyle{remark}
\newtheorem{remark}[thm]{Remark}
\begin{document}

\title[Electrostatics and minimal surface doublings]{Minimal surface doublings and electrostatics for Schr\"odinger operators }
\author{Adrian Chun-Pong Chu}
\author{Daniel Stern}
\address{Cornell University, Department of Mathematics, 310 Malott Hall, Ithaca, NY 14853}
\email{cc2938@cornell.edu}
\email{daniel.stern@cornell.edu}
\begin{abstract}
Twenty years ago, N. Kapouleas introduced a singular perturbation construction known as ``doubling", which produces sequences of high-genus minimal surfaces converging to a given minimal surface with multiplicity two. Doubling constructions have since been implemented successfully in several settings, with deep work of Kapouleas--McGrath reducing their existence theory to the problem of finding suitable families of ansatz data on the initial minimal surface. 

In this paper, we introduce a variational approach to the existence of minimal doublings, relating the Kapouleas--McGrath construction to the study of nondegenerate critical points for a Coulomb-type interaction energy. By analyzing the minimizers of this energy, we prove that, in a generic closed $3$-manifold, every two-sided, embedded minimal surface of index one admits a sequence of minimal doublings. As a corollary, we find that a generic 3-manifold contains an infinite sequence of embedded minimal surfaces with bounded area and arbitrarily large genus, whose geometry can be described with some precision. 
\end{abstract}

\maketitle

\section{Introduction}\label{sect:intro}

The last fifteen years have been a time of rapid progress in the existence theory for minimal hypersurfaces, with a number of dramatic results originating in Marques and Neves's efforts to develop Morse-theoretic methods for the area functional via Almgren-Pitts min-max theory. Among these, one of the most notable achievements is the proof that any closed Riemannian manifold contains infinitely many minimal hypersurfaces--first proved for Ricci positive metrics in \cite{MN17}, then for generic metrics in \cite{IMN18} (see also \cite{ChoMan1, Zho20}), and finally for arbitrary metrics via a contradiction argument by Song \cite{Son23}, settling a well-known conjecture of Yau \cite[Problem 88]{Yau82}. 

In general, these constructions come with limited information about the resulting minimal surfaces, beyond bounds on the area and Morse index that tend to infinity with the number of parameters in the associated min-max construction. In dimension three, recent refinements of the Simon-Smith min-max theory have also facilitated the construction of minimal surfaces with bounded or prescribed genus in various settings \cite{HK19, Ket19, WangZhou23FourMinimalSpheres,  chuLi2024fiveTori, LiWang2024NineTori, chuLiwang2025}, as in the first-named author's recent proof that every Ricci-positive metric on $S^3$ contains minimal surfaces of arbitrary genus and bounded area \cite{chu2025}. In some special ambient geometries like $\mathbb{R}^3$, $\mathbb{S}^3$, $\mathbb{B}^3$, and Gaussian space $(\mathbb{R}^3,e^{-|x|^2/4}\delta_{ij})$, other methods have also been successful in detecting new families of minimal surfaces, including equivariant min-max constructions \cite{Ket16, Ket16b, CFS22, Ket22, franzKetoverMario2024genusOneCatenoid, BNS25, SW25}, eigenvalue optimization \cite{FS16, KKMS24}, and singular perturbation or gluing techniques  \cite{KapYang, KapouleasKleeneMøller, KapEq1, Wiygul20, KL17, KapPeter19, KW17b, FPZ17, kapouleasMcGrath2023generalDoubling, KZ21, CSW22}, where the above references are by no means exhaustive. Gluing methods in particular have the advantage of producing minimal surfaces with a very precise geometric structure, at the expense of missing examples with small first Betti number. 

The focus of the present paper is a particular gluing construction due to Kapouleas known as \emph{doubling}, which detects near a given minimal surface $\Sigma$ families of minimal surfaces resembling two copies of $\Sigma$ joined by a large number of small catenoidal necks.
Since its introduction in \cite{KapYang}, doubling and generalizations thereof have been successfully employed to produce many new families of minimal surfaces in $\mathbb{S}^3$ \cite{KapEq1,Wiygul20,KapPeter19,KapPeter24}, self-shrinkers for the mean curvature flow \cite{kapouleasMcGrath2023generalDoubling}, and free boundary minimal surfaces in $\mathbb{B}^3$ \cite{FPZ17,KW17b}. In an exciting development, the technique has also recently been extended to dimension four, yielding new minimal hypersurfaces in $\mathbb{S}^4$ \cite{KZ24}. 

Up to now, most doubling constructions have been carried out in settings where both the ambient $3$-manifold and initial minimal surface possess a large symmetry group, but Kapouleas has suggested that it may be possible to implement the construction in much greater generality, potentially giving an alternate approach to Yau's conjecture \cite[Section 1]{KapSurvey}. A significant step in this direction was made in the paper \cite{kapouleasMcGrath2023generalDoubling}, in which Kapouleas and McGrath reduced the existence theory for minimal doublings to the problem of finding suitable families of ansatz data on a given minimal surface $\Sigma$.

The goal of the present paper is to develop quite general doubling constructions that can be implemented in generic settings, by relating the results of \cite{kapouleasMcGrath2023generalDoubling} to the variational theory of a suitable Coulomb-type interaction energy. As our main result, we identify a doubling construction that can be applied to any minimal surface of index one in a generic closed $3$-manifold.

\begin{thm}\label{thm:main_doubling}
Let $N$ be a closed $3$-manifold with a $C^{\infty}$-generic Riemannian metric $g$. Then for every two-sided, embedded minimal surface $\Sigma\subset N$ of index one, there exists a sequence of embedded minimal surfaces $\{M_k\}$ with unbounded genus such that $M_k\to 2\Sigma$ as varifolds.
\end{thm}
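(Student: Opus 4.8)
The plan is to apply the Kapouleas--McGrath theory \cite{kapouleasMcGrath2023generalDoubling}, which reduces the construction of a sequence of minimal doublings of $\Sigma$ to exhibiting, for all sufficiently large $n$, a configuration of $n$ small catenoidal necks on $\Sigma$ — positions $p_1,\dots,p_n\in\Sigma$ and scales $\tau_1,\dots,\tau_n>0$ — meeting the ansatz requirements of the construction: a \emph{horizontal balancing system} placing the necks in force equilibrium on $\Sigma$, together with a \emph{nondegeneracy} condition for the associated linearized operator. The organizing principle is that these requirements identify admissible configurations with the nondegenerate critical points of a Coulomb-type interaction energy for the Jacobi (Schr\"odinger) operator $L_\Sigma=\Delta_\Sigma+|A_\Sigma|^2+\Ric(\nu,\nu)$, schematically
\[
\cE_n \;\sim\; \sum_{i\neq j}\tau_i\tau_j\,G_\Sigma(p_i,p_j) \;+\; \sum_i\tau_i^2\bigl(\mathcal{R}_\Sigma(p_i)+\log\tau_i\bigr) \;+\; (\text{lower-order terms}),
\]
where $G_\Sigma$ is the Green's function of $L_\Sigma$ (well defined since $\Sigma$ is nondegenerate for generic $g$, so $L_\Sigma$ is invertible) and $\mathcal{R}_\Sigma$ is its Robin mass: stationarity in the $p_i$ reproduces the balancing system, stationarity in the $\tau_i$ fixes the neck scales, and Hessian nondegeneracy is the linearized nondegeneracy.

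\textbf{Step 2: existence of a minimizing configuration, and the role of index one.} For each fixed large $n$, the energy $\cE_n$ is lower semicontinuous and coercive on the natural configuration space — pairs $(p_i,\tau_i)$ with distinct $p_i$, positive $\tau_i$, and a normalization of the overall neck scale — since the logarithmic singularity of $G_\Sigma$ along the diagonal (with the same leading behavior $-c\log|p_i-p_j|$, $c>0$, as for the Laplacian) drives the cross term to $+\infty$ as necks collide, while the remaining terms keep $\cE_n$ bounded below and prevent neck scales from degenerating; hence a minimizing configuration exists, with distinct, non-collapsed necks. Here the hypothesis $\ind(\Sigma)=1$ enters crucially: although $G_\Sigma$ is \emph{indefinite} — in contrast to the positive-definite electrostatics of the Laplacian, because $L_\Sigma$ has negative spectrum — the destabilizing subspace is spanned by the single lowest Jacobi eigenfunction $\phi_1>0$, which both keeps the negative directions of $\cE_n$ under control and is precisely what permits the one-dimensional obstruction of the Kapouleas--McGrath scheme to be matched, so that the minimizers furnish genuinely admissible ansatz data.

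\textbf{Step 3: generic nondegeneracy, and conclusion.} It remains to arrange, for a generic metric, that these minimizing configurations are \emph{nondegenerate} critical points. One works over the universal moduli space $\cZ$ of pairs $(g,\Sigma)$ with $\Sigma$ a two-sided embedded index-one minimal surface in $(N,g)$ — a Banach manifold, over the (residual) set of bumpy metrics, fibering over the metrics with countable discrete fibers — equipped with the fiberwise family of energies $\cE_n$. The dependence of $\cE_n$ on $(g,\Sigma)$ is smooth, through the second fundamental form of $\Sigma$ and the Green's function and Robin mass of $L_\Sigma$, and a Sard--Smale transversality argument — using that localized (e.g.\ conformal) metric perturbations supported near $\Sigma$ already move the $2$-jet of $\cE_n$ at its minimizer surjectively modulo the normalization — shows that for $g$ in a residual set $\cG_n$ the minimizing configuration is nondegenerate for every index-one $\Sigma\subset(N,g)$ at once. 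Taking $g\in\cG:=\{\text{bumpy}\}\cap\bigcap_n\cG_n$, still residual, every two-sided embedded index-one $\Sigma\subset(N,g)$ admits, for all large $n$, a nondegenerate critical configuration for $\cE_n$, hence — by \cite{kapouleasMcGrath2023generalDoubling} — an embedded minimal doubling $M_n$. By the quantitative structure of that construction, $M_n$ consists of two small normal graphs over $\Sigma$ joined by $n$ catenoidal necks of scale tending to $0$ as $n\to\infty$; therefore $\genus(M_n)\to\infty$ and $M_n\to 2\Sigma$ as varifolds.

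\textbf{Expected main obstacle.} The crux — and the step I expect to be most delicate — is Step 3: converting genericity of the ambient metric into nondegeneracy of the minimizer of a globally defined and only implicitly described interaction energy. One must (a) show that $\cE_n$ depends on $g$ \emph{richly enough}, i.e.\ that metric perturbations act surjectively on the relevant $2$-jet of $\cE_n$ at the minimizer modulo the normalization, which rests on controlling the variation of the Green's function and Robin mass of $L_\Sigma$ under simultaneous perturbation of the metric and of $\Sigma$; (b) accommodate the motion of $\Sigma$ as $g$ varies, and run the argument on the universal moduli space so as to cover all index-one surfaces, and all large $n$, uniformly; and (c) carry out the translation between the doubling-theoretic ``balancing plus nondegeneracy'' conditions of \cite{kapouleasMcGrath2023generalDoubling} and the critical-point theory of $\cE_n$ with enough precision — including the correct signs and relative sizes of the $\tau_i$ for the gluing to close up — which is exactly where the index-one hypothesis and the electrostatic formulation for the Schr\"odinger operator $L_\Sigma$ do their work.
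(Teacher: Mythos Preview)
Your overall architecture—recast the Kapouleas--McGrath ansatz as the critical-point problem for a Coulomb-type energy $\cH$, minimize to get existence, and then arrange nondegeneracy generically—is exactly the paper's. Steps 1 and 2 are broadly right, and you correctly flag Step 3 as the crux.

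Where you diverge from the paper, and where there is a genuine gap, is the mechanism in Step 3. You propose a Sard--Smale transversality argument; the paper does \emph{not} do this. Instead it proceeds in two pieces. First, it proves that for index-one $\Sigma$ the Hessian of $\cH$ at a minimizing critical point is already \emph{quantitatively} nondegenerate in the \emph{weight} directions $\tau_i$ (Proposition~\ref{prop:wt.nondeg}): here $\lambda_2(J)>0$ is used to show $\lambda_2$ of the weight block is $\geq c_0 n$. Second, for the \emph{position} directions it constructs by hand a conformal change $g_n=e^{2\rho_n}g$ whose restriction $u_n=\rho_n|_\Sigma$ has nondegenerate minima precisely at the minimizing points $p_1^n,\dots,p_n^n$; since under a conformal change $\cH$ shifts by $\frac{1}{2\pi}\sum_i\tau_i^2 u(p_i)$ (Lemma~\ref{lem:confChangeH}), this adds a controlled positive-definite term $\sim n^6 e^{-a_0 n}$ to the position Hessian, with $a_0$ chosen small relative to $|\cI|$. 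The combination yields $\min|\lambda|\geq n^{-4}e^{-a_0 n}$, which beats the threshold $\bar\btau^{1/1000}\sim e^{2\pi n\cI/1000}$ in condition~(\ref{quant.nondeg}) of Theorem~\ref{thm:doubling.precise}.

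This is the point your Sard--Smale plan misses: the bridge to \cite{kapouleasMcGrath2023generalDoubling} (Theorem~\ref{thm:doubling.precise}) requires not mere nondegeneracy but the \emph{quantitative} bound $\min|\lambda|>\bar\btau^{1/1000}$, uniformly along the subsequence $n_k\to\infty$. Sard--Smale would give, for $g$ in a residual set $\cG_n$, that the minimizer at level $n$ has $\min|\lambda(n)|>0$; but for $g\in\bigcap_n\cG_n$ there is no control on how $\min|\lambda(n)|$ decays with $n$, and nothing prevents it from falling below $\bar\btau(n)^{1/1000}$ for all large $n$. The paper's explicit well construction exists precisely to supply a decay rate you can name. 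Relatedly, the paper does not show that each $\cS_n$ is dense, only that $\bigcup_{n\geq k}\cS_n$ is (Theorem~\ref{thm:perturb} plus Lemma~\ref{sn.open}), which is why the conclusion is along a subsequence $n_k\to\infty$.

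One smaller correction: your account of the role of index one (``matching a one-dimensional obstruction'') is not how it enters in the paper. There it is used twice, both via $\lambda_2(J)>0$: to guarantee \emph{embeddedness} of the doubling through condition~(\ref{2eigen.bd}) of Theorem~\ref{thm:doubling.precise} (Remark~\ref{ind1.rk}), and to obtain the weight-direction Hessian bound (Proposition~\ref{prop:wt.nondeg}).
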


The surfaces $M_k$ will be orientable if and only if the initial surface $\Sigma$ is orientable. In the nonorientable case, genus can be taken to refer to the number of $\mathbb{RP}^2$ summands in a connected sum decomposition of $M_k$. Here, as in \cite{IMN18} and elsewhere, we use ``generic" to mean comeager in the space of smooth Riemannian metrics: i.e., the conclusions of the theorem hold within a countable intersection of open, dense subsets of the space of smooth metrics on $N$. Thanks to recent advances in Almgren-Pitts (or Allen-Cahn) min-max theory \cite{ChoMan1,Zho20, MN21}, we recall that any closed $3$-manifold $(N^3,g)$ with a bumpy metric contains a two-sided, embedded minimal surface $\Sigma$ of index one, with area bounded by the first Almgren-Pitts min-max width $\omega_1(N,g)$. As a result, Theorem \ref{thm:main_doubling} implies the following, giving a gluing approach to Yau's conjecture for generic $3$-manifolds.

\begin{cor}\label{thm:maincor}
    For a $C^\infty$-generic Riemannian metric $g$ on a closed $3$-manifold $N$, there exists a sequence of embedded minimal surfaces with unbounded genus and area less than $2\omega_1(N,g)$.
\end{cor}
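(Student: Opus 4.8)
The plan is to deduce Corollary~\ref{thm:maincor} from Theorem~\ref{thm:main_doubling} together with the by-now standard existence theory for the minimal surface realizing the first Almgren--Pitts width. First I would restrict attention to the comeager set of metrics $g$ for which the conclusion of Theorem~\ref{thm:main_doubling} holds; as noted in the introduction, such metrics are in particular bumpy, so by the combination of Almgren--Pitts min-max theory with the multiplicity-one and index-estimate results of Marques--Neves, Chodosh--Mantoulidis, and Zhou \cite{ChoMan1,Zho20,MN21}, the manifold $(N,g)$ contains a two-sided, embedded minimal surface $\Sigma$ of index one realizing $\omega_1(N,g)$ with multiplicity one, so that $\area(\Sigma) = \omega_1(N,g)$.

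Next I would feed this $\Sigma$ into Theorem~\ref{thm:main_doubling} to obtain a sequence $\{M_k\}$ of embedded minimal surfaces with $\genus(M_k)\to\infty$ and $M_k\to 2\Sigma$ as varifolds. Varifold convergence immediately gives $\area(M_k)\to 2\area(\Sigma) = 2\omega_1(N,g)$, so the only remaining point is to promote this to the strict bound $\area(M_k) < 2\omega_1(N,g)$ for $k$ large. For this I would invoke the precise geometric description of the doublings alluded to in the abstract: each $M_k$ is built from two graphs over $\Sigma$, close to $\Sigma$ in $C^1$ away from finitely many points, by excising small geodesic disks around those points and gluing in rescaled truncated catenoids. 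Since a small catenoidal neck bridging two nearly-parallel boundary circles has strictly smaller area than the two flat caps it replaces, while the cost of bending $\Sigma$ into the two graphs is of strictly lower order, one obtains $\area(M_k) < 2\area(\Sigma) = 2\omega_1(N,g)$ for all sufficiently large $k$. Passing to this tail of the sequence yields the corollary.

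I expect the one genuinely non-formal step to be the strict area inequality, which forces one to keep track of the sign of the leading-order area correction in the construction (neck deficit versus bending cost) rather than merely its smallness. It is worth noting that if one only wanted embedded minimal surfaces of unbounded genus with area \emph{at most} $2\omega_1(N,g)$ --- or if the first width happened to be realized by more than one minimal surface, so that some index-one $\Sigma$ has $\area(\Sigma) < \omega_1(N,g)$ --- then this step is unnecessary and the conclusion drops out of Theorem~\ref{thm:main_doubling} and varifold convergence of the areas alone. Every other ingredient is a direct citation of the min-max existence theory.
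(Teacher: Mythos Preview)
Your approach is the same as the paper's: restrict to the generic set, invoke min-max to produce an index-one $\Sigma$ with $\area(\Sigma)\le\omega_1(N,g)$, and apply Theorem~\ref{thm:main_doubling}. The one place you work harder than necessary is the strict inequality $\area(M_k)<2\area(\Sigma)$. You flag this as ``the one genuinely non-formal step'' and sketch a heuristic neck-versus-caps comparison, but in the paper this is not a separate step at all: the strict inequality is already part of the output of the doubling construction. Theorem~\ref{thm:doubling.precise} states outright that $\area(M)<2\area(\Sigma)$, with the quantitative refinement
\[
\bigl|\area(M)-\bigl(2\area(\Sigma)-\tfrac14\cH(\bp,\btau)\bigr)\bigr|\le \tfrac{1}{n}\cH(\bp,\btau),
\]
and Theorem~\ref{thm: asymptotics} packages the same information as the asymptotic $\log(2\area(\Sigma)-\area(M_k))/n_k\to 4\pi\cI$, which only makes sense if the quantity inside the logarithm is positive. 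So you can simply cite either of these and drop the heuristic area comparison entirely.
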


As is typical for gluing constructions--building once again on the analysis of \cite{kapouleasMcGrath2023generalDoubling}--it is possible to give a much more precise description of the surfaces produced in Theorem \ref{thm:main_doubling} in relation to the initial surface $\Sigma$. To do so, we first introduce some notation: denoting by $G(x,y)$ the Green's function for the Jacobi operator $J=d^*d-(|A|^2+\Ric(\nu,\nu))$ of $\Sigma$ as in Section \ref{subsect:hdef} below, we define the interaction energy $\cE(\mu)\in (-\infty,+\infty]$ of a probability measure $\mu$ on $\Sigma$ by
$$\cE(\mu):=\int_{\Sigma\times \Sigma} G(x,y)d\mu(x)d\mu(y).$$
For any unstable minimal surface $\Sigma$, it is straightforward to check that the infimum
$$\cI:=\inf\{\cE(\mu)\mid \mu(\Sigma)=1\}$$
is a negative number, realized by some \emph{equilibrium measure} $\mu$ whose support is contained in the set $\{|A|^2+\Ric(\nu,\nu)\geq 0\}$. 

\begin{thm}\label{thm: asymptotics}
    The minimal surfaces $M_k$ in Theorem \ref{thm:main_doubling} satisfy the following:
    \begin{itemize}
    \item For each $k$, $M_k$ is a two-sheeted graph over $\Sigma$ away from a collection of $n_k$ approximately catenoidal necks centered at some points $p_1^k,\ldots,p^k_{n_k}\in \Sigma$.
    \item As $k\to \infty$, the areas $\area(M_k)$ satisfy  $$\lim_{k\to\infty}\frac{\log(2\area(\Sigma)-\area(M_k))}{n_k}=4\pi \cI.$$
    \item The probability measure $\frac{1}{n_k}\sum_{i=1}^{n_k}\delta_{p_i^k}$ on $\Sigma$ weak$^*$-converges to an equilibrium measure $\mu$ for $\cE$ as $k\to\infty$.
    \end{itemize}
\end{thm}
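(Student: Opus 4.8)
The surfaces $M_k$ are exactly those produced in the proof of Theorem~\ref{thm:main_doubling}, so the three assertions will be obtained by inspecting that construction. Recall its shape: following \cite{kapouleasMcGrath2023generalDoubling}, $M_k$ is obtained by a fixed-point argument as a small normal graph over an explicit initial surface $\widehat M_k$, assembled from a configuration $P^k=(p^k_1,\dots,p^k_{n_k})$ of $n_k$ points of $\Sigma$, an approximately common necksize $\tau_k>0$, the two normal graphs of $\pm\varphi_k$ over $\Sigma$ with small geodesic disks of radius $\delta_k$ (with $\tau_k\ll\delta_k\ll1$) excised around the $p^k_i$, and $n_k$ rescaled catenoidal necks of waist $\approx\tau_k$ glued into the excised disks; here $\varphi_k$ agrees to leading order with the Jacobi field $-2\pi\tau_k\sum_i G(\cdot,p^k_i)$ (so it carries the logarithmic singularities $\varphi_k\sim\tau_k\log\dist(\cdot,p^k_i)$ matching the catenoid ends), and $P^k$ is a nondegenerate critical point of the discrete interaction energy
$$W_n(q_1,\dots,q_n):=\sum_{i\ne j}G(q_i,q_j)+\sum_i\gamma(q_i),\qquad\gamma(q):=\lim_{x\to q}\Big(G(x,q)+\tfrac1{2\pi}\log\dist(x,q)\Big),$$
whose critical equations encode the horizontal balancing of the necks, the vertical matching conditions pinning down $\tau_k$ (the distinction between holding $\tau_k$ common and allowing the necksizes to vary slightly, as in the general doubling of \cite{kapouleasMcGrath2023generalDoubling}, does not affect the leading asymptotics below). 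The additional input we will use—furnished by the proof of Theorem~\ref{thm:main_doubling}—is that, for a suitable sequence $n_k\to\infty$, the configurations $P^k$ may be chosen \emph{asymptotically minimizing}: $\tfrac1{n_k^2}W_{n_k}(P^k)\to\cI$. Granting this, the first bullet is immediate, since $\widehat M_k$ literally has the stated graph-plus-necks structure and the fixed-point perturbation is small in $C^1$.

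\textbf{The empirical measures (third bullet).} This is the one genuinely soft point, handled by a standard argument (the analogue of equality of transfinite diameter and capacity). First, $\tfrac1{n^2}\min_{\Sigma^n}W_n\to\cI$: the bound $\limsup\le\cI$ comes from discretizing a measure $\mu$ with $\cE(\mu)$ near $\cI$ (legitimate since such $\mu$ has $\cE(\mu)<\infty$ and $G$ is bounded below on the compact $\Sigma\times\Sigma$), and for any configurations whose empirical measures $\tfrac1n\sum_i\delta_{q_i}$ converge weak$^*$ to a probability measure $\nu$ one has $\liminf_n\tfrac1{n^2}W_n\ge\cE(\nu)$, by truncating $G$ from above at level $L$ (making it continuous), passing to the limit in $n$, and letting $L\to\infty$ by monotone convergence; the self-energy terms $\sum_i\gamma(q_i)$ are $O(n)$ and drop out at scale $n^2$. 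Applying the lower bound to $P^k$ and using asymptotic minimality, any weak$^*$ subsequential limit $\mu$ of $\tfrac1{n_k}\sum_i\delta_{p^k_i}$ (which exists by weak$^*$ compactness of probability measures on $\Sigma$) satisfies
$$\cE(\mu)\le\liminf_{k\to\infty}\tfrac1{n_k^2}W_{n_k}(P^k)=\cI\le\cE(\mu),$$
so $\cE(\mu)=\cI$ and $\mu$ is an equilibrium measure. Along the full sequence the limit is an equilibrium measure as soon as the latter is unique (a property of $\cE$ that holds for generic $g$), or one simply fixes the $P^k$ coherently.

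\textbf{The area asymptotics (second bullet).} Spelling out the vertical matching at the $i$-th neck—that the catenoid end, of graphical height $\tau_k\log(2r/\tau_k)+o(\tau_k)$ over the annulus $\tau_k<r<\delta_k$, fit the boundary data of $\varphi_k\sim-2\pi\tau_k\sum_j G(\cdot,p^k_j)$—and expanding $G$ near the diagonal gives, for each $i$,
$$\log\tau_k=\log 2+2\pi\Big(\gamma(p^k_i)+\sum_{j\ne i}G(p^k_i,p^k_j)\Big)+o(1);$$
averaging over $i$ and using $\tfrac1{n_k^2}W_{n_k}(P^k)\to\cI$ yields
$$\log\tau_k=\log 2+\tfrac{2\pi}{n_k}W_{n_k}(P^k)+o(1)=2\pi\cI\,n_k+o(n_k),$$
so $\tau_k=e^{2\pi\cI n_k+o(n_k)}$ is exponentially small. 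I would then compute $\area(\widehat M_k)$ region by region: each truncated graph contributes $\area(\Sigma)-\pi n_k\delta_k^2+\tfrac12\int_{\Sigma\setminus\bigcup_i B_{\delta_k}(p^k_i)}\big(|\nabla\varphi_k|^2-(|A|^2+\Ric(\nu,\nu))\varphi_k^2\big)+(\text{higher order})$, while the $n_k$ truncated catenoids together contribute $2\pi n_k\tau_k^2\log(2\delta_k/\tau_k)+2\pi n_k\delta_k^2+(\text{higher order})$. Because $\widehat M_k$ is built from a balanced ansatz, the $n_k\tau_k^2\log(1/\tau_k)$-size terms cancel between graphs and necks; integrating by parts with $J\varphi_k=-2\pi\tau_k\sum_i\delta_{p^k_i}$ to evaluate the graphical Dirichlet term through the boundary integrals on $\partial B_{\delta_k}(p^k_i)$, the surviving contribution is $-(2\pi)^2\tau_k^2\,W_{n_k}(P^k)\,(1+o(1))$—\emph{positive}, since $W_{n_k}(P^k)<0$ for large $k$—and of size $\Theta(n_k^2\tau_k^2)$. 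Hence $2\area(\Sigma)-\area(\widehat M_k)=\Theta(n_k^2\tau_k^2)>0$; since the fixed-point perturbation changes the area by $o(n_k^2\tau_k^2)$ (from the linear and quadratic estimates of \cite{kapouleasMcGrath2023generalDoubling}), the same holds for $M_k$. Taking logarithms, $\log\big(2\area(\Sigma)-\area(M_k)\big)=2\log\tau_k+O(\log n_k)=4\pi\cI\,n_k+o(n_k)$, and dividing by $n_k$ gives the claim.

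\textbf{Expected main obstacle.} The delicate step is the area expansion. Since $\widehat M_k$ is assembled from a well-balanced ansatz, the naive leading terms cancel, so one must expand the graph, neck, and transition-annulus contributions one order further and bound every cross-term uniformly in $k$, in order to pin down the sign of the residual and locate it precisely at the exponentially small scale $\Theta(n_k^2\tau_k^2)$; one must then verify that passing from $\widehat M_k$ to the genuine minimal surface $M_k$ perturbs the area by strictly less than this residual—an area estimate for the fixed point, not merely a $C^1$ one. By contrast, the potential theory behind the third bullet is classical, and the existence of asymptotically minimizing nondegenerate critical configurations $P^k$ is part of the proof of Theorem~\ref{thm:main_doubling}.
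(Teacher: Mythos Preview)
Your overall strategy matches the paper's: the first bullet is structural, the third is classical potential theory (the paper packages this as Theorem~\ref{thm:convMeasure}), and the second combines the matching relation $\log\tau_k\sim 2\pi\cI n_k$ with an area-defect estimate. But your area expansion contains an error. The term you declare to be the survivor, $-(2\pi)^2\tau_k^2 W_{n_k}(P^k)$, is itself of size $\Theta(n_k^2\tau_k^2)$ --- the \emph{same} order as the ``$n_k\tau_k^2\log(1/\tau_k)$-size terms'' you claim cancel, since $\log(1/\tau_k)\sim 2\pi|\cI|n_k$. What actually happens is that, after the $\delta_k$-dependence drops out, the graph's boundary contribution $(2\pi)^2\tau_k^2 W_{n_k}(P^k)$ and the neck's residual $2\pi n_k\tau_k^2\log(2/\tau_k)$ cancel \emph{each other} precisely via the matching relation $\log(\tau_k/2)=\tfrac{2\pi}{n_k}W_{n_k}(P^k)$. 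The genuine area defect is one order smaller, namely $\pi n_k\tau_k^2$, coming from the next term $-\pi\tau_k^2$ per neck in the sharper truncated-catenoid area formula $2\pi\delta^2+2\pi\tau^2\log(2\delta/\tau)-\pi\tau^2+o(\tau^2)$; this is exactly the Kapouleas--McGrath area estimate $\area(M)\approx 2\area(\Sigma)-\pi\sum_i\tau_i^2$ quoted in the paper.

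Your conclusion survives anyway: $\log(\pi n_k\tau_k^2)$ and $\log\big(\Theta(n_k^2\tau_k^2)\big)$ differ only by $O(\log n_k)=o(n_k)$, so both yield $4\pi\cI n_k+o(n_k)$. The paper avoids this whole computation by quoting the packaged area estimate of Theorem~\ref{thm:doubling.precise} (area defect $\asymp\cH(\bp^k,\btau^k)=\tfrac{1}{4\pi}|\btau^k|^2$ at critical points), together with $\cH(\bp^k,\btau^k)=\inf_{\hat X_{n_k}}\hat\cH$ and the two-sided bound \eqref{hath.asymp}, so the logarithmic asymptotic becomes a one-line consequence.
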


\begin{remark}
As discussed in Proposition \ref{prop:eq.char} below, for an index one minimal surface $\Sigma\subset (N^3,g)$ such that $\{|A|^2+\Ric(\nu,\nu)>0\}=\Sigma$, one can show that
$$\cI=-\frac{1}{\int_{\Sigma}(|A|^2+\Ric(\nu,\nu))dA_g},$$
and the equilibrium measure $\mu$ for $\mathcal{E}$ is precisely $\mu=-\cI(|A|^2+\Ric(\nu,\nu))dA_g$. In fact, assuming $|A|^2+\Ric(\nu,\nu)>0$, this is \emph{equivalent} to the statement that $\Sigma$ has Morse index one.
\end{remark}




\subsection{Main ideas: the doubling theorem}
Given a nondegenerate minimal surface $\Sigma^2\subset (N^3,g)$ with Jacobi operator $J$, minimal doublings of $\Sigma$ are constructed in \cite[Sections 3-5]{kapouleasMcGrath2023generalDoubling} from families of \emph{linearized doubling solutions} satisfying a suite of important structural conditions. These linearized doubling solutions are functions $\varphi\in C^{\infty}(\Sigma\setminus\{p_1,\ldots,p_n\})$ solving
\begin{equation}\label{ld.def}
J\varphi=-2\pi\sum_{i=1}^n\tau_i\delta_{p_i}
\end{equation}
for some points $p_1,\ldots,p_n\in\Sigma$ and weights $\tau_1,\ldots,\tau_n>0$, where the weights $\tau_i$ are of roughly comparable size, and much smaller than the separation $\min_{i\neq j}d(p_i,p_j)$. Standard results for Green's functions on surfaces imply that for any such $\varphi$, the difference $\varphi(x)-\tau_i\log(d(p_i,x))$ is $C^1$ near $p_i$, and we recall that the catenoid of waist $\tau>0$ in $\mathbb{R}^3$ can be expressed as a two-sheeted graph $$\{z=\pm \tau\cosh^{-1}(|(x,y)|/\tau)\mid |(x,y)|\geq \tau\},$$
where $\cosh^{-1}(|(x,y)|/\tau)\approx \log(2|(x,y)|/\tau)$ for $|(x,y)|>>\tau$. If we wish to construct a new surface with small mean curvature by gluing the two-sheeted normal graph of $\pm\varphi$ over $\Sigma$ to approximately catenoidal necks of waist $\tau_i$ centered at $p_i$, it is then intuitively reasonable to seek solutions of \eqref{ld.def} for which the difference 
\begin{equation}\label{doub.diff}
\varphi(x)-\tau_i\log(2d(p_i,x)/\tau_i)=\varphi(x)-\tau_i\log(d(p_i,x))+\tau_i\log(\tau_i/2)
\end{equation}
is kept as small as possible in the gluing region. In \cite{kapouleasMcGrath2023generalDoubling}, this difference is controlled through the \emph{mismatch} 
$$\mathcal{M}(\varphi)\in \bigoplus_{i=1}^n\left(T^*_{p_i}\Sigma\oplus\mathbb{R}\right),$$
which associates to each linearized doubling solution $\varphi$ the first order Taylor expansion of \eqref{doub.diff} at each of the singularities $p_1,\ldots,p_n$. 

Our work in the present paper begins with the observation that this mismatch $\cM(\varphi)$ can be identified with the gradient of a Coulomb-type interaction energy. Specifically, given an invertible Schr\"odinger operator $J=d^*d-V$ on $(\Sigma,g)$, denote by $G:\Sigma\times \Sigma\setminus diag(\Sigma)\to \mathbb{R}$ the Green's function satisfying
$$u(x)=\int_{y\in \Sigma} Ju(y)G(x,y)dA_g$$
where
$$diag(\Sigma):=\{(x,x):x\in\Sigma\}\subset\Sigma\x\Sigma,$$
and let $R\in Lip(\Sigma\times \Sigma)$ be the \emph{Robin's function} given by
$$R(x,y):=G(x,y)+\frac{1}{2\pi}\log d(x,y),$$
whose restriction $x\mapsto R(x,x)$ to the diagonal $diag(\Sigma)$ belongs to $C^{\infty}(\Sigma)$ (cf. Appendix \ref{green.app}).
Given $n$-tuples $\bp=(p_1,\ldots,p_n)$, $\btau=(\tau_1,\ldots,\tau_n)$ of distinct points in $\Sigma$ and positive weights, we then define 
$$\cH(\bp,\btau):=\sum_{i=1}^n\sum_{j\neq i}\tau_i\tau_jG(p_i,p_j)+\sum_{i=1}^n\tau_i^2R(p_i,p_i)+\sum_{i=1}^n\frac{\tau_i^2}{4\pi}\left(1-2\log\frac{\tau_i}{2}\right),$$
and observe that this defines a smooth function $\cH: X_n\to (-\infty,\infty)$ on the space $X_n\subset \Sigma^n\times (0,\infty)^n$ of all such pairs. 

Quadratic interaction energies resembling the first two terms 
$$\sum\tau_i\tau_j G(p_i,p_j)+\sum \tau_i^2 R(p_i,p_i)$$
of $\mathcal{H}$ arise naturally in many settings; for example, the analogous energy for the Green's function of the Laplace-Beltrami operator provides a Hamiltonian for the dynamics of point vortices in two-dimensional fluids \cite{DB15, DGK24}. More broadly, the dynamics and equilibria of Coulomb-type interactions play a central role in classical electrostatics, statistical mechanics, and $n$-body problems in Newtonian gravity, among many other areas of math and physics; see, for instance, Serfaty's ICM lecture \cite{SerfatyICM} for a nice overview.

The addition of the nonlinear term $\sum \frac{\tau_i^2}{4\pi}\left(1-\log \frac{\tau_i}{2}\right)$ is less standard, and tailored specifically to the doubling construction of minimal surfaces. Namely, the precise form of $\cH$ defined above is chosen so that  
$$\mathcal{M}(\varphi)=-\pi S_{\btau}\circ\nabla \cH(\bp,\btau),$$
where $\varphi$ is the linearized doubling associated to $(\bp,\btau)$ under \eqref{ld.def} and $S_{\btau}:\bigoplus_{i=1}^nT^*_{p_i}\Sigma\oplus \mathbb{R}^n\to \bigoplus_{i=1}^nT^*_{p_i}\Sigma\oplus \mathbb{R}^n$ is a linear map fixing the $\mathbb{R}^n$ factor and scaling $T^*_{p_i}\Sigma$ by $\tau_i^{-1}$. The first main technical result of the paper recasts \cite[Theorem 5.7]{kapouleasMcGrath2023generalDoubling} in terms of the existence of critical points for $\cH$ satisfying a list of key estimates.

\begin{thm}[cf. Theorem \ref{thm:doubling.precise} below]\label{thm:double.lem-intro}
Let $\Sigma$ be an unstable, two-sided, nondegenerate minimal surface with Jacobi operator $J$ in a closed $3$-manifold $(N^3,g)$. Then for $n$ sufficiently large, if $(\bp,\btau)$ is a critical point for $\mathcal{H}$ satisfying
\begin{enumerate}
    \item  $\displaystyle \frac{1}{n}< \frac{\tau_i}{\bar{\btau}}< n\text{ for all }1\leq i\leq n$ where $\bar{\btau}:=\frac{1}{n}\sum_{i=1}^n\tau_i\in (0,n^{-100,000})$,
    \item $ \displaystyle    \delta(\bp):=\frac{1}{20}\min_{i\neq j}d(p_i,p_j)>\bar{\btau}^{1/100,000}$,
    \item \label{quant.nondeg}
    $\min\{|\lambda|\mid \lambda \in Spec(S_{\btau}D^2\mathcal{H}(\bp,\btau)S_{\btau})\}>\bar{\btau}^{1/1000},$ and
    \item  \label{2eigen.bd} $\lambda_2(J;\Sigma\setminus \bigcup_{i=1}^nB_{\sigma_i/3}(q_i))>0$
for all $(\bq,\bsigma)$ sufficiently close to $(\bp,\btau)$, 
\end{enumerate}
 then there exists an embedded minimal doubling $M_n\subset (N,g)$ of $\Sigma$ with $n$ necks such that 
\begin{equation}\label{area.exp}
    |\area(M)-(2\area(\Sigma)-\frac{1}{4}\mathcal{H}(\bp,\btau))|\leq \frac{1}{n}\mathcal{H}(\bp,\btau).
\end{equation}
\end{thm}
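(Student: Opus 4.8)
The plan is to deduce the statement from the linearized-doubling existence theory of Kapouleas--McGrath \cite{kapouleasMcGrath2023generalDoubling}, by showing that a critical point $(\bp,\btau)$ of $\cH$ satisfying (1)--(4) supplies exactly the ansatz data required by \cite[Theorem~5.7]{kapouleasMcGrath2023generalDoubling}, whose precise adaptation to the present hypotheses is Theorem~\ref{thm:doubling.precise} below. The argument will run in four steps: (i) record the identity $\cM(\varphi)=-\pi S_{\btau}\circ\nabla\cH(\bp,\btau)$ relating the Kapouleas--McGrath mismatch of the linearized doubling solution $\varphi$ attached to $(\bp,\btau)$ via \eqref{ld.def} to the gradient of $\cH$, so that critical points of $\cH$ are exactly linearized doubling solutions with vanishing mismatch; (ii) translate the quantitative hypotheses (1)--(4) into the separation, comparability, invertibility, and linearized-solvability estimates demanded in \cite{kapouleasMcGrath2023generalDoubling}; (iii) invoke their construction to produce the embedded minimal doubling $M_n$; and (iv) read off \eqref{area.exp} from the area computation of \cite{kapouleasMcGrath2023generalDoubling}, re-expressed through the identity of step (i).

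For step (i), invertibility of $J$ gives the unique solution $\varphi=-2\pi\sum_j\tau_j G(\cdot,p_j)$ of \eqref{ld.def}, so the splitting $G(x,y)=-\tfrac{1}{2\pi}\log d(x,y)+R(x,y)$ yields, near each $p_i$,
\[
\varphi(x)-\tau_i\log(2d(p_i,x)/\tau_i)=-2\pi\tau_i R(x,p_i)-2\pi\sum_{j\neq i}\tau_j G(x,p_j)+\tau_i\log(\tau_i/2),
\]
which extends $C^1$ across $p_i$. Comparing the value and the $x$-differential of this function at $p_i$ with $\partial_{\tau_i}\cH$ and $\partial_{p_i}\cH$ --- using the symmetry $G(x,y)=G(y,x)$ and the fact that the derivative of $x\mapsto R(x,x)$ at $p_i$ is $2(\partial_1R)(p_i,p_i)$ --- gives the stated identity, the factor $S_{\btau}$ accounting for the $\tau_i^{-1}$-weighting of the $T^*_{p_i}\Sigma$ slots; in particular $(\bp,\btau)$ is critical for $\cH$ if and only if $\cM(\varphi)=0$. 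Differentiating once more, at such a critical point the linearization of the mismatch map along the family of linearized doubling solutions is $-\pi$ times $S_{\btau}D^2\cH(\bp,\btau)S_{\btau}$ in the naturally rescaled coordinates, so hypothesis (3) is precisely a quantitative invertibility bound for this linearization.

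Step (ii) is the crux and the step I expect to be the main obstacle. Conditions (1) and (2) are tailored to the admissibility constraints in \cite{kapouleasMcGrath2023generalDoubling}: the weights $\tau_i$ are mutually comparable, far smaller than the separation $\min_{i\neq j}d(p_i,p_j)$, which is itself bounded below on the scale of the geometry of $\Sigma$, so the approximately catenoidal necks of waist $\tau_i$ about the $p_i$ are pairwise disjoint and sit in a fixed ambient configuration. The delicate point is to verify that the specific exponents (such as $\bar\btau^{1/100,000}$ and $\bar\btau^{1/1000}$) are chosen generously enough that every error term produced in the gluing --- those coming from weighted Schauder estimates on the graphs of $\pm\varphi$ and on the necks, and from the discrepancy between genuine catenoids and the model necks --- is $o(\bar\btau^{1/1000})$ and hence strictly dominated by the spectral gap of (3). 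Condition (3) then enters the linear theory as invertibility, with quantitatively controlled inverse, of the finite-dimensional mismatch operator, which is what makes the Newton iteration correcting the ansatz to an exact minimal surface converge. Condition (4), positivity of $\lambda_2(J)$ on the complement of small balls for all nearby configurations, supplies the uniform solvability --- modulo the explicit obstruction space spanned by the approximate Jacobi fields supported near the necks --- of the linearized Jacobi equation on the two outer sheets; the appearance of $\lambda_2$ rather than $\lambda_1$ reflects that the single unstable direction of $J$ on $\Sigma$ is absorbed by the necks rather than the outer region. With (1)--(4) so translated and $n$ taken large enough to control the $n$-fold sums arising in \cite{kapouleasMcGrath2023generalDoubling}, $(\bp,\btau)$ meets the hypotheses of Theorem~\ref{thm:doubling.precise}, and step (iii) is the direct appeal to that theorem.

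Finally, for step (iv), I would write $\area(M_n)$ as the sum of the areas of the two normal graphs of $\pm\varphi$ over $\Sigma$ (with the removed disks excised) and the areas of the $n$ necks. Expanding each graph to second order and regularizing near the punctures $p_i$ produces, by integration by parts against \eqref{ld.def}, a finite quantity governed by $\sum_{i\neq j}\tau_i\tau_j G(p_i,p_j)$ and $\sum_i\tau_i^2R(p_i,p_i)$; the neck of waist $\tau_i$ contributes a term of the form $\tau_i^2(a_i\log\tfrac1{\tau_i}+b_i)$ with $a_i,b_i$ fixed by the catenoid geometry, and the pieces depending on the intermediate cutoff scale (in particular the logarithmic divergences) cancel between the graph and the neck contributions. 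The nonlinear term $\sum\frac{\tau_i^2}{4\pi}(1-2\log\frac{\tau_i}{2})$ in $\cH$ is designed so that the finite remainder equals $-\tfrac14\cH(\bp,\btau)$, while every error term is a positive power of $\bar\btau$ times $\cH(\bp,\btau)$ --- hence, since (1) forces $\bar\btau<n^{-100,000}$, at most $\tfrac1n\cH(\bp,\btau)$ --- which is \eqref{area.exp}. Concretely, this amounts to invoking the area estimate already established in \cite{kapouleasMcGrath2023generalDoubling} and rewriting it via the identity of step (i).
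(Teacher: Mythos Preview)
Your overall strategy---reduce to \cite[Theorem~5.7]{kapouleasMcGrath2023generalDoubling} by verifying that $(\bp,\btau)$ supplies admissible ansatz data---matches the paper's approach, and steps (i), (iii), and the closing remark in (iv) are essentially what the paper does. In particular, the paper uses the critical-point identity $\cH(\bp,\btau)=\frac{1}{4\pi}|\btau|_{\ell^2}^2$ (equation \eqref{crit.h.val}) to convert the area estimate of \cite{kapouleasMcGrath2023generalDoubling}, which is stated in terms of $\sum_i\sigma_i^2$, into \eqref{area.exp}; your more elaborate cancellation picture is not needed.

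There is, however, a genuine misidentification in step (ii): you assign condition (4) the role of supplying linear solvability on the outer sheets, but that is not what it does. The solvability of the linearized Jacobi equation modulo the obstruction space $\mathcal{K}[\bq]$---the map $\mathcal{R}'$ in the paper---is furnished by \cite[Corollary~4.21]{kapouleasMcGrath2023generalDoubling} and requires only the nondegeneracy of $\Sigma$, not (4). Condition (4) is invoked for a different, and more geometric, purpose: to guarantee that the linearized doubling solution $\varphi=-2\pi\phi$ stays \emph{positive} sufficiently far from the necks, i.e., to verify item (vi) of \cite[Convention~3.15]{kapouleasMcGrath2023generalDoubling}. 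The argument (Lemma~\ref{ht.bd} in the paper) is a maximum-principle one: one shows $\phi<0$ on $\partial B_{\sigma_i}(q_i)$ and $\phi>0$ on $\partial B_{\sigma_i/3}(q_i)$, so the nodal set of $\phi$ bounds a domain $\Omega$ on which $\phi$ is a Dirichlet eigenfunction with eigenvalue $0$; the hypothesis $\lambda_2(J;\Omega)>0$ forces $\phi$ to be the \emph{first} eigenfunction on $\Omega$ and hence sign-definite. This height bound is what makes the two sheets of the initial doubling disjoint and the resulting surface embedded---and, as the paper remarks after Theorem~\ref{thm:doubling.precise}, dropping (4) still yields an immersed doubling. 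Your plan should therefore route condition (4) through this positivity argument rather than through the linear theory.

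A minor point: the fixed-point step in \cite{kapouleasMcGrath2023generalDoubling} is a Schauder fixed-point argument, not a Newton iteration. The quantitative role of (3) is not convergence of an iteration but ensuring that the mismatch map $\mathcal{M}$ covers a ball $B_{\mathcal{P}}$ of radius $\sim\bar\btau^{1+1/500}$, which must dominate the error terms $\sim\bar\btau^{1+\alpha/4}$ produced by the nonlinear corrections so that the fixed-point map sends the relevant compact convex set into itself.
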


\begin{remark} 
    In fact, the proof of Proposition \ref{prop:p_iLowerBoundDistance} below shows that any critical point satisfying condition (1) automatically satisfies a separation estimate akin to condition (2), so it is possible to state a more streamlined version that removes item (2) from the hypotheses.
\end{remark}

A more detailed version of this theorem is given in Theorem \ref{thm:doubling.precise}. Conditions (1) and (2) are self-explanatory, ensuring that the weights $\tau_i$ are of roughly comparable size, much smaller than $\frac{1}{n}$ or $\min_{i\neq j}d(p_i,p_j)$ in a logarithmic sense. Condition (3) on the spectrum $Spec(S_{\btau}D^2\cH(\bp,\btau)S_{\btau})$ of $S_{\btau}D^2\cH(\bp,\btau)S_{\btau}$ is a nondegeneracy criterion, ensuring the existence of a neighborhood of $(\bp,\btau)$ which is mapped bijectively by the mismatch $(\bq,\bsigma)\mapsto \cM(\varphi_{\bq,\bsigma})$ onto a ball of radius $\sim \tau_i^{1+1/500}$ about the origin in a suitable space $\cP\cong \bigoplus_{i=1}^nT^*_{p_i}\Sigma\oplus \mathbb{R}^n$: note here $\varphi_{\bq,\bsigma}$ denotes the function satisfying $J\varphi_{\bq,\bsigma}=-2\pi\sum_i\sigma_i\delta_{q_i}$. The proof, given in Section \ref{sect:Proofdoubling.precise}, amounts to carefully checking that this neighborhood gives rise to a family of linearized doubling solutions meeting all the criteria necessary to carry out the fixed point argument of \cite[Section 5]{kapouleasMcGrath2023generalDoubling}.

The condition (4) is used to ensure positivity of the associated linearized doublings $\varphi$ sufficiently far from the singularities, which is essential for the \emph{embeddedness} of the resulting minimal doubling. It is satisfied automatically when $\lambda_2(J)>0$--i.e., when $\Sigma$ has Morse index one--which is the primary reason for the restriction to surfaces of index one in our main theorems. Indeed, for the existence of \emph{immersed} minimal doublings, the index one condition could probably be relaxed throughout, giving immersed minimal surfaces of high genus lying near any fixed unstable minimal surface in a generic $3$-manifold. 

\subsection{Main ideas: variational theory for $\cH$ and proof of the main theorems}

In Section \ref{sect:minH}, we introduce a simple variational construction to obtain critical points of $\cH$ for any invertible, indefinite Schr\"odinger operator $J$ on a closed surface $(\Sigma,g)$. The construction can be formulated as a one-parameter min-max construction for the functional $\cH$ or, more conveniently, as a minimization problem for a projectivized energy 
$$\hat\cH(\bp,[\btau]):=\max_{t\in (0,\infty)}\cH(\bp, t\btau)\in (0,\infty),$$
whose critical point theory is equivalent to that of $\cH$. We show that this minimization problem is well-posed on $X_n$
for every $n\in \mathbb{N}$, yielding critical points of $\cH$ of \emph{lowest possible energy}. When $J$ is the Jacobi operator of a minimal embedding $\Sigma\subset (N^3,g)$, it is then reasonable to hope that Theorem \ref{thm:double.lem-intro} can be applied to produce a \emph{minimal doubling of largest possible area} with respect to its genus.

For any invertible, indefinite Schr\"odinger operator $J$, we succeed in showing that these least-energy critical points $(\bp,\btau)$ satisfy estimates of the form
$$c\leq \frac{\tau_i}{\tau_j}\leq C,$$
$$e^{-Cn}\leq \tau_i\leq e^{-cn},$$
and  
$$d(p_i,p_j)>c_{\epsilon}n^{-2-\epsilon}$$
for any $i\neq j$ and $\epsilon>0$, for constants depending on $J$ and the geometry of $\Sigma$. Moreover, drawing inspiration from the analysis of mean-field limits for classical Coulomb energies as in, e.g., \cite[Section 2]{SerfatyLectures}, we find that $\tau_i/\tau_j\to 1$ in an average sense as $n\to\infty$, and the probability measures $\frac{1}{n}\sum_{i=1}^n\delta_{p_i}$ tend to an equilibrium measure $\mu$ for the interaction energy $\cE$ as $n\to\infty$.

Applying this construction to the Jacobi operator of an unstable, two-sided, nondegenerate minimal surface $\Sigma\subset (N^3,g)$, we obtain for each sufficiently large $n$ a critical point $(\bp, \btau)$ satisfying conditions (1) and (2) of Theorem \ref{thm:double.lem-intro}. If $\Sigma$ has Morse index one, then condition (4) holds automatically as well. The remaining challenge, then, is to check that the quantitative nondegeneracy condition (3) of Theorem \ref{thm:double.lem-intro} holds generically for these least-energy critical points of $\cH$, at least along some subsequence $n_k\to\infty$. In Section \ref{sect:wt.nondeg}, we establish quantitative nondegeneracy with respect to the weight variables $\tau_1,\ldots,\tau_n$ in generality, with no perturbation needed, so the key point is to achieve nondegeneracy in the position variables $p_1,\ldots,p_n$.

This last step is carried out in Section \ref{sect:MainProof}. As a key technical ingredient, given a nondegenerate, unstable minimal surface $\Sigma\subset (N^3,g)$ and sufficiently large $n$, we construct by hand a small conformal perturbation $g_n'\in [g]$ with respect to which $\Sigma$ remains minimal, while every least-energy critical point of the interaction energy $\cH_{(g_n',\Sigma)}$ on $X_n$ for the new Jacobi operator and induced metric on $\Sigma$ satisfies all four conditions (1)-(4) of Theorem \ref{thm:double.lem-intro}, essentially by creating small additional wells around the points $p_1,\ldots,p_n$ obtained by $\hat\cH$-minimization in the initial metric. Moreover, $g_n'\to g$ exponentially fast as $n\to\infty$. Since $\Sigma$ is nondegenerate, we can then fix a neighborhood $\cV(g,\Sigma)$ in the set $\Gamma_\infty$ of all smooth metrics on $N$ such that there exists a unique $g'$-minimal surface $\Sigma'$ close to $\Sigma$ in a $C^{\infty}$ sense for every $g'\in \cV(g,\Sigma)$, and use this perturbation construction to show that the open set $\mathcal{V}_k(g,\Sigma)$ of metrics $g'\in \cV(g,\Sigma)$ such that the least-energy critical points of $\cH_{(g',\Sigma')}$ satisfy conditions (1)-(4) of Theorem \ref{thm:double.lem-intro} \emph{for some} $n\geq k$ is dense in $\cV(g,\Sigma)$. 

Using this argument, for any fixed $k\in \mathbb{N}$ and $\epsilon>0$, we then show that the set of smooth metrics $g$ for which each of the (finitely many) minimal surfaces $\Sigma\subset (N,g)$ satisfying 
$$\lambda_1(J_{\Sigma})+\epsilon<0<\lambda_2(J_{\Sigma})-\epsilon$$
and
$$\area(\Sigma)+|A_{\Sigma}|_{C^0}<\frac{1}{\epsilon}$$
satisfies conditions (1)-(4) of Theorem \ref{thm:double.lem-intro} for some $n_k\geq k$ is open and dense in the space of smooth metrics. Taking the intersection over $\epsilon\in \mathbb{Q}\cap (0,1)$ and $k\in \mathbb{N}$, and intersecting with the space of bumpy metrics (whose genericity was proved by White), we arrive finally at a generic set of metrics $g$ such that for any index one minimal surface $\Sigma\subset (N,g)$, there exists a sequence $n_k\to\infty$ for which the lowest-energy critical points $(\bp,\btau)$ of $\cH_{(g,\Sigma)}$ satisfy the hypotheses of Theorem \ref{thm:double.lem-intro}, so Theorems \ref{thm:main_doubling} and \ref{thm: asymptotics} follow.

\subsection{Discussion and future directions}

Since the energies $\cH$ and $\hat\cH$ are invariant under the obvious action of the permutation group $S_n$ on $X_n$, they can be regarded as functions on the quotient $X_n/S_n$. This quotient is easily seen to contain as a deformation retract the unordered configuration space $\UConf_n(\Sigma)$ of $n$ points on the surface $\Sigma$, whose topology has been the subject of intensive investigation \cite{Fuks70Cohomology,vershinin99braid,napolitano03cohomology,church12homologicalStability,church15FIModule,drummondKnudsen16betti,knudsen17bettiFactorizationHomology,millerWilson19higher}. Given the rich topology of these spaces, it is natural to seek out further doubling constructions by min-max or Morse-theoretic techniques for $\hat\cH$, resulting in doublings of lower area than those constructed here. As a concrete goal, it would be very interesting to identify, for a given $k\geq 2$, a variational construction for $\hat\cH$ for which condition (4) of Theorem \ref{thm:double.lem-intro} can be verified in some generality on surfaces $\Sigma$ of Morse index $k$, yielding \emph{embedded} minimal doublings for minimal surfaces of higher Morse index.

As stated, the results of this paper cannot be applied directly to surfaces $\Sigma$ possessing continuous families of isometries commuting with $J$. However, in many cases of interest, it should be possible to employ a version of Theorem \ref{thm:double.lem-intro} that works modulo isometries, as in most of the explicit doubling constructions in the literature. For the special case of the equator $\Sigma=\mathbb{S}^2\subset \mathbb{S}^3$, we expect that the minimal doublings given by $\hat\cH$-minimization coincide with those obtained by equivariant eigenvalue optimization in \cite[Theorem 1.1]{KKMS24} with $\Gamma=\mathbb{Z}_2$, for which coarse asymptotics were obtained in \cite{KMS25}. If true, this would answer the natural analog in the hemisphere $\mathbb{S}^3_+$ of a question \cite[Question 5]{KS24-steklov} raised by Karpukhin and the second-named author regarding the free boundary minimal surfaces of genus zero in $\mathbb{B}^3$ obtained by maximizing the first Steklov eigenvalue.

While we have focused in this article on doubling constructions for closed minimal surfaces in closed $3$-manifolds, it would be interesting to develop analogous variational techniques for other generalizations of the doubling construction--e.g., to the free boundary setting, and to higher-multiplicity stackings. In light of the recent work of Kapouleas-Zou \cite{KZ24}, there is also the prospect of extending these tools to higher dimension, where currently few methods are available for producing minimal hypersurfaces with prescribed topology.

\subsection{Organization} In Section \ref{sect:KMtoEnergyFunctional}, we introduce the interaction energies $\cH$ and $\hat\cH$, collect some preliminary estimates, and state  a more precise version (Theorem \ref{thm:doubling.precise})  of Theorem \ref{thm:double.lem-intro}. In Section \ref{sect:Proofdoubling.precise}, we explain in detail how to derive Theorem \ref{thm:doubling.precise} from the arguments of \cite{kapouleasMcGrath2023generalDoubling}. In Section \ref{sect:minH}, we give a detailed analysis of the $\hat{\cH}$-minimization problem for arbitrary indefinite, invertible Schr\"odinger operators, showing that for $n$ large, the problem is well-posed, and gives rise to critical points $(\bp,\btau)$ of $\cH$ whose weights are proportional $\tau_i\sim \tau_j$ and decay exponentially fast as $n\to\infty$, with suitable lower bounds on $d(p_i,p_j)$; as a byproduct of our analysis, we also establish convergence of $\sum_{i=1}^n\frac{1}{n}\delta_{p_i}$ in the large $n$ limit to equilibrium measures, which we then characterize. At the end of Section \ref{sect:minH}, in the index one case, we also obtain general lower bounds for the Hessian $D^2\cH(\bp,\btau)$ at the minimizing critical points $(\bp,\btau)$ with respect to variations of the weight parameters $\btau$. Finally, in Section \ref{sect:MainProof}, we complete the proofs of Theorems \ref{thm:main_doubling}, \ref{thm:maincor}, and \ref{thm: asymptotics}, with the main technical ingredient being a perturbation construction used to achieve the requisite lower bounds for $D^2\cH(\bp,\btau)$ with respect to variations of the spatial variables $\bp$. In the Appendix, we recall how to derive some of the relevant estimates for Green's functions of Schr\"odinger operators, and explain the effects of an ambient conformal change on the Jacobi operator of a minimal surface.


\subsection*{Acknowledgments}
AC would like to thank Nicolaos Kapouleas for helpful discussions about doubling while he was in residence at SLMath in Fall 2024, supported by NSF grant DMS-1928930. Both authors are grateful to Peter McGrath for valuable conversations about the paper \cite{kapouleasMcGrath2023generalDoubling} and doubling constructions more generally, as well as comments on an earlier draft of this paper.   AC would  also like to thank  Antonios Zitridis for discussions about mean field theory. During the completion of this project, the research of AC was partially supported by an AMS-Simons travel grant, and the research of DS was partially supported by the NSF grant DMS 2404992 and the Simons Foundation.

\section{Basic properties of the interaction energies $\cH$ and $\hat\cH$}\label{sect:KMtoEnergyFunctional}

\subsection{Definition and first variation of $\cH$}\label{subsect:hdef}
Let $(\Sigma^2,g)$ be a closed surface with a smooth metric $g$, and let $V\in C^{\infty}(\Sigma)$ be a smooth function such that the associated Schr\"odinger operator
$$J=J_{g,V}:=d_g^*d-V$$
has trivial kernel, and is therefore invertible, by standard Fredholm theory. Before introducing the interaction energy $\mathcal{H}$ associated to $J$, we first need to collect some basic properties of the Green's function for $J$ on $\Sigma$. The following should be well-known, but for completeness we include some details of the proof in Section \ref{green.app} of the appendix.

\begin{lem}\label{green.lem}
    On the complement of $diag(\Sigma):=\{(x,x)\mid x\in \Sigma\}$, there exists a unique symmetric function 
    $$G\in C^{\infty}(\Sigma\times \Sigma\setminus diag(\Sigma))$$
    such that for each $p\in\Sigma$, the function $\phi_p\in C^{\infty}(\Sigma\setminus\{p\})$ given by $\phi_p(x)=G(p,x)=G(x,p)$ is the unique solution of
    $$J\phi_p=\delta_p,$$
    and for any $u\in C^{\infty}(\Sigma)$, we have
    $$u(x)=\int_{\Sigma}G(x,y)(Ju)(y)dA_g(y).$$
    Moreover, the \emph{Robin function}
    \begin{equation}
    R(x,y):=G(x,y)+\frac{1}{2\pi}\log d_g(x,y)
    \end{equation}
    extends to a globally defined function in $Lip(\Sigma\times \Sigma)$, which is $C^1$ near the diagonal in $\Sigma\times \Sigma$. Finally, for each $k\in \mathbb{N}$, there is a uniform bound
    $$\sup_{(x,y)\in \Sigma\times \Sigma}d_g(x,y)^k|\nabla^k G(x,y)|<\infty.$$
\end{lem}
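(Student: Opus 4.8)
The plan is to construct $G$ by the classical parametrix method and then read off every assertion from standard elliptic regularity. First I would fix $\rho>0$ smaller than the injectivity radius of $(\Sigma,g)$ and a cutoff $\chi\in C^\infty([0,\infty);[0,1])$ with $\chi\equiv1$ on $[0,\rho/2]$ and $\supp\chi\subset[0,\rho)$, and take as an initial parametrix
$$P(x,y):=-\frac{1}{2\pi}\chi(d_g(x,y))\log d_g(x,y).$$
Since $d^*d\big(-\tfrac1{2\pi}\log|\cdot|\big)=\delta_0$ on $\R^2$, computing in geodesic normal coordinates centered at $y$ — in which $d_g(x,y)=|x|$ and $g_{ij}-\delta_{ij}=O(|x|^2)$ — one finds
$$J_xP(\cdot,y)=\delta_y+E(\cdot,y),$$
where $E$ is supported in $\{d_g(x,y)<\rho\}$, smooth off $diag(\Sigma)$, and lies in $L^p(\Sigma\times\Sigma)$ for every finite $p$: the discrepancy between $d^*d$ and the Euclidean Laplacian contributes a bounded (if a priori discontinuous) term, the cutoff contributes a smooth term supported in $\{\rho/2\le d_g\le\rho\}$, and $-V$ contributes a term $\sim V\log d_g$. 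Iterating in the standard way — successively correcting $P$ by smooth functions of $(x,y)$ supported near $diag(\Sigma)$ so as to cancel the Taylor expansion of $E$ along the diagonal, as in the Hadamard/Levi parametrix — I would arrange, for any prescribed $k\in\N$, that the error $E=E_k$ lies in $C^k(\Sigma\times\Sigma)$, while $P=P_k$ still equals $-\tfrac1{2\pi}\log d_g(x,y)$ plus a function that is $C^\infty$ off the diagonal and $C^k$ across it.

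Since $\Ker J=\{0\}$ and $J$ is a self-adjoint elliptic operator of order two, it has a bounded inverse $J^{-1}$ on $L^2(\Sigma)$, and I would set
$$Q(x,y):=-\big(J^{-1}E(\cdot,y)\big)(x),\qquad G(x,y):=P(x,y)+Q(x,y),$$
so that $J_xG(\cdot,y)=\delta_y$. As $E(\cdot,y)\in L^p(\Sigma)$ uniformly in $y$, elliptic regularity gives $Q(\cdot,y)\in W^{2,p}(\Sigma)\subset C^{1,\alpha}(\Sigma)$; differentiating $J_xQ(\cdot,y)=-E_k(\cdot,y)$ in $y$ and bootstrapping with $L^p$ and Schauder estimates shows that $Q$ is as regular jointly in $(x,y)$ as desired once $k$ is large. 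Since $P$ is smooth off the diagonal, this yields $G\in C^\infty(\Sigma\times\Sigma\setminus diag(\Sigma))$ — alternatively, and most cleanly, $G$ is the Schwartz kernel of the order $-2$ classical pseudodifferential operator $J^{-1}$, hence automatically $C^\infty$ off the diagonal with the stated logarithmic singularity structure.

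Next I would establish the reproducing formula, symmetry, and uniqueness. Fixing $x$ and applying Green's identity for the self-adjoint operator $J$ on $\Sigma\setminus B_r(x)$ to $G(x,\cdot)$ and $u$, using $J_yG(x,\cdot)\equiv0$ there and letting $r\to0$: the boundary integral over $\partial B_r(x)$ converges to $u(x)$ because $G(x,y)\sim-\tfrac1{2\pi}\log d_g(x,y)$ and $\partial_{\nu}G(x,y)\sim\tfrac1{2\pi d_g(x,y)}$ against a circle of circumference $\sim2\pi d_g(x,y)$, giving $u(x)=\int_\Sigma G(x,y)(Ju)(y)\,dA_g(y)$. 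Symmetry $G(x,y)=G(y,x)$ then follows at once, since this formula exhibits $G$ as the integral kernel of $J^{-1}$, which is self-adjoint on $L^2(\Sigma)$ because $J$ is. For uniqueness, if $\psi,\psi'\in C^\infty(\Sigma\setminus\{p\})$ both solve $J\psi=\delta_p$ on $\Sigma$, their difference is bounded near $p$ (both carry the same $-\tfrac1{2\pi}\log$ singularity) and $J$-harmonic on $\Sigma\setminus\{p\}$, hence extends across $p$ to a weak — and by elliptic regularity smooth — solution of $J(\psi-\psi')=0$ on $\Sigma$; since $\Ker J=\{0\}$ this gives $\psi\equiv\psi'$, and the same argument gives uniqueness of $G$.

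Finally, for the Robin function: on $\{d_g(x,y)<\rho/2\}$ one has $R(x,y)=G(x,y)+\tfrac1{2\pi}\log d_g(x,y)=Q(x,y)+(\text{smooth parametrix corrections})$, which near $diag(\Sigma)$ is $C^k$ for every $k$, hence $C^1$, and $x\mapsto R(x,x)\in C^\infty(\Sigma)$; away from the diagonal $R$ is smooth, with gradient bounded in terms of $\nabla G$ and $|\nabla\log d_g|\lesssim d_g^{-1}$ (bounded once $d_g$ is bounded below), so compactness of $\Sigma\times\Sigma$ yields $R\in\mathrm{Lip}(\Sigma\times\Sigma)$. For the weighted bounds I would split $G=-\tfrac1{2\pi}\log d_g+\big(G+\tfrac1{2\pi}\log d_g\big)$ near the diagonal: since $\tfrac12 d_g(x,y)^2$ is smooth with bounded derivatives there, $|\nabla^k\log d_g|\lesssim d_g^{-k}$, the remainder is $C^k$ after enough parametrix steps, and away from the diagonal everything is bounded on the compact space $\Sigma\times\Sigma$; hence $\sup d_g(x,y)^k|\nabla^kG(x,y)|<\infty$. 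The one genuinely technical point is the parametrix iteration together with the accompanying $L^p$/Schauder bookkeeping (equivalently, the extraction of the precise kernel structure of $J^{-1}$); given $G$, the reproducing formula, symmetry, and uniqueness are routine.
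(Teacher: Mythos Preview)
Your construction matches the paper's: both write $\phi_p=-\tfrac{1}{2\pi}\log d_p+\psi_p$ with $\psi_p$ obtained by applying $J^{-1}$ to the error, and deduce smoothness off the diagonal, the reproducing formula, symmetry, and uniqueness from elliptic regularity and self-adjointness; the cutoff is cosmetic. The one genuine gap is your claim that parametrix iteration makes $R(x,y)\in C^k$ near the diagonal for every $k$, on which you then rest both the joint-regularity argument for $Q$ and the weighted bound $|\nabla^kG|\lesssim d_g^{-k}$ for $k\geq 2$. This claim fails for two incompatible reasons. If you correct $P$ only by \emph{smooth} functions of $(x,y)$, as you write, then $J(\text{smooth})$ is smooth and cannot cancel the $\frac{V}{2\pi}\log d_g$ singularity in $E$, so $E$ never reaches $C^0$, let alone $C^k$. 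If instead you run the genuine Hadamard iteration, the first nontrivial correction to $P$ is of type $d_g^2\log d_g$, whose Hessian blows up like $\log d_g$; thus $P_k+\tfrac{1}{2\pi}\log d_g$ is $C^{1,\alpha}$ for every $\alpha<1$ but never $C^2$. Either way $R$ is only $C^{1,\alpha}$ near the diagonal --- the paper notes this is essentially sharp, since $J_yR(x,\cdot)=VG(x,\cdot)+O(1)\sim -\tfrac{V}{2\pi}\log d_g$ lies in every $L^p$ but not in $L^\infty$ when $V(x)\neq 0$.

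For the lemma's actual assertions this is fixable, but by a different route. The $C^1$ regularity of $R$ already follows from your first step: $E\in L^p$ for all $p$ gives $\psi_p\in W^{2,p}\subset C^{1,\alpha}$ in one variable, and the paper then passes to joint $C^1$ by combining this uniform $W^{2,q}$ bound with the symmetry $R(x,y)=R(y,x)$, rather than by differentiating $E$ in $y$. For $|\nabla^kG|\lesssim d_g^{-k}$ with $k\geq 2$, the paper abandons $R$ entirely and uses interior elliptic estimates: since $J\phi_p=0$ on $\Sigma\setminus\{p\}$, standard estimates on a ball $B_r(x)\subset\Sigma\setminus\{p\}$ with $r\sim d(x,p)$ give $|\nabla^k\phi_p|(x)\leq C_kr^{-k}$ directly from the $k=1$ bound, with no appeal to regularity of $R$ beyond $C^1$.
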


\begin{remark}\label{robin.rk}
For later use, note that the Lipschitz norms of the Robin function $R$ and suprema of $d_g(x,y)^k|\nabla^kG(x,y)|$ vary continuously under smooth deformations of the metric $g$ and potential $V$. Though the stated regularity for the Robin function $R(x,y)$ is more or less optimal in general, in Lemma \ref{robin.reg} of the appendix, we check that the function $R_D:\Sigma\to \mathbb{R}$ given by the restriction
$$R_D(x):=R(x,x)$$
of $R$ to the diagonal satisfies $R_D\in C^{\infty}(\Sigma)$, with $C^k$ norms $\|R_D\|_{C^k}$ again varying continuously with respect to smooth variations of $g$ and $V$.
\end{remark}

Now, for each $n\in \mathbb{N}$, consider the $n$-fold product 
$$\overline{X}_n=\Sigma^n\times [0,\infty)^n$$
consisting of pairs $(\bp,\btau)$ of $n$-tuples $\bp=(p_1,\ldots,p_n)\in \Sigma^n$ and nonnegative weights $\btau=(\tau_1,\ldots,\tau_n)\in [0,\infty)^n$. Our interaction energies will be defined on the open subset
\begin{equation}
    X_n:=\{(\bp,\btau)\in \Sigma^n\x (0,\infty)^n: \text{ }p_i\ne p_j\textrm{ if }i\ne j\}
\end{equation}
of $\Sigma^n\times [0,\infty)^n$ in which all weights are positive and $\bp$ consists of $n$ \emph{distinct} points. Note that the permutation group $S_n$ acts in a natural way on $X_n$, such that the quotient $X_n/S_n$ can be identified with the space of measures on $\Sigma$ supported at exactly $n$ points, and admits an obvious deformation retraction onto a copy of the unordered configuration space $\UConf_n(\Sigma)$ of $n$ points in $\Sigma$.

We can now define the interaction energy central to our results.

\begin{definition}\label{h.def}
Define an energy functional $\H:X_n\to\R$
by
$$\cH(\bp,\btau):=\sum^n_{i=1}\sum_{j\neq i}\tau_i\tau_jG(p_i,p_j)+\sum_{i=1}^n\tau_i^2R(p_i,p_i)+\sum_{i=1}^n\frac{\tau_i^2}{4\pi}\left (1-2 \log \frac {\tau_i}2\right).$$
\end{definition}

As discussed in the introduction, the functional $\cH$ is defined in such a way that its gradient is closely related to the mismatch of linearized doubling solutions. More precisely, we have the following.

\begin{prop}\label{prop:criticalPointH}
    Given any point $(\bp,\btau)\in X_n$, let $\phi=\sum_i\tau_i\phi_{p_i}$ be the unique solution of $J\phi=\sum_i\tau_i\delta_{p_i}$. Then
    $$\nabla^{p_i}\cH(\bp,\btau) = 2\tau_i \nabla[\phi+\frac {\tau_i}{2\pi}\log(2 d_{p_i})](p_i)$$
    and
    $$\nabla^{\tau_i}{\cH}(\bp,\btau)=2[\phi+\frac{\tau_i}{2\pi}\log d_{p_i}](p_i)-\frac{\tau_i}{\pi}\log (\tau_i/2).$$
    In particular, $(\bp,\btau)$ is critical for $\mathcal{H}$ if and only if the function
    $$\phi(x)+\frac{\tau_i}{2\pi}\log(2d(p_i,x)/\tau_i)$$
    vanishes to first order at $p_i$ for each $1\leq i \leq n$.
\end{prop}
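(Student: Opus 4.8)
The plan is a direct term-by-term differentiation of $\cH$, with the only care needed being the regularization of the logarithmically singular quantities at the points $p_i$ via the Robin function of Lemma \ref{green.lem}. First I would record the basic identity underlying the statement: writing $\phi_{p_j}(\cdot)=G(p_j,\cdot)$ and $\phi=\sum_j\tau_j\phi_{p_j}$, for each fixed $i$ the function
$$h_i(x):=\phi(x)+\frac{\tau_i}{2\pi}\log d(p_i,x)=\sum_{j\neq i}\tau_j\,G(p_j,x)+\tau_i\,R(p_i,x)$$
extends to a $C^1$ function near $p_i$, since the logarithmic singularity of the $j=i$ term of $\phi$ is exactly cancelled, leaving the $C^1$ Robin term plus terms smooth near $p_i$. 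The two functions $\phi+\frac{\tau_i}{2\pi}\log(2 d_{p_i})$ and $\phi+\frac{\tau_i}{2\pi}\log(2 d_{p_i}/\tau_i)$ appearing in the statement differ from $h_i$ only by constants in $x$, so their gradients at $p_i$ both equal $\nabla h_i(p_i)$, while their values at $p_i$ differ from $h_i(p_i)=\sum_{j\neq i}\tau_j G(p_i,p_j)+\tau_i R(p_i,p_i)$ by explicit constants; this is what lets me translate the termwise derivatives into the stated intrinsic form.

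Next I would differentiate in $\tau_i$: both occurrences of $\tau_i$ in the double sum contribute equally by symmetry of $G$, giving $2\sum_{j\neq i}\tau_j G(p_i,p_j)$; the diagonal term gives $2\tau_i R(p_i,p_i)$; and for $f(\tau):=\frac{\tau^2}{4\pi}(1-2\log\frac{\tau}{2})$ one computes $f'(\tau)=-\frac{\tau}{\pi}\log\frac{\tau}{2}$. Adding these and using the identity above gives $\nabla^{\tau_i}\cH=2h_i(p_i)-\frac{\tau_i}{\pi}\log\frac{\tau_i}{2}$, which is the claimed formula. For the $p_i$-derivative, the two occurrences of $p_i$ in the double sum again contribute equally, giving $2\tau_i\sum_{j\neq i}\tau_j(\nabla_x G)(p_i,p_j)$; the diagonal term $\tau_i^2 R(p_i,p_i)$ contributes $\tau_i^2[(\nabla_x R)+(\nabla_y R)](p_i,p_i)=2\tau_i^2(\nabla_x R)(p_i,p_i)$ by the chain rule applied to the diagonal restriction $R_D(x)=R(x,x)$ (which is smooth, by Remark \ref{robin.rk}) together with the symmetry $R(x,y)=R(y,x)$; the nonlinear term drops out. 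On the other hand, differentiating $h_i$ in its argument and using the symmetry of $G$ and $R$ gives $\nabla h_i(p_i)=\sum_{j\neq i}\tau_j(\nabla_x G)(p_i,p_j)+\tau_i(\nabla_x R)(p_i,p_i)$, so $2\tau_i\nabla h_i(p_i)=\nabla^{p_i}\cH$, which is the claimed identity.

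Finally, for the characterization of critical points: since $\tau_i>0$, we have $\nabla^{p_i}\cH=0$ iff $\nabla h_i(p_i)=0$, and $\nabla^{\tau_i}\cH=0$ iff $h_i(p_i)=\frac{\tau_i}{2\pi}\log\frac{\tau_i}{2}$, i.e. iff the function $x\mapsto\phi(x)+\frac{\tau_i}{2\pi}\log(2 d(p_i,x)/\tau_i)$ (a constant shift of $h_i$) vanishes at $p_i$; taken together over all $i$ these say precisely that each $\phi+\frac{\tau_i}{2\pi}\log(2 d_{p_i}/\tau_i)$ vanishes to first order at $p_i$. The computation has no real obstacle; the only subtleties are that $\phi$ and $\frac{\tau_i}{2\pi}\log d_{p_i}$ must be combined before evaluating or differentiating at $p_i$—with the $C^1$-regularity of $R$ near the diagonal from Lemma \ref{green.lem} supplying the needed smoothness—and the factor of $2$ produced by differentiating the diagonal term $\tau_i^2 R(p_i,p_i)$, which must be tracked consistently against the analogous symmetry-induced factor of $2$ in the off-diagonal sum.
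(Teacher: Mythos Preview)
Your proof is correct and follows essentially the same approach as the paper: a direct term-by-term differentiation of $\cH$, using the symmetry of $G$ and $R$ to produce the factors of $2$, and identifying the resulting expressions with the regularized function $\phi+\frac{\tau_i}{2\pi}\log d_{p_i}$ evaluated at $p_i$. Your explicit introduction of the auxiliary function $h_i$ at the outset is a minor organizational difference, but the content of the computation is identical to the paper's.
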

\begin{proof}
Differentiating $\cH$ in the $p_i$ variable and exploiting the symmetry of $G$ and $R$, we see that
\begin{align*}
&\nabla^{p_i}\cH(\bp,\btau)\\
 &=\sum_{j\neq i}\tau_i\tau_j(\nabla^{x} G(p_i,p_j)+\nabla^{y}G(p_j,p_i))+\tau_i^2({\nabla^x}R(p_i,p_i)+{\nabla^y}R(p_i,p_i))\\
&=2\tau_i\sum_{j\neq i}\tau_j\nabla^{y}G(p_j,p_i)+2\tau^2_i{\nabla^y}R(p_i,p_i)\\
&=2\tau_i \sum_{j\neq i}\tau_j \nabla\phi_{p_j}(p_i)+2\tau_i^2 \nabla[\phi_{p_i}+\frac {1}{2\pi}\log d_{p_i}](p_i)\\
&= 2\tau_i \nabla[\phi+\frac {\tau_i}{2\pi}\log(d_{p_i})](p_i).
\end{align*}
Then, differentiating $\cH$ in $\tau_i$, we see that
\begin{align*}
    &\nabla^{\tau_i}{\cH}(\bp,\btau)\\
    &=2\sum_{j\neq i}\tau_jG(p_j,p_i)+2\tau_i R(p_i,p_i)-\frac{\tau_i}{\pi}\log (\tau_i/2)\\
    &=2\sum_{j\neq i}\tau_i\phi_{p_j}(p_i)+2\tau_i[\phi_{p_i}+\frac 1{2\pi}\log d_{p_i}](p_i)-\frac{\tau_i}{\pi}\log (\tau_i/2)\\
    &=2[\phi+\frac{\tau_i}{2\pi}\log d_{p_i}](p_i)-\frac{\tau_i}{\pi}\log (\tau_i/2).
\end{align*}
This gives the desired result.
\end{proof}

When $\Sigma\subset (N^3,g)$ is a nondegenerate, two-sided minimal surface with Jacobi operator 
$$J=d_g^*d-(|A|^2+\Ric(\nu,\nu)),$$
and $\phi=\sum_i \tau_i \phi_{p_i}$ is the Green's function associated to $(\bp,\btau)\in X_n$, then the function $\varphi=-2\pi \phi$ gives a ``linearized doubling solution" as in \cite[Definition 3.6]{kapouleasMcGrath2023generalDoubling}. In the terminology of \cite[Definition 3.10]{kapouleasMcGrath2023generalDoubling}, the ``mismatch" of such a linearized doubling solution $\varphi$ is given by the first-order Taylor expansion of $\varphi-\tau_i\log(2d(p_i,x)/\tau_i)$ at each of the points $p_1,\ldots,p_n$, so Proposition \ref{prop:criticalPointH} relates this mismatch to the gradient $\nabla \mathcal{H}(\bp,\btau)$.

To successfully carry out the doubling construction of \cite{kapouleasMcGrath2023generalDoubling}, one needs not only a single linearized doubling solution with mismatch zero, but a family of linearized doubling solutions realizing any sufficiently small mismatch, while satisfying certain estimates for the weights $\tau_i$ and distances $d(p_i,p_j)$. Thus, we are led to consider not only critical points for the energies $\mathcal{H}$, but sufficiently \emph{nondegenerate} critical points, in a quantitative sense that we make precise in the following subsection.

\subsection{Statement of the doubling theorem}

Before giving a precise statement of the doubling theorem, we introduce a few more pieces of notation. First, for any $(\bp,\btau)\in X_n$, denote by $\mathcal{P}_{\bp}$ the tangent space
$$\mathcal{P}_{\bp}:=(\bigoplus_{i=1}^nT_{p_i}\Sigma)\oplus \mathbb{R}^n\cong T_{(\bp,\btau)}X_n.$$
Note that $\mathcal{P}_{\bp}$ inherits from the metric on $\Sigma$ a natural $\ell^2$ inner product: namely, given $(\bv,\bs)=(v_1,\ldots,v_n,s_1,\ldots,s_n)$ and $(\bw,\bt)=(w_1,\ldots,w_n,t_1,\ldots,t_n)\in\mathcal{P}_{\bp}$, we set
$$\langle (\bv,\bs),(\bw,\bt)\rangle_{\ell^2}:=\sum_{i=1}^n\langle v_i,w_i\rangle+\sum_{i=1}^ns_it_i.$$
While it is sometimes convenient to work with the associated $\ell^2$ norm on $\mathcal{P}_{\bp}$, to keep notation more consistent with \cite{kapouleasMcGrath2023generalDoubling}, for the most part we will use the $\ell^{\infty}$-type norm
$$|(\bv,\bs)|_{\ell^{\infty}}:=\max\{\sqrt{|v_i|^2+|s_i|^2}\mid 1\leq i\leq n\}$$
as our default norm on $\mathcal{P}_{\bp}$, keeping in mind the obvious inequalities
$$|(\bv,\bs)|_{\ell^{\infty}}\leq |(\bv,\bs)|_{\ell^2}\leq \sqrt{n}|(\bv,\bs)|_{\ell^{\infty}}.$$
Similarly, we find it convenient to equip $\Sigma^n$ and $(0,\infty)^n$ with the $\ell^{\infty}$-type metrics
$$d(\bp,\bq):=\max\{d(p_i,q_i)\mid 1\leq i\leq n\}$$
and
$$|\bv-\bw|:=|\bv-\bw|_{\ell^{\infty}}.$$

Next, at a critical point $(\bp,\btau)$ for $\mathcal{H}$, note that there is no ambiguity in defining the Hessian as a symmetric bilinear form
$$D^2\mathcal{H}(\bp,\btau):\mathcal{P}_{\bp}\otimes \mathcal{P}_{\bp}\to \mathbb{R},$$
which we can identify, via the $\ell^2$ inner product, with a self-adjoint map
$$D^2\mathcal{H}(\bp,\btau): \mathcal{P}_{\bp}\to \mathcal{P}_{\bp}.$$
Finally, defining a rescaling map
$$S_{\btau}: \mathcal{P}_{\bp}\to \mathcal{P}_{\bp}$$
by
$$S_{\btau}(\bv,\bs):=(\tau_1^{-1}v_1,\ldots,\tau_n^{-1}v_n,s_1,\ldots,s_n),$$
it is easy to see that $S_{\btau}$ is also self-adjoint with respect to the $\ell^2$ inner product, and therefore the composition
$$S_{\btau}\circ D^2\cH(\bp,\btau)\circ S_{\btau}: \mathcal{P}_{\bp}\to \mathcal{P}_{\bp}$$
is self-adjoint as well, hence diagonalizable with respect to an $\ell^2$-orthonormal basis. 

With this notation in place, we can now state a precise version of the doubling theorem.

\begin{thm}\label{thm:doubling.precise}
Let $(N,g)$ be a closed Riemannian $3$-manifold, and let
$\Sigma$ be a closed, two-sided, unstable, nondegenerate minimal surface. Letting $\mathcal{H}$ be the interaction energy associated to the Jacobi operator $J$ as above, there exists a constant $\tilde{n}(g,\Sigma)\in \mathbb{N}$ with the follow property: For any  $n\geq \tilde{n}$, suppose $(\bp,\btau)\in X_n$ is a critical point for $\mathcal{H}$ such that:
\begin{enumerate}
    \item\label{tau.comparable}  $\displaystyle \frac{1}{n}< \frac{\tau_i}{\bar{\btau}}< n\text{ for all }1\leq i\leq n$, and  $\bar{\btau}:=\frac{1}{n}\sum_{i=1}^n\tau_i\in (0,n^{-100,000})$.
    \item\label{del.low} $ \displaystyle    \delta(\bp):=\frac{1}{20}\min_{i\neq j}d(p_i,p_j)>\bar{\btau}^{1/100,000}$.
    \item \label{quant.nondeg}
    $\min\{|\lambda|\mid \lambda \in Spec(S_{\btau}D^2\mathcal{H}(\bp,\btau)S_{\btau})\}>\bar{\btau}^{1/1000}.$
    \item  \label{2eigen.bd} 
For all $(\bq,\bsigma)\in X_n$ with $d(\bp,\bq)+\bar{\btau}^{-1}|\bsigma-\btau|<\delta^3$, we have 
$$\lambda_2(J;\Sigma\setminus \bigcup_{i=1}^nB_{\sigma_i/3}(q_i))>0.$$
\end{enumerate}
 Then there exists an embedded minimal doubling $M\subset (N,g)$ of $\Sigma$ with $n$ necks and area $\area(M)<2\area(\Sigma)$, contained in the $\bar{\btau}^{1/2}$-neighborhood of $\Sigma$. More precisely, $M$ satisfies
 $$|\area(M)-(2\area(\Sigma)-\frac{1}{4}\mathcal{H}(\bp,\btau))|\leq \frac{1}{n}\mathcal{H}(\bp,\btau)$$
 and for some $(\bq,\bsigma)$ with $d(\bq,\bsigma)+\bar{\btau}^{-1}|\bsigma-\btau|<\delta^3$, $M$  is given by the union of a two-sheeted graph over $\Sigma\setminus \bigcup_{i=1}^n B_{\sigma_i^{1/2}}(q_i)$
 with a collection of $n$ neck regions given by small perturbations of catenoids of waist radius $\sigma_i$ centered at the points $q_i$.
 
\end{thm}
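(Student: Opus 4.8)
The plan is to deduce Theorem~\ref{thm:doubling.precise} from the doubling existence theory of \cite{kapouleasMcGrath2023generalDoubling}, using Proposition~\ref{prop:criticalPointH} as a dictionary between the interaction energy $\cH$ and the mismatch of linearized doubling solutions. First I would extract from \cite[Sections 3--5]{kapouleasMcGrath2023generalDoubling}, and in particular from the hypotheses of \cite[Theorem 5.7]{kapouleasMcGrath2023generalDoubling}, the precise list of structural conditions that a family of linearized doubling solutions $\{\varphi_{\bq,\bsigma}\}$ must satisfy for the gluing and fixed-point scheme to apply: uniform comparability of the weights $\sigma_i$, logarithmic smallness of the weights relative to $1/n$ and to the separation $\min_{i\neq j}d(q_i,q_j)$, a lower bound on the separation, a quantitative surjectivity statement for the mismatch map onto a ball of radius comparable to $\sigma_i^{1+1/500}$, and a positivity requirement ensuring $\varphi_{\bq,\bsigma}>0$ away from its singularities. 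Conditions~(1) and~(2) of the theorem are tailored to reproduce the first three of these, and condition~(4), via the relationship between $\lambda_2(J)$ on the complement of the neck disks and positivity of linearized doublings, is tailored to the last; so the heart of the matter is to convert the spectral condition~(3) into the required quantitative surjectivity.

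For that, I would fix the critical point $(\bp,\btau)$ and study, for $(\bq,\bsigma)$ in an $\cH$-adapted neighborhood of $(\bp,\btau)$ in $X_n$, the linearized doubling solution $\varphi_{\bq,\bsigma}=-2\pi\sum_i\sigma_i\phi_{q_i}$ solving $J\varphi_{\bq,\bsigma}=-2\pi\sum_i\sigma_i\delta_{q_i}$. By Proposition~\ref{prop:criticalPointH} and the scaling identity $\cM(\varphi)=-\pi S_{\btau}\circ\nabla\cH$ recorded in the introduction, the mismatch map $(\bq,\bsigma)\mapsto\cM(\varphi_{\bq,\bsigma})$ agrees, up to the explicit invertible rescaling, with $\nabla\cH$. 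Since $(\bp,\btau)$ is critical, the mismatch vanishes there and its linearization equals $-\pi S_{\btau}D^2\cH(\bp,\btau)$; after a matching rescaling of the parameter space this becomes $-\pi S_{\btau}D^2\cH(\bp,\btau)S_{\btau}$, whose smallest eigenvalue in absolute value is bounded below by $\bar{\btau}^{1/1000}$ precisely by condition~(3). Combining this lower bound with the uniform $C^2$ bounds on $\cH$ coming from the Green's function estimates of Lemma~\ref{green.lem} and the continuity statements of Remark~\ref{robin.rk}, a quantitative inverse function theorem shows that the mismatch map restricts to a diffeomorphism from a suitable neighborhood of $(\bp,\btau)$ onto a ball of radius $\sim\sigma_i^{1+1/500}$ about the origin in $\cP_{\bp}$, once $n\geq\tilde n(g,\Sigma)$ is large enough to absorb the relevant error terms. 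I expect the bookkeeping here --- comparing the radius of invertibility against the higher-order terms of the mismatch in $(\bq,\bsigma)$, and reconciling the normalization conventions of \cite{kapouleasMcGrath2023generalDoubling} with those of $\cH$ --- to be the main technical burden, with the appearance of $S_{\btau}D^2\cH S_{\btau}$ rather than $D^2\cH$ being exactly what accounts for the different scaling of the tangential and scalar parts of the mismatch.

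With the mismatch map understood, I would then invoke the fixed-point construction of \cite[Section 5]{kapouleasMcGrath2023generalDoubling}: from a family of linearized doublings realizing every sufficiently small mismatch one builds an initial surface by gluing the two-sheeted normal graph of $\pm\varphi_{\bq,\bsigma}$ over $\Sigma$ to approximately catenoidal necks of waist $\sim\sigma_i$ at $q_i$, and then solves away the residual mean curvature by a contraction mapping, adjusting $(\bq,\bsigma)$ within the above neighborhood to cancel the mismatch. Condition~(4) feeds into \cite{kapouleasMcGrath2023generalDoubling} to ensure $\varphi_{\bq,\bsigma}>0$ off the necks, hence that the two sheets stay disjoint and the resulting doubling $M$ is embedded, lies in the $\bar{\btau}^{1/2}$-neighborhood of $\Sigma$, and has the graphical-plus-necks structure asserted in the theorem. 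Finally, the area expansion is obtained by carrying the area formula of \cite{kapouleasMcGrath2023generalDoubling} through the same dictionary: the area of the two-sheeted graph of $\pm\varphi$ over $\Sigma$ with the neck disks removed, plus the areas of the $n$ necks, reduces at a critical point of $\cH$ to $2\area(\Sigma)-\frac{1}{4}\cH(\bp,\btau)$ up to an error controlled by $\frac{1}{n}\cH(\bp,\btau)$, since the quadratic-plus-logarithmic structure of $\cH$ is precisely what encodes the interaction and self-energy contributions of the necks. The overall difficulty is not conceptual but lies in checking that every estimate demanded by the intricate scheme of \cite{kapouleasMcGrath2023generalDoubling} is implied by the four stated hypotheses.
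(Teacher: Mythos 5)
Your proposal is correct and follows essentially the same route as the paper: identify the mismatch map with $-\pi S_{\btau}\nabla\cH$ via Proposition~\ref{prop:criticalPointH}, use the spectral bound in condition~(3) together with the Green's-function estimates of Lemma~\ref{green.lem} to run a quantitative inverse function theorem that makes the mismatch a local diffeomorphism onto a ball of radius $\sim\bar\btau^{1+1/500}$, use condition~(4) to get the positivity of $\varphi$ off the necks (the paper does this by a maximum-principle argument against the first eigenfunction in Lemma~\ref{ht.bd}), and then feed the resulting family of linearized doublings into the Kapouleas--McGrath fixed-point scheme. The only notable difference is one of detail: the paper does not merely cite \cite[Theorem 5.7]{kapouleasMcGrath2023generalDoubling} but re-runs the Schauder fixed-point argument, tracking the constants carefully to show that the role of the quantitative nondegeneracy~(3) is to guarantee the self-map property $\mathcal{J}(K)\subset K$; you correctly anticipate that this bookkeeping is the main technical burden.
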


The proof is the content of Section \ref{sect:Proofdoubling.precise} below, in which we check carefully that the assumptions give rise to a family of linearized doubling solutions to which the results of \cite{kapouleasMcGrath2023generalDoubling} can be applied. 

The precise choice of constants in the statement of Theorem \ref{thm:doubling.precise} is somewhat arbitrary, but sufficient for the construction we carry out in the following sections. The assumption \eqref{2eigen.bd} can be removed at the expense of the embeddedness of the resulting minimal surface, yielding in general immersed minimal doublings. Since we are primarily concerned here with the embedded case, we do not pursue this further.

\subsection{The projectivized energy $\hat\cH$}\label{sect:hatH.def}

Motivated by Theorem \ref{thm:doubling.precise}, we are now interested in developing the variational theory for the functionals $\mathcal{H}$ on $X_n$. As a first step, we observe next that the variational theory for $\mathcal{H}$ is equivalent to that of a positive, projectivized functional $\hat\cH$.

\begin{lem}\label{lem:tMax}
    Fix any $(\bp,\btau)\in X_n$. The restriction of $\cH$ to the ray $$\{\bp\}\x\{t\btau:t\in (0,\infty)\}$$ has exactly one critical point at a global maximum, where  $t$  is given by
$$t_{\max} = e^{2\pi \cH(\bp,\btau)/|\btau|_{\ell^2}^2-1/2}.$$
    In particular, $\cH(\bp,t_{\max} \btau) = \frac 1{4\pi}|t_{\max}\btau|_{\ell^2}^2$.
\end{lem}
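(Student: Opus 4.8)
The plan is to reduce the claim to a one-variable calculus problem by exploiting the scaling behavior of $\cH$ in the weight variable. First I would record that the first two terms of $\cH(\bp,\btau)$ — the quadratic interaction $\sum_{i\neq j}\tau_i\tau_j G(p_i,p_j)$ and the Robin term $\sum_i\tau_i^2 R(p_i,p_i)$ — are homogeneous of degree two in $\btau$, so they scale by $t^2$ under $\btau\mapsto t\btau$. For the nonlinear term, I would expand $\sum_i\frac{(t\tau_i)^2}{4\pi}\bigl(1-2\log\frac{t\tau_i}{2}\bigr)$ using $\log(t\tau_i)=\log t+\log\tau_i$; this separates out exactly a $t^2$-multiple of the original nonlinear term together with the extra contribution $-\frac{t^2\log t}{2\pi}\sum_i\tau_i^2$. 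Writing $C:=\cH(\bp,\btau)$ and $B:=\frac{1}{2\pi}|\btau|_{\ell^2}^2>0$, this gives the identity
$$h(t):=\cH(\bp,t\btau)=Ct^2-Bt^2\log t,\qquad t\in(0,\infty).$$

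Next I would analyze $h$ on $(0,\infty)$. Differentiating yields $h'(t)=t\bigl(2C-B-2B\log t\bigr)$, so the sign of $h'(t)$ agrees with that of $\psi(t):=2C-B-2B\log t$, which — since $B>0$ — is strictly decreasing, running from $+\infty$ to $-\infty$. Hence $\psi$ (and therefore $h'$) has a unique zero, at the value $t_{\max}$ characterized by $\log t_{\max}=\frac{C}{B}-\frac12=\frac{2\pi\,\cH(\bp,\btau)}{|\btau|_{\ell^2}^2}-\frac12$, i.e. $t_{\max}=e^{2\pi\cH(\bp,\btau)/|\btau|_{\ell^2}^2-1/2}$; moreover $h$ is strictly increasing on $(0,t_{\max})$ and strictly decreasing on $(t_{\max},\infty)$, so $t_{\max}$ is the unique critical point of the restriction of $\cH$ to the ray and a strict global maximum. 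Finally, for the ``in particular'' clause I would substitute $\log t_{\max}=C/B-1/2$ into $h(t_{\max})=t_{\max}^2\bigl(C-B\log t_{\max}\bigr)$, obtaining $h(t_{\max})=\frac{B}{2}t_{\max}^2=\frac{1}{4\pi}|t_{\max}\btau|_{\ell^2}^2$.

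There is no serious obstacle here: the entire argument is elementary and, since $B>0$, works regardless of the sign of $\cH(\bp,\btau)$. The only point demanding any care is the bookkeeping in the expansion of the nonlinear term — keeping straight the sign of the $-2\log$ factor and confirming that the leftover coefficient of $-t^2\log t$ is precisely $B=|\btau|_{\ell^2}^2/(2\pi)$ — after which the monotonicity of $\psi$ and the evaluation at $t_{\max}$ are immediate.
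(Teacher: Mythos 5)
Your proof is correct and follows essentially the same approach as the paper: reduce to the scaling identity $\cH(\bp,t\btau)=t^2\cH(\bp,\btau)-\frac{|\btau|^2_{\ell^2}}{2\pi}t^2\log t$, differentiate, and solve. The only difference is that you spell out the monotonicity of $\psi(t)=2C-B-2B\log t$ to justify uniqueness and the max/global-max claim, which the paper leaves implicit.
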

\begin{proof}
By a direct computation,
\begin{equation}\label{eq:scaleTau}
    \cH(\bp,t\btau) =t^2\cH(\bp,\btau)-t^2\log(t)\cdot\frac { |\btau|_{\ell^2}^2}{2\pi}
\end{equation}
and
$$\frac d{dt}\cH(\bp,t\btau)=2t(\cH(\bp,\btau)-\frac{|\btau|_{\ell^2}^2}{4\pi}-\frac{|\btau|_{\ell^2}^2}{2\pi}\log(t)).$$
Hence, $\cH(\bp,t\btau)$ achieves a positive maximum at a unique critical point $t_{\max}$ given by
$$\log(t_{\max})=\frac{2\pi \cH(\bp,\btau)}{|\btau|_{\ell^2}^2}-\frac{1}{2}.$$
Putting $t=t_{\max}$ into (\ref{eq:scaleTau}),  we also have $$\cH(\bp,t_{\max} \btau) = \frac 1{4\pi} e^{4\pi\cH(\bp,\btau)/|\btau|_{\ell^2}^2-1}|\btau|_{\ell^2}^2 = \frac 1{4\pi}|t_{\max}\btau|_{\ell^2}^2$$
\end{proof}

As a corollary, we see that if $(\bp,\btau)\in X_n$ is a critical point of $\mathcal{H}$, then we must have $1=t_{max}$ above; i.e., 
\begin{equation}\label{crit.h.val}
    \mathcal{H}(\bp,\btau)=\frac{1}{4\pi}|\btau|_{\ell^2}^2,
\end{equation}
and $(\bp, \btau)$ maximizes $\mathcal{H}$ along the ray $\{\bp\}\times \{t\btau : t\in (0,\infty)\}$. Setting
$$\hat{X}_n:=\{(\bp,\bomega)\in X_n\mid |\bomega|_{\ell^2}^2=1\},$$
this motivates the following definition:
\begin{definition}\label{hatH.def}
Define  the  
 \textit{projectivized energy} $\hat\cH: \hat{X}_n\to (0,\infty)$ by
$$\hat\cH(\bp,\bomega):=\max_{t\in (0,\infty)}\cH(\bp,t\bomega)=\mathcal{H}(\bp,e^{2\pi \cH(\bp,\bomega)-1/2}\bomega).$$
\end{definition}
Equivalently, by an application of \eqref{eq:scaleTau}, we can write
\begin{equation}\label{hath.gibbs}
\hat\cH(\bp,\bomega)=\frac{1}{4\pi e}e^{4\pi \cH(\bp,\bomega)},
\end{equation}
so that $\hat\cH$ resembles a kind of Gibbs measure on $\hat{X}_n$. It follows from the discussion above that $(\bp,\btau)\in X_n$ is a critical point for $\cH$ if and only if $(\bp,\btau)$ satisfies \eqref{crit.h.val} and $(\bp,\btau/|\btau|_{\ell^2})\in \hat{X}_n$ is a critical point for $\hat\cH$, so that the critical point theory for the functionals $\mathcal{H}$ can be reformulated in terms of the positive energy functional $\hat\cH$. In Section \ref{sect:minH} below, we take the first step toward developing the variational theory for $\hat\cH$, showing that the minimization problem for $\hat\cH$ is well-posed on $\hat{X}_n$, and gives rise to critical points of $\cH$ whose behavior in the large $n$ limit is consistent with the conditions of Theorem \ref{thm:doubling.precise}.

\section{Proof of Theorem \ref{thm:doubling.precise}}\label{sect:Proofdoubling.precise}

We now come to the proof of Theorem \ref{thm:doubling.precise}, checking carefully that the hypotheses ensure the existence of a family of linearized doubling solutions satisfying all of the criteria in \cite[Sections 3-5]{kapouleasMcGrath2023generalDoubling} necessary to deduce the existence of a minimal doubling as in \cite[Theorem 5.7]{kapouleasMcGrath2023generalDoubling}.

\subsection{From nondegenerate critical points to families of configurations}

To begin, let $(\bp,\btau)$ be as in the statement of Theorem \ref{thm:doubling.precise}, and note that a standard Vitali covering argument forces
$$\delta=\delta(\bp)<C(\Sigma,g)n^{-1/2}\leq \frac{1}{2}\InjRad(N,g)$$
for $n\geq C(g,\Sigma)$ sufficiently large.

Next, consider the neighborhood $U\subset X_n$ of $(\bp,\btau)$ given by
$$U:=B_{\delta}(p_1)\times \cdots\times B_{\delta}(p_n)\times (\tau_1/2,2\tau_1)\times \cdots\times (\tau_n/2,2\tau_n),$$
so that for every $(\bq,\bsigma)\in U$, we have $d(q_i,p_i)<\delta$, 
$$\min_{i\neq j}d(q_i,q_j)\geq 2\delta,$$
and
$$\frac{1}{2n}\leq \frac{\sigma_i}{\bar{\btau}}\leq 2n.$$
We will use exponential coordinates centered at the points $p_i$ to fix a coordinate system on each $B_{\delta}(p_i)$, giving us an identification 
\begin{equation}\label{z.def}
Z_{\bq}:\mathcal{P}_{\bq}\to\mathcal{P}_{\bp}\cong (\mathbb{R}^2)^n\times \mathbb{R}^n=:\mathcal{P}
\end{equation}
that preserves $\ell^p$ norms up to a universal constant depending only on $\Sigma$.

Now, translating \cite[Definition 3.10]{kapouleasMcGrath2023generalDoubling} to our notation, we define the mismatch $\mathcal{M}:U\to \mathcal{P}$ by
$$\mathcal{M}(\bq,\bsigma)=(\mathcal{M}_{q_i}(\bq,\bsigma),\mathcal{M}_{\sigma_i}(\bq,\bsigma)):=-\pi(\frac{1}{\sigma_i}\nabla^{q_i}\mathcal{H}(\bq,\bsigma),\nabla^{\sigma_i}\mathcal{H}(\bq,\bsigma)),$$
so that $\mathcal{M}(\bp,\btau)=0$, and Proposition \ref{prop:criticalPointH} gives
\begin{eqnarray*}
    \mathcal{M}_{q_i}(\bq,\bsigma)&=&\left(\sum_{j\neq i}2\sigma_j(\nabla \phi_{q_j})(q_i)+2\sigma_i \nabla R_{q_j}(q_i)\right)\\
    &=&-2\pi\nabla(\phi+\frac{\sigma_i}{2\pi}\log d_{q_i})(q_i)=\nabla(-2\pi \phi-\sigma_i\log d_{q_i})
\end{eqnarray*}
and
\begin{eqnarray*}
    \mathcal{M}_{\sigma_i}(\bq,\bsigma)&=&-\pi\sum_{j\neq i}2\sigma_j G(q_i,q_j)+2\sigma_iR(q_i,q_i)-\frac{\sigma_i}{2\pi}\log(\sigma_i/2)\\
    &=&-2\pi(\phi+\frac{\sigma_i}{2\pi}\log d_{q_i})(q_i)+\sigma_i\log(\sigma_i/2)\\
    &=&(-2\pi \phi-\sigma_i\log(2d_{q_i}/\sigma_i))(q_i)
\end{eqnarray*}
for $\phi=\phi_{\bq,\bsigma}$ solving $J\phi=\sum_i \sigma_i \delta_{q_i}$, where we have set $d_{q_i}(x):=d(q_i,x).$

By an easy application of the estimates in Lemma \ref{green.lem} and the assumptions of the theorem, we can then prove $C^2$ bounds of the following form for $\mathcal{M}$ on $U$.

\begin{lem}
    There is a constant $C(g,\Sigma)$ such that on $U$,
$$\bar{\btau}^{-1}|\nabla^{q_j}\nabla^{q_k}\mathcal{M}|+|\nabla^{q_k}\nabla^{\sigma_j}\mathcal{M}|+\bar{\btau}|\nabla^{\sigma_k}\nabla^{\sigma_j}\mathcal{M}|\leq \frac{Cn}{\delta^3}$$
with respect to the fixed exponential coordinate system. 
\end{lem}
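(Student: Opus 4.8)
The strategy is to write $\mathcal M$ explicitly in terms of the Green's function $G$, the Robin function $R$, and elementary functions of $\bsigma$, then differentiate term by term, using the scaling estimates $\sup d_g(x,y)^k|\nabla^k G(x,y)|<\infty$ from Lemma \ref{green.lem}, the $C^k$-regularity of $R$ and $R_D$ from Lemma \ref{green.lem} and Remark \ref{robin.rk}, and the quantitative bounds $\min_{i\ne j}d(q_i,q_j)\ge 2\delta$, $\sigma_i\in(\bar\btau/(2n),2n\bar\btau)$ valid throughout $U$. Recall from the computation preceding the lemma that
$$\mathcal M_{q_i}(\bq,\bsigma)=2\sum_{j\ne i}\sigma_j\,\nabla^x G(q_i,q_j)+2\sigma_i\,\nabla^x R(q_i,q_i),$$
$$\mathcal M_{\sigma_i}(\bq,\bsigma)=-2\pi\sum_{j\ne i}\sigma_j\,G(q_i,q_j)-2\pi\sigma_i R(q_i,q_i)+\sigma_i\log(\sigma_i/2),$$
so each component is a finite sum of products of $\sigma_j$'s with either $G(q_i,q_j)$ (off-diagonal, $i\ne j$), $R(q_i,q_i)$ (diagonal), or $\sigma_i\log\sigma_i$. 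First I would note that the claimed inequality decouples over the index $i$ labelling the component $\mathcal M_{q_i},\mathcal M_{\sigma_i}$ and over the differentiation indices $j,k$: it suffices to bound each individual mixed second derivative $\nabla^{q_j}\nabla^{q_k}\mathcal M_{q_i}$, $\nabla^{q_k}\nabla^{\sigma_j}\mathcal M_{q_i}$, $\nabla^{\sigma_k}\nabla^{\sigma_j}\mathcal M_{q_i}$ and their $\sigma$-component analogues.

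Next I would go through the cases. A derivative $\nabla^{\sigma_k}$ acting on any term either kills it or removes a factor $\sigma_k$, replacing it by a bounded quantity (or, in the $\sigma_i\log\sigma_i$ term, producing $\log\sigma_i$ or $\sigma_i^{-1}$, both controlled by $O(\log n+\bar\btau^{-1})$ using condition (1) — here the worst term for the $\nabla^{\sigma_k}\nabla^{\sigma_j}\mathcal M_{\sigma_i}$ estimate is $\partial_{\sigma_i}^2(\sigma_i\log\sigma_i)=\sigma_i^{-1}\le 2n\bar\btau^{-1}$, which is why that term carries the weight $\bar\btau$ in the statement, giving $O(n)$). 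A derivative $\nabla^{q_k}$ acting on an off-diagonal term $G(q_i,q_j)$ produces at most one or two spatial derivatives of $G$ evaluated at a pair of points at mutual distance $\ge 2\delta$, so by the scaling bound it contributes $O(\delta^{-2})$ for the $\mathcal M_q$-components (one extra $\nabla^x$ already present) and $O(\delta^{-1})$ or $O(\delta^{-2})$ for the $\mathcal M_\sigma$-components; acting on a diagonal term $R(q_i,q_i)$ it produces $C^2$-bounded derivatives of $R$ near the diagonal, i.e. $O(1)$ — so the $\delta$-singularity only ever comes from off-diagonal $G$-terms. Summing the at most $n$ terms in each $\sum_{j\ne i}$ (this is where the factor $n$ in $Cn/\delta^3$ comes from, together with the extra $O(\delta^{-1})$ lost in a crude joint bound, or alternatively from $\sigma_j\le 2n\bar\btau$ against the normalization $\bar\btau^{-1}$), and multiplying by the weights $\sigma_j\le 2n\bar\btau$ against the prefactors $\bar\btau^{\mp1}$ in the statement, every term is seen to be $\le Cn\delta^{-3}$ for a constant $C=C(g,\Sigma)$; the continuity-under-metric-variation remarks (Remark \ref{robin.rk}) guarantee this constant can be chosen uniformly.

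The only genuinely delicate point is the dependence of $\phi=\phi_{\bq,\bsigma}$, equivalently of $G$ and $R$, on the basepoint being differentiated: one must use the symmetry of $G$ (already exploited in the display preceding the lemma) so that $\nabla^{q_i}$ acting on $G(q_i,q_j)$ with $j\ne i$ is a genuine spatial derivative of the \emph{smooth} function $G$ on $\Sigma\times\Sigma\setminus\mathrm{diag}(\Sigma)$, controlled by the scaling estimate — there is no extra singular contribution because $q_i\ne q_j$ throughout $U$. I expect this bookkeeping — tracking which derivatives land on which argument of $G$ and confirming that every resulting evaluation is either off-diagonal (where the $d^{-k}$ scaling bound applies with $d\ge 2\delta$) or on the diagonal of $R$ (where $C^2$-regularity applies) — to be the main, though entirely routine, obstacle; no new idea beyond Lemma \ref{green.lem} is needed.
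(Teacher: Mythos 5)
Your plan is correct in outline — write $\mathcal M$ in terms of $G$, $R$, and $\sigma_i\log(\sigma_i/2)$, differentiate term by term, control the off-diagonal $G$-terms by the $d(x,y)^{-k}$ scaling bounds of Lemma~\ref{green.lem} together with $d(q_i,q_j)\geq 2\delta$, and control the $\sigma$-derivative terms via $\sigma_i^{-1}\leq 2n/\tau_i$. The summation bookkeeping that produces the factor $n$ is also essentially right.

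However, there is a genuine gap in your treatment of the diagonal Robin-function term. You write $\mathcal M_{q_i}$ with the summand $2\sigma_i\nabla^x R(q_i,q_i)$ and assert that $\nabla^{q_j}\nabla^{q_k}$ applied to it is controlled by ``$C^2$-bounded derivatives of $R$ near the diagonal''. But Lemma~\ref{green.lem} only gives $R\in Lip(\Sigma\times\Sigma)$ with $C^1$ regularity near the diagonal, and Remark~\ref{robin.rk} explicitly notes that this near-diagonal regularity of the two-variable function $R(x,y)$ is essentially optimal. Differentiating $\nabla^x R(q_i,q_i)$ twice more in $q_i$ would need $R\in C^3$ near the diagonal, which is not available. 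The paper's proof of the lemma sidesteps this by first rewriting $\mathcal M_{q_i}(\bq,\bsigma)=-\pi\big(2\sum_{j\neq i}\sigma_j\nabla^{q_j}G(q_i,q_j)+\sigma_i\nabla R_D(q_i)\big)$, using the symmetry of $R$ to identify $\nabla^x R(x,x)=\tfrac12\nabla R_D(x)$ with the gradient of the \emph{one-variable} diagonal Robin function $R_D(x)=R(x,x)$, which is $C^\infty(\Sigma)$ by Lemma~\ref{robin.reg}; the same substitution is made in $\mathcal M_{\sigma_i}$, replacing $R(q_i,q_i)$ by $R_D(q_i)$. Only after this rewriting are the higher-order bounds $|\nabla^k R_D|\leq C(g,\Sigma)$ available, and without it the claimed $C^2$ bounds on $\mathcal M$ do not follow from the stated lemmas. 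This is the genuinely delicate point of the argument, distinct from the one you flag (tracking which slot of $G$ gets differentiated, which is indeed routine by symmetry of $G$ and $q_i\neq q_j$).
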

\begin{proof}
Writing
$$\mathcal{M}_{q_i}(\bq,\bsigma)=-\pi\left(2\sum_{j\neq i}\sigma_j\nabla^{q_j}G(q_i,q_j)+\sigma_i\nabla R_D(q_i)\right)$$
and
$$\mathcal{M}_{\sigma_i}(\bq,\bsigma)=-2\pi\left(\sum_{j\neq i}\sigma_jG(q_i,q_j)+\sigma_i R_D(q_i)-\frac{\sigma_i}{2\pi}\log(\sigma_i/2)\right),$$
a straightforward computation gives
$$|\nabla^{q_j}\nabla^{q_k}\mathcal{M}_{q_i}(\bq,\bsigma)|\leq 2\pi \sum_{\ell\neq i}\sigma_\ell|\nabla^3G(q_i,q_\ell)|+\pi\sigma_i|\nabla^3R_D(q_i)|,$$
$$|\nabla^{q_j}\nabla^{q_k}\mathcal{M}_{\sigma_i}(\bq,\bsigma)|\leq 2\pi\sum_{\ell\neq i}\sigma_\ell|\nabla^2G(q_i,q_\ell)|+\pi\sigma_i|\nabla^2R_D(q_i)|,$$
which together with Lemma \ref{green.lem}, Remark \ref{robin.rk}, and the estimates $\sigma_i\leq 2\tau_i$ and $d(q_i,q_j)\geq 2\delta$, give an estimate of the desired form
$$|\nabla^{q_j}\nabla^{q_k}\mathcal{M}|(\bq,\bsigma)\leq C(g,\Sigma)n\bar{\btau}\delta^{-3}.$$
Similarly, direct computation and the same collection of estimates gives a bound of the form
$$|\nabla^{q_j}\nabla^{\sigma_k}\mathcal{M}(\bq,\bsigma)|\leq C(g,\Sigma)\delta^{-2},$$
and since
$$\nabla^{\sigma_k}\nabla^{\sigma_j}\mathcal{M}_{q_i}(\bq,\bsigma)=0$$
and
$$\nabla^{\sigma_k}\nabla^{\sigma_j}\mathcal{M}_{\sigma_i}(\bq,\bsigma)=\frac{1}{\sigma_i}$$
when $i=j=k$ and vanishes otherwise, we see that
$$|\nabla^{\sigma_k}\nabla^{\sigma_j}\mathcal{M}(\bq,\bsigma)|=\frac{1}{\sigma_i}\leq \frac{2n}{\tau_i}\leq \frac{2n^2}{\bar{\btau}}.$$
Recalling that $n<C(g,\Sigma)\delta^{-2}$, the desired estimate follows.
\end{proof}

As an immediate corollary, we deduce that
$$|\nabla^{p_i}\mathcal{M}(\bq,\bsigma)-\nabla^{p_i}\mathcal{M}(\bp,\btau)|\leq \frac{Cn}{\delta^3}(|\bsigma-\btau|+\bar{\btau}d(\bp,\bq))$$
and
$$\bar{\btau}|\nabla^{\tau_i}\mathcal{M}(\bq,\bsigma)-\nabla^{\tau_i}\mathcal{M}(\bp,\btau)|\leq \frac{Cn}{\delta^3}(|\bsigma-\btau|+\bar{\btau}d(\bp,\bq)),$$
for all $(\bq,\bsigma)\in U$.
And by another application of the fundamental theorem of calculus, we see that for any $(\bq,\bsigma),(\bq',\bsigma')\in U$, computing once again in the fixed exponential coordinates,
\begin{eqnarray*}
    |\mathcal{M}(\bq,\bsigma)-\mathcal{M}(\bq',\bsigma')-\sum_{i=1}^n\nabla_{q_i-q_i'}^{p_i}\mathcal{M}(\bp,\btau)+\nabla^{\tau_i}_{\sigma_i-\sigma_i'}\mathcal{M}(\bp,\btau)|\\
    \leq \frac{Cn}{\delta^3}(d(\bq,\bq')+\bar{\btau}^{-1}|\bsigma-\bsigma'|)[|\bsigma-\btau|+|\bsigma'-\btau|+\bar{\btau}(d(\bp,\bq)+d(\bp,\bq'))].
\end{eqnarray*}

Now, recalling that the composition
$$T(\bp,\btau):=S_{\btau}\circ D^2\mathcal{H}(\bp,\btau)\circ S_{\btau}$$
is symmetric for the $\ell^2$ inner product on $\mathcal{P}$, taking
$$0<\mu\leq\min \{|\lambda|\mid \lambda \in Spec(T(\bp,\btau))\},$$
it follows that
$$|T(\bp,\btau)[\bv,\bs]|_{\ell^2}\geq \mu |[\bv,\bs]|_{\ell^2},$$
or equivalently,
\begin{eqnarray*}
    |S_{\btau}\circ D^2F(\bp,\btau)[\bv,\bs]|_{\ell^2}&\geq & \mu |S_{\btau}^{-1}[\bv,\bs]|_{\ell^2}\\
    &\geq &\frac{\mu}{2}(\frac{\bar{\btau}}{n}|\bv|_{\ell^{\infty}}+|\bs|_{\ell^{\infty}}).
\end{eqnarray*}
On the other hand, since $(\bp,\btau)$ is a critical point for $\mathcal{H}$, we see that 
$$S_{\btau}\circ D^2\mathcal{H}(\bp,\btau)=-\frac{1}{\pi}\nabla \mathcal{M}(\bp,\btau),$$
so that 
\begin{equation}
    |\nabla \mathcal{M}(\bp,\btau)[\bv,\bs]|_{\ell^{\infty}}\geq \frac{\pi\mu}{2n^{3/2}} (\bar{\btau}|\bv|_{\ell^{\infty}}+|\bs|_{\ell^{\infty}}).
\end{equation}

Combining this with the computations above, we see that for all $(\bq,\bsigma),(\bq',\bsigma')\in U$ satisfying
$$\frac{Cn}{\delta^3\bar{\btau}}(|\bsigma-\btau|+|\bsigma'-\btau'|+\bar{\btau}(d(\bq,\bp)+d(\bq',\bp))<\frac{\pi\mu}{4n^{3/2}},$$
we have
$$|\mathcal{M}(\bq,\bsigma)-\mathcal{M}(\bq',\bsigma')|\geq \frac{\pi\mu}{4n^{3/2}}(\bar{\btau}d(\bq,\bq')+|\bsigma-\bsigma'|).$$
In other words, defining a metric $d_{\btau}$ on $U$ by
$$d_{\btau}((\bq,\bsigma),(\bq',\bsigma')):=|\bsigma-\bsigma'|+\bar{\btau}d(\bq,\bq'),$$
we see that $\mathcal{M}$ is bi-Lipschitz on the ball $B^{d_{\btau}}_{c\mu\delta^3\bar{\btau}n^{-5/2}}(\bp,\btau)$ for a suitable small constant $c(g,\Sigma)>0$, with
$$Lip(\mathcal{M}^{-1}; \mathcal{M}(B^{d_{\btau}}_{c\mu\delta^3\bar{\btau}n^{-5/2}}(\bp,\btau)))\leq \frac{2n^{3/2}}{\mu}$$
with respect to the $\ell^{\infty}$ metric on $\mathcal{P}$. By elementary arguments, it follows that the image $\mathcal{M}(B^{d_{\btau}}_{c\mu\delta^3\bar{\btau}n^{-5/2}}(\bp,\btau))$ must contain the $\ell^{\infty}$ ball $B_{c'\mu^2\delta^3\bar{\btau}n^{-4}}(0)\subset \mathcal{P}$, where $c'(g,\Sigma)=c(g,\Sigma)/4$.

Now, by assumption \ref{quant.nondeg} of Theorem \ref{thm:doubling.precise}, we can take $\mu=\bar{\btau}^{1/1000}$ in the arguments above, to arrive at the following conclusion.

\begin{prop}\label{fam.prop}
Under the hypotheses of Theorem \ref{thm:doubling.precise}, for $n\geq C(\Sigma,g)$, there exists $c(g,\Sigma)>0$ and a neighborhood $V\subset X_n$ of $(\bp,\btau)$ such that
$$|\bsigma-\btau|+\bar{\btau}d(\bp,\bq)\leq \bar{\btau}^{1+1/1000}\delta^3n^{-5/2}\text{ for all }(\bq,\bsigma)\in V$$
and $\mathcal{M}$ maps $V$ homeomorphically onto the ball
$$B_{c\delta^3n^{-4}\bar{\btau}^{1+1/500}}(0)\subset \mathcal{P}.$$
\end{prop}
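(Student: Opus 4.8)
\textbf{Proof plan for Proposition \ref{fam.prop}.}

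The plan is to assemble the estimates already derived in this subsection into a quantitative inverse function statement. The starting point is the quantitative invertibility of the linearization: since $T(\bp,\btau) = S_{\btau} D^2\mathcal{H}(\bp,\btau) S_{\btau}$ is self-adjoint for the $\ell^2$ inner product and, by hypothesis \eqref{quant.nondeg} of Theorem \ref{thm:doubling.precise}, has spectral gap at least $\bar{\btau}^{1/1000}$ from the origin, one may take $\mu = \bar{\btau}^{1/1000}$ in the chain of inequalities above. This gives the lower bound $|\nabla\mathcal{M}(\bp,\btau)[\bv,\bs]|_{\ell^\infty} \geq \frac{\pi\mu}{2n^{3/2}}(\bar{\btau}|\bv|_{\ell^\infty} + |\bs|_{\ell^\infty})$, i.e. invertibility of the differential of $\mathcal{M}$ at the base point with operator norm of the inverse controlled by $2n^{3/2}/\mu$ in the $d_{\btau}$-versus-$\ell^\infty$ norm pairing.

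Next I would feed this into the $C^2$ (really $C^{1,1}$) control of $\mathcal{M}$ on the neighborhood $U$: the fundamental theorem of calculus estimate already displayed shows that on the $d_{\btau}$-ball of radius $r$ around $(\bp,\btau)$, the map $\mathcal{M}$ differs from its linearization at $(\bp,\btau)$ by an error of size $\lesssim \frac{Cn}{\delta^3\bar{\btau}} r^2$ (after normalizing the target by $\bar{\btau}$ appropriately), where here $r = |\bsigma-\btau| + \bar{\btau}d(\bp,\bq)$. Choosing $r = c\mu\delta^3\bar{\btau}n^{-5/2}$ for a suitably small $c(g,\Sigma)$ makes this quadratic error strictly smaller than half the linear lower bound on the same ball, so a standard contraction-mapping / quantitative inverse function argument shows that $\mathcal{M}$ restricted to $B^{d_{\btau}}_{r}(\bp,\btau)$ is a bi-Lipschitz homeomorphism onto its image, with $Lip(\mathcal{M}^{-1}) \leq 2n^{3/2}/\mu$ in the $\ell^\infty$ metric on $\mathcal{P}$. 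An elementary open-mapping estimate (the image of a ball under a bi-Lipschitz map with these constants contains a ball of comparable radius divided by the Lipschitz constant, then shrunk by the distortion) then shows the image contains an $\ell^\infty$-ball of radius $\gtrsim \mu^2 \delta^3 \bar{\btau} n^{-4}$ about the origin; with $\mu = \bar{\btau}^{1/1000}$ this is $\gtrsim \delta^3 n^{-4}\bar{\btau}^{1+1/500}$, as claimed. Taking $V$ to be the preimage of this ball under $\mathcal{M}$ (intersected with $B^{d_{\btau}}_r(\bp,\btau)$), and noting $r \leq \bar{\btau}^{1+1/1000}\delta^3 n^{-5/2}$ by construction, gives exactly the stated neighborhood.

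The only genuine subtlety — rather than an obstacle — is bookkeeping: one must be careful that the two norms in play ($d_{\btau}$ on the source, weighted $\ell^\infty$ on the target after applying $S_{\btau}$) are matched consistently throughout, since the map $\mathcal{M}$ already has the factor $\sigma_i^{-1}$ built into its first component, and one should check that the exponential-coordinate identification $Z_{\bq}$ of $\mathcal{P}_{\bq}$ with $\mathcal{P}$ distorts all these norms only by a universal constant (which it does, as noted after \eqref{z.def}). Given the estimates already in hand this is routine; the substantive analytic content — the $C^{1,1}$ bounds on $\mathcal{M}$ and the spectral gap hypothesis — has already been established, so the proof is genuinely just the quantitative inverse function theorem applied with explicit constants.
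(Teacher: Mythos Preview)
Your proposal is correct and follows essentially the same argument as the paper: the paper assembles the $C^2$ bounds on $\mathcal{M}$ and the spectral-gap lower bound into exactly the bi-Lipschitz estimate you describe on the $d_{\btau}$-ball of radius $c\mu\delta^3\bar{\btau}n^{-5/2}$, then invokes an elementary open-mapping argument to get the ball of radius $c'\mu^2\delta^3\bar{\btau}n^{-4}$ in the image, and finally substitutes $\mu=\bar{\btau}^{1/1000}$. Your bookkeeping remark about matching the $d_{\btau}$ metric on the source with the $\ell^\infty$ metric on the target via $S_{\btau}$ and $Z_{\bq}$ is also exactly the content the paper handles implicitly in the displayed inequalities preceding the proposition.
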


Taking $B_{\mathcal{P}}:=B_{c\delta^3\bar{\btau}^{1+1/500}n^{-4}}(0)\subset \mathcal{P}$, our goal now is to show that $\mathcal{M}^{-1}: B_{\mathcal{P}}\to V$ parametrizes a family of LD solutions satisfying Assumption 5.2 of \cite[Section 5]{kapouleasMcGrath2023generalDoubling}, to which the fixed point argument in the proof of \cite[Theorem 5.7]{kapouleasMcGrath2023generalDoubling} can be applied to produce a minimal doubling of $\Sigma$ with $n$ catenoidal necks.

\subsection{Ingredients for the doubling construction}

Let $V$ be as in Proposition \ref{fam.prop}, and for any $(\bq,\bsigma)\in V$, letting $\phi=\phi_{\bq,\bsigma}$ solve
$$J\phi=\sum_{i=1}^n\sigma_i\delta_{q_i},$$
where $J$ is the Jacobi operator of $\Sigma\subset N$, we see that $\varphi=-2\pi \phi$ defines a linearized doubling in the sense of \cite[Definition 3.6]{kapouleasMcGrath2023generalDoubling} with $L=\{q_1,\ldots,q_n\}$ and $\bsigma: L\to (0,\infty)$ given by $\bsigma(q_i)=\sigma_i$.

For each $i\neq j$, note that
$$d(q_i,q_j)\geq d(p_i,p_j)-\frac{\delta^3 \bar{\btau}^{1/1000}}{n^{5/2}}\geq \delta\cdot (20-\delta^2\bar{\btau}^{1/1000} n^{-5/2}),$$
where $\delta=\delta(\bp)$. In particular, since assumption \ref{del.low} of Theorem \ref{thm:doubling.precise} implies
$$\bar{\btau}^{1/1000}<\delta^{100}\leq (C(\Sigma,g)n^{-1/2})^{100}=C'(\Sigma,g)n^{-50},$$
we see that for $n\geq C(g,\Sigma)$ sufficiently large,
$$d(q_i,q_j)>18\delta$$
for any $i\neq j$, and \cite[Convention 3.8]{kapouleasMcGrath2023generalDoubling} holds with $\delta_{q_i}=\delta(\bp)$.

Next, we check that the family of linearized doublings associated to $V\subset X_n$ satisfy the assumptions of \cite[Convention 3.15]{kapouleasMcGrath2023generalDoubling}, with 
$$\alpha:=\frac{6}{500}=\frac{3}{250}.$$
To verify items (i)-(iii) of \cite[Convention 3.15]{kapouleasMcGrath2023generalDoubling}, it remains to check that 
$$9\sigma_i^{\alpha}<\sigma_i^{\alpha/100}<\delta$$
and
$$\sigma_i\leq \sigma_j^{1-\alpha/100}$$
for all $1\leq i,j\leq n$. For $n\geq C(g,\Sigma)$ sufficiently large, both follow in a straightforward way from the estimates
$$\frac{1}{2n}\leq \frac{\sigma_i}{\bar{\btau}}\leq 2n$$
and the bounds $\delta>\bar{\btau}^{1/100,000}$ and $\bar{\btau}<n^{-100,000}$ given by the hypotheses of Theorem \ref{thm:doubling.precise}. 

To check items (iv)-(vi) of \cite[Convention 3.15]{kapouleasMcGrath2023generalDoubling}, consider the solution $\phi$ of
$$J\phi=\sum_{i=1}^n\sigma_i\delta_{q_i}$$
for $(\bq,\bsigma)\in V$, so that
$$\phi(x)=\sum_{i=1}^n\sigma_i G(q_i,x).$$
As a consequence of Lemma \ref{green.lem}, observe that
$$|\nabla^k\phi|(x)\leq C_k(g,\Sigma)\sum_{i=1}^n\frac{|\sigma_i|}{d(q_i,x)^k}$$
for each $k\in \mathbb{N}$, while 
$$|\phi(x)|\leq C(g,\Sigma) \sum_{i=1}^n|\sigma_i|(|\log d(q_i,x)|+1).$$
On $\partial D_{\delta}(q_i)$, it follows that 
$$|\nabla^k\phi|(x)\leq C_k(g,\Sigma)n^2\bar{\btau}\delta^{-k},$$
so 
\begin{eqnarray*}
    \delta^{-2}\|\phi\|_{C^3(\partial D_{\delta}(q_i))}&\leq & C(\Sigma,g)\bar{\btau}n^2\delta^{-5}\\
    &\leq & C \bar{\btau} \bar{\btau}^{-2/100,000}\bar{\btau}^{-5/100,000}\\
    &\leq &C \bar{\btau}^{1-1/10,000}\leq \sigma_i^{1-1/750}=\sigma_i^{1-\alpha/9}
\end{eqnarray*}
for $n\geq C(g,\Sigma)$ sufficiently large, using repeatedly the fact that $n^{-1}\sigma_i\leq \bar{\btau}\leq n\sigma_i$ and $\min\{n,\delta\}>\bar{\btau}^{1/100,000}$; this confirms item (iv) of \cite[Convention 3.15]{kapouleasMcGrath2023generalDoubling}. Similarly, to check \cite[Convention 3.15(v)]{kapouleasMcGrath2023generalDoubling}, we combine the hypotheses of Theorem \ref{thm:doubling.precise} with the estimate $\bar{\btau}\leq n\sigma_i$ in a straightforward way to confirm that
$$\|\phi\|_{C^4(\Sigma\setminus \bigcup_{i=1}^n B_{\sigma_i^{\alpha}}(q_i))}\leq Cn^{4\alpha}\bar{\btau}^{-4\alpha}\bar{\btau}\leq \sigma_i^{8/9}$$
for any $1\leq i\leq n$, giving \cite[Convention 3.15(v)]{kapouleasMcGrath2023generalDoubling}.

The final item (vi) of \cite[Convention 3.15]{kapouleasMcGrath2023generalDoubling}, giving lower bounds on the height of the doubling away from the necks, is more subtle, and this is where the assumption \eqref{2eigen.bd} of Theorem \ref{thm:doubling.precise} comes into play. For $(\bq,\bsigma)\in V$ and $\phi$ as above, let $(\bv, \bs)=\mathcal{M}(\bq,\bsigma)$, so that the function
\begin{eqnarray*}
    \hat{\phi}_i(x)&:=&\phi(x)+\frac{\sigma_i}{2\pi}\log (2d(q_i,x)/\sigma_i)\\
    &=&\sum_{j\neq i}\sigma_jG(q_j,x)+\sigma_iR(q_i,x)-\frac{\sigma_i}{2\pi}\log(\sigma_i/2)
\end{eqnarray*}
satisfies
$$|\hat{\phi}_i(q_i)|=\pi|s_i|\leq c\delta^3\bar{\btau}^{1+1/500}n^{-4}$$
and
$$|d\hat{\phi_i}(q_i)|=\pi|v_i|\leq c\delta^3\bar{\btau}^{1+1/500}n^{-4}.$$
Now, since $d(q_i,q_j)\geq 18\delta$ whenever $i\neq j$, by definition of $\hat{\phi}_i$ and an application of Lemma \ref{green.lem}, we have an estimate of the form
\begin{equation}\label{hatphi.hold}
[d\hat{\phi}_i]_{C^{0,1/2}(B_{\delta}(q_i))}\leq C\left(\sum_{j\neq i}\frac{\sigma_j}{\delta^{3/2}}+\sigma_i\right)\leq C\delta^{-3/2}n\bar{\btau},
\end{equation}
which together with the preceding estimates implies that
$$|d\hat{\phi}_i(x)|\leq c\delta^3\bar{\btau}^{1+1/500}n^{-4}+C\delta^{-3/2}n\bar{\btau} d(x,q_i)^{1/2}$$
and
$$|\hat{\phi}_i(x)|\leq c\delta^3\bar{\btau}^{1+1/500}n^{-4}+C\delta^{-3/2}n\bar{\btau} d(x,q_i)^{3/2}$$
for all $x\in B_{\delta}(q_i)$. In particular, it follows from these estimates and the hypotheses of Theorem \ref{thm:doubling.precise} that
\begin{equation}\label{phi.floor}
    \phi\leq -\frac{\sigma_i}{2\pi}\log(2)+c\delta^3\bar{\btau}^{1+1/500}n^{-4}+C\delta^{-3/2}n\bar{\btau}\sigma_i^{3/2}\leq -\frac{\bar{\btau}}{8\pi n}\text{ on }\partial B_{\sigma_i}(q_i)
\end{equation}
and
$$\phi \geq -\frac{\sigma_i}{2\pi}\log(2/3)-\delta^3\bar{\btau}^{1+1/500}n^{-4}-C\delta^{-3/2}n\bar{\btau}\sigma_i^{3/2}>0\text{ on }\partial B_{\sigma_i/3}(q_i) $$
for $n\geq C(g,\Sigma)$ sufficiently large. 

Thus, the nodal set $\{\phi=0\}$ has a connected component contained in each of the annuli $B_{\sigma_i}(q_i)\setminus B_{\sigma_i/3}(q_i)$, and together these curves bound a domain $\Omega\subset \Sigma\setminus \bigcup_{i=1}^n B_{\sigma_i/3}(q_i)$ such that $J\phi=0$ on $\Omega$ while $\phi\in W_0^{1,2}(\Omega)$. In other words, we see that $\phi$ is an eigenfunction for $J$ on $\Omega$ with Dirichlet boundary condition and eigenvalue $0$, and since assumption \eqref{2eigen.bd} of Theorem \ref{thm:doubling.precise} implies that
$$\lambda_2(\Omega)> \lambda_2(\Sigma\setminus \bigcup_{i=1}^n B_{\sigma_i/3}(q_i))>0,$$
it follows that $\phi$ must be the \emph{first} Dirichlet eigenfunction on $\Omega$. This, together with \eqref{phi.floor} already suffices to conclude that $\phi<0$ in the complement of the disks $B_{\sigma_i}(q_i)$, but to confirm \cite[Convention 3.15(vi)]{kapouleasMcGrath2023generalDoubling}, we need to make this more quantitative.

\begin{lem}\label{ht.bd}
    For $\phi$ as above and $n\geq C(g,\Sigma)$ sufficiently large, we have
    $$\phi\leq -\frac{\bar{\btau}}{n^2}\text{ on }\Sigma\setminus \bigcup_{i=1}^nB_{\sigma_i}(q_i).$$
\end{lem}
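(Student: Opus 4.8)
The plan is to establish the (stronger) bound $\phi \le -c(g,\Sigma)\,\bar{\btau}/n$ on $D:=\Sigma\setminus\bigcup_{i=1}^n B_{\sigma_i}(q_i)$ by a maximum-principle comparison, which yields the stated estimate once $n\ge C(g,\Sigma)$. Write $\psi:=-\phi$. Recall from the discussion above what is already in hand: $\psi>0$ on $D$, $J\psi=0$ on $D$ (the Dirac masses of $J\phi$ sit at the centers $q_i\notin D$), and $\psi\ge \frac{\bar{\btau}}{8\pi n}$ on $\partial D=\bigcup_i\partial B_{\sigma_i}(q_i)$ by \eqref{phi.floor}. So $\psi$ is a positive $J$-harmonic function on $D$ with a lower bound on its boundary values, and the task is simply to propagate that bound inward. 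The natural tool is a comparison principle for $J$ on $D$ with Dirichlet data; but since $J$ has negative spectrum on $\Sigma$, this is only available after one checks that $J$ is coercive on $W^{1,2}_0(D)$.

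\emph{Step 1: $\lambda_1(J;D)>0$.} By \eqref{phi.floor}, $\phi$ is strictly negative on each circle $\partial B_{\sigma_i}(q_i)$, so the nodal curve of $\phi$ in the $i$-th annulus stays strictly inside $B_{\sigma_i}(q_i)$; consequently a full inner neighborhood of $\partial B_{\sigma_i}(q_i)$ within $B_{\sigma_i}(q_i)$ lies in the domain $\Omega$ cut out by these nodal curves (on which, as established above, $\phi$ is the first Dirichlet eigenfunction of $J$ with eigenvalue $0$) but not in $D$. Hence $D\subsetneq\Omega$ with $|\Omega\setminus D|>0$, and strict domain monotonicity of the bottom Dirichlet eigenvalue on the connected domain $\Omega$ gives $\lambda_1(J;D)>\lambda_1(J;\Omega)=0$ (if equality held, the sign-definite first Dirichlet eigenfunction of $D$, extended by zero, would be a first eigenfunction of $\Omega$ vanishing on an open set, contradicting unique continuation). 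This gives the comparison principle I need: if $h\in W^{1,2}(D)$ has $Jh\le 0$ weakly in $D$ and $h\le 0$ on $\partial D$, then testing $Jh\le 0$ against $h_+\in W^{1,2}_0(D)$ yields $\int_D(|\nabla h_+|^2-Vh_+^2)\le 0$, which forces $h_+\equiv 0$ by coercivity, so $h\le 0$ on $D$.

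\emph{Step 2: a global subsolution, and conclusion.} Since $V$ is indefinite, constants are not subsolutions, so instead I would use the first eigenfunction $\eta\in C^\infty(\Sigma)$ of $J$ on the whole closed surface $\Sigma$, normalized by $\eta>0$ and $\max_\Sigma\eta=1$; as $\Sigma$ is unstable, $J\eta=\lambda_1(J;\Sigma)\eta$ with $\lambda_1(J;\Sigma)<0$, and since $\eta$ is a positive smooth function on a compact surface, $\eta\ge c_0(g,\Sigma)>0$. Setting $w:=\frac{\bar{\btau}}{8\pi n}\eta$, one has $Jw=\frac{\bar{\btau}}{8\pi n}\lambda_1(J;\Sigma)\eta<0$ throughout $D$, while $w\le \frac{\bar{\btau}}{8\pi n}\le\psi$ on $\partial D$ by \eqref{phi.floor}. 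Applying Step 1 to $h:=w-\psi$ (which satisfies $Jh=Jw<0$ in $D$ and $h\le 0$ on $\partial D$) gives $w\le\psi$ on $D$, i.e. $\phi\le -\frac{\bar{\btau}}{8\pi n}\eta\le -\frac{c_0\bar{\btau}}{8\pi n}$ on $D$; for $n\ge C(g,\Sigma)$ this is $\le -\bar{\btau}/n^2$.

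The genuinely delicate point is Step 1: a priori $J$ has negative eigenvalues on $\Sigma$, so neither the maximum principle nor a naive constant barrier is available on a subdomain, and the boundary estimate \eqref{phi.floor} alone carries no interior information. What rescues the argument is the nodal-domain structure already in hand---$\phi|_\Omega$ being a lowest Dirichlet eigenfunction with eigenvalue exactly $0$---together with the strict inclusion $D\subsetneq\Omega$, which is itself a (small) consequence of the quantitative sign of $\phi$ on $\partial B_{\sigma_i}(q_i)$ from \eqref{phi.floor}. Once coercivity of $J$ on $D$ is established, everything else is a routine comparison, which in fact delivers a lower bound of order $\bar{\btau}/n$, comfortably stronger than the $\bar{\btau}/n^2$ required.
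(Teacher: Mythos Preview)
Your proof is correct and reaches the same $O(\bar{\btau}/n)$ bound via the same comparison function (the first eigenfunction of $J$ on $\Sigma$), but the mechanism differs from the paper's. The paper runs a direct sliding argument: it considers $f(t)=\max_{D}(\phi+t\chi_1)$, finds the first $t_0>0$ with $f(t_0)=0$, and rules out an interior touching point by a pointwise computation at the max (where $\phi+t_0\chi_1=0$ forces $d^*d(\phi+t_0\chi_1)=J(\phi+t_0\chi_1)=t_0\lambda_1\chi_1<0$, contradicting the second-order condition at a maximum). This avoids any appeal to coercivity of $J$ on $D$. Your route instead first proves $\lambda_1(J;D)>0$ by strict domain monotonicity against the nodal domain $\Omega$ (using the structure established just before the lemma) and then applies the resulting weak comparison principle. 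Both are standard elliptic arguments; the paper's is slightly more economical since it never needs to invoke $\Omega$ or unique continuation, while yours makes the underlying coercivity on $D$ explicit, which is conceptually informative.
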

\begin{proof}
    Set $\Omega':=\Sigma\setminus \bigcup_{i=1}^nB_{\sigma_i}(q_i),$ and let $\chi_1$ be a positive first eigenfunction for $J$ on the full surface $\Sigma$, normalized so that
    $$1\leq \chi_1\leq C(\Sigma,g).$$
    For each $t\in \mathbb{R}$, consider the quantity
    $$f(t):=\max_{\Omega'}(\phi+t\chi_1).$$
    Evidently $f(t)$ is an increasing function of $t$ with $f(0)<0$ and $\lim_{t\to\infty}f(t)=+\infty$, so there is some $t_0>0$ for which $f(t_0)=0$. 

    Now, if $\phi+t_0\chi_1$ achieves its maximum
    $$0=f(t_0)=(\phi+t_0\chi_1)(x_0)$$
    at some interior point $x_0\in\Omega'$, then we must have
    $$0\leq d^*d (\phi+t_0\chi_1)(x_0)=J(\phi+t_0\chi_1)(x_0)=t_0\lambda_1(J)\chi_1(x_0),$$
    but since $t_0>0$, $\chi_1(x_0)>0$ and $\lambda_1(J)<0$, this is impossible. 

    Thus, there must be some $x_0\in \partial\Omega'$ at which
    $$(\phi+t_0\chi_1)(x_0)=0.$$
    But then, since $\phi|_{\partial\Omega'}\leq -\frac{\bar{\btau}}{8\pi n}$ by \eqref{phi.floor}, and $\chi_1(x_0)\leq C$, it follows that
    $$\frac{\bar{\btau}}{8\pi n}\leq Ct_0.$$
    Finally, by definition of $t_0$ and the fact that $\chi_1\geq 1$ we deduce that
    $$\phi\leq -t_0\chi_1\leq -C^{-1}\frac{\bar{\btau}}{8\pi n},$$
    from which the desired estimate follows for $n\geq 8\pi C$.  
\end{proof}
Writing $\varphi=-2\pi \phi$, it then follows that
$$\varphi\geq 2\pi \frac{\bar{\btau}}{n^2}\geq \sigma_i^{1-1/250}$$
on $\Sigma\setminus \bigcup_{i=1}^n B_{\sigma_i}(q_i)$, and in particular on the smaller set $\Sigma\setminus \bigcup_{i=1}^n B_{\sigma_i^{\alpha}}(q_i)$, for every $1\leq i\leq n$, confirming \cite[Convention 3.15(vi)]{kapouleasMcGrath2023generalDoubling}. Summarizing, we now have the following.

\begin{prop}
    For $n\geq C(g,\Sigma)$ sufficiently large, the linearized doublings associated to $(\bq,\bsigma)\in V$ satisfy all the conditions (i)-(vi) of \cite[Convention 3.15]{kapouleasMcGrath2023generalDoubling} with $\alpha=\frac{3}{250}$.
\end{prop}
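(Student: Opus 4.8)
The plan is to assemble, item by item, the estimates established in the preceding two subsections into a verification of conditions (i)--(vi) of \cite[Convention 3.15]{kapouleasMcGrath2023generalDoubling} for the linearized doubling $\varphi=-2\pi\phi$ attached to an arbitrary $(\bq,\bsigma)\in V$. The starting point is Proposition \ref{fam.prop}, which confines $V$ to a tiny $d_{\btau}$-ball about $(\bp,\btau)$; in particular every $(\bq,\bsigma)\in V$ inherits the neck-size comparability $\tfrac{1}{2n}\leq\sigma_i/\bar{\btau}\leq 2n$, the global smallness $\bar{\btau}<n^{-100,000}$, and the separation $\delta(\bp)>\bar{\btau}^{1/100,000}$ from the hypotheses of Theorem \ref{thm:doubling.precise}, as well as the improved separation $d(q_i,q_j)>18\delta$ and the validity of \cite[Convention 3.8]{kapouleasMcGrath2023generalDoubling} with $\delta_{q_i}=\delta(\bp)$ recorded above. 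Every subsequent bound is a purely algebraic consequence of these, with the exponent $\alpha=\tfrac{3}{250}$ chosen so that all the required inequalities hold once $n$ is large.

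For the geometric conditions (i)--(iii) it remains only to check $9\sigma_i^{\alpha}<\sigma_i^{\alpha/100}<\delta$ and $\sigma_i\leq\sigma_j^{1-\alpha/100}$ for all $i,j$; these follow immediately from $\tfrac{1}{2n}\leq\sigma_i/\bar{\btau}\leq 2n$ together with $\delta>\bar{\btau}^{1/100,000}$ and $\bar{\btau}<n^{-100,000}$, since the right-hand exponents dominate for $n$ large. For the derivative bounds (iv) and (v) I would first record the pointwise estimates $|\nabla^k\phi|(x)\leq C_k(g,\Sigma)\sum_i|\sigma_i|\,d(q_i,x)^{-k}$ and $|\phi(x)|\leq C(g,\Sigma)\sum_i|\sigma_i|(|\log d(q_i,x)|+1)$ coming from Lemma \ref{green.lem}, then evaluate them on $\partial D_{\delta}(q_i)$ and on the exterior domain $\Sigma\setminus\bigcup_j B_{\sigma_j^{\alpha}}(q_j)$; feeding in $\max\{n,\delta^{-1}\}\leq C\bar{\btau}^{-1/100,000}$ and $n^{-1}\sigma_i\leq\bar{\btau}\leq n\sigma_i$ yields $\delta^{-2}\|\phi\|_{C^3(\partial D_{\delta}(q_i))}\leq\sigma_i^{1-\alpha/9}$ and $\|\phi\|_{C^4(\Sigma\setminus\bigcup_j B_{\sigma_j^{\alpha}}(q_j))}\leq\sigma_i^{8/9}$, which are exactly (iv) and (v).

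The serious point, and the one I expect to be the main obstacle, is condition (vi): the quantitative lower bound $\varphi\geq\sigma_i^{1-1/250}$ on the complement of the neck balls, which is what guarantees embeddedness. Here I would proceed in three stages. First, using the smallness of the mismatch $\mathcal{M}(\bq,\bsigma)=(\bv,\bs)$ from Proposition \ref{fam.prop} together with the H\"older bound \eqref{hatphi.hold} on $d\hat\phi_i$, derive the sign estimates \eqref{phi.floor}: $\phi\leq-\bar{\btau}/(8\pi n)$ on $\partial B_{\sigma_i}(q_i)$ and $\phi>0$ on $\partial B_{\sigma_i/3}(q_i)$, which force a component of the nodal set $\{\phi=0\}$ into each annulus $B_{\sigma_i}(q_i)\setminus B_{\sigma_i/3}(q_i)$ and hence enclose a domain $\Omega\subset\Sigma\setminus\bigcup_i B_{\sigma_i/3}(q_i)$ on which $\phi\in W^{1,2}_0(\Omega)$ solves $J\phi=0$. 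Second, invoke assumption \eqref{2eigen.bd} of Theorem \ref{thm:doubling.precise}, which gives $\lambda_2(J;\Omega)\geq\lambda_2(J;\Sigma\setminus\bigcup_i B_{\sigma_i/3}(q_i))>0$ and therefore identifies $\phi$ as a first Dirichlet eigenfunction of $J$ on $\Omega$, so $\phi$ is sign-definite there and, by \eqref{phi.floor}, $\phi<0$ throughout $\Sigma\setminus\bigcup_i B_{\sigma_i}(q_i)$. Third, upgrade this to the quantitative bound via Lemma \ref{ht.bd}: sliding a multiple $t\chi_1$ of a positive first eigenfunction of $J$ on all of $\Sigma$ until $\max_{\Omega'}(\phi+t_0\chi_1)=0$ with $\Omega'=\Sigma\setminus\bigcup_i B_{\sigma_i}(q_i)$, the maximum principle combined with $\lambda_1(J)<0$ rules out an interior maximum, so the maximum is attained on $\partial\Omega'$ where $\phi\leq-\bar{\btau}/(8\pi n)$, forcing $t_0\geq c\bar{\btau}/n$ and hence $\phi\leq-c'\bar{\btau}/n$ on $\Omega'$; since $\varphi=-2\pi\phi$ and $\bar{\btau}\geq n^{-1}\sigma_i$ with $\bar{\btau}<n^{-100,000}$, this gives $\varphi\geq 2\pi\bar{\btau}/n^{2}\geq\sigma_i^{1-1/250}$ on $\Sigma\setminus\bigcup_i B_{\sigma_i}(q_i)\supset\Sigma\setminus\bigcup_i B_{\sigma_i^{\alpha}}(q_i)$, which is (vi). Collecting (i)--(vi) finishes the proof; the only genuinely delicate input is the spectral hypothesis \eqref{2eigen.bd}, which is what makes the nodal-domain argument in stage two run and is ultimately the reason the construction is confined to index-one surfaces.
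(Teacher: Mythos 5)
Your proposal is correct and follows essentially the same route as the paper's own proof: items (i)--(iii) via the elementary inequalities in $\sigma_i$, $\bar{\btau}$, and $\delta$; items (iv)--(v) via the Green's function derivative bounds from Lemma \ref{green.lem}; and item (vi) via the three-stage argument of sign estimates on $\partial B_{\sigma_i}(q_i)$ and $\partial B_{\sigma_i/3}(q_i)$, identification of $\phi$ as a first Dirichlet eigenfunction using assumption \eqref{2eigen.bd}, and the eigenfunction comparison of Lemma \ref{ht.bd}. You have correctly identified the key structural point that the spectral hypothesis \eqref{2eigen.bd} is what makes the nodal domain argument close and is why the construction is restricted to index one.
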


Next, we observe that the construction described in \cite[Remark 3.12]{kapouleasMcGrath2023generalDoubling} associates to each $(\bq,\bsigma)\in V$ an ``obstruction space" 
$$\widehat{\mathcal{K}}[\bq]=\bigoplus_{i=1}^n\widehat{\mathcal{K}}[q_i]\subset C^{\infty}(\Sigma)$$
satisfying the conditions of \cite[Assumption 3.11]{kapouleasMcGrath2023generalDoubling}. Namely, this gives us a family of spaces $\widehat{\mathcal{K}}[q_i]\subset C_c^{\infty}(B_{4\delta}(q_i))$ such that
$$J(\widehat{\mathcal{K}}[q_i])\subset C_c^{\infty}(B_{4\delta}(q_i)\setminus B_{\delta/4}(q_i)),$$
for which the first-order Taylor expansion at each $q_i$ gives an isomorphism
$$\mathcal{E}_{\bq}: \widehat{\mathcal{K}}[\bq]\to \mathcal{P}_{\bq}$$
whose inverse satisfies a uniform bound of the form $\|\mathcal{E}_{\bq}\|^{-1}\leq C\delta^{-2-\beta}$ with respect to the $C^{2,\beta}$ norm on $\widehat{\mathcal{K}}[\bq]$.

Now, given $(\bq,\bsigma)\in V$ and $\bkappa\in B_{\mathcal{P}}$, since 
$$|\bkappa|\leq c\delta^3\bar{\btau}^{1+1/500}n^{-4}<\delta^3n^{-2}\frac{\bar{\btau}}{2n}(\bar{\btau}/2n)^{\alpha/6}<\sigma_i^{1+\alpha/6}$$
for each $1\leq i\leq n$, we see that the conditions of \cite[Definition 3.17]{kapouleasMcGrath2023generalDoubling} are satisfied. Then, to every pair $\bzeta,\bkappa\in B_{\mathcal{P}}$, setting $(\bq,\bsigma)=\mathcal{M}^{-1}(\bzeta)\in V$, the construction of \cite[Definition 3.17]{kapouleasMcGrath2023generalDoubling} associates an initial doubling surface $M[\bzeta,\bkappa]$. Near each point $q_i$, the surface $M[\bzeta,\bkappa]$ coincides with the catenoid $K[q_i,\sigma_i,\kappa_i]$ centered at $q_i$ with waist of radius $\sigma_i$ and height and tilt relative to $T_{q_i}\Sigma$ determined by $\kappa_i=(v_i,s_i)\in T_{q_i}\Sigma\times \mathbb{R}$ as in \cite[Section 2]{kapouleasMcGrath2023generalDoubling}. Sufficiently far from $\{q_1,\ldots,q_n\}$, $M[\bzeta,\bkappa]$ is given by the two-sheeted graph associated with the functions
$$\mp2\pi \phi_{\bq,\bsigma} \mp \mathcal{E}_{\bq}^{-1}(\bzeta)+\mathcal{E}_{\bq}^{-1}(\bkappa),$$
and the catenoidal and graphical regions are then glued together as two-sheeted graphs over the annuli $B_{4\sigma_i^{\alpha}}(q_i)\setminus B_{\sigma_i^{\alpha}}(q_i)$ using a simple linear interpolation by cutoff functions. As discussed in \cite[Section 3]{kapouleasMcGrath2023generalDoubling}, these surfaces $M[\bzeta,\bkappa]$ then define a family of embedded surfaces in $N$ of genus $2g_{\Sigma}+n-1$ (or nonorientable genus $2g_{\Sigma}+2n-2$, if $\Sigma$ is nonorientable).

\subsection{Completing the Kapouleas-McGrath fixed point argument}

We claim now that the family of initial doubling surfaces $M[\bzeta,0]$ parametrized by $\bzeta\in B_{\mathcal{P}}\subset \mathcal{P}=\mathcal{P}_{\bp}$ as above satisfy the conditions of \cite[Assumption 5.2]{kapouleasMcGrath2023generalDoubling}. Items (iii)-(vi) are immediate from the discussion in the preceding sections, and (vii) is similarly straightforward, with $Z_{\bzeta}$ given by $Z_{\bzeta}:=Z_{\bq}: \mathcal{P}_{\bq}\to \mathcal{P}_{\bp}$
as in \eqref{z.def}, where $(\bq,\bsigma)=\mathcal{M}^{-1}(\bzeta)$. Crucially, note that \cite[Assumption 5.2(e)]{kapouleasMcGrath2023generalDoubling} holds automatically in this setting, since $\bzeta-Z_{\bzeta}(\mathcal{M}(\bq,\bsigma))=0$ by definition for $(\bq,\bsigma)=\mathcal{M}^{-1}(\bzeta)$.

Likewise, items (c) and (d) of \cite[Assumption 5.2]{kapouleasMcGrath2023generalDoubling} follow readily from the estimates of the previous sections, so all that remains is to identify a family of diffeomorphisms $\mathcal{F}_{\bzeta}:\Sigma\to \Sigma$ satisfying items (i), (ii), (a), and (b). This is also straightforward: setting $(\bq,\bsigma)=\mathcal{M}^{-1}(\bzeta)$, recall that the disks $B_{9\delta}(p_i)$ are disjoint and 
$$d(p_i,q_i)<\delta^3\bar{\btau}^{1/1000}<\delta^{103},$$
so we can easily use exponential maps to produce a diffeomorphism
$$F_i: B_{5\delta}(p_i)\to B_{5\delta}(q_i)$$
satisfying $F_i(p_i)=q_i$, $\|F_i\|_{C^{10}}(B_{3\delta}(p_i))\leq C(\Sigma,g)$, and 
$$|F_i(x)-x|\leq C(\Sigma,g)\delta^{103}\text{ for all }x\in B_{5\delta}(q_i).$$
By an elementary interpolation argument, we deduce moreover that
$$\|F_i-Id\|_{C^4(B_{5\delta}(p_i))}\leq C\delta^{10}.$$
We then define $\mathcal{F}_{\bzeta}:\Sigma\to \Sigma$ by setting
$$\mathcal{F}_{\bzeta}(x):=x\text{ for }x\in \Sigma\setminus \bigcup_{i=1}^nB_{9\delta}(p_i),$$
and, with respect to exponential coordinates centered at $p_i$,
$$\mathcal{F}_{\bzeta}(x):=x+\chi_i(x)(F_i(x)-x)\text{ for }x\in B_{9\delta}(p_i),$$
where $\chi_i\in C_c^{\infty}(B_{5\delta}(p_i))$ is a cutoff supported in $B_{5\delta}(p_i)$ such that $\chi_i\equiv 1$ on $B_{3\delta}(p_i)$ and 
$$\|\chi_i\|_{C^4}\leq C\delta^{-4},$$
so that
$$\|\chi_i\cdot (F_i-Id)\|_{C^4}\leq \|\chi_i\|_{C^4}\|F_i-Id\|_{C^4}\leq C\delta^6.$$
It is then straightforward to check that the diffeomorphisms $(\mathcal{F}_{\bzeta})_{\bzeta\in B_{\mathcal{P}}}$ satisfy all the conditions (i), (ii), (a), and (b) of \cite[Assumption 5.2]{kapouleasMcGrath2023generalDoubling}.

At this point, we could almost conclude by quoting \cite[Theorem 5.7]{kapouleasMcGrath2023generalDoubling} directly; however, we find it useful to recall some details of the fixed point argument used in its proof, in part because our setup and notation differ slightly from that of \cite{kapouleasMcGrath2023generalDoubling}, but more importantly, to emphasize the role of the quantitative nondegeneracy hypothesis \eqref{quant.nondeg} of Theorem \ref{thm:doubling.precise} in the argument.

First, given $(\bzeta,\bkappa)\in B_{\mathcal{P}}\times B_{\mathcal{P}}$ and $\beta\in (0,1)$, we recall the weighted H\"older norms $\|\cdot\|_{k,\beta,\gamma,\gamma';\Sigma}$ defined in \cite[Definition 4.2]{kapouleasMcGrath2023generalDoubling}, and introduce the notation
$$\|u\|_{0,\beta;M[\bzeta,\bkappa]}':=\|u\|_{0,\beta,-1/2,-3/2;M[\bzeta,\bkappa]}$$
for $u\in C^{0,\beta}(M[\bzeta,\bkappa])$ and
$$\|u\|_{2,\beta; M[\bzeta,\bkappa]}':=\|u\|_{2,\beta,3/2,1/2;M[\bzeta,\bkappa]}$$
for $u\in C^{2,\beta}(M[\bzeta,\bkappa])$. Note that \cite[Lemma 5.5]{kapouleasMcGrath2023generalDoubling} supplies for $(\bzeta,\bkappa)\in B_{\mathcal{P}}\times B_{\mathcal{P}}$ a continuous family of diffeomorphisms
$$\mathcal{F}_{\bzeta,\bkappa}: M[0,0]\to M[\bzeta,\bkappa]$$
such that
\begin{equation}
    \frac{1}{4}\|u\|_{2,\beta; M[\bzeta,\bkappa]}'\leq \|u\circ \mathcal{F}_{\bzeta,\bkappa}\|_{2,\beta; M[0,0]}'\leq 4\|u\|_{2,\beta; M[\bzeta,\bkappa]}'
\end{equation}
and
\begin{equation}
    \frac{1}{4}\|E\|_{0,\beta; M[\bzeta,\bkappa]}'\leq \|E\circ \mathcal{F}_{\bzeta,\bkappa}\|_{0,\beta; M[0,0]}'\leq 4\|E\|_{0,\beta; M[\bzeta,\bkappa]}'.
\end{equation}

Next, recalling that $\bzeta$ is, by construction, the mismatch of the linearized doubling $-2\pi \phi_{\bq,\bsigma}$ associated to $M[\bzeta,\bkappa]$, observe that \cite[Lemma 4.6]{kapouleasMcGrath2023generalDoubling} in the present setting gives the mean curvature estimate
\begin{equation}\label{hm.exp}
    \|H_{M[\bzeta,\bkappa]}-w_{\bzeta,\bkappa}^+\circ P_+-w_{\bzeta,\bkappa}^-\circ P_-\|_{0,\beta}'\leq (n\bar{\btau})^{1+\alpha/3},
\end{equation}
where, in our slightly different notation,
\begin{equation}\label{w.def}
w_{\bzeta,\bkappa}^{\pm}:=J(\mathcal{E}_{\bq}^{-1}(\bkappa\mp\bzeta))\in C_c^{\infty}(\Sigma\setminus \bigcup_{i=1}^nB_{\delta/4}(q_i))
\end{equation}
and $P_{\pm}$ denotes the nearest-point projection from the graph of $\mp 2\pi \phi_{\bq,\bsigma}+\mathcal{E}_{\bq}^{-1}(\bkappa\mp\bzeta)$ over $\Sigma\setminus \bigcup_{i=1}^n B_{\delta/4}(q_i)$ onto $\Sigma\setminus \bigcup_{i=1}^n B_{\delta/4}(q_i)$.

Recalling the notation $\mathcal{K}[\bq]=J_{\Sigma}(\widehat{\mathcal{K}}[\bq])$, we note next that \cite[Corollary 4.21]{kapouleasMcGrath2023generalDoubling}, the key technical result of \cite[Section 4]{kapouleasMcGrath2023generalDoubling}, supplies in our setting a continuous family of maps
$$\mathcal{R}'_{\bzeta,\bkappa}: C^{0,\beta}(M[\bzeta,\bkappa])\to C^{2,\beta}(M[\bzeta,\bkappa])\times \mathcal{K}[\bq]\times \mathcal{K}[\bq]$$
of the form 
$$\mathcal{R}'(E)=(u,w_E^+,w_E^-),$$
where, denoting by $J_{M[\bzeta,\bkappa]}$ the Jacobi operator of $M[\bzeta,\bkappa]$,
\begin{equation}\label{r.def}
    J_{M[\bzeta,\bkappa]}u=E+w_E^+\circ P_++w_E^-\circ P_-
\end{equation}
and
\begin{equation}\label{r.schaud}
    \|u\|_{2,\beta}'+\|w_E^{\pm}\|_{C^{0,\beta}(\Sigma)}\leq C_{\beta}(\Sigma,g)\delta^{-4-2\beta}\|E\|_{0,\beta}'.
\end{equation}

Finally, we note that \cite[Lemma 5.1]{kapouleasMcGrath2023generalDoubling} applied to our setting implies that, for any $\bzeta,\bkappa\in B_{\mathcal{P}}$ and $u\in C^{2,\beta}(M[\bzeta,\bkappa])$, if $u\in C^{2,\beta}(M[\bzeta,\bkappa])$ satisfies
$$\|u\|_{2,\beta;M[\bzeta,\bkappa]}'\leq \bar{\btau}^{1+\alpha/4},$$
then the normal graph
$$M_u[\bzeta,\bkappa]=Graph_{M[\bzeta,\bkappa]}^{N,g}(u)$$
of $u$ over $M[\bzeta,\bkappa]$ is a well-defined embedded surface whose mean curvature $H_u$, identified in the obvious way with a function on $M[\bzeta,\bkappa]$, satisfies
\begin{equation}\label{hu.exp}
    \|H_u-H_{M[\bzeta,\bkappa]}-J_{M[\bzeta,\bkappa]}u\|_{0,\beta}'\leq C(\Sigma,g)n\bar{\btau}^{-\alpha/2}(\|u\|_{2,\beta; M[\bzeta,\bkappa]}')^2.
\end{equation}

We are now in a position to state the following.

\begin{thm}[cf. Theorem 5.7 of \cite{kapouleasMcGrath2023generalDoubling}]
Under the assumptions of Theorem \ref{thm:doubling.precise}, for $n\geq \tilde{n}(g,\Sigma)$ sufficiently large, there exists $(\bzeta,\bkappa)\in B_{\mathcal{P}}\times B_{\mathcal{P}}$ and $u\in C^{\infty}(M[\bzeta,\bkappa])$ satisfying
$$\|u\|_{2,1/2; M[\bzeta,\bkappa]}\leq \bar{\btau}^{1+3/1000}$$
such that the normal graph $\breve{M}:=M_u[\bzeta,\bkappa]$ is an embedded minimal surface of area $|\breve{M}|$ satisfying
$$||\breve{M}|-(2|\Sigma|-\pi \sum_{i=1}^n\tau_i^2)|\leq \frac{1}{n}\sum_{i=1}^n\tau_i^2.$$
\end{thm}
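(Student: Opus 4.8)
The plan is to carry out the Kapouleas--McGrath fixed point argument of \cite[Section 5]{kapouleasMcGrath2023generalDoubling}. Having verified above that the initial surfaces $\{M[\bzeta,\bkappa]\}_{\bzeta,\bkappa\in B_{\mathcal{P}}}$ satisfy every item of \cite[Assumption 5.2]{kapouleasMcGrath2023generalDoubling}, one could in principle simply quote \cite[Theorem 5.7]{kapouleasMcGrath2023generalDoubling}; we spell out the structure to pin down the area estimate and to see where the quantitative nondegeneracy hypothesis \eqref{quant.nondeg} is consumed. Write $M_{\bzeta,\bkappa}:=M[\bzeta,\bkappa]$, $J_{\bzeta,\bkappa}:=J_{M[\bzeta,\bkappa]}$, fix $\beta=1/2$, and recall $(\bq,\bsigma):=\mathcal{M}^{-1}(\bzeta)\in V$ for $\bzeta\in B_{\mathcal{P}}$. \textbf{Step 1} is to solve, for each fixed $(\bzeta,\bkappa)\in B_{\mathcal{P}}\times B_{\mathcal{P}}$, the minimal surface equation for a normal graph $M_u[\bzeta,\bkappa]$ modulo the obstruction space. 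Using \eqref{hm.exp} to isolate the genuinely small part $E_0(\bzeta,\bkappa):=H_{M_{\bzeta,\bkappa}}-w_{\bzeta,\bkappa}^+\circ P_+-w_{\bzeta,\bkappa}^-\circ P_-$ of the mean curvature (of size $(n\bar{\btau})^{1+\alpha/3}$ in $\|\cdot\|_{0,\beta}'$) and the quadratic remainder from \eqref{hu.exp}, I would feed these into the right inverse $\mathcal{R}'_{\bzeta,\bkappa}$ of \eqref{r.def}--\eqref{r.schaud} to obtain a self-map of $\{\|u\|_{2,\beta}'\le\bar{\btau}^{1+\alpha/4}\}$; the Schauder bound $\|\mathcal{R}'_{\bzeta,\bkappa}\|\lesssim\delta^{-4-2\beta}$, the quadratic estimate \eqref{hu.exp}, and the smallness $\bar{\btau}<n^{-100,000}$ make it a contraction for $n\ge\tilde{n}(g,\Sigma)$. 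Its fixed point $u_{\bzeta,\bkappa}$ depends continuously on $(\bzeta,\bkappa)$ (using \cite[Lemma 5.5]{kapouleasMcGrath2023generalDoubling}), satisfies $\|u_{\bzeta,\bkappa}\|_{2,\beta}'\lesssim\delta^{-4-2\beta}(n\bar{\btau})^{1+\alpha/3}\le\bar{\btau}^{1+\alpha/4}=\bar{\btau}^{1+3/1000}$ once the exponents of $\bar{\btau},\delta,n$ are tallied, and is such that $H_{M_{u_{\bzeta,\bkappa}}[\bzeta,\bkappa]}=\Xi^+(\bzeta,\bkappa)\circ P_++\Xi^-(\bzeta,\bkappa)\circ P_-$ for some $\Xi^{\pm}\in\mathcal{K}[\bq]=J(\widehat{\mathcal{K}}[\bq])$. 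Writing $\Xi^{\pm}=J(\mathcal{E}_{\bq}^{-1}\bw^{\pm})$ via the isomorphism $\mathcal{E}_{\bq}$ and using \eqref{w.def}, one gets $\bw^{\pm}(\bzeta,\bkappa)=(\bkappa\mp\bzeta)+\mathbf{m}^{\pm}(\bzeta,\bkappa)$ with a correction $|\mathbf{m}^{\pm}|\lesssim\delta^{-C}(n\bar{\btau})^{1+\alpha/3}$.

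\textbf{Step 2} is to kill the obstruction: $M_{u_{\bzeta,\bkappa}}[\bzeta,\bkappa]$ is minimal iff $\bw^+=\bw^-=0$, which by Step 1 amounts to the fixed point equations $2\bkappa=-\mathbf{m}^+-\mathbf{m}^-$ and $2\bzeta=\mathbf{m}^+-\mathbf{m}^-$ on $B_{\mathcal{P}}\times B_{\mathcal{P}}$, i.e., a fixed point of $(\bzeta,\bkappa)\mapsto\tfrac12(\mathbf{m}^+-\mathbf{m}^-,\,-\mathbf{m}^+-\mathbf{m}^-)$. That this is a self-map follows from $\delta^{-C}(n\bar{\btau})^{1+\alpha/3}\ll c\,\delta^3\bar{\btau}^{1+1/500}n^{-4}$, a consequence of $\delta>\bar{\btau}^{1/100,000}$ and $\bar{\btau}<n^{-100,000}$. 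The crux is the contraction estimate. Here the point is that $\mathbf{m}^{\pm}$ depends on $\bzeta$ only through $(\bq,\bsigma)=\mathcal{M}^{-1}(\bzeta)$, the linearized doubling $\phi_{\bq,\bsigma}$, and the associated objects $M_{\bzeta,\bkappa}$, $\mathcal{F}_{\bzeta}$, $\mathcal{E}_{\bq}$, $\mathcal{R}'_{\bzeta,\bkappa}$, $u_{\bzeta,\bkappa}$; and Proposition \ref{fam.prop} -- whose proof is exactly where hypothesis \eqref{quant.nondeg} enters -- guarantees that $\mathcal{M}^{-1}$ is well defined and Lipschitz on $B_{\mathcal{P}}$ with constant $\lesssim n^{3/2}\bar{\btau}^{-1/1000}$. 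Combined with the Lipschitz dependence of all the auxiliary constructions on their data recorded in \cite[Sections 3--5]{kapouleasMcGrath2023generalDoubling}, this forces the Lipschitz constant of $(\bzeta,\bkappa)\mapsto\tfrac12(\mathbf{m}^+-\mathbf{m}^-,\,-\mathbf{m}^+-\mathbf{m}^-)$ to be at most $\tfrac12$ for $n$ large, since every correction carries a positive power of $n\bar{\btau}$ that overwhelms the $n^{3/2}\bar{\btau}^{-1/1000}$ growth. The resulting fixed point $(\bzeta,\bkappa)$ yields $\breve{M}:=M_{u_{\bzeta,\bkappa}}[\bzeta,\bkappa]$ with $H_{\breve{M}}\equiv0$; elliptic regularity gives $u_{\bzeta,\bkappa}\in C^{\infty}$, and since $M_{\bzeta,\bkappa}$ is embedded with controlled geometry and $\|u_{\bzeta,\bkappa}\|_{2,\beta}'$ is small, $\breve{M}$ is embedded, with the asserted bound $\|u\|_{2,1/2}\le\bar{\btau}^{1+3/1000}$.

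\textbf{Step 3} is the area estimate. I would obtain it from the area accounting of \cite[Section 3]{kapouleasMcGrath2023generalDoubling}: $M[0,0]$ is the union of two normal graphs of $\mp2\pi\phi_{\bp,\btau}$ over $\Sigma\setminus\bigcup_iB_{\tau_i^{\alpha}}(p_i)$ with $n$ catenoidal necks of waist $\tau_i$, and a short integration by parts -- using that $J(-2\pi\phi_{\bp,\btau})=0$ off the necks and that the critical point condition (Proposition \ref{prop:criticalPointH}) pins down the regular part of $\varphi=-2\pi\phi_{\bp,\btau}$ at each $p_i$, so that $\varphi\approx\tau_i\log(2d_{p_i}/\tau_i)$ near $p_i$ -- shows that the graph energy $\tfrac12\int(|\nabla\varphi|^2-V\varphi^2)$ cancels the logarithmic part of the neck area, leaving $|M[0,0]|=2|\Sigma|-\pi\sum_i\tau_i^2+O(\tfrac1n\sum_i\tau_i^2)$. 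Since $\big||M_{\bzeta,\bkappa}|-|M[0,0]|\big|$ and $\big||\breve{M}|-|M_{\bzeta,\bkappa}|\big|$ are both bounded by $\delta^{-C}(n\bar{\btau})^{1+\alpha/3}\sum_i\tau_i^2\ll\tfrac1n\sum_i\tau_i^2$ for $n$ large, the estimate $\big||\breve{M}|-(2|\Sigma|-\pi\sum_i\tau_i^2)\big|\le\tfrac1n\sum_i\tau_i^2$ follows.

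The step I expect to be the main obstacle is Step 2 -- verifying that the obstruction map is a self-map and a contraction on $B_{\mathcal{P}}\times B_{\mathcal{P}}$. The difficulty is that perturbing $\bzeta$ perturbs the whole geometric configuration $(\bq,\bsigma)=\mathcal{M}^{-1}(\bzeta)$, hence the initial surface $M_{\bzeta,\bkappa}$, its Jacobi operator, the right inverse, and the cutoff diffeomorphisms, over a ball $B_{\mathcal{P}}$ of the extremely small radius $\sim\delta^3\bar{\btau}^{1+1/500}n^{-4}$; controlling this uniformly requires both the Lipschitz bound on $\mathcal{M}^{-1}$ coming from the quantitative nondegeneracy hypothesis \eqref{quant.nondeg} via Proposition \ref{fam.prop}, and a careful tally of the powers of $\bar{\btau}$, $\delta$, and $n$ throughout \cite[Sections 3--5]{kapouleasMcGrath2023generalDoubling} to confirm that every error term is genuinely subordinate. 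These are precisely what the constants in the hypotheses of Theorem \ref{thm:doubling.precise} have been arranged to supply.
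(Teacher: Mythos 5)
Your proposal differs from the paper's proof in a substantive way that creates a real gap. Both you and the paper reduce the problem to finding a fixed point in $B_{\mathcal{P}}\times B_{\mathcal{P}}$, both use Proposition \ref{fam.prop} (and hence hypothesis \eqref{quant.nondeg}) to define $\mathcal{M}^{-1}$ and the family $M[\bzeta,\bkappa]$, and both arrive at essentially the same fixed-point equations $\bkappa = -\tfrac12(\bmu^++\bmu^-)$, $\bzeta=\tfrac12(\bmu^+-\bmu^-)$. But the paper does \emph{not} decouple the $u$-equation from the $(\bzeta,\bkappa)$-equation and it does \emph{not} argue by contraction. Instead it treats $(u,\bzeta,\bkappa)$ jointly, identifies all the $C^{2,\beta}(M[\bzeta,\bkappa])$ spaces with $C^{2,\beta}(M[0])$ via the diffeomorphisms $\mathcal{F}_{\bzeta,\bkappa}$, forms the compact convex set
$$K:=\{u\in C^{2,1/3}(M[0])\mid \|u\|_{2,1/2}'\le\tfrac14\bar{\btau}^{1+\alpha/4}\}\times B_{\mathcal{P}}\times B_{\mathcal{P}},$$
shows only that the map $\mathcal{J}$ is continuous and that $\mathcal{J}(K)\subset K$, and then invokes the \emph{Schauder} fixed-point theorem. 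This is exactly the strategy of \cite[Theorem 5.7]{kapouleasMcGrath2023generalDoubling}, and it deliberately avoids the issue you flag.

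The gap in your Step 2 is precisely where you anticipate trouble: you need the map $(\bzeta,\bkappa)\mapsto\tfrac12(\mathbf{m}^+-\mathbf{m}^-,-\mathbf{m}^+-\mathbf{m}^-)$ to be a contraction, and you justify this by appealing to ``the Lipschitz dependence of all the auxiliary constructions on their data recorded in \cite[Sections 3--5]{kapouleasMcGrath2023generalDoubling}.'' But \cite{kapouleasMcGrath2023generalDoubling} does not record Lipschitz dependence of the right inverse $\mathcal{R}'_{\bzeta,\bkappa}$ on the configuration $(\bq,\bsigma)$ — it only establishes continuity, which is all Schauder needs (this is the whole reason Kapouleas and McGrath use Schauder rather than a contraction). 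Varying $\bzeta$ changes the points $q_i$, hence the initial surface $M[\bzeta,\bkappa]$, its Jacobi operator, the obstruction spaces $\widehat{\mathcal{K}}[\bq]$, and the linear inverse built from them; getting a quantitative Lipschitz bound on $\mathcal{R}'$ under such domain perturbations is a nontrivial additional piece of analysis that is not in the cited reference, and without it you cannot conclude the Lipschitz constant is $\le\tfrac12$. The ``positive power of $n\bar{\btau}$ overwhelms the $n^{3/2}\bar{\btau}^{-1/1000}$ growth'' accounting only bounds the \emph{size} of $\mathbf{m}^\pm$ (hence the self-map property, which is fine); it says nothing a priori about the modulus of continuity of $\mathbf{m}^\pm$ in $(\bzeta,\bkappa)$. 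The fix is to replace your two-step contraction by the paper's single Schauder argument on $K$: then you only need $\mathcal{J}(K)\subset K$ (which your Step 1 and Step 2 size estimates already provide) and continuity (which follows from \cite[Lemma 5.5 and Corollary 4.21]{kapouleasMcGrath2023generalDoubling}), at the cost of losing uniqueness of the fixed point, which is not claimed anyway. Your Step 3 area discussion is consistent with the paper, which simply cites the area estimate of \cite[Theorem 5.7]{kapouleasMcGrath2023generalDoubling} directly.
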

\begin{proof}
We recall the fixed point argument from the proof of \cite[Theorem 5.7]{kapouleasMcGrath2023generalDoubling}, keeping careful track of estimates to emphasize the role played by the quantitative nondegeneracy assumption \eqref{quant.nondeg}, which ultimately allows us to ensure that the map whose fixed point yields the minimal doubling maps the set $\{u\in C^{2,1/3}(M[0])\mid \|u\|_{2,1/2}'\leq \frac{1}{4}\bar{\btau}^{1+\alpha/4}\}\times B_{\cP}\times B_{\cP}$ into itself.

First, given $\bzeta,\bkappa\in B_{\mathcal{P}}$, recalling \eqref{hm.exp} and \eqref{r.def}, define
$$(\psi_{\bzeta,\bkappa},v^+_{\bzeta,\bkappa},v_{\bzeta,\bkappa}^-)
    :=-\mathcal{R}_{\bzeta,\bkappa}'(H_{M[\bzeta,\bkappa]}-w_{\bzeta,\bkappa}^+\circ P_+-w_{\bzeta,\bkappa}^-\circ P_-),$$
and observe that \eqref{hm.exp} and \eqref{r.schaud} together give an estimate of the form
\begin{equation}\label{psi.est}
    \|\psi\|_{2,\beta}'+\|v^{\pm}\|_{C^{0,\beta}}\leq C_{\beta}(\Sigma,g)\delta^{-4-2\beta}(n\bar{\btau})^{1+\alpha/3}.
\end{equation}

Now, given $u\in C^{2,\beta}(M[\bzeta,\bkappa])$ with $\|u\|_{2,\beta; M[\bzeta,\bkappa]}'\leq \bar{\btau}^{1+\alpha/4}$, define 
$$(\xi_{u,\bzeta,\bkappa}, h^+_{u,\bzeta,\bkappa},h^-_{u,\bzeta,\bkappa}):=-\mathcal{R}_{\bzeta,\bkappa}'(H_{u+\psi}-H_{M[\bzeta,\bkappa]}-J_{M[\bzeta,\bkappa]}(u+\psi)),$$
so that \eqref{hu.exp} and \eqref{r.schaud} together yield the estimate
\begin{equation}\label{xi.est}
    \|\xi\|_{2,\beta}'+\|h^{\pm}\|_{C^{0,\beta}}\leq C_{\beta}(\Sigma,g)\delta^{-4-2\beta}n\bar{\btau}^{-\alpha/2}(\|u+\psi_{\bzeta,\bkappa}\|_{2,\beta}')^2.
\end{equation}

Taking the sum of the preceding definitions and temporarily suppressing subscripts, note that
$$(\psi+\xi,v^++h^+,v^-+h^-)=\mathcal{R}'(J_M(u+\psi)+w^+\circ P_++w^-\circ P_--H_{u+\psi}),$$
so by the defining property \eqref{r.def} of $\mathcal{R}'$, we have
\begin{eqnarray*}
    J_M(\psi+\xi)&=&J_M(u+\psi)+w^+\circ P_++w^-\circ P_--H_{u+\psi}\\
    &&+(h^++v^+)\circ P_++(h^-+v^-)\circ P_-,
\end{eqnarray*}
and in particular,
$$H_{u+\psi}=J_M(u-\xi)+(w^++h^++v^+)\circ P_++(w^-+h^-+v^-)\circ P_-.$$
Thus, if we can find $\bzeta,\bkappa\in B_{\mathcal{P}}$ and $u\in C^{2,\beta}(M[\bzeta,\bkappa])$ such that
$$\xi_{u,\bzeta,\bkappa}=u$$
and
$$h_{u,\bzeta,\bkappa}^{\pm}+v_{\bzeta,\bkappa}^{\pm}+w_{\bzeta,\bkappa}^{\pm}=0,$$
then the surface $M_{\tilde{u}}[\bzeta,\bkappa]$ with $\tilde{u}=u+\psi_{\bzeta,\bkappa}$ would be minimal. 

Recalling \eqref{w.def}, note that this trio of conditions can be expressed equivalently as
$$\xi_{u,\bzeta,\bkappa}=u,$$
$$\mathcal{E}_{\bq}(J_{\Sigma}^{-1}(h_{u,\bzeta,\bkappa}^{\pm}+v_{\bzeta,\bkappa}^{\pm}))+\bkappa\mp \bzeta=0.$$
Rearranging slightly and writing 
$$\bmu^{\pm}(u,\bzeta,\bkappa):=\mathcal{E}_{\bq}(J_{\Sigma}^{-1}(h_{u,\bzeta,\bkappa}^{\pm}+v_{\bzeta,\bkappa}^{\pm})),$$
we see that $M_{u+\psi_{\bzeta,\bkappa}}[\bzeta,\bkappa]$ is minimal provided
$$\xi_{u,\bzeta,\bkappa}=u,$$
$$\bkappa=-\frac{1}{2}(\bmu^++\bmu^-),$$
and
$$\bzeta=\frac{1}{2}(\bmu^+-\bmu^-),$$
i.e., if $(u,\bzeta,\bkappa)$ is a fixed point of the map
$$\tilde{\mathcal{J}}: \{u\in C^{2,\beta}(M[\bzeta,\bkappa])\mid \|u\|_{2,\beta}'\leq \bar{\btau}^{1+\alpha/4}\}\times (B_{\mathcal{P}})^2\to C^{2,\beta}(M[\bzeta,\bkappa])\times (\mathcal{P})^2$$
given by 
$$\tilde{\mathcal{J}}(u,\bzeta,\bkappa):=(\xi_{u,\bzeta,\bkappa},\frac{\bmu^+(u,\bzeta,\bkappa)-\bmu^-(u,\bzeta,\bkappa)}{2},-\frac{\bmu^+(u,\bzeta,\bkappa)+\bmu^-(u,\bzeta,\bkappa)}{2}).$$
Using the diffeomorphisms $\mathcal{F}_{\bzeta,\bkappa}: M[0]\to M[\bzeta,\bkappa]$ to identify functions on $M[\bzeta,\bkappa]$ with functions on $M[0]$ while preserving the $\|\cdot\|_{0,\beta}'$ and $\|\cdot\|_{2,\beta}'$ norms up to a factor of $4$, we can then identify the family of maps $\tilde{\mathcal{J}}_{\bzeta,\bkappa}$ with a single continuous map
$$\mathcal{J}:\{u\in C^{2,\beta}(M[0])\mid \|u\|_{2,\beta}'\leq \frac{1}{4}\bar{\btau}^{1+\alpha/4}\}\times (B_{\mathcal{P}})^2\to C^{2,\beta}(M[0])\times (\mathcal{P})^2.$$

Fixing $\beta=\frac{1}{3}$ above, we can extend $\mathcal{J}$ to a continuous self-map 
$$\mathcal{J}: Y\to Y$$
of the Banach space
$$Y:=C^{2,1/3}(M[0])\times (\mathcal{P})^2,$$
of which
$$K:=\{u\in C^{2,1/3}(M[0])\mid \|u\|_{2,1/2}'\leq \frac{1}{4}\bar{\btau}^{1+\alpha/4}\}\times B_{\mathcal{P}}\times B_{\mathcal{P}}$$
is a compact, convex subset. Thus, if we can show that $\mathcal{J}(K)\subset K$, then the Schauder fixed-point theorem would supply a fixed point for $\mathcal{J}$ in $K$.

To this end, note that when $\|u\|_{2,\beta}'\leq \bar{\btau}^{1+\alpha/4}$, \eqref{psi.est} and \eqref{xi.est} together imply that 
$$\|v_{\bzeta,\bkappa}^{\pm}+h_{u,\bzeta,\bkappa}^{\pm}\|_{C^{0,1/2}}\leq C\delta^{-5}[(n\bar{\btau})^{1+\alpha/3}+n\bar{\btau}^{-\alpha/2}(\bar{\btau}^{2+\alpha/2}+\delta^{-10}(n\bar{\btau})^{2+2\alpha/3})]$$
and
$$\|\xi_{u,\bzeta,\bkappa}\|_{2,1/2}'\leq C\delta^{-5}n\bar{\btau}^{-\alpha/2}(\bar{\btau}^{2+\alpha/2}+\delta^{-10}(n\bar{\btau})^{2+2\alpha/3}).$$
Recalling that we have $\alpha=3/250$, and the hypotheses of Theorem \ref{thm:doubling.precise} give
$$\max\{n,\delta^{-1}\}<\bar{\btau}^{-1/100,000}=\bar{\btau}^{-\alpha/1200},$$
it follows from the preceding estimates and a little arithmetic that 
$$\|\xi_{u,\bzeta,\bkappa}\|_{2,1/2}'\leq C\bar{\btau}^{2-\alpha/200}$$
and, recalling the definition of $\bmu^{\pm}$ and invoking Schauder estimates for $J$, 
\begin{eqnarray*}
    |\bmu^{\pm}(u,\bzeta,\bkappa)|_{\ell^{\infty}}&\leq &\|J_{\Sigma}^{-1}(h_{u,\bzeta,\bkappa}^{\pm}+v_{\bzeta,\bkappa}^{\pm})\|_{C^{2,1/2}}\\
    &\leq &C \|h_{u,\bzeta,\bkappa}^{\pm}+v_{\bzeta,\bkappa}^{\pm}\|_{C^{0,1/2}}\\
    &\leq & C\bar{\btau}^{1+\alpha/3-2\alpha/300}.
\end{eqnarray*}
For $n\geq \nu(g,\Sigma)$ sufficiently large, it clearly follows that
$$\|\xi_{u,\bzeta,\bkappa}\circ \mathcal{F}_{\bzeta,\bkappa}\|_{2,\beta}'\leq \frac{1}{4}\bar{\btau}^{1+\alpha/4},$$
and
$$|\bmu^{\pm}|<\bar{\btau}^{1+\alpha/4}.$$
Finally, since $B_{\mathcal{P}}$ is defined to be the ball in $\mathcal{P}$ of radius
$$c\delta^3n^{-4}\bar{\btau}^{1+1/500}=c\delta^3n^{-4}\bar{\btau}^{1+\alpha/6},$$
we check that
$$c\delta^3n^{-4}\bar{\btau}^{1+\alpha/6}\geq c\bar{\btau}^{1+\alpha/6-7\alpha/1200}>\bar{\btau}^{1+\alpha/4}$$
for $n\geq \nu(g,\Sigma)$ sufficiently large as well, we deduce that indeed $\mathcal{J}(K)\subset K$, as desired. Hence, $\mathcal{J}$ has a fixed point in $K$, giving the desired minimal doubling $\breve{M}$.

The area estimate from \cite[Theorem 5.7]{kapouleasMcGrath2023generalDoubling} then gives
$$||\breve{M}|-(2|\Sigma|-\pi \sum_{i=1}^n\sigma_i^2)|\leq C\sum_{i=1}^n\sigma_i^2\sigma_i^{1/2}|\log\sigma_i|)$$
for some $\bsigma$ with 
$$|\sigma_i-\tau_i|\leq \bar{\btau}\delta^3\mu n^{-5/2}<\frac{\tau_i}{n^2},$$
so the stated area estimate for $\breve{M}$ in terms of $\btau$ follows immediately for $n\geq \tilde{n}(g,\Sigma)$ sufficiently large.
    
\end{proof}

The conclusions of Theorem \ref{thm:doubling.precise} now follow immediately, where we use the identity \eqref{crit.h.val} for critical points of $\cH$ to restate the area estimate in terms of $\cH(\bp,\btau)$.

\section{Minimizers for $\hat\cH$ and mean field limits}\label{sect:minH}

As in Section \ref{sect:KMtoEnergyFunctional}, fix a closed surface $(\Sigma,g)$ and an invertible, indefinite Schr\"odinger operator $J=d^*d-V$ on $\Sigma$. In this section, we prove that, for sufficiently large $n$, the minimization problem for the projectivized interaction energy $\hat\cH$ is well-posed, and yields critical points for $\cH$ satisfying estimates (1) and (2) of Theorem \ref{thm:doubling.precise}. Along the way, we also obtain a refined description of the asymptotic behavior of the minimizers as $n\to\infty$, modeled on the well-studied mean field analysis for classical Coulomb energies as described in, e.g. \cite[Section 2]{SerfatyLectures}.

To begin, we first introduce a natural relaxation for the interaction energy $\cE(\mu):=\int_{\Sigma\times \Sigma}G(x,y)d\mu(x)d\mu(y)$ for Radon measures, as follows. For each $\delta>0$, consider the Lipschitz regularization of $G(x,y)$ given by
$$G_{\delta}(x,y):=-\frac{1}{2\pi}\log(\max\{d(x,y),\delta\})+R(x,y),$$
satisfying $G_\delta(x,y)=G(x,y)$ when $d(x,y)\geq\delta$ and $$G_\delta(x,y)=-\frac{1}{2\pi}\log(\delta)+R(x,y)$$ when $d(x,y)<\delta$. Then for any Radon measure $\mu$ on $\Sigma$, we define the relaxed energy
$$\cE_{\delta}(\mu):=\int_{\Sigma\times \Sigma}G_\delta(x,y)d\mu(x)d\mu(y)\in \mathbb{R}.$$
We observe next that the functional $\cH$ on $X_n$ can be recast in terms of these energies for $\delta$ sufficiently small; throughout this section, we write $|\btau|:=|\btau|_{\ell^2}$ for simplicity.

\begin{lem}\label{ed.char}
    If $(\bp,\btau)\in X_n$ satisfies $\delta<\min_{i\neq j}d(p_i,p_j)$, then 
$$\cH(\bp,\btau)= \mathcal{E}_{\delta}(\sum_{i=1}^n \tau_i\delta_{p_i})+\sum_{i=1}^n\frac{\tau_i^2}{4\pi}\left (1-2 \log \frac {\tau_i}2\right)+\frac{|\btau|^2}{2\pi}\log\delta.$$
\end{lem}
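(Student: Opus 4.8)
The strategy is simply to expand the definition of $\cE_\delta(\sum_i\tau_i\delta_{p_i})$ directly and match it against the definition of $\cH(\bp,\btau)$, using the fact that the hypothesis $\delta<\min_{i\neq j}d(p_i,p_j)$ forces $G_\delta$ to agree with $G$ on all the off-diagonal pairs $(p_i,p_j)$ with $i\neq j$, while the diagonal terms $G_\delta(p_i,p_i)$ reduce to the simple closed-form expression $-\frac1{2\pi}\log\delta+R(p_i,p_i)$.

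\begin{proof}
Set $\mu:=\sum_{i=1}^n\tau_i\delta_{p_i}$, and split the double sum defining $\cE_\delta(\mu)$ into diagonal and off-diagonal contributions:
$$\cE_\delta(\mu)=\sum_{i\neq j}\tau_i\tau_j G_\delta(p_i,p_j)+\sum_{i=1}^n\tau_i^2 G_\delta(p_i,p_i).$$
For the off-diagonal terms, the assumption $\delta<\min_{i\neq j}d(p_i,p_j)$ gives $d(p_i,p_j)\geq\delta$ whenever $i\neq j$, so $G_\delta(p_i,p_j)=G(p_i,p_j)$ and the first sum equals $\sum_{i\neq j}\tau_i\tau_j G(p_i,p_j)$, matching the first term of $\cH$. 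For the diagonal terms, since $d(p_i,p_i)=0<\delta$, the definition of $G_\delta$ gives $G_\delta(p_i,p_i)=-\frac{1}{2\pi}\log\delta+R(p_i,p_i)$, so
$$\sum_{i=1}^n\tau_i^2 G_\delta(p_i,p_i)=\sum_{i=1}^n\tau_i^2 R(p_i,p_i)-\frac{1}{2\pi}\log\delta\sum_{i=1}^n\tau_i^2=\sum_{i=1}^n\tau_i^2 R(p_i,p_i)-\frac{|\btau|^2}{2\pi}\log\delta.$$
Combining,
$$\cE_\delta(\mu)=\sum_{i\neq j}\tau_i\tau_j G(p_i,p_j)+\sum_{i=1}^n\tau_i^2 R(p_i,p_i)-\frac{|\btau|^2}{2\pi}\log\delta.$$
Comparing with Definition \ref{h.def}, the right-hand side above is exactly $\cH(\bp,\btau)-\sum_{i=1}^n\frac{\tau_i^2}{4\pi}(1-2\log\frac{\tau_i}{2})-\frac{|\btau|^2}{2\pi}\log\delta$... wait, rearranging: $\cH(\bp,\btau)=\cE_\delta(\mu)+\frac{|\btau|^2}{2\pi}\log\delta+\sum_{i=1}^n\frac{\tau_i^2}{4\pi}(1-2\log\frac{\tau_i}{2})$, which is the claimed identity.
\end{proof}

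There is essentially no obstacle here: the only point requiring care is bookkeeping the sign of the $\frac{|\btau|^2}{2\pi}\log\delta$ term (which is negative for small $\delta$, as it must be since replacing $G$ by the larger quantity $G_\delta$ near the diagonal decreases the self-energy contribution $\tau_i^2 G_\delta(p_i,p_i)$ relative to the formal $+\infty$ of $G(p_i,p_i)$, the difference being absorbed into $R$), and confirming that the cross terms are genuinely unaffected — which is exactly where the hypothesis $\delta<\min_{i\neq j}d(p_i,p_j)$ is used and cannot be dropped. I would present this as a two- or three-line computation immediately after the statement.
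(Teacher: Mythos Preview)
Your proof is correct and follows essentially the same approach as the paper: expand $\cE_\delta(\mu)$ as a double sum, use the hypothesis to identify $G_\delta(p_i,p_j)=G(p_i,p_j)$ on off-diagonal pairs, evaluate $G_\delta(p_i,p_i)=-\frac{1}{2\pi}\log\delta+R(p_i,p_i)$ on the diagonal, and compare with Definition~\ref{h.def}. The only cosmetic point is that your hypothesis actually gives the strict inequality $d(p_i,p_j)>\delta$ (not $\geq$), though of course either suffices for $G_\delta=G$; you should also clean up the ``wait, rearranging'' aside before presenting.
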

\begin{proof}
  This is a straightforward computation; by definition of $G_{\delta}$ and $\cH$, we have 
  \begin{align*}
        \mathcal{E}_{\delta}(\sum_{i=1}^n \tau_i \delta_{p_i})&=\sum_{i,j=1}^n\tau_i\tau_jG_{\delta}(p_i,p_j)\\
        &=\sum_{i=1}^n\sum_{j\neq i}\tau_i\tau_j G(p_i,p_j)+\sum_{i=1}^n\tau_i^2G_{\delta}(p_i,p_i)\\
        &=\sum_{i=1}^n\sum_{j\neq i}\tau_i\tau_j G(p_i,p_j)+\sum_{i=1}^n\tau_i^2(-\frac{1}{2\pi}\log(\delta)+R(p_i,p_i))\\
        &=\cH(\bp,\btau)-\sum_{i=1}^n\frac{\tau_i^2}{4\pi}\left (1-2 \log \frac {\tau_i}2\right)-\frac{|\btau|^2}{2\pi}\log\delta.
    \end{align*}
\end{proof}

Next, noting that $G_\delta(x,y)\leq G_{\delta'}(x,y)\leq G(x,y)$ whenever $0<\delta'<\delta$, we see that the monotone convergence theorem gives the following characterization of the interaction energy $\cE$.
\begin{definition}
For any Radon measure $\mu$ on $\Sigma$, we define the energy
$$\cE(\mu):=\int G(x,y)d\mu(x)d\mu(y)=\sup_{\delta>0}\mathcal{E}_{\delta}(\mu)=\lim_{\delta\to 0}\mathcal{E}_{\delta}(\mu)\in (-\infty,+\infty].$$
\end{definition}



Since the functions $G_{\delta}(x,y)$ are Lipschitz on $\Sigma\times \Sigma$, it is straightforward to check that the relaxed energies $\cE_{\delta}$ are continuous with respect to the Lipschitz-dual norm $\|\cdot\|_{Lip(\Sigma)^*}$, and by a simple application of the Arzela-Ascoli theorem, it follows that $\cE_{\delta}$ is continuous with respect to weak-$*$ convergence on the space
$$\cP:=\{\mu\in C^0(\Sigma)^*\mid \mu\geq 0,\text{ }\mu(\Sigma)=1\}$$
of probability measures on $\Sigma$. Writing $\cE(\mu)=\sup_{\delta>0}\cE_{\delta}(\mu)$, it then follows that $\cE$ is lower semi-continuous with respect to weak-* convergence on $\Sigma$, and an application of the direct method shows that the infimum 
\begin{equation}\label{i.def}
\cI=\cI(g,V):=\inf_{\mu\in \cP}\cE(\mu)
\end{equation}
is realized by a probability measure $\mu\in \cP$, whose properties we describe further in Section \ref{sect:meanFieldLimit}. 

By definition of the Green's function $G(x,y)$, note that for any $u\in C^2(\Sigma)$ such that $Ju\geq 0$, we have
\begin{equation}\label{e.char}
\cE(Ju\; dA_g)=\iint Ju(x)Ju(y)G(x,y)dA_g(x)dA_g(y)=\int u JudA_g.
\end{equation}
More generally, it is a straightforward exercise, using Lemma \ref{green.lem} and Fubini's theorem, to check that for any $\mu\in \cP$ with $\cE(\mu)<\infty$, the potential $\phi_{\mu}(x):=\int G(x,y)d\mu(y)$ belongs to $W^{1,2}(\Sigma)$, and \eqref{e.char} holds in a weak sense with $u=\phi_{\mu}$.

Since we are assuming that $J$ is indefinite, i.e. $\lambda_1(J)<0$, we can take $u_1$ to be the first eigenfunction for $J$ such that $u_1<0$ and $\|u_1\|_{L^2(dA_g)}=1$. Then $Ju_1=\lambda_1u_1>0$,
and we see that
$$\cE(Ju_1\;dA_g)=\int_{\Sigma}u_1Ju_1 dA_g=\lambda_1(J),$$
while
$$\left(\int Ju_1dA_g\right)^2=\lambda_1(J)^2\left(\int u_1dA_g\right)^2\leq \lambda_1(J)^2\area(\Sigma,g),$$
and, consequently,
\begin{equation}\label{i.neg}
\cI\leq \frac{\cE(Ju_1dA_g)}{\left(\int Ju_1dA_g\right)^2}\leq\frac{1}{\lambda_1(J)\area(\Sigma,g)}<0.
\end{equation}

Turning now to the $\hat\cH$ minimization problem, recall from \eqref{hath.gibbs} that for any $(\bp,\bomega)\in  \hat X_n:=\{(\bp,\bomega)\in X_n:|\bomega|=1\}$, we have
$$\hat\cH(\bp,\bomega)=\frac{1}{4\pi e}e^{4\pi\cH(\bp,\bomega) }.$$
As a result, we see that
\begin{equation}\label{eq:infHatH}
\inf_{\hat{X}_n}\hat\cH=\frac 1{4\pi e}\exp\left(4\pi \inf_{(\bp,\bomega)\in  \hat X_n}\cH(\bp,\bomega)\right).
\end{equation}
In particular, it follows that $(\bp,\bomega)$ is a minimizer of $\hat \cH$ on $\hat X_n$ if and only if it is a minimizer of $\cH$ on $\hat X_n$, in which case the discussion in Section \ref{sect:hatH.def} shows that
$$(\bp,e^{ {2\pi \cH(\bp,\bomega)}  -1/2}\bomega)$$
is a {\it critical point} of $\cH$ on $X_n$ with energy
$$\cH(\bp,e^{ {2\pi \cH(\bp,\bomega)}  -1/2}\bomega)=\inf_{\hat{X}_n}\hat\cH.$$

The main theorem of this section confirms the existence of $\cH$-minimizing pairs in $\hat{X}_n$ for sufficiently large $n$, and establishes some key estimates for these minimizers that will be used in the next section to verify the hypotheses of Theorem \ref{thm:doubling.precise} for the associated critical points of $\cH$. 

\begin{thm}\label{thm:existGoodMinimizers}
Let $\Sigma$ be a closed surface equipped with a Riemannian metric $g$ and a function $V\in C^{\infty}(\Sigma)$ such that the Schrödinger operator $J=d^*_gd-V$ is invertible with $\lambda_1(J)<0$. Then
there exist constants $\overline n(g,V), \overline C(g,V),\overline\Lambda(g,V)>0$ such that for every $n\geq  \overline  n$, we have:
\begin{enumerate}
\item 
$n\cI - \overline C\log n<\inf_{\hat X_n} \cH< n\cI+\overline C n^{3/4}.$
\item There exists a minimizer for $\cH$ (equivalently, for $\hat \cH$) on $\hat X_n$.
\item\label{item:existGoodMinimizerPTau}  Every minimizer $(\bp,\bomega)$ for $\cH$ on $\hat X_n$ satisfies $\frac 1{ \overline\Lambda\sqrt n}<\omega_i<\frac{\overline\Lambda}{\sqrt n}$ for each $i$,  and $d(p_i,p_j)> n^{-3}$ for each $i\ne j$.
\end{enumerate} 
\end{thm}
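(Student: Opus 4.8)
By Section~\ref{sect:hatH.def}, a point of $\hat X_n$ minimizes $\hat\cH$ if and only if it minimizes $\cH$ on $\hat X_n=\{\,|\bomega|_{\ell^2}=1\,\}$, and such a minimizer rescales to a critical point of $\cH$ on $X_n$, so the task is to analyze $\cH$ on the constraint set $\hat X_n$. The main tools are Lemma~\ref{ed.char}, which rewrites $\cH$ via the relaxed energies $\cE_\delta$, and the Green's function bounds of Lemma~\ref{green.lem}. For the \emph{upper bound} in (1) I would exhibit explicit competitors by discretizing a near-optimal probability measure: regularize an equilibrium measure $\mu$ for $\cE$ (by convolution, plus a tiny multiple of $dA_g$ to keep the density bounded below) into a measure $\mu_\epsilon$ with smooth positive density, $\|\mu_\epsilon\|_{C^0}\lesssim\epsilon^{-2}$ and $\cE(\mu_\epsilon)\le\cI+o_\epsilon(1)$ (mollification raises the logarithmic part of the energy by at most a controlled amount, since $-\log$ is of positive type). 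Partition $\Sigma$ into $n$ Borel cells of $\mu_\epsilon$-mass $1/n$, put $p_i$ at the barycenter of the $i$-th cell, $\tau_i=1/n$ and $\bomega=\sqrt n\,\btau$; then Lemma~\ref{ed.char} with $\delta$ of the order of the smallest cell diameter gives $\cH(\bp,\bomega)=n\,\cE_\delta(\tfrac1n\sum_i\delta_{p_i})+\tfrac1{2\pi}\log\delta+\tfrac{\log n}{4\pi}+O(1)$, in which the two $\log n$ terms essentially cancel, while the barycenter choice makes $|\cE_\delta(\tfrac1n\sum_i\delta_{p_i})-\cE_\delta(\mu_\epsilon)|$ of size only $O_\epsilon(n^{-1}\log n)$ (the first-order Taylor terms vanish). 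Hence $\cH(\bp,\bomega)=n\cE(\mu_\epsilon)+O_\epsilon(\log n)$, and choosing $\epsilon=\epsilon_n\to0$ slowly enough that the $\epsilon_n$-dependent losses stay below $n^{3/4}$ yields the bound; the exponent $3/4$ is a safe, not necessarily sharp, choice.

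\textbf{Lower bound in (1).} For $\inf_{\hat X_n}\cH>n\cI-\overline C\log n$ I would run a relaxation (mean-field) argument in the spirit of \cite[Section 2]{SerfatyLectures}. Given $(\bp,\bomega)\in\hat X_n$, replace each atom $\omega_i\delta_{p_i}$ by $\omega_i$ times the uniform probability measure on a disk $B_{r_i}(p_i)$, with radii $r_i$ small enough that the disks are pairwise disjoint; the resulting measure $\tilde\rho$ has mass $s:=\sum_i\omega_i\le\sqrt n$, so $\cE(\tilde\rho)\ge s^2\cI\ge n\cI$. Expanding $\cE(\tilde\rho)$ and comparing with $\cH(\bp,\bomega)$, the cross terms reproduce $\sum_{i\ne j}\omega_i\omega_jG(p_i,p_j)$ up to an $O(1)$ error (using that each point is the nearest neighbor of boundedly many others), the Robin term is $O(1)$, and the disk self-energies combine with the nonlinear term of $\cH$; one is left with $\cH(\bp,\bomega)\ge n\cI+\tfrac1{2\pi}\sum_i\omega_i^2\log r_i-O(1)$. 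Taking $r_i$ comparable to $\min_{j\ne i}d(p_i,p_j)$ this becomes $\ge n\cI+\tfrac1{2\pi}\log\bigl(\min_{i\ne j}d(p_i,p_j)\bigr)-O(1)$, which is $\ge n\cI-\overline C\log n$ once $\min_{i\ne j}d(p_i,p_j)\ge n^{-C}$; configurations with a much closer pair carry a large \emph{positive} contribution $\omega_i\omega_jG(p_i,p_j)$ and can be dealt with separately, e.g. by an induction on $n$ after merging the offending pair, or by a more careful choice of the truncation radii.

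\textbf{Existence and the estimates in (3).} Since $\hat X_n$ is not compact, (2) follows once we know every minimizing sequence lies in a fixed compact part of $\hat X_n$. The \emph{weight estimates} $\tfrac1{\overline\Lambda\sqrt n}<\omega_i<\tfrac{\overline\Lambda}{\sqrt n}$ I would read off from the Euler--Lagrange equations of Proposition~\ref{prop:criticalPointH}: at a minimizer $\sum_{j\ne i}\omega_jG(p_j,p_i)=\omega_i\bigl(\lambda-R(p_i,p_i)+\tfrac1{2\pi}\log\tfrac{\omega_i}2\bigr)$ with a Lagrange multiplier $\lambda$ that, by the energy bounds of (1), is comparable to $n\cI$; the left-hand side is $\ge-M_0\sqrt n$ trivially (from $G\ge-M_0$ and $\sum_j\omega_j\le\sqrt n$), and a mean-field estimate controlling $\tfrac1n\sum_i\delta_{p_i}$ by the equilibrium measure $\mu$ (whose potential is $\equiv\cI$ on $\operatorname{supp}\mu$) shows it is also $\le-c\sqrt n$, which forces $\omega_i$ to be of order $1/\sqrt n$. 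For the \emph{separation} $d(p_i,p_j)>n^{-3}$, I would argue that any minimizer with a pair closer than $n^{-3}$ can be strictly improved: choosing a suitably tight cluster of points lying in a ball of radius $\rho<n^{-3}$ and dilating it, in normal coordinates, to radius $\sim n^{-3}$ strictly decreases $\cH$, because the within-cluster interaction drops by a positive multiple of $|\log(n^3\rho)|\sum_{\text{cluster pairs}}\omega_i\omega_j$---of order $1/n$ by the weight bounds---whereas, crucially, the change in the cluster--complement interaction and in the Robin and nonlinear terms scales with the cluster radius and is only $O(\rho\sqrt n)=o(1/n)$; iterating gives the bound. These estimates place every minimizer in a fixed compact subset of $\hat X_n$ on which $\cH$ is continuous, proving (2), and applying them to the minimizer gives (3).

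\textbf{Main obstacle.} The genuinely hard point is the \emph{polynomial} separation bound: relocating a single point naively yields only an exponentially small lower bound for $\min_{i\ne j}d(p_i,p_j)$, with an exponent that cannot be controlled in terms of $|\cI|$ and is thus useless against the threshold $\bar{\btau}^{1/100000}$ in condition (2) of Theorem~\ref{thm:doubling.precise}; reaching $n^{-3}$ requires working with whole clusters and exploiting that the cost of relocating a small cluster scales with its radius. A secondary difficulty is making the matching-order bounds of (1) genuinely uniform over $\hat X_n$---handling configurations with extremely close point pairs, and balancing regularization against discretization in the competitor construction.
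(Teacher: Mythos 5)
Your overall strategy for the upper bound in (1) and for item (3)'s upper weight bound is close to the paper's: the paper discretizes an equilibrium measure (Lemma~\ref{disc.app}), and deduces $\omega_i<\overline\Lambda/\sqrt{n}$ from the $\tau_i$-criticality equation combined with the upper bound on $\inf\hat\cH$ (Proposition~\ref{prop:tauIComparable}). Two differences of detail: (i) you propose regularizing the equilibrium measure to get bounded density, but Proposition~\ref{prop:eq.char} shows the equilibrium measure $\mu=-\cI V\,dA_g\lfloor_K$ is already in $L^\infty$, so this is unnecessary; (ii) for the lower weight bound $\omega_i>1/(\overline\Lambda\sqrt n)$ the paper does not use a Lagrange-multiplier argument at all — instead (Proposition~\ref{prop:piTauiAlmostMinimum}) it argues by contradiction: if some $\omega_n$ is too small, remove $p_n$ and insert a new point $q$ at the minimum of the potential $\tilde\phi=\sum_{i<n}\omega_iG(p_i,\cdot)$, which strictly decreases the energy because Lemma~\ref{minphi.bd} (from $\lambda_1(J)<0$) shows $\min\tilde\phi\le -c_0\sum\omega_i$. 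This replacement argument is more elementary and also yields precompactness of minimizing sequences, giving existence.

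The genuine gap is the polynomial separation bound $d(p_i,p_j)>n^{-3}$, and you flag it correctly but your proposed fix does not work as stated. The cluster-dilation idea has circular dependencies and unquantified claims: you invoke the weight bounds $\omega_i\sim 1/\sqrt n$ inside the argument, but those rest on the upper energy bound which in turn needs only (1)'s upper half, so that part is fine; the real problem is the claim that dilating a cluster changes the cluster--complement interaction only by $O(\rho\sqrt n)$. That claim requires the non-cluster points to be \emph{far} from the cluster, which is exactly what you cannot assume, and without control on how many iterations of cluster-dilation are needed you get no polynomial bound. The paper's Proposition~\ref{prop:p_iLowerBoundDistance} uses a completely different device: it pairs the $p_i$-criticality equation~\eqref{cn.crit} against the gradient of a test function $\psi=\frac1{1+\gamma}\chi^{1+\gamma}\circ d_q$, whose Hessian is bounded below by $\gamma\,d_q^{\gamma-1}-C\,d_q^{\gamma}$ via Hessian comparison. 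Summing over $i$, using the fundamental theorem of calculus along the geodesic from $p_i$ to $p_j$, and plugging in the lower bound $\omega_i\omega_j\gtrsim 1/n$ converts the resulting inequality into $\sum_{j\ne i}d(p_i,p_j)^{\gamma-1}\lesssim \gamma^{-1}n^2$ for any $q=p_i$, which directly gives $d(p_i,p_j)\gtrsim n^{-2/(1-\gamma)}$ --- a polynomial bound with an exponent close to $2$. This is also why the paper proves the separation estimate (and the weight bounds) \emph{before} the lower energy bound: Proposition~\ref{prop:infHatHLowerBound} then never has to contend with near-collisions, whereas your proposal tries to prove the lower energy bound first and acknowledges that close pairs then have to be ``dealt with separately,'' which is exactly the untamed difficulty.
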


Note that by \eqref{eq:infHatH}, the above estimate for $\inf_{\hat X_n} \cH$ implies
\begin{equation}\label{hath.asymp}
e^{4\pi n\cI-C(g,V)\log n}<\inf_{\hat{X}_n} \hat\cH < e^{4\pi n\cI+C(g,V) n^{3/4}},
\end{equation}
and if $(\bp,\btau)=(\bp, e^{2\pi \cH(\bp,\bomega)-1/2},\bomega)$ is the critical point of $\cH$ associated to a $\hat\cH$-minimizer $(\bp,\bomega)$, we see in particular that the weights $\tau_i$ decay exponentially fast as $n\to\infty$. As a byproduct of our analysis, we are also able to prove the following, describing the limiting distribution of the points $p_1,\ldots,p_n$ as $n\to\infty$.

\begin{thm}\label{thm:convMeasure}
In the setting of Theorem \ref{thm:existGoodMinimizers}, let $\{n_1,n_2,...\}\subset \N$ be an increasing sequence and, for each $k=1,2,...$, suppose $(\bp^k,\bomega^k)$ is a minimizer for $\cH$ on $\hat X_{n_k}$. Define the probability measure  $$\nu^k:=\frac 1 {n_k}\sum_{i=1}^{n_k}\delta_{p^k_i}$$ on $\Sigma$. Then $\nu^k$   subsequentially weak$^*$-converges  to a  probability measure   realizing the infimum $\cI:=\inf_{\mu\in\cP}\cE(\mu)$. 
\end{thm}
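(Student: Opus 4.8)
The plan is to run a mean-field argument in the spirit of \cite[Section 2]{SerfatyLectures}, combining the sharp asymptotics $\inf_{\hat X_n}\cH = n\cI + o(n)$ from Theorem \ref{thm:existGoodMinimizers} with the regularized energies $\cE_\delta$ and the identity of Lemma \ref{ed.char}. Throughout abbreviate $n=n_k$, $(\bp,\bomega)=(\bp^k,\bomega^k)$, and set $\mu^k:=\sum_{i=1}^{n_k}\omega_i^k\delta_{p_i^k}$. \textbf{Step 1 (the minimizing energy controls $\cE_\delta(\mu^k)$).} By part \eqref{item:existGoodMinimizerPTau} of Theorem \ref{thm:existGoodMinimizers}, $d(p_i^k,p_j^k)>n_k^{-3}$ for $i\ne j$, so Lemma \ref{ed.char} applies with $\delta=\delta_k:=n_k^{-3}$; using $|\bomega^k|_{\ell^2}^2=1$ and the two-sided bound $\tfrac{1}{\overline\Lambda\sqrt{n_k}}<\omega_i^k<\tfrac{\overline\Lambda}{\sqrt{n_k}}$ (which forces $1-2\log\tfrac{\omega_i^k}{2}=\log n_k+O_{\overline\Lambda}(1)$), it yields $\cH(\bp^k,\bomega^k)=\cE_{\delta_k}(\mu^k)-\tfrac{5}{4\pi}\log n_k+O(1)$. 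Since $(\bp^k,\bomega^k)$ minimizes $\cH$ on $\hat X_{n_k}$, combining this with the estimate of Theorem \ref{thm:existGoodMinimizers}(1) gives $\cE_{\delta_k}(\mu^k)=n_k\cI+O(n_k^{3/4})$.

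\textbf{Step 2 (normalize, and use $\cI<0$ to equalize the weights).} Put $s_k:=\sum_i\omega_i^k$ and $\hat\mu^k:=s_k^{-1}\mu^k\in\cP$. From $\omega_i^k>\tfrac{1}{\overline\Lambda\sqrt{n_k}}$ we get $s_k^2>n_k/\overline\Lambda^2$, and Cauchy--Schwarz gives $s_k^2\le n_k$. Since $\cE_\delta$ is homogeneous of degree $2$, Step 1 gives $\cE_{\delta_k}(\hat\mu^k)=s_k^{-2}\cE_{\delta_k}(\mu^k)=\cI\cdot\tfrac{n_k}{s_k^2}+o(1)$, where $s_k^2\gtrsim n_k$ absorbs the error. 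I claim $s_k^2/n_k\to 1$. If not, then along a subsequence $s_k^2/n_k\le 1-c$ for some $c>0$; passing to a further subsequence with $\hat\mu^k\rightharpoonup\hat\mu\in\cP$ (weak$^*$ compactness of $\cP$, as $\Sigma$ is compact), and using $\cI<0$, we find $\cE_{\delta_k}(\hat\mu^k)\le\tfrac{\cI}{1-c}+o(1)$. For any fixed $\delta_0>0$ and $k$ large (so $\delta_k<\delta_0$, hence $G_{\delta_0}\le G_{\delta_k}$ pointwise), $\cE_{\delta_0}(\hat\mu^k)\le\cE_{\delta_k}(\hat\mu^k)$, and weak$^*$ continuity of $\cE_{\delta_0}$ gives $\cE_{\delta_0}(\hat\mu)\le\tfrac{\cI}{1-c}$; taking the supremum over $\delta_0$, $\cE(\hat\mu)\le\tfrac{\cI}{1-c}<\cI$, contradicting $\hat\mu\in\cP$. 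Hence $s_k^2/n_k\to 1$, equivalently $\sum_i(\omega_i^k-\bar\omega_k)^2\to 0$ with $\bar\omega_k:=s_k/n_k$ (Lagrange's identity).

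\textbf{Step 3 (conclude).} By Step 2, $\cE_{\delta_k}(\hat\mu^k)=\cI+o(1)$, so exactly as above any weak$^*$ subsequential limit $\hat\mu$ of $\hat\mu^k$ satisfies $\cE_{\delta_0}(\hat\mu)=\lim_k\cE_{\delta_0}(\hat\mu^k)\le\cI$ for every $\delta_0>0$, whence $\cE(\hat\mu)=\cI$ since $\hat\mu\in\cP$. Finally, the weight-equalization controls the total variation distance:
$$\|\nu^k-\hat\mu^k\|_{TV}=\sum_i\left|\tfrac{1}{n_k}-\tfrac{\omega_i^k}{s_k}\right|=\tfrac{1}{s_k}\sum_i\left|\bar\omega_k-\omega_i^k\right|\le\tfrac{\sqrt{n_k}}{s_k}\Big(\sum_i(\omega_i^k-\bar\omega_k)^2\Big)^{1/2}\longrightarrow 0,$$
since $\sqrt{n_k}/s_k\to 1$. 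Therefore $\nu^k$ and $\hat\mu^k$ have the same weak$^*$ subsequential limits; by weak$^*$ compactness at least one exists, and it realizes $\cI$.

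The main obstacle is Step 2. Theorem \ref{thm:existGoodMinimizers} pins down the weights $\omega_i^k$ only up to a constant factor of $n_k^{-1/2}$, whereas $\nu^k$ carries uniform weights $n_k^{-1}$; one must first show that the $\omega_i^k$ become asymptotically uniform in an $\ell^2$-averaged sense before the empirical measure of positions can be identified. This ``no concentration of weights'' phenomenon is forced by the strict negativity of $\cI$: any persistent imbalance among the $\omega_i^k$ would, after normalization to a probability measure, drive the energy $\cE$ strictly below its minimum value $\cI$, which is impossible.
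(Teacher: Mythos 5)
Your proof is correct and, while it proves the same result by fundamentally the same mechanism (the negativity of $\cI$ forces the $\ell^1$ mass $s_k$ to saturate the Cauchy--Schwarz bound $s_k^2\le n_k$, which is Lagrange's variance identity in disguise), the packaging is genuinely different from the paper's. The paper first mollifies the empirical measure into an $L^\infty$ probability density $\tilde\nu^k$, reuses the Step~2--4 estimates from the proof of Proposition~\ref{prop:infHatHLowerBound} to compare $\cH(\bp^k,\bomega^k/|\bomega^k|_{\ell^1})$ with $\cE(\tilde\nu^k)$, and separately invokes the quantitative variance bound $\sum_i(\omega_i^k-\bar{\bomega}^k)^2\lesssim n_k^{-1/4}$ of Lemma~\ref{lem:tauIBarTau} to transfer the conclusion from $\tilde\nu^k$ to $\nu^k$ in $\mathrm{Lip}(\Sigma)^*$. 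You instead avoid the mollification entirely by working directly with the truncated kernels: since $\delta_k=n_k^{-3}$ is below the separation scale of Theorem~\ref{thm:existGoodMinimizers}(\ref{item:existGoodMinimizerPTau}), the identity of Lemma~\ref{ed.char} applies, and the monotonicity $G_{\delta_0}\le G_{\delta_k}$ for $\delta_0>\delta_k$ plus weak$^*$ continuity of each $\cE_{\delta_0}$ let you pass to the limit without ever regularizing the measure. Your Step~2 establishes the weight equalization by a soft compactness contradiction rather than the quantitative estimate, which is all this theorem needs (though the paper's sharper Lemma~\ref{lem:tauIBarTau} is also used in Proposition~\ref{prop:wt.nondeg}, so the paper cannot dispense with it), and you close by bounding $\|\nu^k-\hat\mu^k\|_{TV}$, which is stronger than the paper's $\mathrm{Lip}^*$ estimate. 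Net effect: your route is conceptually cleaner for this particular statement, at the cost of the explicit rates that the paper harvests for reuse elsewhere.
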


Finally, in the case where $J$ has index one, we prove some preliminary lower bounds for the Hessian $D^2\cH(\bp,\btau)$ of a critical point $(\bp,\btau)\in X_n$ for $\cH$ for which $(\bp,\btau/|\btau|_{\ell^2})$ is $\cH$-minimizing in $\hat X_n.$

\begin{prop}\label{prop:wt.nondeg}
In addition to the hypotheses of Theorem \ref{thm:existGoodMinimizers}, suppose that $\lambda_2(J)>0$, and let $(\bp,\btau)\in X_n$ be a critical point of $\cH$ such that
$$\cH(\bp,\btau/|\btau|_{\ell^2})=\inf_{\hat X_n}\cH.$$
Then there are constants $n_0(g,V)\in \mathbb{N}$ and $c_0(g,V)>0$ such that if $n\geq n_0$, then for any 
$$(\bv,\bs)\in \bigoplus_{i=1}^nT_{p_i}\Sigma\oplus \mathbb{R}^n= T_{(\bp,\btau)}X_n,$$
satisfying $\langle \bs,\btau\rangle_{\ell^2}=0$, we have
$$\langle S_{\btau}D^2\cH(\bp,\btau)S_{\btau}(\bv,\bs),(\bv,\bs)\rangle_{\ell^2}\geq \max\{0,c_0 n|\bs|^2-n^5|\bv||\bs|\}.$$
\end{prop}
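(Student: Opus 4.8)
The plan is to compute $D^2\cH(\bp,\btau)$ explicitly in the $\btau$-directions, using the formulas from Proposition~\ref{prop:criticalPointH}, and to show that after conjugating by $S_{\btau}$ the resulting quadratic form is, up to a controlled error coming from the mixed $(\bv,\bs)$-block, bounded below by a positive multiple of $n|\bs|^2$ on the subspace $\{\langle \bs,\btau\rangle_{\ell^2}=0\}$. The key structural input is that the pure-$\btau$ second variation of $\cH$ at a critical point is closely related to the energy $\cE$: differentiating $\nabla^{\tau_i}\cH(\bp,\btau)=2[\phi+\tfrac{\tau_i}{2\pi}\log d_{p_i}](p_i)-\tfrac{\tau_i}{\pi}\log(\tau_i/2)$ in $\tau_j$ produces, for $i\neq j$, the term $2G(p_i,p_j)$, and for $i=j$ the term $2R(p_i,p_i)-\tfrac{1}{\pi}\log(\tau_i/2)-\tfrac{1}{\pi}$. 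So the $\bs$-block of $D^2\cH$ is $2\bigl(G(p_i,p_j)\bigr)_{i\neq j}+$ a diagonal piece $2R(p_i,p_i)-\tfrac1\pi\log(\tau_i/2)-\tfrac1\pi$, and since $\tau_i\sim n^{-1/2}e^{-cn}$ by Theorem~\ref{thm:existGoodMinimizers}, the diagonal term is of size $\sim n/(2\pi)$ with a definite sign, dominating the off-diagonal Green's function entries; this is where the size of $n$ and the exponential decay of $\btau$ enter.

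First I would record the exact entries of $D^2\cH(\bp,\btau)$ in all three blocks ($\nabla^{p_i}\nabla^{p_j}$, $\nabla^{p_i}\nabla^{\tau_j}$, $\nabla^{\tau_i}\nabla^{\tau_j}$), using Proposition~\ref{prop:criticalPointH} and the estimates of Lemma~\ref{green.lem} and Remark~\ref{robin.rk}; in particular the mixed block $\nabla^{p_i}\nabla^{\tau_j}\cH$ is bounded in operator norm by $C$ times (distance factors) $\lesssim \delta(\bp)^{-1}\lesssim n^{C}$ by the separation bound $d(p_i,p_j)>n^{-3}$ of Theorem~\ref{thm:existGoodMinimizers}\eqref{item:existGoodMinimizerPTau}, which accounts for the $n^5|\bv||\bs|$ cross term. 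Second, after applying $S_{\btau}$ the $\bs$-block is unchanged while the cross-block gets multiplied by $\tau_i^{-1}$; tracking the weights carefully (all comparable, $\tau_i\asymp n^{-1/2}|\btau|$) gives $|\langle S_{\btau}D^2\cH S_{\btau}(\bv,\bs),(\bv,\bs)\rangle - Q_{\btau}(\bs)| \le n^5|\bv||\bs| + (\text{pure }\bv\text{ terms})$, where $Q_{\btau}(\bs)$ is the $\bs$-quadratic form above; since we only need a lower bound on the whole form and are allowed to throw away the nonnegative $\bv$-contribution (note the $\max\{0,\cdot\}$ on the right), it suffices to show $Q_{\btau}(\bs)\ge c_0 n|\bs|^2$ on $\{\langle\bs,\btau\rangle_{\ell^2}=0\}$ after absorbing the cross term.

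The heart of the matter is the lower bound $Q_{\btau}(\bs)\ge c_0 n|\bs|^2$. Writing $Q_{\btau}(\bs)=2\sum_{i,j}G(p_i,p_j)s_is_j + \sum_i\bigl(-\tfrac1\pi\log(\tau_i/2)-\tfrac1\pi\bigr)s_i^2 + 2\sum_i(R(p_i,p_i)-G\text{-diagonal removed})s_i^2$, I would interpret the first sum as $2\cE\bigl(\sum_i s_i\delta_{p_i}\bigr)$ in the sense of the regularized energy — but this is exactly where the constraint $\langle\bs,\btau\rangle_{\ell^2}=0$ and the index-one hypothesis $\lambda_2(J)>0$ come in. Because $\btau$ is (proportional to) the maximizing direction and $(\bp,\btau)$ is an $\hat\cH$-minimizer, the signed measure $\sum_i s_i\delta_{p_i}$ with $\sum_i s_i\tau_i=0$ should be thought of as a perturbation orthogonal to the ``ground state,'' and $\lambda_2(J)>0$ forces the associated quadratic form in the Green's function to be controlled below by $-C|\bs|_{\ell^1}^2\log n$ or similar — in any case, subdominant to the diagonal term $\tfrac{1}{\pi}|\log(\tau_i/2)|\,|\bs|^2 \gtrsim (cn)|\bs|^2$ coming from the exponential smallness of $\tau_i$. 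I expect the main obstacle to be making this last comparison rigorous: one must bound $\bigl|\sum_{i\ne j}G(p_i,p_j)s_is_j\bigr|$ using only $d(p_i,p_j)>n^{-3}$, which naively gives $\sum_{i\ne j}|\log d(p_i,p_j)|\,|s_i||s_j|\lesssim n\log n\,|\bs|^2$ — marginally the same order as the good term — so a more careful argument is needed, either a finer separation/packing estimate (a Vitali-type count showing only $O(\log n)$ of the pairs can be as close as $n^{-3}$, most being $\gtrsim n^{-1/2}$) or a direct energy comparison using that $\sum_i s_i\delta_{p_i}$ integrates the $L^2$-orthogonal complement of a near-ground-state. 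I would carry this out by combining the crude bound on the number of close pairs with the sign and size of the diagonal term, choosing $c_0(g,V)$ small enough to absorb all errors, and finally using Young's inequality $n^5|\bv||\bs|\le \tfrac{c_0}{2}n|\bs|^2 + \tfrac{n^9}{2c_0}|\bv|^2$ — but since the right-hand side of the Proposition only asks for $\max\{0, c_0n|\bs|^2 - n^5|\bv||\bs|\}$, no such absorption is even needed; one simply keeps the cross term as stated and discards the (nonnegative, up to lower order) pure-$\bv$ block.
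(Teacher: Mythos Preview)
Your overall architecture is right: you correctly compute the $\bs$-block of $D^2\cH$, identify the dominant diagonal $-\tfrac{1}{\pi}\log(\tau_i/2)\sim 2n|\cI|$, bound the cross-block by a polynomial in $n$ using the separation $d(p_i,p_j)>n^{-3}$, and recognize that the crux is a lower bound $Q(\bs,\bs)\geq c_0 n|\bs|^2$ on $\{\langle\bs,\btau\rangle=0\}$. But the argument you sketch for this last step does not close, and the missing idea is exactly the content of the paper's proof.

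The difficulty you flag is real and cannot be fixed by a packing argument. The off-diagonal sum $2\sum_{i\neq j}G(p_i,p_j)s_is_j$ is \emph{not} small compared to the diagonal: in the direction $\bs\propto\btau$ it equals (up to $o(n)$) $2n\cI|\bs|^2=-2n|\cI||\bs|^2$, which exactly cancels the diagonal --- this is precisely why $\btau$ is an eigenvector with eigenvalue $-1/\pi$ (Lemma~\ref{lem:spanTau}). So no estimate on $|\sum_{i\neq j}G(p_i,p_j)s_is_j|$ that ignores the constraint $\langle\bs,\btau\rangle=0$ can work; a Vitali count on close pairs addresses the wrong issue. What is needed is a genuine \emph{spectral} comparison: the paper smooths each $\delta_{p_i}$ to an $L^\infty$ bump $\varphi_i=|B_{n^{-4}}(p_i)|^{-1}\chi_{B_{n^{-4}}(p_i)}$ with disjoint supports, sets $u:=J^{-1}(\sum_i s_i\varphi_i)$, and shows
\[
Q(\bs,\bs)-2\int u\,Ju\,dA_g\;\geq\;\tfrac{n}{2}|\cI|\,|\bs|^2,
\]
where the smoothing error is only $O(\log n)|\bs|^2$ (not $O(n\log n)$), because one is comparing $G(p_i,p_j)$ to $\iint G\varphi_i\varphi_j$ pair by pair rather than bounding the whole sum in absolute value. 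Now $\lambda_2(J)>0$ enters cleanly: since $\bs\mapsto\sum s_i\varphi_i$ is injective (disjoint supports), any $2$-dimensional subspace of $\mathbb{R}^n$ maps to a $2$-dimensional subspace of $W^{1,2}$, on which the quadratic form $u\mapsto\int uJu$ must be nonnegative somewhere; hence $\lambda_2(Q)\geq c_0 n$ by min-max. Your phrase ``direct energy comparison'' points in this direction but does not supply the mechanism.

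A second, smaller gap: the lower bound $0$ in the $\max$, and your ability to ``throw away the nonnegative $\bv$-contribution,'' both require knowing $S_\btau D^2\cH S_\btau\geq 0$ on $\langle(0,\btau)\rangle^\perp$. This is not automatic from criticality; it comes from the fact that $(\bp,\btau/|\btau|)$ \emph{minimizes} $\cH$ on $\hat X_n$, via the auxiliary function $F(\bq,\bsigma)=\cH(\bq,\bsigma/|\bsigma|)$ (see Lemma~\ref{hess.nonneg}). You invoke the minimizer property only heuristically for the $\bs$-direction; it is needed for the full orthogonal complement.
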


The remainder of \S \ref{sect:minH} is devoted to the proof of these three results. In \S \ref{sect:upperBoundInf}, we prove the upper bound for $\inf\hat\cH$. In \S \ref{sect:PropertyMinimizers}, we establish the existence of minimizer for $\hat\cH$ and study its properties. In \S \ref{sect:lowerBoundInfHatH}, we prove a lower bound for $\inf\hat\cH$, and complete the proof of Theorem \ref{thm:existGoodMinimizers}. Finally in \S \ref{sect:meanFieldLimit}, we prove Theorem \ref{thm:convMeasure}, and in \S \ref{sect:wt.nondeg}, we prove Proposition \ref{prop:wt.nondeg}.

Throughout the remainder of \S \ref{sect:minH}, we will be in the setting of Theorem \ref{thm:existGoodMinimizers}, so that $\Sigma$ is a closed surface equipped with a metric $g$ and function $V\in C^{\infty}(\Sigma)$ such that the Schrödinger operator $J=d^*_gd-V$ is nondegenerate with $\lambda_1(J)<0$. As usual, the value of absolute constants $C$ may change from line to line, but will always depend on the parameters $(g,V)$, unless specified otherwise.

\subsection{The upper bound for $\inf_{\hat X_n}\cH$.}\label{sect:upperBoundInf}

To obtain asymptotically sharp upper bounds for $\inf_{\hat{X_n}}\cH$ in terms of $\cI$, we argue similarly to Steps 2 and 3 of \cite[Section 2.4]{SerfatyLectures}, beginning with the following discrete approximation lemma for smooth measures.

\begin{lem}\label{disc.app}
For any $0\leq\psi \in L^{\infty}(\Sigma)$ such that $\mu:=\psi dA_g$ is a probability measure, there are constants $\hat c(g,\|\psi\|_{L^{\infty}})>0$ and $\hat C(g,V,\|\psi\|_{L^\infty})>0$ such that for any $n\in \mathbb{N}$, we can find $n$ distinct points $p_1,\ldots,p_n\in \Sigma$ such that
$$\min_{i\neq j}d(p_i,p_j)> \hat cn^{-1/2},$$
and for any $\delta\in (0,1/2)$ the measure $\nu:=\frac 1n \sum_i\delta_{p_i}$ satisfies
$$\left| \cE_\delta(\mu) - \cE_\delta(\nu) \right| <\hat  C (|\log \delta| n^{-1/2} +  n^{-1/4}).$$
\end{lem}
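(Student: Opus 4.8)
The plan is to construct the points $p_1,\dots,p_n$ by a quantitative equidistribution argument and then estimate the difference of energies by splitting the double integral into a ``diagonal'' and an ``off-diagonal'' regime. First I would decompose $\Sigma$ into $n$ measurable cells $\{A_i\}_{i=1}^n$ with $\mu(A_i)=\frac1n$ and $\diam(A_i)\le C(g,\|\psi\|_{L^\infty})\, n^{-1/2}$; such a decomposition exists by a standard chopping argument using $0\le\psi\in L^\infty$ (the lower bound on the separation $\min_{i\ne j}d(p_i,p_j)>\hat c n^{-1/2}$ will be arranged by a further subdivision / Vitali-type thinning so that each cell also contains a ball of radius comparable to $n^{-1/2}$ — here is where the implicit lower bound on $\psi$ away from its support would matter, so more carefully one first restricts to $\spt\mu$ and then distributes, since the estimate only involves $\nu$ through pairings against $G_\delta$). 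Then I would pick $p_i\in A_i$ and set $\nu=\frac1n\sum_i\delta_{p_i}$, so that $\nu=\sum_i\frac1n\delta_{p_i}$ and $\mu=\sum_i\mu|_{A_i}$ are ``close'' in the sense that each piece $\mu|_{A_i}$ is a probability-weighted measure supported within distance $Cn^{-1/2}$ of $p_i$.

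Next I would write
\begin{align*}
\cE_\delta(\mu)-\cE_\delta(\nu)
&=\sum_{i,j}\int_{A_i\times A_j}\bigl(G_\delta(x,y)-G_\delta(p_i,p_j)\bigr)\,d\mu(x)\,d\mu(y),
\end{align*}
and estimate the $(i,j)$ term in two regimes. For the off-diagonal pairs, I would use the Lipschitz-type bound on $G_\delta$ coming from Lemma \ref{green.lem}: away from the diagonal $|\nabla G(x,y)|\lesssim d(x,y)^{-1}$, hence for $i\ne j$ (so $d(p_i,p_j)\gtrsim n^{-1/2}$ and, by $\diam A_i\lesssim n^{-1/2}$, also $d(x,y)\gtrsim n^{-1/2}$ on $A_i\times A_j$ after possibly discarding nearest-neighbor pairs) one gets $|G_\delta(x,y)-G_\delta(p_i,p_j)|\lesssim n^{-1/2}\cdot d(p_i,p_j)^{-1}$; summing $\frac1{n^2}\sum_{i\ne j}d(p_i,p_j)^{-1}$ against the separation lower bound and a packing estimate for points on a surface gives a contribution of size $O(n^{-1/2}\log n)$ or better — this is the analogue of Steps 2--3 of \cite[Section 2.4]{SerfatyLectures}. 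For the diagonal and near-diagonal pairs, I would bound $G_\delta$ crudely: $|G_\delta(x,y)|\le \frac1{2\pi}|\log\delta|+\|R\|_{C^0}$, so each such term is $\lesssim \frac1{n^2}(|\log\delta|+1)$, and the number of near-diagonal pairs is $O(n)$ by the bounded-geometry packing bound; this yields the $|\log\delta|\,n^{-1}$-type contribution. Re-examining the exponents, to land on the stated bound $\hat C(|\log\delta|\,n^{-1/2}+n^{-1/4})$ one should be slightly less wasteful: the diagonal blocks $\mu|_{A_i}\times\mu|_{A_i}$ and $\delta_{p_i}\times\delta_{p_i}$ each have total mass $n^{-2}$ but there are $n$ of them, so their combined contribution is $O(n^{-1}(|\log\delta|+1))$, comfortably inside $|\log\delta|\,n^{-1/2}$; the $n^{-1/4}$ term then absorbs the off-diagonal error, where the crude Lipschitz estimate $d(x,y)^{-1}\lesssim n^{1/2}$ on all cells gives $O(n^{-1/2}\cdot n^{1/2}\cdot\#\{\text{pairs with }d\lesssim n^{-1/4}\}/n^2)$ — choosing the near/far threshold at scale $n^{-1/4}$ optimizes the trade-off and produces exactly $n^{-1/4}$.

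The main obstacle I expect is making the cell decomposition genuinely quantitative with the \emph{simultaneous} diameter bound $\diam A_i\lesssim n^{-1/2}$ and mass bound $\mu(A_i)=n^{-1}$ while also extracting the separation lower bound $\min_{i\ne j}d(p_i,p_j)>\hat c n^{-1/2}$ — the last requires that the chosen representatives $p_i$ not cluster, which forces one to either (i) thin out by a Vitali argument and accept that only $\sim n$ points survive (adjusting constants), or (ii) exploit a lower bound for $\psi$ on $\spt\psi$, or (iii) more robustly, note that since only $G_\delta$-pairings matter and $G_\delta$ is globally Lipschitz, one can afford to move each $p_i$ slightly to enforce separation at the cost of an extra $O(n^{-1/2}\cdot\mathrm{Lip}(G_\delta))=O(n^{-1/2}|\log\delta|)$ error, which is again within budget. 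The remaining steps — the packing/counting lemmas for points on a closed surface and the $G_\delta$ gradient bounds — are routine given Lemma \ref{green.lem} and bounded geometry, so the heart of the argument is the careful bookkeeping of exponents in the near/far split described above.
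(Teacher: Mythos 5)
Your cell decomposition is where the proposal breaks down, and the issue is structural rather than a gap in bookkeeping. You want cells $A_i$ with $\mu(A_i)=\tfrac1n$ \emph{and} $\diam A_i\lesssim n^{-1/2}$. Since $\psi\le \|\psi\|_{L^\infty}$, the mass condition forces $\operatorname{area}(A_i)\ge (n\|\psi\|_{L^\infty})^{-1}$, which gives a \emph{lower} bound $\diam A_i\gtrsim n^{-1/2}$; getting the matching \emph{upper} bound would require a positive lower bound on $\psi$, which is nowhere in the hypotheses. In the actual application (Proposition \ref{prop:betaNUpperBound}) the measure is $\psi=-\cI V\cdot\chi_K$ for an a priori unknown set $K$, and $\psi$ can degenerate to zero near $\partial K$, so cells of prescribed mass $1/n$ can be arbitrarily long there. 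Your fix (iii) -- moving points by $O(n^{-1/2})$ to enforce separation -- does not salvage this either, because $\mathrm{Lip}(G_\delta)\sim \delta^{-1}$ (not $|\log\delta|$: $G_\delta$ has a $\log$-cutoff at scale $\delta$, hence gradient of order $1/\delta$ near the diagonal), so the perturbation error is $O(n^{-1/2}/\delta)$, which is not absorbed by $\hat C(|\log\delta|n^{-1/2}+n^{-1/4})$. And fix (ii) simply imports the lower bound on $\psi$ that is missing from the statement. A further symptom that something is wrong: if your cell decomposition did exist, the argument you sketch would yield an error of order $n^{-1/2}+n^{-1}|\log\delta|$, strictly better than the stated $n^{-1/4}+n^{-1/2}|\log\delta|$; the $n^{-1/4}$ in the statement is telling you that cells of size $n^{-1/2}$ are not achievable, and your retroactive ``threshold at scale $n^{-1/4}$'' has no mechanism generating it once the cells are assumed that small.

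The paper's proof resolves exactly this by decoupling the geometry of the cells from the mass of $\mu$. It takes a maximally separated set of $k$ points (with $k=\sqrt n$ at the end), uses the associated Voronoi cells $Q_i$ -- which have diameter comparable to $k^{-1/2}=n^{-1/4}$ regardless of $\psi$ -- and then places a \emph{variable} number $m_i\approx n\,\mu(Q_i)$ of points in the ball $B_{r_k/2}(q_i)$, with $\sum m_i=n$. The upper bound $\psi\le \|\psi\|_{L^\infty}$ is used only to bound $m_i\lesssim n/k$, which gives the $n^{-1/2}$ separation among the $m_i$ points packed inside a single cell. The near/far split is then at the cell scale $r_k\sim n^{-1/4}$: far pairs contribute a Lipschitz error of order $r_k\sim k^{-1/2}=n^{-1/4}$ plus a mass-mismatch error of order $k/n=n^{-1/2}$, while near-diagonal pairs (those with $d(q_i,q_j)\le 30 r_k$) live in a tube of volume $\sim 1/k=n^{-1/2}$ and contribute $|\log\delta|/k=|\log\delta|\,n^{-1/2}$. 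This is the genuine two-scale structure your one-point-per-cell scheme misses. If you want to keep your framework, the minimal repair is to replace ``equal mass cells, one point each'' by ``equal geometry cells, mass-weighted point counts.''
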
 

\begin{proof}   

To begin, fix $n\in \mathbb{N}$, and for any $1\leq k\leq n$, let $q_1,\ldots,q_k$ be a collection of $k$ distinct points maximizing the separation $\min_{i\neq j}d(q_i,q_j)$, and set $r_k:=\min_{i\neq j}d(q_i,q_j)$. Without loss of generality, we may assume that the disks $\{B_{10r_k}(q_i)\}$ cover $\Sigma$; otherwise, we could replace one of the points $q_i$ such that $d(q_i,q_j)=r_k$ for some $j\neq i$ with a new point $q_i'$ such that $d(q_i',q_j)\geq 10r_k$ for every $j\neq i$, and if the balls of radius $10r_k$ for this new collection still fail to cover $\Sigma$, we can repeat this process again, eventually contradicting the maximality of the separation $r_k$. As a consequence, we have an estimate of the form
$$\area(\Sigma,g)\leq C(g)k r_k^2,$$
while the disjointness of the disks $B_{r_k/2}(q_i)$ implies an estimate of the form
$$k r_k^2\leq C(g).$$
Next, for each $i=1,...,k$, let $$Q_i:=\{x\mid d(x,q_i)=\min_j d(x,q_j)\},$$ be the  Voronoi cell associated to $q_i$, and set
$$\alpha_i:=\int_{Q_i}\psi.$$
Noting that $\sum_{i=1}^n \alpha_i=\|\psi\|_{L^1}=1$, we can then select some $m_i\in \mathbb{N}\cup\{0\}$ for each $i=1,...,k$ such that $\sum_i m_i=n$ and $|m_i/n-\alpha_i|\leq 2/n$ (cf. \cite[Section 2.4, Claim 1]{SerfatyLectures}). 

Next, inside each disk $B_{r_k/2}(q_i)\subset Q_i$, we can select $m_i$ further points $q_{i,1},\ldots,q_{i,m_i}\in Q_i$ satisfying satisfying $d(q_{i,j},q_{i,j'})> c_3\cdot(km_i)^{-1/2}$ whenever $j\neq j'$ and $d(q_{i,j},\partial Q_i)> c_3k^{-1/2}$ for some $c_3=c_3(\Sigma,g)>0$. The collection of all such points $q_{i,j}$ will then provide the desired set $\{p_1,...,p_n\}$; we check now that they satisfy the desired properties.

We first confirm the separation estimate $d(p_i,p_j)>\hat{c}n^{-1/2}$ for $i\ne j$. By construction, if $p_i,p_j$ belong to distinct Voronoi cells, then $d(p_i,p_j)>c_3 k^{-1/2}>c_3 n^{-1/2}$, so it remains to consider the case where $p_i,p_j\in Q_i$ for a common cell $Q_i$. In this case, we have
$$d(p_i,p_j)>c_3 \cdot (km_i)^{-1/2},$$
where we recall that
$$m_i\leq n\alpha_i+2.$$
On the other hand, by definition of $\alpha_i$ and the fact that $Q_i\subset B_{10r_k}(q_i)$, we have an estimate of the form
$$\alpha_i\leq C(g)r_k^2\|\psi\|_{L^{\infty}}\leq C(g,\|\psi\|_{L^{\infty}})\cdot k^{-1},$$
so that
$$km_i\leq k(n\alpha_i+2)\leq [C(g,\|\psi\|_{L^{\infty}})+2]n,$$
giving an estimate of the desired form
$$d(p_i,p_j)>c_3 \cdot (km_i)^{-1/2}>\hat{c} n^{-1/2}.$$

Next, setting
$$\nu:=\sum_{i=1}^n\frac{1}{n}\delta_{p_{i}}=\sum_{i=1}^k\sum_{j=1}^{m_i}\frac{1}{n}\delta_{q_{i,j}},$$
it remains to establish the desired bound for $|\mathcal{E}_{\delta}(\mu)-\mathcal{E}_{\delta}(\nu)|$.
To this end, we first estimate the integrals 
$$\int_{_{Q_i\x Q_j}}G_\delta(x,y) (d\mu d\mu-d\nu d\nu)$$
for every $1\leq i,j\leq n,$ treating separately the cases where the centers $q_i,q_j$ of the cells $Q_i,Q_j$ satisfy $d(q_i,q_j)>30r_k$ and $d(q_i,q_j)\leq 30r_k$.

\subsection*{Case 1: Assume $d(q_i,q_j)>30r_k$.} \hspace{40mm}

In light of the containment $Q_i\subset B_{10r_k}(q_i)$, the separation assumption $d(q_i,q_j)>30r_k$ and a simple application of the triangle inequality yields in this case the estimates
\begin{equation}\label{tri.est}
    \frac{1}{3}d(q_i,q_j)<d(x,y)<\frac{5}{3}d(q_i,q_j).
\end{equation}
Now, write
$$\left|\int_{Q_i \times Q_j} G_\delta(x, y) \, (d\mu \, d\mu - d\nu \, d\nu) \right| \notag \leq I_1+I_2,$$
where
$$I_1=\left| \int_{Q_i \times Q_j} G_\delta(q_i, q_j) \, (d\mu \, d\mu - d\nu \, d\nu) \right|$$
and
$$I_2=\left| \int_{Q_i \times Q_j} (G_\delta(x, y) - G_\delta(q_i, q_j)) \, (d\mu \, d\mu - d\nu \, d\nu) \right|.$$

To estimate $I_1$, note first that
$$I_1=|G_{\delta}(q_i,q_j)||\alpha_i\alpha_j-\frac{m_im_j}{n^2}|,$$
which together with the bounds $|\alpha_i-m_i/n|\leq 2/n$ and $\alpha_i\leq C(g,\|\psi\|_{L^{\infty}})/k$ leads to an estimate of the form
$$I_1\leq |G_{\delta}(q_i,q_j)|\cdot (\alpha_j|\alpha_i-\frac{m_i}{n}|+\frac{m_i}{n}|\alpha_j-\frac{m_j}{n}|)\leq |G_{\delta}(q_i,q_j)|\cdot\frac{C(g,\|\psi\|_{L^{\infty}})}{nk}.$$
Combining this with the observation that $\area(Q_i)\geq \area(B_{r_k/2}(q_i))\geq \frac{c(g)}{k}$ for a suitable $c(g)>0$, we deduce that
$$I_1\leq \frac{k^2}{c}\cdot \frac{C}{nk}\int_{Q_i\times Q_j}|G_{\delta}(q_i,q_j)|dA_gdA_g.$$
Finally, since $|G_{\delta}(q_i,q_j)|\leq \frac{C}{d(q_i,q_j)}$ for some constant $C(g,V)$, an application of \eqref{tri.est} gives an estimate of the form
$$I_1\leq C \frac{k}{n}\int_{Q_i\times Q_j}\frac{1}{d(x,y)}dA_gdA_g$$
for a constant $C=C(g,\|\psi\|_{L^{\infty}},V)$.

As for $I_2$, it follows from Lemma \ref{green.lem} and \eqref{tri.est} that there is a constant $C(g,V)$ such that
\begin{align}
\| G_\delta \|_{Lip(Q_i\x Q_j)} \leq \frac{C}{d(q_i,q_j)}. \notag
\end{align}
It then follows that
$$|G_{\delta}(x,y)-G_{\delta}(q_i,q_j)|\leq \frac{C}{d(q_i,q_j)}\cdot(d(x,q_i)+d(y,q_j))\leq \frac{20r_k C}{d(q_i,q_j)},$$
for any $(x,y)\in Q_i\times Q_j$, and consequently
$$I_2\leq \frac{C'(g,V)}{\sqrt{k}}\cdot \frac{1}{d(q_i,q_j)}(\mu(Q_i)\mu(Q_j)+\nu(Q_i)\nu(Q_j)).$$
Since $\mu(Q_i)=\alpha_i\leq C(g,\|\psi\|_{L^{\infty}})\cdot k^{-1}$ and $\nu(Q_i)=\frac{m_i}{n}\leq \alpha_i+\frac{2}{n}$, it then follows that
$$I_2\leq \frac{C(g,V,\|\psi\|_{L^{\infty}})}{\sqrt{k}\cdot d(q_i,q_j)}\cdot \frac{1}{k^2}\leq \frac{C(g,V,\|\psi\|_{L^{\infty}})}{\sqrt{k}\cdot d(q_i,q_j)}\cdot \area(Q_i)\area(Q_j),$$
and appealing once more to \eqref{tri.est}, we obtain an estimate of the form
$$I_2\leq \frac{C}{\sqrt{k}}\int_{Q_i\times Q_j} \frac{1}{d(x,y)}dA_gdA_g.$$

Combining this with the preceding estimate for $I_1$, we find that
\begin{equation}
    |\int_{Q_i\times Q_j}G_{\delta}(x,y)(d\mu d\mu-d\nu d\nu)|\leq C\left(\frac{k}{n}+\frac{1}{k^{1/2}}\right)\int_{Q_i\times Q_j}\frac{1}{d(x,y)}dA_gdA_g
\end{equation}
for some constant $C(g,V,\|\psi\|_{L^{\infty}}$, when $d(q_i,q_j)>30r_k.$

\subsection*{Case 2: Assume $d(q_i,q_j)\leq 30r_k$.} \hspace{40mm}

In this case, we use the crude bound $|G_{\delta}(x,y)|\leq C(g,V)|\log \delta|$ together with the estimates
$$\nu(Q_i)+\mu(Q_i)\leq \frac{C}{k}\leq C'(g,\|\psi\|_{L^{\infty}})\area(Q_i)$$
to see that
\begin{align*}
\left| \int_{Q_i \times Q_j} G_\delta(x, y) \, (d\mu \, d\mu - d\nu \, d\nu) \right| \notag 
&\leq C  \int_{Q_i \times Q_j} |\log \delta |\, (d\mu \, d\mu + d\nu \, d\nu) \notag \\
&\lesssim  C'|\log \delta| vol(Q_i\times Q_j) \notag.
\end{align*}
Note moreover that the (disjoint) union of all such pairs is contained in the set
$$S_k=\{(x,y)\in \Sigma\times \Sigma\mid d(x,y)\leq 50r_k\},$$
whose volume is easily seen to satisfy a bound of the form
$$vol(S_k)\leq C(g) k^{-1}.$$

\medskip

Finally, to estimate $|\cE_{\delta}(\mu)-\cE_{\delta}(\nu)|$, we sum over these cases to obtain an estimate of the form
\begin{eqnarray*}
    |\cE_{\delta}(\mu)-\cE_{\delta}(\nu)|&\leq &\sum_{i=1}^n\sum_{j=1}^n\left|\int_{Q_i\times Q_j}G_{\delta}(x,y)(d\mu d\mu- d\nu d\nu)\right|\\
    &\leq &C\left(\frac{k}{n}+k^{-1/2}\right)\int_{\Sigma\times \Sigma}\frac{1}{d(x,y)}dA_gdA_g+C|\log \delta|\cdot \frac{1}{k}.
\end{eqnarray*}
Since $\int_{\Sigma\times \Sigma}\frac{1}{d(x,y)}dA_gdA_g<\infty$, taking $k=\sqrt n$ then gives us the desired estimate. \end{proof}

With this approximation lemma in hand, we can now prove a fairly sharp upper bound for the infimum $\inf_{\hat X_n}\cH$.

\begin{prop}\label{prop:betaNUpperBound}
    For each $n$ we have
    $$\inf_{\hat X_n}\cH<\cI n+ C n^{3/4}$$
    for a constant $C=C(g,V)$.
\end{prop}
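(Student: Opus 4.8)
I would prove the bound by exhibiting a single explicit competitor. Since $\inf_{\hat X_n}\cH\le\cH(\bp,\bomega)$ for every $(\bp,\bomega)\in\hat X_n$, it suffices to produce one such pair with $\cH(\bp,\bomega)<\cI n+Cn^{3/4}$. The natural choice is the \emph{equal-weight} configuration $\bomega=(n^{-1/2},\dots,n^{-1/2})$, which has $|\bomega|_{\ell^2}^2=1$, placed at points $p_1,\dots,p_n$ obtained by discretizing a near-optimal measure for the interaction energy $\cE$.

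\textbf{Step 1: a bounded reference density.} Choose a probability measure $\mu=\psi\,dA_g$ with $0\le\psi\in L^\infty(\Sigma)$ and $\cE(\mu)$ as close to $\cI$ as possible. The cleanest route is to take $\psi$ to be (a slight smoothing of) the density of the equilibrium measure: by the direct method the infimum $\cI$ in \eqref{i.def} is attained at some $\mu\in\cP$, and its potential $\phi_\mu:=\int G(\cdot,y)\,d\mu(y)\in W^{1,2}(\Sigma)$ satisfies the obstacle-type relations $\phi_\mu\ge\cI$ everywhere, $\phi_\mu\equiv\cI$ on $\supp\mu$, and $J\phi_\mu=\mu$; on the interior of the contact set this forces $\mu=-\cI V\,dA_g$, and standard $C^{1,1}$ regularity for the associated obstacle problem upgrades this to a global $L^\infty$ bound on $d\mu/dA_g$, so one may simply take $\psi=d\mu/dA_g$ with $\cE(\psi\,dA_g)=\cI$. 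To avoid quoting obstacle regularity one can instead mollify $\mu$ at a small scale $\eta$: the resulting bounded density $\psi_\eta$ satisfies $\cE(\psi_\eta\,dA_g)\le\cI+C\eta|\log\eta|$, since the Lipschitz part $R$ of $G$ contributes $O(\eta)$ while the logarithmic part $-\tfrac1{2\pi}\log d$ is essentially superharmonic and so barely increases under radial mollification; one then takes $\eta=\eta_n\to 0$, tracking the dependence of the constant $\hat C$ in Lemma~\ref{disc.app} on $\|\psi\|_{L^\infty}$ so that $n(\cE(\psi_{\eta_n}\,dA_g)-\cI)$ remains $O(n^{3/4})$.

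\textbf{Step 2: discretize and compute.} Apply Lemma~\ref{disc.app} to $\psi$: for every $n$ it produces distinct points $p_1,\dots,p_n\in\Sigma$ with $\min_{i\ne j}d(p_i,p_j)>\hat c\,n^{-1/2}$ such that, with $\delta:=\tfrac12\hat c\,n^{-1/2}\in(0,\tfrac12)$ and $\nu:=\tfrac1n\sum_i\delta_{p_i}$,
$$|\cE_\delta(\nu)-\cE_\delta(\psi\,dA_g)|<\hat C\big(|\log\delta|\,n^{-1/2}+n^{-1/4}\big)\le Cn^{-1/4}.$$
Since $\delta<\min_{i\ne j}d(p_i,p_j)$, Lemma~\ref{ed.char} applies to $(\bp,\bomega)$; using $|\bomega|_{\ell^2}^2=1$, $\omega_i^2=1/n$, and $\sum_i\omega_i\delta_{p_i}=\sqrt n\,\nu$ (so that $\cE_\delta(\sum_i\omega_i\delta_{p_i})=n\,\cE_\delta(\nu)$), it gives
$$\cH(\bp,\bomega)=n\,\cE_\delta(\nu)+\frac1{4\pi}\big(1+2\log 2+\log n\big)+\frac1{2\pi}\log\delta=n\,\cE_\delta(\nu)+O(1),$$
the two $\log n$ contributions cancelling since $\log\delta=-\tfrac12\log n+O(1)$. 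Finally $\cE_\delta(\psi\,dA_g)\le\cE(\psi\,dA_g)=\cI$ because $G_\delta\le G$, so
$$\inf_{\hat X_n}\cH\le\cH(\bp,\bomega)\le n\big(\cI+Cn^{-1/4}\big)+O(1)<\cI n+Cn^{3/4}$$
for $n$ large, which is the claim.

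\textbf{Main obstacle.} Everything past Step 1 is routine bookkeeping with Lemmas~\ref{disc.app} and \ref{ed.char} and the scaling identity of Lemma~\ref{lem:tMax}. The one delicate point is Step 1 for a \emph{general} indefinite invertible $J$: producing a bounded-density probability measure whose $\cE$-energy equals, or approaches with controlled $L^\infty$ norm, the infimum $\cI$. For the index-one Jacobi operators driving the main theorems this is immediate, since there $\mu=-\cI V\,dA_g$ with $V=|A|^2+\Ric(\nu,\nu)>0$ bounded (cf. the Remark after Theorem~\ref{thm: asymptotics}); in general one invokes $C^{1,1}$ regularity for the obstacle problem solved by $\phi_\mu$, or runs the mollification estimate indicated above.
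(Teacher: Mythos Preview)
Your proof is correct and follows the same approach as the paper: discretize an $L^\infty$-density equilibrium measure via Lemma~\ref{disc.app}, then compute $\cH$ at the equal-weight configuration using Lemma~\ref{ed.char} (the paper computes at $\btau=(1/n,\dots,1/n)$ and then rescales via \eqref{eq:scaleTau}, which is equivalent to your direct computation at $\bomega=(n^{-1/2},\dots,n^{-1/2})$). For your Step~1, note that the paper supplies exactly the needed $L^\infty$ bound on the equilibrium measure in Proposition~\ref{prop:eq.char} (proved later via a self-contained Frostman-type argument showing $\mu=-\cI V\,dA_g\lfloor_K$), so you need not invoke obstacle-problem regularity as a black box; your mollification alternative~(b) would require more care, since the constant $\hat C$ in Lemma~\ref{disc.app} grows with $\|\psi\|_{L^\infty}$.
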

\begin{proof}
By Proposition \ref{prop:eq.char} below, we know that $\cI$ is realized by an equilibrium measure $\cI=\cE(\mu)$ of the form $\mu=\psi dA_g$, where $\psi\in L^{\infty}(\Sigma)$ is given by the restriction of $-\cI V$ to a subset $K\subset \{V\geq 0\}$. By applying Lemma \ref{disc.app} with $\delta=1/n$, we then have a point  
$$(\bp,\btau):=(p_1,...,p_n, 1/n,...,1/n)\in X_n$$ 
satisfying estimates of the form
$$|\cE_{\delta}(\mu)-\cE_{\delta}(\nu)|<C(g,V)n^{-1/4}$$
and
$$\min_{i\neq j}d(p_i,p_j)>\hat{c} n^{-1/2}>\frac{1}{n}$$
for $n$ sufficiently large. 

In particular, it follows that
$$\cE_{\delta}(\nu)\leq \cI+C n^{-1/4},$$
and since $d(p_i,p_j)>\frac{1}{n}=\delta$ for $i\neq j$ and $\tau_i=\frac{1}{n}$, it follows from Lemma \ref{ed.char} that
\begin{align*}
\cH(\bp,\btau)&= \mathcal{E}_{\delta}(\sum_{i=1}^n\frac{1}{n}\delta_{p_i})+\sum_{i=1}^n\frac{1}{4\pi n^2}\left (1+2 \log 2n\right)-\frac{1}{2\pi n^2}\log n\\
&\leq \cI+C' n^{-1/4}.
\end{align*} 
Finally, applying (\ref{eq:scaleTau}) with $t=\sqrt n$, we have
\begin{align*}
    \cH(\bp,1/\sqrt n,...,1/\sqrt n)&=n \cH(\bp,\btau)-\frac {\log\sqrt n}{2\pi n}\\ 
    &< n\cI+Cn^{3/4},
\end{align*}
and since $(\bp,\frac{1}{\sqrt n},...,\frac{1}{\sqrt n})\in \hat X_n$, we arrive at a bound of the desired form.
\ref{disc.app}.

\end{proof}

\subsection{Existence and properties of minimizers for $\hat \cH$}\label{sect:PropertyMinimizers}

Recall that, by the relation \eqref{eq:infHatH}, the minimizers of $\hat\cH$ on $\hat X_n:=X_n\cap\{|\bomega|=1\}$ are also the minimizers of $\cH$ on $\hat X_n$, realizing the infimum
$$\alpha_n:=\inf_{(\bp,\bomega)\in\hat X_n}\H(\bp,\bomega).$$

Moreover, recall from \eqref{i.neg} that $\cI:=\inf_{\mu\in\cP}\cE(\mu)<0$, since $\lambda_1(J)<0$. As a consequence, it follows from Proposition \ref{prop:betaNUpperBound} that $\alpha_n<0$ for $n\geq C(g,V)$ sufficiently large. For $n$ sufficiently large, the following proposition establishes precompactness of minimizing sequences for $\cH$ in $\hat X_n$, along with some additional quantitative information.

\begin{prop}\label{prop:piTauiAlmostMinimum} Assume $\lambda_1(J)<0$.
    There are constants $\hat c(g,V)>0$ and $C(g,V)<\infty$ such that for every $n\geq C$, if $(\bp,\bomega)\in \hat X_n$ satisfies
    $$\alpha_n\leq\cH(\bp,\bomega)\leq (1-\hat c^2/n)\alpha_n,$$
    then the following hold:
    \begin{enumerate}
        \item For each $i$, $\omega_i> \frac{\hat c}{2\sqrt n}$. 
        \item $\min_{i\ne j} d(p_i,p_j)>\hat \gamma_n$ for some $\hat\gamma_n(g,V)>0$.
    \end{enumerate}
\end{prop}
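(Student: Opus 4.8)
The plan is to argue by contradiction, exploiting the logarithmic concavity of $\cH$ along rays and the structure of the interaction energy. For part (1), suppose some weight $\omega_i$ is very small, say $\omega_i \leq \frac{\hat c}{2\sqrt n}$ with $\hat c$ to be chosen. The idea is to delete the $i$-th point: set $\bp' = (p_1,\dots,\widehat{p_i},\dots,p_n)$ and $\bomega'$ the renormalization of $(\omega_1,\dots,\widehat{\omega_i},\dots,\omega_n)$ to unit $\ell^2$ norm, an element of $\hat X_{n-1}$. I would estimate the change in $\cH$: the terms involving $p_i$ contribute $2\sum_{j\neq i}\omega_i\omega_j G(p_i,p_j) + \omega_i^2 R(p_i,p_i) + \frac{\omega_i^2}{4\pi}(1-2\log\frac{\omega_i}{2})$. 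Using the uniform bound $d_g(x,y)|G(x,y)|\leq C$ from Lemma \ref{green.lem}, the cross terms are bounded by $C\omega_i \sum_{j\neq i}\omega_j/d(p_i,p_j)$, which is not obviously small; so instead I would use the cruder $L^2$-type bound $\sum_j \omega_j G(p_i,p_j) = \phi(p_i)$ where $\phi = \sum_j \omega_j\phi_{p_j}$, combined with the fact that at a near-minimizer the potential $\phi$ cannot be too large in an averaged sense. More robustly: renormalizing $\bomega'$ only rescales by $(1-\omega_i^2)^{-1/2} = 1 + O(\omega_i^2)$, and by the scaling identity \eqref{eq:scaleTau}, $\cH(\bp',t\bomega')$ differs from $\cH$ restricted to the sub-configuration by a controlled amount; the key gain is that removing a point with tiny weight changes $\cH$ by $o(\alpha_n/n) = o(1)$ (since $\alpha_n \sim \cI n$), while $\alpha_{n-1} \leq \alpha_n - \cI + o(1)$ from the upper bound Proposition \ref{prop:betaNUpperBound} applied with $n-1$ together with the lower bound for $\inf_{\hat X_n}\cH$; comparing gives $\cH(\bp,\bomega) \geq \alpha_{n-1} + \cI - o(1) > (1-\hat c^2/n)\alpha_n$ once $\hat c$ is small, a contradiction.

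For part (2), suppose $d(p_i,p_j)$ is extremely small for some pair. Here the mechanism is that the cross term $2\omega_i\omega_j G(p_i,p_j) \approx -\frac{1}{\pi}\omega_i\omega_j \log d(p_i,p_j) \to +\infty$ as $d(p_i,p_j)\to 0$, using $G(x,y) = -\frac{1}{2\pi}\log d(x,y) + R(x,y)$ with $R$ bounded. Meanwhile all other terms of $\cH$ are bounded below by a constant depending only on $(g,V)$ and $n$ (again via $d|G|\leq C$ and $|R|\leq C$, giving $\cH \geq -Cn$ roughly, but more carefully $\cH(\bp,\bomega) \geq -\frac{1}{2\pi}\sum_{k\neq \ell}\omega_k\omega_\ell \log^-(\text{stuff}) - C$, which I'd need to handle). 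The cleanest route: write $\cH(\bp,\bomega) = -\frac{1}{\pi}\sum_{k<\ell}\omega_k\omega_\ell\log d(p_k,p_\ell) + (\text{terms bounded by }C\text{ in absolute value, using }|\bomega|^2=1\text{ and }d|G|,|R|\leq C)$. Wait — the $\sum_{k\neq \ell}\omega_k\omega_\ell G(p_k,p_\ell)$ is only one-sided controllable; but a single pair $(i,j)$ with $d(p_i,p_j)$ tiny forces that pair's contribution to dominate, and since $\omega_i,\omega_j > \frac{\hat c}{2\sqrt n}$ by part (1), we get $\cH(\bp,\bomega) \geq -\frac{\hat c^2}{4\pi n}\log d(p_i,p_j) - C(n)$; if $d(p_i,p_j) < \hat\gamma_n := \exp(-8\pi n C(n)/\hat c^2)$ this exceeds $\alpha_n$ (which is $O(n)$), contradiction. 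The subtlety is that other pairs could contribute large \emph{negative} amounts — but that's impossible since $G$ is bounded above on the complement of the diagonal only if no two points coincide; more precisely $G(x,y) \leq -\frac{1}{2\pi}\log d(x,y) + C$, so $\sum_{k\neq \ell}\omega_k\omega_\ell G \geq \sum_{k\neq\ell}\omega_k\omega_\ell(-\frac{1}{2\pi}\log d(p_k,p_\ell)) - C$, and each term $-\log d(p_k,p_\ell)$ could be negative only if $d(p_k,p_\ell) > 1$, bounded on a compact $\Sigma$; so the whole double sum is bounded below by $\frac{\hat c^2}{4\pi n}\cdot(-\log d(p_i,p_j)) - C'$ after isolating the $(i,j)$ term and bounding the rest below by $-C'$. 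This gives the contradiction.

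\textbf{The main obstacle} I anticipate is part (1): controlling the effect of deleting a low-weight point requires comparing $\alpha_n$ with $\alpha_{n-1}$ sharply enough (to precision $o(1)$, not just $o(n)$), which in turn leans on having matching upper bounds (Proposition \ref{prop:betaNUpperBound}) and lower bounds (to be proved in \S\ref{sect:lowerBoundInfHatH}) for $\inf_{\hat X_n}\cH$ of the form $n\cI + o(n)$ — there is a potential circularity here with the lower bound, so one likely wants to prove part (1) using only the upper bound plus a standalone crude lower bound $\inf_{\hat X_n}\cH \geq n\cI - C\log n$ that can be obtained independently (e.g. by the splitting/truncation estimate $\cE_\delta(\nu) \geq \cI$ for $\nu$ a probability measure, losing only $\log$ corrections from the diagonal). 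I would structure the argument to establish such a crude two-sided bound first, then deduce both (1) and (2). Part (2), once (1) is in hand, is genuinely elementary as sketched above.
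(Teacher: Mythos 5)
Your treatment of part (2) is essentially the paper's: once part (1) gives $\omega_i\omega_j \geq \hat c^2/(4n)$, isolating the single divergent $-\log d(p_i,p_j)$ contribution and bounding everything else by an $n$-dependent constant (using compactness of $\Sigma$ to control the sign of $-\log d$) delivers the crude separation bound.

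Part (1) is where you have a genuine gap, which you correctly anticipate but do not resolve. The delete-and-compare-to-$\alpha_{n-1}$ route requires the increment $\alpha_{n-1}-\alpha_n$ to be pinned down to $O(1)$, but Proposition \ref{prop:betaNUpperBound} only controls $\alpha_n$ from above with an $O(n^{3/4})$ error, which swamps the $|\cI|$ you want to detect; and the matching lower bound appears only in Section \ref{sect:lowerBoundInfHatH}, which itself rests on Propositions \ref{prop:existenceMinimizer}--\ref{prop:p_iLowerBoundDistance} and hence on the present proposition, so the circularity you flag is real. There is also a subsidiary problem you notice but leave unresolved: the ``$o(1)$'' cost of deleting the low-weight point requires bounding $\omega_i\sum_{j\neq i}\omega_j G(p_i,p_j)=\omega_i\tilde\phi(p_i)$, which cannot be made small without a separation estimate, i.e.\ without part (2) -- a second circularity, since you derive (2) after (1).

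The paper sidesteps both issues by never leaving $\hat X_n$ and never comparing distinct $\alpha_n$'s. After relabeling so $\omega_n$ is the smallest weight, Step 1 strips $p_n$ and keeps only a one-sided inequality $\cH(\bp,\bomega)>\cH(\tilde\bp,\tilde\bomega)+2\omega_n\tilde\phi(p_n)$: the discarded terms $\omega_n^2R(p_n,p_n)+\tfrac{\omega_n^2}{4\pi}(1-2\log\tfrac{\omega_n}{2})$ are positive for large $n$ because $\omega_n\leq 1/\sqrt n$, and $\tilde\phi(p_n)\geq -C\sum\omega_j$ needs no separation hypothesis. Step 2 then constructs a competitor \emph{in} $\hat X_n$ by replacing $p_n$ with a point $q$ at the minimum of $\tilde\phi$, with a prescribed weight $\sigma=\hat c/\sqrt n$ and the remaining weights rescaled to land on $\hat X_n$. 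The engine is Lemma \ref{minphi.bd}, $\min\tilde\phi\leq -c_0\sum\omega_j$, which forces the competitor to capture a definite amount of negative energy. Combining these with the near-minimality hypothesis $\cH(\bp,\bomega)\leq(1-\sigma^2)\alpha_n\leq(1-\sigma^2)\cH(\text{competitor})$ yields, after the arithmetic of Steps 3--4, an inequality that fails outright when $\omega_n\leq\sigma/2$. This argument is local in $n$, uses no lower bound on $\alpha_n$, and requires no fine control of the cross terms -- exactly the three things the delete-and-compare strategy is missing.
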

\begin{remark}
    While the lower bound for $\omega_i$ is sharp up to a constant multiple, note that the lower bound for $d(p_i,p_j)$ given here is simply a rough preliminary bound, used only to show that $\cH$-minimizing sequences remain in the interior of $\hat X_n$. 
\end{remark}

Before proving this proposition, we record two important consequences.

\begin{prop}\label{prop:existenceMinimizer} Under the assumptions of Theorem \ref{thm:existGoodMinimizers}, for $n\geq C(g,V)$, the infimum $\inf _{\hat X_n}\cH$  is achieved by some minimizer  $(\bp,\bomega)\in \hat X_n$. 
\end{prop}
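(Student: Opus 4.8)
The plan is to run the direct method, using Proposition~\ref{prop:piTauiAlmostMinimum} to confine a minimizing sequence to a fixed compact subset of $\hat X_n$ on which $\cH$ is continuous. First I would fix $n\geq C(g,V)$ large enough that Proposition~\ref{prop:piTauiAlmostMinimum} applies and that, by Proposition~\ref{prop:betaNUpperBound} together with the bound $\cI<0$ from~\eqref{i.neg}, we have $\alpha_n:=\inf_{\hat X_n}\cH<0$; enlarging $C(g,V)$ if necessary, one also has $\hat c^2/n\in(0,1)$, so that $(1-\hat c^2/n)\alpha_n>\alpha_n$ and $[\alpha_n,(1-\hat c^2/n)\alpha_n)$ is a genuine right-neighborhood of $\alpha_n$.

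Next I would take a minimizing sequence $(\bp^{(m)},\bomega^{(m)})\in\hat X_n$ with $\cH(\bp^{(m)},\bomega^{(m)})\to\alpha_n$. For all large $m$ one has $\alpha_n\leq\cH(\bp^{(m)},\bomega^{(m)})<(1-\hat c^2/n)\alpha_n$, so Proposition~\ref{prop:piTauiAlmostMinimum} yields $\omega_i^{(m)}>\hat c/(2\sqrt n)$ for each $i$ and $\min_{i\neq j}d(p_i^{(m)},p_j^{(m)})>\hat\gamma_n$. Hence for $m$ large the sequence lies in
$$
K:=\big\{(\bp,\bomega)\in\Sigma^n\times\mathbb{R}^n : |\bomega|=1,\ \omega_i\geq\tfrac{\hat c}{2\sqrt n}\text{ for all }i,\ d(p_i,p_j)\geq\hat\gamma_n\text{ for all }i\neq j\big\},
$$
which is a closed subset of the compact space $\Sigma^n\times\{|\bomega|=1\}$, hence compact, and which is contained in $\hat X_n=X_n\cap\{|\bomega|=1\}$ since the inequalities $\omega_i\geq\hat c/(2\sqrt n)>0$ and $d(p_i,p_j)\geq\hat\gamma_n>0$ keep every configuration inside $X_n$.

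Since $\cH$ is smooth, in particular continuous, on the open set $X_n\supset K$, it attains its minimum over $K$ at some $(\bp,\bomega)\in K\subset\hat X_n$; because the minimizing sequence eventually lies in $K$, this minimum value is $\alpha_n=\inf_{\hat X_n}\cH$, so $(\bp,\bomega)$ is the desired minimizer. The entire argument is pure compactness and continuity once Proposition~\ref{prop:piTauiAlmostMinimum} is available; the real content --- ruling out concentration of the weights and collision of the points along near-minimizing sequences --- is precisely that proposition, and I do not anticipate any additional obstacle here.
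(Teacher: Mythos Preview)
Your proof is correct and follows essentially the same approach as the paper: both run the direct method on a minimizing sequence, invoking Proposition~\ref{prop:piTauiAlmostMinimum} to keep the sequence away from $\partial\hat X_n$ so that the limit (or, in your phrasing, the minimum over the resulting compact set $K$) lies in $\hat X_n$. The only cosmetic difference is that the paper extracts a subsequential limit in the closure and checks it stays in the interior, whereas you package the same bounds into a compact $K\subset\hat X_n$ and use continuity of $\cH$ there.
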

\begin{proof}[Proof (assuming Proposition \ref{prop:piTauiAlmostMinimum}).]
Let us take a minimizing sequence of $\cH$ in $\hat X_n$,  $\{(\bp^{k},\bomega^{k})\}_k$. Passing to a subsequence, it is always possible to find a limit $(\bp,\bomega)\in \Sigma^n\times ([0,\infty)^n\cap S^{n-1})$ in the compact closure of $\hat{X}_n$. For $n\geq C(g,V)$ sufficiently large, it follows from Proposition \ref{prop:piTauiAlmostMinimum} that $$\liminf_{k\to\infty}(\min_i\omega^{k})>0,\quad\textrm{ and }\quad \liminf_{k\to\infty}(\min_{i\ne j}d(p_i^{k},p_j^{k}))>0,$$ 
so that the limit $(\bp,\bomega)$ must have $\omega_i>0$ for each $1\leq i\leq n$ and $d(p_i,p_j)>0$ for $i\neq j$. Thus, $(\bp,\bomega)\in \hat{X}_n$ provides the desired minimizer.
\end{proof}

As another important consequence of Proposition \ref{prop:piTauiAlmostMinimum}, we see that the weights $\omega_i$ of any $\cH$-minimizer $(\bp,\bomega)\in \hat X_n$ must be proportional.

\begin{prop}\label{prop:tauIComparable} Under the same assumptions, there exists some $\hat \Lambda(g,V)>0$ such that for every $n\geq C(g,V)$ sufficiently large, every minimizer $(\bp,\bomega)$ of $\cH$ on $\hat X_n$ satisfies $\frac 1{\hat \Lambda \sqrt n}<\omega_i<\frac {\hat \Lambda}{\sqrt n}$ for each $i$.
\end{prop}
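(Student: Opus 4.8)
\emph{Proof plan.} The lower bound is immediate. A minimizer $(\bp,\bomega)$ of $\cH$ on $\hat X_n$ has $\cH(\bp,\bomega)=\alpha_n$, and since $\alpha_n<0$ for $n$ large (by Proposition~\ref{prop:betaNUpperBound} and $\cI<0$), it trivially satisfies $\alpha_n\le\cH(\bp,\bomega)\le(1-\hat c^2/n)\alpha_n$; Proposition~\ref{prop:piTauiAlmostMinimum}(1) then gives $\omega_i>\hat c/(2\sqrt n)$ for every $i$, which is the asserted lower bound for any $\hat\Lambda\ge2/\hat c$. So it remains to prove the upper bound $\omega_i<\hat\Lambda/\sqrt n$, and for this it suffices to establish \emph{comparability} of the weights, $\omega_i/\omega_k\le C_0(g,V)$ for all $i,k$: granting this, if $\omega_m=\min_i\omega_i$ then $1=\sum_i\omega_i^2\ge\omega_k^2+(n-1)\omega_m^2\ge\omega_k^2\bigl(1+(n-1)C_0^{-2}\bigr)$, whence $\omega_k^2\le C_0^2/(n-1)<\hat\Lambda^2/n$ once $\hat\Lambda^2\ge2C_0^2$ and $n$ is large.

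To prove comparability, I would pass to the critical point $(\bp,\btau)\in X_n$ of $\cH$ associated with the $\hat\cH$-minimizer, so that $\btau=e^{2\pi\alpha_n-1/2}\bomega$ and $\tau_i/\tau_k=\omega_i/\omega_k$. By Proposition~\ref{prop:criticalPointH}, the stationarity condition $\nabla^{\tau_i}\cH(\bp,\btau)=0$, divided by $\tau_i$, reads
$$\frac{1}{\tau_i}\Bigl(\sum_{j\ne i}\tau_j\,G(p_i,p_j)+\tau_i\,R(p_i,p_i)\Bigr)=\frac{1}{2\pi}\log\frac{\tau_i}{2},$$
so that, setting $\Psi_i:=\tau_i^{-1}\sum_{j\ne i}\tau_j\,G(p_i,p_j)+R(p_i,p_i)$, subtracting the identities for two indices gives $\log(\tau_i/\tau_k)=2\pi(\Psi_i-\Psi_k)$. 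Comparability is therefore equivalent to a bound on the \emph{spread} of the renormalized potentials $\Psi_i$ over the points $p_1,\dots,p_n$, uniform in $n$ and in the choice of minimizer. Note that each $\Psi_i=\tfrac{1}{2\pi}\log\tfrac{\tau_i}{2}$ is itself \emph{large}, of order $n\cI$, since the $\tau_i$ are exponentially small (cf.\ Theorem~\ref{thm:existGoodMinimizers}(1)); so one cannot control the $\Psi_i$ one at a time, and the entire point is to exploit cancellation in $\Psi_i-\Psi_k$.

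I expect this potential estimate to be the main obstacle. The natural approach is a mean-field argument in the spirit of \cite[Section~2]{SerfatyLectures}: express $\sum_{j\ne i}\tau_j\,G(p_i,p_j)$ as a discretization of a continuum potential of the empirical measure $\tfrac1n\sum_j\delta_{p_j}$, bounding the discretization error through the Green's-function estimates of Lemma~\ref{green.lem} and the preliminary separation $d(p_i,p_j)>\hat\gamma_n$ from Proposition~\ref{prop:piTauiAlmostMinimum}(2), and then use minimality — comparing $\tfrac1n\sum_j\delta_{p_j}$ with an equilibrium measure for $\cE$, via competitor estimates together with the bounds of Proposition~\ref{prop:betaNUpperBound} and Theorem~\ref{thm:existGoodMinimizers}(1) — to show that this potential is nearly constant on the support. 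A concrete competitor that feeds into this is the one-parameter family $(\bp,\bomega_t)$ obtained by transferring $\ell^2$-mass between a single atom and the rest (with positions fixed), whose first- and second-order optimality at the minimizer — the same mechanism underlying Proposition~\ref{prop:piTauiAlmostMinimum} — couples the ratio $\tau_i/\tau_k$ to $G(p_i,p_k)$, $R$, and $\alpha_n$. In any route, the delicate point is to extract genuinely $O(1)$ (rather than merely $o(n)$) control of the potential spread from the available asymptotics for $\alpha_n$, which is what ultimately forces the weights to be proportional.
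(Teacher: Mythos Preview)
Your reduction to comparability of the $\tau_i$ and your stationarity identity are correct, but you have overlooked a one-line argument and are proposing something far harder than necessary. The claim ``one cannot control the $\Psi_i$ one at a time'' is precisely where you go astray: you \emph{can}, using only that the Green's function is bounded \emph{below}, $G(x,y)\ge-C(g,V)$ (the logarithmic singularity is $+\infty$; cf.\ Lemma~\ref{green.lem}). From your own identity $\Psi_i=\tau_i^{-1}\sum_{j\ne i}\tau_jG(p_i,p_j)+R(p_i,p_i)$ together with this lower bound and the boundedness of $R_D$,
\[
\frac{1}{2\pi}\log\frac{\tau_i}{2}\;=\;\Psi_i\ \ge\ -C\,\frac{n\bar\btau}{\tau_i}-C.
\]
On the other hand, Proposition~\ref{prop:betaNUpperBound} alone gives $|\btau|^2=e^{4\pi\alpha_n-1}$ with $\alpha_n<n\cI+Cn^{3/4}$, hence $\log(\tau_i/2)\le\tfrac12\log(|\btau|^2/4)<-cn$ for large $n$. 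Combining the two displays yields $cn\le C'\,n\bar\btau/\tau_i$, i.e.\ $\tau_i\le C''\bar\btau$, which is exactly the comparability you need. This is the paper's argument, and it requires no mean-field analysis whatsoever.

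Beyond being unnecessary, your proposed route also risks circularity: you invoke Theorem~\ref{thm:existGoodMinimizers}(1), but the \emph{lower} bound there (Proposition~\ref{prop:infHatHLowerBound}) is proved \emph{using} the present proposition. With only the upper bound on $\alpha_n$ available, the two-sided asymptotics you would need to control $\Psi_i-\Psi_k$ to $O(1)$ are simply not in hand at this stage --- as you yourself flag. The missing idea is that the one-sided inequality $G\ge-C$, combined with the exponential smallness of the $\tau_i$, already pins down each $\tau_i/\bar\btau$ individually.
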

\begin{proof}[Proof (assuming Proposition \ref{prop:piTauiAlmostMinimum}).]
Let $(\bp,\bomega)$ be a minimizer of $  \cH$  on $\hat X_n$. Note that item (1) of Proposition \ref{prop:piTauiAlmostMinimum} already supplies a lower bound of the desired form for $\omega_i$, so to prove Proposition \ref{prop:tauIComparable}, it suffices to prove an estimate of the form
$$\bar\bomega>c\omega_i$$ 
for each $1\leq i\leq n$ for the average $\bar\bomega:=\frac{1}{n}\sum_{i=1}^n\omega_i$, for some constant $c(g,J)>0$.

Since $(\bp,\bomega)$ is a critical point for $\hat\cH$ in $\hat X_n$, it follows from the discussion in Section \ref{sect:hatH.def} that the point
$$(\bp,\btau):=(\bp,e^{ {2\pi \cH(\bp,\bomega)} -1/2}\bomega)$$
is a {\it critical point} for $\cH$ in $X_n$, satisfying 
$$\frac 1{4\pi}|\btau|^2=\frac{1}{4\pi e}e^{4\pi\cH(\bp,\bomega)}=\frac{1}{4\pi e}e^{4\pi\alpha_n}.$$
In particular, it follows from Proposition \ref{prop:betaNUpperBound} that we have an estimate of the form 
\begin{equation}\label{log.tausq}
    \log(|\btau|^2/4)< -c n
\end{equation}
for $n$ sufficiently large.

Now, by the criticality condition with respect to $\tau_i$ in Proposition \ref{prop:criticalPointH} and Lemma \ref{green.lem}, we have 
\begin{align*}
    \frac{\tau_i}{\pi}\log\frac{\tau_i}2&=2\sum_{j\neq i}\tau_jG(p_j,p_i)+2\tau_i R(p_i,p_i)
    \geq  -C \sum_{j=1}^n\tau_j
\end{align*}
for a suitable constant $C(g,V)$. And since  $$\log(\tau_i/2)\leq \frac{1}{2}\log(|\btau|^2/4)< -c n$$ 
by \eqref{log.tausq},
it follows immediately that 
$$ \sum_{j=1}^n\tau_j> c n\tau_i,$$
or, equivalently, $\bar\btau>c\tau_i$. Finally, since $\btau=e^{ {2\pi \cH(\bp,\bomega)} -1/2}\bomega$, we have that $\frac{\tau_i}{\bar \btau}=\frac{\omega_i}{\bar\bomega}$, so the desired estimate $\bar\bomega>c\omega_i$ follows.    
\end{proof}

We turn now to the proof of Proposition \ref{prop:piTauiAlmostMinimum}, beginning with the following lemma.

\begin{lem}\label{minphi.bd} There exists some constant $c_0(g,V)>0$ such that for every $k$, for any $(\bp,\btau)\in X_k$, if $\phi$ is the solution of $J\phi=\sum_{i=1}^k\tau_i\delta_{p_i}$, then
    $$\min \phi < -c_0\sum_{i=1}^k\tau_i.$$ 
\end{lem}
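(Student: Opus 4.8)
\emph{Proof proposal.} The plan is to test $\phi$ against a first eigenfunction of $J$. Let $u_1$ be a first eigenfunction; since it does not change sign we may take $u_1<0$ on $\Sigma$, and by elliptic regularity and the Harnack inequality there are constants $0<c_1\le C_1$ depending only on $(g,V)$ (and on the normalization, but only through the ratio $C_1/c_1$, which is normalization-independent) with $c_1\le -u_1\le C_1$ throughout $\Sigma$. Writing $\lambda_1:=\lambda_1(J)<0$ for the bottom eigenvalue, the reproducing property of the Green's function from Lemma \ref{green.lem} gives, for each $i$,
$$u_1(p_i)=\int_\Sigma G(p_i,y)\,(Ju_1)(y)\,dA_g(y)=\lambda_1\int_\Sigma \phi_{p_i}(y)\,u_1(y)\,dA_g(y),$$
so multiplying by $\tau_i>0$ and summing,
$$\lambda_1\int_\Sigma \phi\,u_1\,dA_g=\sum_{i=1}^k\tau_i\,u_1(p_i)\le -c_1\sum_{i=1}^k\tau_i<0.$$
Dividing through by $\lambda_1<0$ yields $\int_\Sigma \phi\,u_1\,dA_g\ge \frac{c_1}{|\lambda_1|}\sum_i\tau_i>0$; in particular $\phi$ must be negative somewhere.

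Next I would convert this integral bound into a pointwise one. The function $\phi=\sum_i\tau_i G(p_i,\cdot)$ tends to $+\infty$ near each $p_i$ and is smooth on $\Sigma\setminus\{p_1,\dots,p_k\}$, so for all sufficiently small $\rho>0$ one has $\phi>0$ on $\bigcup_i B_\rho(p_i)$; hence $\phi$ attains a genuine finite (necessarily negative) minimum $\min_\Sigma\phi$ on the compact set $\Sigma\setminus\bigcup_i B_\rho(p_i)$, and its negative part $\phi_-:=\max\{-\phi,0\}$ is a continuous function, supported away from the $p_i$, with $\|\phi_-\|_{L^\infty(\Sigma)}=-\min_\Sigma\phi$. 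Discarding the non-positive contribution of the set $\{\phi\ge 0\}$ and using $0<c_1\le -u_1\le C_1$,
$$\frac{c_1}{|\lambda_1|}\sum_{i=1}^k\tau_i\le \int_\Sigma\phi\,u_1\,dA_g\le \int_{\{\phi<0\}}\phi\,u_1\,dA_g\le C_1\int_\Sigma\phi_-\,dA_g< C_1\,\area(\Sigma,g)\,\bigl(-\min_\Sigma\phi\bigr),$$
where the final inequality is strict because $\phi_-$ vanishes on the nonempty open set $\bigcup_iB_\rho(p_i)$. Rearranging gives $\min_\Sigma\phi<-c_0\sum_i\tau_i$ with $c_0:=\frac{c_1}{C_1\,|\lambda_1|\,\area(\Sigma,g)}>0$, which is the claim.

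Finally I would record that $c_0$ depends only on $(g,V)$ — through $\lambda_1(J)$, $\area(\Sigma,g)$, and $\sup_\Sigma|u_1|/\inf_\Sigma|u_1|$ — and in particular is independent of $k$, $\bp$, and $\btau$, as required. I do not anticipate a serious obstacle here; the only points that need care are justifying that $\min_\Sigma\phi$ is a finite negative number (handled by the logarithmic blow-up of $\phi$ at the $p_i$, which also makes $\phi_-$ bounded) and keeping the passage from the integral estimate to the pointwise estimate strict, both dispatched above.
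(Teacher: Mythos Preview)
Your proof is correct and follows essentially the same approach as the paper: both pair $\phi$ against a first eigenfunction of $J$, exploit $\lambda_1<0$ and the definite sign of the eigenfunction to obtain an integral inequality, and then convert to a pointwise bound on $\min\phi$. The only cosmetic differences are the sign convention for the eigenfunction and the mechanism for passing from the integral to the pointwise estimate (the paper goes through $\min(\phi\psi_1)$ directly, while you go through $\int\phi_-$); your version is actually a bit more careful about the area factor and about why $\min\phi$ is finite.
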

\begin{proof}[Proof of Lemma \ref{minphi.bd}]

Since $\lambda_1(J)<0$, we can fix a first eigenfunction $\psi_1>0$ for $J$ satisfying $J\psi_1=\lambda_1\psi_1$ and $\min \psi_1=1$. Integrating $J\psi_1=\lambda_1\psi_1$ against $\phi$ then gives
$$\lambda_1\int \phi \psi_1=\int \phi J\psi_1=\int \psi_1 J\phi=\sum_{i=1}^k\tau_i\psi_1(p_i)\geq\sum_{i=1}^k\tau_i.$$
Hence,
$$\min(\phi\psi_1)\leq \frac{-\sum_i \tau_i}{|\lambda_1|},$$
and using $\min(\phi\psi_1)\leq \min\phi\cdot\max\psi_1$ we have
$$\min\phi\leq \frac{-\sum_i \tau_i}{|\lambda_1|\max\psi_1},$$
giving the desired estimate with, e.g., $c_0=\frac{1}{2|\lambda_1|\max\psi_1}$.
\end{proof}

We can now prove Proposition \ref{prop:piTauiAlmostMinimum}.

\begin{proof}[Proof of Proposition \ref{prop:piTauiAlmostMinimum}]
Recall that $\alpha_n<0$ for $n\geq C(g,V)$ sufficiently large.

{\it In Step 1 to 4 below, we are going to find some $0<\hat c(g,J)<1/10$ such that for every $n$ sufficiently large,  the inequality $\cH(\bp,\bomega)\leq (1-\hat c ^2/ n)\alpha_n$ implies $\min_i\omega_i>\frac {\hat c }{2\sqrt n}$}, proving item (1) in Proposition \ref{prop:piTauiAlmostMinimum}. Then in Step 5 we will prove item (2).

To begin, set $\sigma:=\hat c /\sqrt n$ for a constant $\hat c$ that we will fix later, and consider a point $(\bp,\bomega)\in \hat X_n$ satisfying $\cH(\bp,\bomega)\leq (1-\sigma^2)\alpha_n$.

\subsection*{Step 1}
Up to relabeling, suppose $\omega_n=\min_i\omega_i,$ which clearly implies $\omega_i\leq 1/{\sqrt{n}}$. Let $\tilde\bp:=(p_1,...,p_{n-1})$ and $\tilde\bomega:= (\omega_1,...,\omega_{n-1})$, and let $\hat\phi$ solve
$$J\tilde{\phi}=\sum_{i=1}^{n-1}\omega_i\delta_{p_i};$$
in other words, $$\tilde\phi(x)=\sum^{n-1}_{i=1}\omega_i G(p_i,x).$$
Then directly from the definition of $\cH$, we see that
\begin{align}\label{ch.exp.bd}
\nonumber \cH(\bp,\bomega)&=\cH(\tilde{\bp},\tilde{\bomega})+2\omega_n \tilde\phi(p_n)+\omega_n^2R(p_n,p_n)+\frac{\omega_n^2}{4\pi}\left (1-2 \log \frac {\omega_n}2\right)\\
&> \cH(\tilde{\bp},\tilde{\bomega})+2\omega_n\tilde{\phi}(p_n)
\end{align}
for $n$ large enough that $1-2\log \frac{\omega_n}{2}\geq 1-2\log \frac{1}{2\sqrt{n}}>\|R\|_{C^0}$. As a consequence, using that $\omega_n\leq 1/\sqrt n$ alongside the estimate
$$\cH(\bp,\bomega)>\cH(\tilde\bp,\tilde\bomega)+2\omega_n\tilde\phi(p_n)$$
and applying Proposition \ref{prop:betaNUpperBound} to see that
$$\cH(\bp,\bomega)\leq (1-\sigma^2)\alpha_n\leq\frac{1}{2}n\cI$$
for $n$ sufficiently large, we see that
$$\cH(\tilde \bp,\tilde\bomega)<\frac{1}{2}n\cI-\frac{2}{\sqrt{n}}\tilde{\phi}(p_n).$$
In particular, since the Green's function $G$ satisfies a universal \emph{lower} bound $G(x,y)\geq -C(g,V)$, a simple application of Cauchy-Schwarz gives
$$\tilde{\phi}(p_n)\geq -C(g,V)\sum_{i=1}^{n-1}\omega_i\geq -C \sqrt{n},$$
which together with the preceding bound implies
$$\cH(\tilde\bp,\tilde\bomega)<\frac{1}{2}n\cI-2C<0$$
for $n$ sufficiently large.

\subsection*{Step 2}

Now, let $q$ be a minimum point of the function $\tilde\phi$ on $\Sigma$, and note that $\tilde\phi(q)<0$ by Lemma \ref{minphi.bd}. Now, for $\sigma\in (0,1)$, consider the point 
$$(\tilde\bp,q,\sqrt{\frac{1-\sigma^2}{1-\omega_n^2}}\tilde\bomega,\sigma)\in \hat X_n.$$
Using (\ref{eq:scaleTau}), we compute
\begin{align*}
&\cH(\tilde\bp,q,\sqrt{\frac{1-\sigma^2}{1-\omega_n^2}}\tilde\bomega,\sigma)\\
&=\cH(\tilde\bp,\sqrt{\frac{1-\sigma^2}{1-\omega_n^2}}\tilde\bomega)+2\sigma\tilde\phi(q)+\sigma^2R(q,q)+\frac{\sigma^2}{4\pi}(1-2\log\frac{\sigma}2)\\
&\leq\frac{1-\sigma^2}{1-\omega_n^2}\cH(\tilde\bp,\tilde\bomega)-\frac{1-\sigma^2}{1-\omega_n^2}\log\sqrt{\frac{1-\sigma^2}{1-\omega_n^2}}\cdot \frac{|\tilde \bomega|^2}{2\pi}+2\sigma\tilde \phi(q)+C\sigma^2|\log\sigma|\\
&\leq\frac{1-\sigma^2}{1-\omega_n^2}\cH(\tilde\bp,\tilde\bomega)-\frac {1-\sigma^2}{4\pi(1-\omega_n^2)}\log\frac{1-\sigma^2}{1-\omega_n^2}+2\sigma\tilde\phi(q)+C\sigma^2|\log\sigma|\\
\end{align*}
Then, recalling that $\cH(\tilde\bp,\tilde\bomega)<0$, noting that $0<1-\sigma^2<\frac{1-\sigma^2}{1-\omega_n^2},$  and observing that $|\log \frac{1-\sigma^2}{1-\omega_n^2}|\leq C\sigma^2$, since $\omega_n<1/2$, we see that 
\begin{equation}\label{ch.pert.bd}
\cH(\tilde\bp,q,\sqrt{\frac{1-\sigma^2}{1-\omega_n^2}}\tilde\bomega,\sigma)< (1-\sigma^2)\cH(\tilde\bp,\tilde\bomega)+2\sigma\tilde\phi(q)+C\sigma^2|\log\sigma|.
\end{equation}

\subsection*{Step 3}
Next, note that
$$\cH(\bp,\bomega)\leq (1-\sigma^2)\alpha_n\leq  (1-\sigma^2)\cH(\tilde\bp,q,\sqrt{\frac{1-\sigma^2}{1-\omega_n^2}}\tilde\bomega,\sigma).$$
Then, applying \eqref{ch.exp.bd} and \eqref{ch.pert.bd}, we obtain
$$\cH(\tilde{\bp},\tilde{\bomega})+2\omega_n\tilde{\phi}(p_n)<  (1-\sigma^2)^2\cH(\hat{\bp},\hat{\bomega})+2(1-\sigma^2)\sigma \tilde{\phi}(q)+C\sigma^2|\log\sigma|,$$
assuming that $n\geq C(g,V)$ is sufficiently large. Then, subtracting $\cH(\tilde \bp,\tilde\bomega)$ and dividing by $2$ on both sides of the above, we find that
$$\omega_n\tilde{\phi}(q)\leq \omega_n\tilde{\phi}(p_n)< - \sigma^2\cH(\tilde{\bp},\tilde{\bomega})+(1-\sigma^2)\sigma \tilde{\phi}(q)+C\sigma^2|\log\sigma|.$$ 
Noting that
$$\cH(\tilde\bp,\tilde\bomega)\geq 2n\cI > -C_1(\omega_1+...+\omega_{n-1})^2$$
for some $C_1(g,V)>0$ whenever $n$ is large enough,
we then have
\begin{equation}\label{step3.bd}
(\sigma-\sigma^3-\omega_n)\tilde\phi(q)>- C_1\sigma^2(\omega_1+...+\omega_{n-1})^2-C\sigma^2|\log\sigma|.
\end{equation}

\subsection*{Step 4} 
{\it Now suppose that 
\begin{equation}\label{sigma.fix}
\omega_n\leq \sigma/2=\frac{\hat c }{2\sqrt n},
\end{equation}
for a constant $\hat c\in (0,1)$ to be chosen shortly}, and note that we then have 
$$\sigma-\sigma^3-\omega_n>\sigma/4$$
for $n$ sufficiently large. Using that $\tilde\phi(q)=\min\tilde\phi<0$ by Lemma \ref{minphi.bd}, an application of \eqref{step3.bd} gives
\begin{equation}\label{eq:hatPhiQLowerBound}
\tilde\phi(q)> -\frac 14 C_1\sigma(\omega_1+...+\omega_{n-1})^2-C\sigma^2|\log\sigma|.
\end{equation}
On the other hand, taking $k=n-1$ in Lemma \ref{minphi.bd}, we also see that 
\begin{equation}\label{eq:hatQUpperBound}
    \tilde\phi(q)\leq -c_0(\omega_1+...+\omega_{n-1}),
\end{equation}
for a suitable constant $c_0(g,V)\in (0,1)$, which together with \eqref{eq:hatPhiQLowerBound} implies that
$$c_0(\omega_1+\cdots+\omega_{n-1})\leq -\tilde{\phi}(q)<\frac{C_1}{4}\sigma(\omega_1+\cdots+\omega_{n-1})^2+C\sigma^2|\log \sigma|.$$
Now, since the Cauchy-Schwarz inequality gives
$$(\omega_1+\cdots+\omega_{n-1})\leq \sqrt{n},$$
taking 
$$\hat c=\frac{c_0}{C_1}$$ 
in \eqref{sigma.fix}, it follows that
$$c_0(\omega_1+\cdots+\omega_{n-1})<\frac{c_0}{4}(\omega_1+\cdots+\omega_{n-1})+C\sigma^2|\log\sigma|,$$
and rearranging gives
$$\frac{3c_0}{4}(\omega_1+\cdots+\omega_{n-1})<C\sigma^2|\log\sigma|\leq C'(g,V)/\sqrt{n}.$$
On the other hand, since 
$$(\omega_1+\cdots+\omega_{n-1})^2\geq \sum_{i=1}^{n-1}\omega_i^2=1-\omega_n^2\geq \frac{1}{2},$$
it then follows that
$$\frac{3c_0}{4\sqrt{2}}<C'(g,V)/\sqrt{n},$$
leading to an obvious contradiction for $n>\frac{32(C')^2}{9c_0^2}$. 

In other words, it follows that for $n\geq C(g,V)$ sufficiently large, we must have $\omega_n>\frac{\hat c}{2\sqrt n}$, completing the proof of item (1) of the Proposition.



\subsection*{Step 5}
Finally, we prove item (2) of Proposition \ref{prop:piTauiAlmostMinimum}. 
By definition of $\cH(\bp,\bomega)$ and the assumption that 
$$\cH(\bp,\bomega)\leq (1-\hat c ^2/ n)\alpha_n<0,$$
we see that
$$\sum^n_{i=1}\sum_{j\neq i}\omega_i\omega_jG(p_i,p_j)+\sum_{i=1}^n\omega_i^2R(p_i,p_i)+\sum_{i=1}^n\frac{\omega_i^2}{4\pi}\left (1-2 \log \frac {\omega_i}2\right)<0.$$
Using Lemma \ref{green.lem} together with obvious bounds for $\sum_{i=1}^n\omega_i^2\left(1-2\log\frac{\omega_i}{2}\right)$, it then follows that
$$\sum_{i=1}^n\sum_{j\neq i}\omega_i\omega_j|\log d(p_i,p_j)|\leq C_n(g,V)$$
for some constant $C_n(g,V)$ depending on $n$. On the other hand, since we have already proved item (1) of the Proposition, we know moreover that 
$$\omega_i\omega_j\geq \frac{\hat{c}^2}{4n},$$
and it follows that
$$|\log(\min_{i\neq j}d(p_i,p_j))|\leq C_n(g,V)\frac{4n}{\hat{c}^2},$$
from which the desired non-sharp estimate follows.



\end{proof}

Finally, we show that Proposition \ref{prop:tauIComparable} together with the criticality condition suffice to give improved lower bounds on the distance $\min_{i\neq j}d(p_i,p_j)$ for any $\hat\cH$-minimizing configuration. These bounds are non-sharp, but suffice for our purposes.

\begin{prop}\label{prop:p_iLowerBoundDistance} Under the assumptions of Theorem \ref{thm:existGoodMinimizers} or every $\epsilon>0$, there is a constant $\hat c_{\epsilon}(g,J)>0$ such that, for every $n$ large enough (depending on $g,J$),  every minimizer  $(\bp,\bomega)\in \hat X_n$ for $\hat\cH$ must satisfy 
    $$d(p_i,p_j)\geq \hat c_{\epsilon}n^{-2-\epsilon}$$
    for all $i\neq j$.
\end{prop}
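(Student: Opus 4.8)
The plan is to pass from the given $\hat\cH$-minimizer $(\bp,\bomega)$ to the associated critical point $(\bp,\btau)=(\bp,e^{2\pi\cH(\bp,\bomega)-1/2}\bomega)$ of $\cH$ on $X_n$, and to exploit the two criticality conditions of Proposition~\ref{prop:criticalPointH}. From Proposition~\ref{prop:tauIComparable} the weights are comparable, $c\bar\btau\le\tau_i\le C\bar\btau$ with $\bar\btau:=\tfrac1n\sum\tau_i$, and from \eqref{crit.h.val} together with Proposition~\ref{prop:betaNUpperBound} and $\cI<0$ the common scale satisfies $|\log\bar\btau|\asymp n$; in particular $|\log(\tau_i/2)|\in[cn,Cn]$ for every $i$. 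Write $\rho:=\min_{i\ne j}d(p_i,p_j)$, achieved at a pair $(p_k,p_l)$; we want $\rho\ge\hat c_\epsilon n^{-2-\epsilon}$.

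The first main ingredient is the gradient condition. Since $\nabla^{p_k}\cH=0$, Proposition~\ref{prop:criticalPointH} gives $\sum_{j\ne k}\tau_j\nabla_xG(p_j,x)\big|_{p_k}=-\tau_k\nabla_xR(p_k,x)\big|_{p_k}$; writing $G=-\tfrac1{2\pi}\log d+R$, using $|\nabla_x\log d(p_j,x)\big|_{p_k}|=d(p_j,p_k)^{-1}$, and bounding the Robin contributions via Lemma~\ref{green.lem} and Remark~\ref{robin.rk}, one obtains
\[
\Bigl|\sum_{j\ne k}\frac{\tau_j}{d(p_j,p_k)}\,e_{jk}\Bigr|\le Cn\bar\btau,\qquad e_{jk}:=\nabla_xd(p_j,x)\big|_{x=p_k}\ \text{(a unit vector)}.
\]
Taking the inner product with $e_{lk}$ isolates the nearest-neighbour term, $\frac{\tau_l}{\rho}\le Cn\bar\btau+\sum_{j\ne k,l}\frac{\tau_j}{d(p_j,p_k)}|\langle e_{jk},e_{lk}\rangle|$, hence, after dividing by $c\bar\btau\le\tau_l$,
\[
\frac1\rho\ \le\ C\Bigl(n+\sum_{j\ne k,l}\frac1{d(p_j,p_k)}\Bigr).
\]
So the whole problem reduces to a polynomial-in-$n$ upper bound for $S_k:=\sum_{j\ne k,l}d(p_j,p_k)^{-1}$.

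To control $S_k$ I would split at a scale $r=r(\rho,\epsilon)$: the far part $\sum_{d(p_j,p_k)>r}d(p_j,p_k)^{-1}\le n/r$ is harmless for a good choice of $r$, and the near part over $B_r(p_k)$ must be estimated through the counting function $N_R(p_k):=\#\{j:d(p_j,p_k)\le R\}$. Two bounds feed in here: (i) the separation $\ge\rho$ gives the packing bound $N_R(p_k)\le C(R/\rho)^2$; (ii) the \emph{zeroth}-order condition $\nabla^{\tau_k}\cH=0$, which reads $\sum_{j\ne k}\tau_jG(p_j,p_k)=\tfrac{\tau_k}{2\pi}\log(\tau_k/2)-\tau_kR(p_k,p_k)$, combined with $|\log(\tau_k/2)|\le Cn$, $\tau_j\asymp\bar\btau$ and the universal lower bound $G\ge-C$, forces
\[
\sum_{j\ne k}\max\{-\log d(p_j,p_k),0\}\le Cn,\qquad\text{hence}\qquad N_R(p_k)\,\log\tfrac1R\le Cn\ \ (R\le R_0).
\]
Feeding the smaller of the two bounds for $N_R(p_k)$ into a dyadic summation controls $S_k$, and optimizing $r$ (in practice a suitable power of $\rho$) against $\epsilon$ turns the displayed inequality for $1/\rho$ into $\rho\ge\hat c_\epsilon n^{-2-\epsilon}$; one may have to bootstrap, starting from the crude positive lower bound $\rho>\hat\gamma_n$ of Proposition~\ref{prop:piTauiAlmostMinimum}(2) and the immediate consequence $|\log\rho|\le Cn$ of (ii).

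The main obstacle is precisely the bound on $S_k$. The packing bound alone only gives $S_k\lesssim\sqrt n/\rho$ — attained when $\sim n$ points are packed at scale $\rho$ near $p_k$ — which is far too weak, so the clustering bound (ii) is essential: any configuration with heavy clustering near $p_k$ would consume more than the allotted logarithmic budget $\sum_{j\ne k}|\log d(p_j,p_k)|\lesssim n$. Balancing "packing permits many close points" against "bounded logarithmic energy near $p_k$" is what forces the non-sharp exponent $2+\epsilon$, and making this trade-off quantitative enough to yield a genuinely polynomial (rather than merely sub-exponential) lower bound for $\rho$ is the delicate point of the argument; the same mechanism also recovers the coarser bound $d(p_i,p_j)>n^{-3}$ asserted in Theorem~\ref{thm:existGoodMinimizers}(3).
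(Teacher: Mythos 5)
Your reduction is algebraically fine as far as it goes, but the step you flag as ``delicate'' is in fact a genuine gap, and I do not think your proposed ingredients can close it. The inequality $\frac1\rho \le C\bigl(n+S_k\bigr)$ with $S_k:=\sum_{j\ne k,l}d(p_j,p_k)^{-1}$ is circular whenever a \emph{second} point sits at distance comparable to $\rho$ from $p_k$: if $d(p_{l'},p_k)\asymp\rho$ and $e_{l'k}$ is not nearly orthogonal to $e_{lk}$, then $S_k\ge \tau_{l'}|\langle e_{l'k},e_{lk}\rangle|/(\bar\btau\,\rho)\gtrsim 1/\rho$, and the inequality reads $1/\rho\lesssim n+1/\rho$, which is vacuous. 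Neither of your two auxiliary bounds can exclude this configuration. The packing bound only caps the number of neighbors at scale $\rho$ at $O(1)$, not zero. The clustering bound $\sum_{j}\max\{-\log d(p_j,p_k),0\}\lesssim n$ — which is correct, and follows from $\nabla^{\tau_k}\cH=0$ as you say — likewise allows $O(1)$ points at distance $\rho$ even when $\rho$ is exponentially small in $n$, since $|\log\rho|\lesssim n$. Put differently, taking $|\cdot|$ in the vectorial relation $\sum_{j\ne k}\frac{\tau_j}{d(p_j,p_k)}e_{jk}=O(n\bar\btau)$ throws away precisely the cancellation that the critical configuration is forced to exhibit, and the information left over ($S_k\lesssim\sqrt n/\rho$ from packing, or $S_k\lesssim 1/\rho$ from the clustering budget) cannot beat the trivial $S_k\gtrsim 1/\rho$.

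The paper proves the proposition by a mechanism that preserves cancellation instead of discarding it. One pairs the vectorial criticality relation at each $p_i$ with $\omega_i\nabla\psi(p_i)$ for a single, globally defined, carefully convex test function $\psi$ and sums over \emph{all} $i$. The resulting double sum contains both $(i,j)$ and $(j,i)$, and by the fundamental theorem of calculus along the geodesic $\gamma_{ij}$ from $p_i$ to $p_j$,
$$\langle\nabla d_{p_i}(p_j),\nabla\psi(p_j)\rangle+\langle\nabla d_{p_j}(p_i),\nabla\psi(p_i)\rangle=\int_0^{d(p_i,p_j)}\Hess\psi\bigl(\gamma_{ij}'(t),\gamma_{ij}'(t)\bigr)\,dt,$$
which has a definite sign once $\psi$ is chosen convex. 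Taking $\psi\approx\frac{1}{1+\gamma}d_q^{1+\gamma}$ with $\gamma\in(0,1)$ and using $\Hess\psi\gtrsim\gamma\,d_q^{\gamma-1}$ (Hessian comparison) yields, after setting $q=p_i$ and using the weight lower bound $\omega_j\gtrsim n^{-1/2}$, the estimate $\sum_{j\ne i}d(p_i,p_j)^{\gamma-1}\le C(g,V)\gamma^{-1}n^2$. Because $\gamma-1\in(-1,0)$, every term is positive and controlled individually, so $d(p_i,p_j)\ge c_\gamma n^{-2/(1-\gamma)}$; choosing $\gamma$ small gives the exponent $-2-\epsilon$. The crucial point your proposal is missing is this symmetrization: the sign comes from the pair $(i,j)$ and $(j,i)$ combining into an integral of a nonnegative quantity, not from isolating the nearest neighbor of a single $p_k$.
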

\begin{proof}Let  $(\bp,\bomega)\in \hat X_n$ be a minimizer for $\cH$ on $\hat X_n$, and observe that $(\bp,\bomega)$ is then critical for $\cH$ among all variations of the form $\bq\mapsto (\bq,\bomega)$ fixing the $(0,\infty)^n$ factor.

\subsection*{Step 1}
    For each $i=1,...,n$, criticality of $(\bp,\bomega)$ for $\cH$ in the $p_i$ variable gives (as in the proof of Proposition \ref{prop:criticalPointH})
    \begin{equation}\label{cn.crit}
\sum_{j\neq i}\omega_j\nabla^yG(p_j,p_i)+\omega_i\nabla^yR(p_i,p_i)=0\in T_{p_i}\Sigma.
\end{equation}
Given a test function $\psi\in C^1(\Sigma)$, we can then pair \eqref{cn.crit} with the vector $\omega_i\nabla\psi(p_i)\in T_{p_i}\Sigma,$ and sum over $1\leq i\leq n$ to obtain
$$\sum_{i=1}^n\sum_{j\neq i}\omega_i\omega_j\langle \nabla \phi_{p_j}(p_i),\nabla \psi(p_i)\rangle=-\sum_{i=1}^n\frac{1}{2}\omega_i^2\langle\nabla R_D(p_i),\nabla \psi(p_i)\rangle,$$
where, recall, $\phi_{p_j}(x)=G(p_j,x)$ and $R_D(x)=R(x,x)$. In particular, since $(\bp,\bomega)\in \hat X_n$ and $R_D\in C^{\infty}(\Sigma)$, it follows that
$$|\sum_{j\neq i}\omega_i\omega_j\langle \nabla \phi_{p_j}(p_i),\nabla \psi(p_i)\rangle|\leq \|R_D\|_{C^1}\|\nabla \psi\|_{C^0}.$$
Next, recalling Lemma \ref{green.lem}, fix a radius $r_0(g)<\InjRad(\Sigma,g)$ such that $R$ is $C^1$ on the neighborhood $T_{r_0}=\{(x,y)\in \Sigma\times \Sigma\mid d(x,y)\leq r_0\}$ of the diagonal in $\Sigma\times \Sigma$. Then, let $S\subset \{(i,j) \mid i\neq j\}$ be the set of pairs for which $d(p_i,p_j)<r_0$.  For $(i,j)\notin S$, it follows from Lemma \ref{green.lem} that
$$|\nabla \phi_{p_j}(p_i)|\leq \frac{C(g,V)}{r_0}=C'(g,V),$$
which together with the preceding calculation and the obvious bound
$\sum_{i,j=1}^n\omega_i\omega_j\leq n$
leads us to an estimate of the form
$$|\sum_{(i,j)\in S}\omega_i\omega_j\langle \nabla \phi_{p_j}(p_i),\nabla \psi(p_i)\rangle|\leq C(g,V)n\|\nabla \psi\|_{C^0}.$$
Moreover, for $(i,j)\in S$, using that 
$$|\nabla \phi_{p_j}(p_i)+\frac{1}{2\pi}\nabla (\log d_{p_j})(p_i)|\leq \|R\|_{C^1(T_{r_0})},$$
we see finally that
\begin{equation}\label{cn.crit.2}
    |\sum_{(i,j)\in S}\omega_i\omega_j\frac{\langle \nabla d_{p_j}(p_i),\nabla \psi(p_i)\rangle}{d(p_i,p_j)}|\leq C(g,V)n\|\nabla \psi\|_{C^0}.
\end{equation}

\subsection*{Step 2} Now, for each pair $(i,j)\in S$, consider the unique unit speed geodesic $\gamma_{ij}$ on $\Sigma$ from $p_i$ to $p_j$, and apply the fundamental theorem of calculus to compute
\begin{align*}
    &\langle \nabla d_{p_i}(p_j),\nabla \psi(p_j)\rangle+\langle \nabla d_{p_j}(p_i),\nabla \psi (p_i)\rangle\\
    &= \frac{d}{dt}|_{t=d(p_i,p_j)}(\psi \circ \gamma_{ij})-\frac{d}{dt}|_{t=0}(\psi \circ \gamma_{ij})\\
    &=\int_0^{d(p_i,p_j)}(\psi \circ \gamma_{ij})''(t)dt\\
    &=\int_0^{d(p_i,p_j)}(\Hess \psi)_{\gamma_{ij}(t)}(\gamma_{ij}'(t),\gamma_{ij}'(t))dt.
\end{align*}
Now, fix a cutoff function $\chi\in C^{\infty}([0,r_0))$ such that
\begin{itemize}
    \item $\chi (t)= t$ on $[0,\frac 13r_0)$.
    \item $\chi(t)=0$ if $t>\frac 23r_0$.
    \item $\chi(t)\leq t$ for all $t$.
\end{itemize}
Given a point $q\in \Sigma$ and $\gamma\in (0,1)$, we then define $\psi\in C^1(\Sigma)$ by
\begin{equation}
    \psi:=\frac{1}{1+\gamma}\chi^{1+\gamma}\circ d_q,
\end{equation}
In a moment, we will check that the Hessian $\Hess\psi$ satisfies a lower bound of the form
\begin{equation}\label{hess.bd}
    \Hess \psi \geq \gamma d_q^{\gamma-1}-C(g)d_q^{\gamma}.
\end{equation}
With this estimate in place, the preceding computation gives
\begin{eqnarray*}
    \langle \nabla d_{p_i}(p_j),\nabla \psi(p_j)\rangle+\langle \nabla d_{p_j}(p_i),\nabla \psi (p_i)\rangle&\geq &\int_0^{d(p_i,p_j)}(\gamma d_q(\gamma_{ij}(t))^{\gamma-1}-Cd_q(\gamma_{ij}(t))^{\gamma}dt\\
    &\geq & d(p_i,p_j)\cdot [\gamma \min_t d(q,\gamma_{ij}(t))^{\gamma-1}-C'(g)].
\end{eqnarray*} 
In particular, applying this in \eqref{cn.crit.2}, we arrive at an estimate of the form
$$\sum_{(i,j)\in S}\omega_i\omega_j\min_t d(q,\gamma_{ij}(t))^{\gamma-1}\leq \frac{1}{\gamma} C(g,V)n,$$
which together with the lower bound $\omega_i>\frac{\hat{c}}{2\sqrt{n}}$ gives
$$\sum_{(i,j)\in S}\min_t d(q,\gamma_{ij}(t))^{\gamma-1}\leq \frac{1}{\gamma}C(g,V)n^2$$
for any $q\in \Sigma$. Fixing any $1\leq i\leq n$ and taking $q=p_i$, we see finally that
$$\sum_{j\neq i}d(p_i,p_j)^{\gamma-1}\leq \frac{1}{\gamma}C(g,V)n^2,$$
and therefore
$$d(p_i,p_j)\geq \left(\frac{1}{\gamma}C(g,V)n^2\right)^{-\frac{1}{1-\gamma}}=c_{\gamma}(g,V)n^{-2/(1-\gamma)},$$
for any $j\neq i$, confirming the desired estimate.

\subsection*{Step 3} It remains to verify the Hessian estimate \eqref{hess.bd} for the test function $\psi$, via a fairly straightforward application of the Hessian comparison theorem. Namely, a direct computation gives
$$\Hess\psi=((\chi^\gamma \chi')\circ d_q)\Hess d_q+((\gamma\chi^{\gamma-1}(\chi')^2+\chi^\gamma \chi'')\circ d_q)\nabla d_q\otimes \nabla d_q,$$
while, on the ball $B_{\frac 23r_0}(q)$, comparing $(\Sigma,g)$ with the space forms of curvature $\min K_g$ and $\max K_g$ in the Hessian comparison theorem \cite[Theorem 1.1]{schoenyau} gives an estimate of the form
$$|\Hess d_q-\frac{1}{d_q}(g-\nabla d_q\otimes\nabla  d_q)|\leq C(g)d_q.$$ 

Now, on the complement $\Sigma\setminus B_{r_0/3}(q)$ of the ball $B_{r_0/3}(q)$, the preceding computations suffice to establish a uniform bound of the form
$$|\Hess \psi|\leq C_0(g),$$
from which \eqref{hess.bd} follows by taking $C(g)$ sufficiently large, since $d_q^{\gamma}\geq (r_0/3)^{\gamma}\geq r_0/3$ on this set. On the ball $B_{r_0/3}(q)$, recall that $\chi(d_q)=d_q$, so that 
\begin{eqnarray*}
    \Hess \psi&=&d_q^{\gamma}\Hess d_q+\gamma d_q^{\gamma-1}\nabla d_q\otimes \nabla d_q\\
    &\geq &d_q^{\gamma}\cdot \frac{1}{d_q}(g-\nabla d_q\otimes \nabla d_q)-C(g)d_q^{1+\gamma}g\\
    &&+\gamma d_q^{\gamma-1}\nabla d_q\otimes d_q\\
    &\geq &\gamma d_q^{\gamma-1}g-C(g)d_q^{1+\gamma}g.
\end{eqnarray*}
This confirms \eqref{hess.bd}, completing the proof.

\end{proof}

\subsection{The lower bound for $\inf_{\hat X_n}\cH$}\label{sect:lowerBoundInfHatH}

Next, we complete the proof of Theorem \ref{thm:existGoodMinimizers}, by establishing a sharp lower bound for $\inf_{\hat X_n}\cH$.

\begin{prop}\label{prop:infHatHLowerBound}
    Under the assumptions of Theorem \ref{thm:existGoodMinimizers}, there exists some constant $\hat C(g,V)>0$ such that for each $n\geq \hat{C}$, we have
    $$\inf_{\hat X_n} \cH>   n\cI-\hat C\log n.$$
\end{prop}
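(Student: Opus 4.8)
The plan is to establish the lower bound by evaluating $\cH$ at a minimizer and comparing the resulting bilinear Green's function sum with the relaxed energy $\cE$, using the separation and weight estimates already proven for minimizers in \S\ref{sect:PropertyMinimizers}.

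First I would fix a minimizer $(\bp,\bomega)\in\hat X_n$ of $\cH$, which exists for $n\geq C(g,V)$ by Proposition \ref{prop:existenceMinimizer}, so that $\inf_{\hat X_n}\cH=\cH(\bp,\bomega)$. Since $\sum_i\omega_i^2=|\bomega|_{\ell^2}^2=1$ and $R$ is bounded on $\Sigma\times\Sigma$ by Lemma \ref{green.lem}, the term $\sum_i\omega_i^2R(p_i,p_i)$ in Definition \ref{h.def} is $O(1)$; and since Proposition \ref{prop:tauIComparable} gives $\frac1{\hat\Lambda\sqrt n}<\omega_i<\frac{\hat\Lambda}{\sqrt n}$, we have $-2\log\omega_i=\log n+O(1)$ uniformly in $i$, whence $\sum_i\frac{\omega_i^2}{4\pi}(1-2\log\frac{\omega_i}{2})=\frac{\log n}{4\pi}+O(1)$. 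Thus
$$\cH(\bp,\bomega)=\sum_{i\neq j}\omega_i\omega_jG(p_i,p_j)+\frac{\log n}{4\pi}+O(1),$$
and it suffices to show $\sum_{i\neq j}\omega_i\omega_jG(p_i,p_j)\geq n\cI-C\log n$.

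The key step is a smoothing comparison. By Proposition \ref{prop:p_iLowerBoundDistance} with $\epsilon=1$ we have $\min_{i\neq j}d(p_i,p_j)\geq\hat c_1 n^{-3}$, so for $n$ large and $r:=n^{-5}$ the geodesic balls $B_r(p_i)$ are pairwise disjoint with $r<\frac14\min_{i\neq j}d(p_i,p_j)$. Let $\rho_i$ be the normalized Riemannian volume measure on $B_r(p_i)$ and put $\tilde\mu_n:=\sum_i\omega_i\rho_i$, a nonnegative measure with $L^\infty$ density and total mass $m:=\sum_i\omega_i\leq\sqrt n$ (Cauchy--Schwarz, using $\sum_i\omega_i^2=1$). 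On one hand, since $\tilde\mu_n/m$ is a probability measure and $\cI<0$ by \eqref{i.neg},
$$\cE(\tilde\mu_n)=m^2\cE(\tilde\mu_n/m)\geq m^2\cI\geq n\cI.$$
On the other hand, expanding $\cE(\tilde\mu_n)=\sum_{i,j}\omega_i\omega_j\iint G\,d\rho_i\,d\rho_j$, the off-diagonal terms obey $|\iint G\,d\rho_i\,d\rho_j-G(p_i,p_j)|\leq Cr/d(p_i,p_j)$ by the gradient estimate $|\nabla G|\leq C/d$ from Lemma \ref{green.lem}, so their total deviation is at most $Cr\cdot n\cdot\max_{i\neq j}d(p_i,p_j)^{-1}\leq Cr\cdot n\cdot\hat c_1^{-1}n^3=O(1)$, while the diagonal terms satisfy $\iint G\,d\rho_i\,d\rho_i=-\frac1{2\pi}\log r+O(1)$ — a standard computation for the uniform measure on a small geodesic ball, using $G=-\frac1{2\pi}\log d+R$ with $R$ Lipschitz — contributing $-\frac1{2\pi}\log r+O(1)=\frac5{2\pi}\log n+O(1)$. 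Hence $\sum_{i\neq j}\omega_i\omega_jG(p_i,p_j)=\cE(\tilde\mu_n)-\frac5{2\pi}\log n+O(1)\geq n\cI-\frac5{2\pi}\log n+O(1)$, which together with the identity above gives $\inf_{\hat X_n}\cH=\cH(\bp,\bomega)\geq n\cI-\hat C\log n$ for a suitable $\hat C(g,V)$.

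The only genuinely delicate point is the error bookkeeping in the smoothing step: the smoothing scale $r$ must be chosen small enough — polynomially in $n$ — relative to the minimal separation $\min_{i\neq j}d(p_i,p_j)$ furnished by Proposition \ref{prop:p_iLowerBoundDistance}, so that the off-diagonal discrepancies $\sum_{i\neq j}\omega_i\omega_j|\iint G\,d\rho_i\,d\rho_j-G(p_i,p_j)|$ stay $O(1)$, while $|\log r|$ remains $O(\log n)$ so that the self-energy correction costs only $O(\log n)$. The remaining ingredients — rewriting $\cH$ via the bilinear Green's sum, the Cauchy--Schwarz bound $m^2\leq n$, and the scaling identity $\cE(\tilde\mu_n)=m^2\cE(\tilde\mu_n/m)$ together with $\cE(\tilde\mu_n/m)\geq\cI$ — are routine.
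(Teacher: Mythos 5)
Your proof is correct and reaches the same bound by the same underlying idea (smooth the point configuration to a measure of bounded density, invoke $\cE\geq\cI$, and account for the $O(\log n)$ self-energy cost), but it is organized differently from the paper's argument. The paper routes through the relaxed energies $\cE_{\delta_n}$: it first normalizes $\nu:=\frac{1}{|\bomega|_{\ell^1}}\sum_i\omega_i\delta_{p_i}$ to a probability measure, invokes Lemma \ref{ed.char} to rewrite $\cH(\bp,\bomega/|\bomega|_{\ell^1})$ as $\cE_{\delta_n}(\nu)$ up to $O(\log n/n)$, and then passes $\cE_{\delta_n}(\nu)\to\cE_{\delta_n}(\nu_{\epsilon_n})\to\cE(\nu_{\epsilon_n})\geq\cI$ in a chain of small steps (using the scaling identity \eqref{eq:scaleTau} and $|\bomega|_{\ell^1}^2\leq n$ to transfer the factor of $n$). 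You instead bypass $\cE_\delta$ entirely: you expand $\cH(\bp,\bomega)$ directly from Definition \ref{h.def}, absorb the Robin and $\log\omega_i$ terms using Proposition \ref{prop:tauIComparable}, and compare the resulting bilinear Green sum with $\cE(\tilde\mu_n)$ for the unnormalized smoothed measure $\tilde\mu_n=\sum_i\omega_i\rho_i$, decomposing into diagonal ($O(\log n)$) and off-diagonal ($O(1)$, thanks to $r=n^{-5}$ being chosen small relative to the $n^{-3}$ separation from Proposition \ref{prop:p_iLowerBoundDistance}) contributions, and finally using $\cE(\tilde\mu_n)=m^2\cE(\tilde\mu_n/m)\geq m^2\cI\geq n\cI$ since $m^2\leq n$ and $\cI<0$. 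Your version is slightly more self-contained and avoids introducing the intermediate smoothed probability measure $\nu_{\epsilon_n}$; the paper's chain of comparisons with $\cE_\delta$ is deliberately parallel to the proofs of the upper bound (Proposition \ref{prop:betaNUpperBound}) and the mean-field convergence (Theorem \ref{thm:convMeasure}), which reuse the same estimates, so the machinery amortizes across the section.
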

\begin{proof}
Combining Propositions \ref{prop:existenceMinimizer}, \ref{prop:tauIComparable}, and \ref{prop:p_iLowerBoundDistance}, we see now that, for every $n$ sufficiently large, the infimum $\inf_{\hat X_n}\cH$ is realized by a minimizer $(\bp,\bomega)\in\hat X_n$ satisfying the following:
\begin{itemize}
    \item $\frac 1{\hat\Lambda\sqrt n} <\omega_i<\frac {\hat \Lambda}{ \sqrt n}$ for some $\hat\Lambda(g,J)>0$
    \item $d(p_i,p_j)>  n^{-2.5}$ for any $i\ne j$.
\end{itemize}
For later use, we set $\delta_n:=  n^{-3}$, and $\epsilon_n:=\delta_n/n=n^{-4}.$ 

Consider the point $(\bp,\bomega/|\bomega|_{\ell^1})\in X_n$, and the associated probability measure
$$\nu:=\frac 1{|\bomega|_{\ell^1}}\sum_i\omega_i\delta_{p_i},$$
and introduce another probability measure 
$$ \nu_{\epsilon_n}:=\frac 1{|\bomega|_{\ell^1}}\sum_i\omega_i\frac{\chi_{B_{\epsilon_n}(p_i)}}{|B_{\epsilon_n}(p_i)|},$$
giving an $L^{\infty}$ approximation of $\nu$, where $|B_{\epsilon_n}(p_i)|$ denotes the area of the ball. We will obtain the desired lower bound by relating $\inf_{\hat X_n}\cH=\cH(\bp,\bomega)$ to $\cE(\nu_{\epsilon_n})$, via comparisons with $\cE_{\delta_n}(\nu)$ and $\cE_{\delta_n}(\nu_{\epsilon_n})$.

\subsection*{Step 1: Comparing $\inf_{\hat X_n}\cH$ and $\cH(\bp,\frac{\bomega}{|\bomega|_{\ell^1}})$.} First, we claim that we can assume that $\cH(\bp,\frac{\bomega}{|\bomega|_{\ell^1}})<0$. Indeed, using  (\ref{eq:scaleTau}), we calculate 
$$\cH(\bp,\frac{\bomega}{|\bomega|_{\ell^1}})=\frac 1{|\bomega|^2_{\ell^1}}[\cH(\bp,\bomega)+\frac 1{2\pi}\log |\bomega|_{\ell^1}],$$ 
and since Cauchy-Schwarz gives $|\bomega|_{\ell^1}\leq\sqrt n$, while  $$\cH(\bp,\bomega)=\inf_{\hat X_n}\cH<\cI n+Cn^{3/4},$$
by Proposition \ref{prop:betaNUpperBound}, it follows that for $n$ sufficiently large,  $$\cH(\bp,\frac{\bomega}{|\bomega|_{\ell^1}})<\frac{1}{|\omega|_{\ell^1}^2}[n\cI+C n^{3/4}+\frac{1}{4\pi}\log n]\leq \frac{1}{|\omega|_{\ell^1}^2}\frac{n}{2}\cI< 0.$$

Using the negativity $\cH(\bp,\frac{\bomega}{|\bomega|_{\ell^1}})<0$, by another application of (\ref{eq:scaleTau}) and the Cauchy-Schwarz inequality $|\bomega|_{\ell^1}\leq \sqrt{n}$, we have
\begin{align*}
\inf_{\hat X_n}\cH=\cH(\bp,\bomega)&=|\bomega|^2_{\ell^1}\cH(\bp,\frac{\bomega}{|\bomega|_{\ell^1}})-\frac 1{2\pi}\log |\bomega|_{\ell^1}\\
&\geq n\cH(\bp,\frac{\bomega}{|\bomega|_{\ell^1}})-\frac 1{4\pi}\log n.
\end{align*}

\subsection*{Step 2: Comparing $\cH(\bp,\frac{\bomega}{|\bomega|_{\ell^1}})$ and $\cE_{\delta_n}(\nu)$} By Lemma \ref{ed.char}, we can write
\begin{align*}
\cH(\bp,\frac{\bomega}{|\bomega|_{\ell^1}})&= \mathcal{E}_{\delta_n}(\nu)+\sum_{i=1}^n\frac{\omega_i^2}{4\pi{|\bomega|_{\ell^1}^2}}\left (1-2 \log \frac {\omega_i}{2{|\bomega|_{\ell^1}}}\right)+\frac{1}{2\pi{|\bomega|_{\ell^1}^2}}\log\delta_n.
\end{align*}
Since $\frac{1}{\hat{\Lambda}}< \omega_i\sqrt{n}<\hat{\Lambda}$ and $\delta_n=n^{-3}$, it is easy to see that
$$|\sum_{i=1}^n\frac{\omega_i^2}{4\pi{|\bomega|_{\ell^1}^2}}\left (1-2 \log \frac {\omega_i}{2{|\bomega|_{\ell^1}}}\right)+\frac{1}{2\pi{|\bomega|_{\ell^1}^2}}\log\delta_n|\leq C(g,V)\frac{\log n}{n},$$
so that
$$|\cH(\bp,\frac{\bomega}{|\bomega|_{\ell^1}})- \mathcal{E}_{\delta_n}(\nu)|\leq C\frac{\log n}{n}.$$
for a suitable constant $C=C(g,V)<\infty$. 
\subsection*{Step 3: Comparing $\cE_{\delta_n}(\nu)$ and $\cE_{\delta_n}(\nu_{\epsilon_n})$}
By definition, since 
$$n\epsilon_n=\delta_n<\frac{1}{\sqrt{n}}\min_{i\ne j}d(p_i,p_j),$$
we see that
$$\cE_{\delta_n}(\nu)= \sum_{i=1}^n\sum_{j\neq i}\frac{\omega_i\omega_j}{|\bomega|_{\ell^1}^2}G(p_i,p_j)+\sum_i\frac {\omega_i^2}{|\bomega|_{\ell^1}^2}(-\frac 1{2\pi}\log\delta_n+R(p_i,p_i)),$$
while
\begin{eqnarray*}
    \cE_{\delta_n}(\nu_{\epsilon_n})&=& \sum_{i=1}^n\sum_{j\neq i}\frac{\omega_i\omega_j}{|\bomega|_{\ell^1}^2}\frac{1}{|B_{\epsilon_n}(p_i)||B_{\epsilon_n}(p_j)|}\int_{B_{\epsilon_n}(p_i)\times B_{\epsilon_n}(p_j)}G(x,y)dxdy\\
    &&+\sum_i\frac {\omega_i^2}{|\bomega|_{\ell^1}^2}\frac 1{|B_{\epsilon_n}(p_i)|^2}\iint_{B_{\epsilon_n}(p_i)\x B_{\epsilon_n}(p_i)}(-\frac 1{2\pi}\log\delta_n+R(x,y))dxdy\\
    &=&\cE_{\delta_n}(\nu)+\sum_{i=1}^n\sum_{j\neq i}\frac{\omega_i\omega_j}{|\bomega|_{\ell^1}^2}\frac{1}{|B_{\epsilon_n}(p_i)||B_{\epsilon_n}(p_j)|}\int_{B_{\epsilon_n}(p_i)\times B_{\epsilon_n}(p_j)}[G(x,y)-G(p_i,p_j)]\\
    &&+\sum_i\frac{\omega_i^2}{|\bomega|_{\ell^1}^2}\frac{1}{|B_{\epsilon_n}(p_i)|^2}\int_{B_{\epsilon_n}(p_i)\times B_{\epsilon_n}(p_i)}[R(x,y)-R(p_i,p_i)].
\end{eqnarray*}
Now, by an application of Lemma \ref{green.lem}, we see that for any $i\neq j$, and $(x,y)\in B_{\epsilon_n}(p_i)\times B_{\epsilon_n}(p_j)$,
$$|G(x,y)-G(p_i,p_j)|\leq C\frac{\epsilon_n}{\delta_n}=\frac{C}{n}$$
and for $(x,y)\in B_{\epsilon_n}(p_i)\times B_{\epsilon}(p_i)$,
$$|R(x,y)-R(p_i,p_i)|\leq C\epsilon_n$$
for some $C=C(g,V)<\infty$. Using this in the preceding computation, we then see that
\begin{eqnarray*}
|\cE_{\delta_n}(\nu_{\epsilon_n})-\cE_{\delta_n}(\nu)|&\leq & \sum_{i=1}^n\sum_{j\neq i}\frac{\omega_i\omega_j}{|\bomega|_{\ell^1}^2}\cdot \frac{C}{n}+\sum_{i=1}^n\frac{\omega_i^2}{|\bomega|_{\ell^1}^2}C\epsilon_n\leq \frac{C}{n}.
\end{eqnarray*}

\subsection*{Step 4: Comparing $\cE_{\delta_n}(\nu_{\epsilon_n})$ and $\cE(\nu_{\epsilon_n})$}
By definition,
$$\cE(\nu_{\epsilon_n})-\cE_{\delta_n}(\nu_{\epsilon_n})=\int[G(x,y)-G_{\delta_n}(x,y)]d\nu_{\epsilon_n}(x)d\nu_{\epsilon_n}(y).$$
Since $G(x,y)-G_{\delta_n}(x,y)=0$ when $d(x,y)>\delta_n$, and 
$$\delta_n+\epsilon_n<\frac{2}{\sqrt{n}}\min_{i\neq j}d(p_i,p_j),$$
it follows from a direct computation that
\begin{eqnarray*}
    \cE(\nu_{\epsilon_n})-\cE_{\delta_n}(\nu_{\epsilon_n})&=& \sum_{i=1}^n\frac{\omega_i^2}{|\bomega|_{\ell^1}^2}\frac{1}{|B_{\epsilon_n}(p_i)|^2}\int_{B_{\epsilon_n}(p_i)\times B_{\epsilon_n}(p_i)} \frac{1}{2\pi}|\log (d(x,y)/\delta_n)|dA_g dA_g\\
    &\leq &\sum_{i=1}^n\frac{\omega_i^2}{|\bomega|_{\ell^1}^2}\frac{1}{|B_{\epsilon_n}(p_i)|^2}\cdot C(g)\epsilon_n^4|\log(\epsilon_n)|\\
    &\leq & C(g,V)\frac{\log n}{n}.
\end{eqnarray*}

\subsection*{Step 5: Conclusion} Finally, combining the estimates obtained in Steps 1 to 4, we see that
\begin{eqnarray*}
    \cI&\leq &\cE(\nu_{\epsilon_n})\\
    &\leq &\cE_{\delta_n}(\nu_{\epsilon_n})+C\frac{\log n}{n}\\
    &\leq &\cE_{\delta_n}(\nu)+2C\frac{\log n}{n}\\
    &\leq &\cH(\bp,\frac{\bomega}{|\bomega|_{\ell^1}})+3C\frac{\log n}{n}\\
    &\leq &\frac{1}{n}\inf_{\hat X_n}\cH+4C\frac{\log n}{n},
\end{eqnarray*}
for some $C=C(g,V)<\infty$, giving us an estimate of the desired form.
\end{proof}

As a byproduct of the proof, we can also prove the following variance estimate, showing that the weights $\omega_i$ lie very close to the mean $\bar{\bomega}$ on average as $n\to\infty$.

\begin{lem}\label{lem:tauIBarTau}
Under the assumptions of Theorem \ref{thm:existGoodMinimizers}, for $n\geq C(g,V)$ sufficiently large, for every minimizer $(\bp,\bomega)$ of $\cH$ on $\hat X_n$, we have
$$\sum_i(\omega_i-\bar\bomega)^2\lesssim n^{-1/4}.$$
\end{lem}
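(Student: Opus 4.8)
The plan is to reduce the estimate to the two-sided comparison of $\inf_{\hat X_n}\cH$ with $n\cI$ already established in Propositions~\ref{prop:betaNUpperBound} and \ref{prop:infHatHLowerBound}. The starting point is the elementary identity
$$\sum_{i=1}^n(\omega_i-\bar\bomega)^2=\sum_{i=1}^n\omega_i^2-n\bar\bomega^2=1-\frac{1}{n}|\bomega|_{\ell^1}^2,$$
valid since $|\bomega|_{\ell^2}^2=1$ for $(\bp,\bomega)\in\hat X_n$. Thus the lemma is equivalent to the lower bound $|\bomega|_{\ell^1}^2\geq n-Cn^{3/4}$: in words, the weights of an $\hat\cH$-minimizer are so nearly equal that $|\bomega|_{\ell^1}$ almost attains its maximum possible value $\sqrt n$ on $\hat X_n$, and this should be forced by the near-optimality of $\cH(\bp,\bomega)$.

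To see this, I would first apply the scaling formula \eqref{eq:scaleTau} with $t=|\bomega|_{\ell^1}$ to the unit vector $\bomega/|\bomega|_{\ell^1}$ (which has $\ell^2$-norm $|\bomega|_{\ell^1}^{-1}$), obtaining the exact identity
$$\inf_{\hat X_n}\cH=\cH(\bp,\bomega)=|\bomega|_{\ell^1}^2\,\cH\!\Big(\bp,\tfrac{\bomega}{|\bomega|_{\ell^1}}\Big)-\frac{1}{2\pi}\log|\bomega|_{\ell^1}.$$
Writing $V_n:=\sum_i(\omega_i-\bar\bomega)^2\in[0,1)$, so that $|\bomega|_{\ell^1}^2=n(1-V_n)$, and using $1\le|\bomega|_{\ell^1}\le\sqrt n$ (hence $0\le\log|\bomega|_{\ell^1}\le\tfrac12\log n$), the lower bound $\cH(\bp,\bomega/|\bomega|_{\ell^1})\ge\cI-C\tfrac{\log n}{n}$ furnished by Steps~2--5 of the proof of Proposition~\ref{prop:infHatHLowerBound} (which apply verbatim to any $\hat\cH$-minimizer), and the upper bound $\inf_{\hat X_n}\cH<n\cI+Cn^{3/4}$ of Proposition~\ref{prop:betaNUpperBound}, these combine into
$$n\cI+Cn^{3/4}>n(1-V_n)\Big(\cI-C\tfrac{\log n}{n}\Big)-\frac{\log n}{4\pi}\ge n\cI-nV_n\cI-C'\log n.$$
Cancelling $n\cI$ and rearranging gives $nV_n\cI>-Cn^{3/4}-C'\log n$; since $\cI<0$ by \eqref{i.neg}, dividing produces $V_n<\frac{Cn^{3/4}+C'\log n}{n|\cI|}\le C''n^{-1/4}$ for $n\ge C(g,V)$, which is the claim.

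The only care required is the bookkeeping of the logarithmic error terms and the verification that it is precisely the $n^{3/4}$ gap between $\inf_{\hat X_n}\cH$ and $n\cI$ in the upper bound that yields the stated $n^{-1/4}$ rate; there is no substantive obstacle, since the hard estimates --- the matching upper and lower bounds for $\inf_{\hat X_n}\cH$ --- are already in hand. A sharper variance bound of order $\log n/n$ would follow by the identical computation from an upper bound $\inf_{\hat X_n}\cH\le n\cI+O(\log n)$, but the present cruder bound is all that is needed for the applications of this lemma.
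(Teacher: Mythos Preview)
Your proof is correct and follows essentially the same route as the paper: both combine the scaling identity expressing $\cH(\bp,\bomega)$ in terms of $\cH(\bp,\bomega/|\bomega|_{\ell^1})$, the lower bound $\cH(\bp,\bomega/|\bomega|_{\ell^1})\ge\cI-C\tfrac{\log n}{n}$ from the proof of Proposition~\ref{prop:infHatHLowerBound}, and the upper bound of Proposition~\ref{prop:betaNUpperBound}, to deduce $|\bomega|_{\ell^1}^2\ge n-Cn^{3/4}$ and then invoke the variance identity. The only cosmetic difference is that you introduce $V_n$ and carry it through the chain of inequalities directly, whereas the paper first isolates the estimate $|\bomega|_{\ell^1}^2>n-Cn^{3/4}$ and then applies the identity $n\sum_i(\omega_i-\bar\bomega)^2=n-|\bomega|_{\ell^1}^2$ at the end.
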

\begin{proof}
Using Step 1 and  5 of the proof of the  Proposition \ref{prop:infHatHLowerBound}, we see that
    $$\frac 1{|\bomega|^2_{\ell^1}}[\cH(\bp,\bomega)+\frac 1{2\pi}\log |\bomega|_{\ell^1}]=\cH(\bp,\frac{\bomega}{|\bomega|_{\ell^1}})> \cI-C\frac {\log n}n,$$
    and so
$$\cH(\bp,\bomega)>|\bomega|^2_{\ell^1}(\cI-C\frac{\log n}n)\geq |\bomega|^2_{\ell^1}\cI-C\log n,$$
using  that $|\bomega|_{\ell^1}^2\leq n$ by Cauchy-Schwarz. Combining this with Proposition \ref{prop:betaNUpperBound}, we deduce that
$$n\cI+C_1(g,V)n^{3/4}>|\bomega|^2_{\ell^1}\cI-C(g,V)\log n$$
for suitable constants $C(g,V), C_1(g,V)$, giving us (since $\cI<0$) an estimate of the form 
\begin{equation}\label{l1sq.lbd}
    |\bomega|^2_{\ell^1}>n-C(g,V)n^{3/4}.
\end{equation}

On the other hand, since $|\bomega|_{\ell^2}=1$, a standard computation gives
$$\sum_i(\omega_i-\bar\bomega)^2=1-2\sum_i\omega_i\bar\bomega+n\bar\bomega^2=1-n\bar\bomega^2,$$
or equivalently,
$$n\sum_i(\omega_i-\bar\bomega)^2=n-n^2\bar\bomega^2=n-|\bomega|^2_{\ell^1}.$$
Combining this with \eqref{l1sq.lbd} then gives
$$n\sum_i (\omega_i-\bar{\bomega})^2<C(g,V) n^{3/4},$$
and dividing through by $n$ yields the desired estimate.
\end{proof}

Finally,  combining Proposition \ref{prop:existenceMinimizer}, \ref{prop:tauIComparable},  \ref{prop:p_iLowerBoundDistance}, and \ref{prop:infHatHLowerBound} completes the proof of Theorem \ref{thm:existGoodMinimizers}.

\subsection{Mean field limits}\label{sect:meanFieldLimit} In this section, we 
prove Theorem \ref{thm:convMeasure}, and obtain a finer description of the equilibrium measures $\mu$ realizing the infimum $\cI$.

\begin{proof}[Proof of Theorem \ref{thm:convMeasure}]
To begin, let $\{n_1,n_2,...\}\subset \N$ be an increasing sequence and, for each $k=1,2,...$, suppose $(\bp^k,\bomega^k)$ is a minimizer for $\cH$ on $\hat X_{n_k}$. 

Without loss of generality, we can assume that each $n_k$ is greater than $\overline n (g,J)$ from Theorem \ref{thm:existGoodMinimizers}, so that each $(\bp^k,\bomega^k)$ satisfies the conclusions of that theorem, including the estimate
\begin{equation}\label{ch.upper}
    \cH(\bp^k,\bomega^k)<n_k\cI+\bar{C}(g,V)n_k^{3/4}.
\end{equation}

Next, define the probability measures 
$$\tilde{\nu}^{k}:=\frac 1 {|\bomega^k|_{\ell^1}}\sum_i\omega^{k}_i\frac{\chi_{B_{n_k^{-4}}(p^k_i)}}{|B_{n_k^{-4}}(p^{k}_i)|},$$
and observe that the estimates from Step 2 to 4 in the proof of Proposition \ref{prop:infHatHLowerBound} yield a bound of the form
\begin{equation}\label{tild.nu.comp}
    |\cH(\bp^k,\frac{\bomega^k}{|\bomega^k|_{\ell^1}})-\cE(\tilde\nu^k)|\leq C(g,V)\frac{\log n_k}{n_k}.
\end{equation}

On the other hand, combining the estimate \eqref{ch.upper} with the computation
$$\cH(\bp^n,\frac{\bomega^n}{|\bomega^n|_{\ell^1}})=\frac 1{|\bomega^n|^2_{\ell^1}}[\cH(\bp^n,\bomega^n)+\frac 1{2\pi}\log |\bomega^n|_{\ell^1}],$$
we see that
$$ \cH(\bp^k,\frac{\bomega^k}{|\bomega^k|_{\ell^1}})\leq \frac{1}{|\bomega^k|_{\ell^1}^2}(n_k\cI+Cn_k^{3/4})\leq \cI+Cn_k^{-1/4}.$$
Putting this together with \eqref{tild.nu.comp}, we then see that
$$\cE(\tilde{\nu}^k)\leq \cI+2Cn_k^{-1/4},$$
so in particular, $\tilde{\nu}^k\in\cP$ is a minimizing sequence for $\cE$. 

Passing to a further subsequence, we may assume that $\tilde{\nu}^k$ converges in the weak-* sense to a limit probability measure $\mu\in \cP$. Since the relaxed functionals $\cE_{\delta}\leq \cE$ are continuous under weak-* convergence for each $\delta>0$, we then have
$$\cE_{\delta}(\mu)=\lim_{k\to\infty}\cE_{\delta}(\tilde{\nu}_k)\leq \cI,$$
and taking the supremum over $\delta>0$ gives $\cE(\mu)=\cI$; i.e., $\mu$ is an equilbrium measure for $\cE$.

To complete the proof of Theorem \ref{thm:convMeasure}, it remains to show that the probability measures  $$\nu^k:=\frac 1 {n_k}\sum_{i=1}^{n_k}\delta_{p^k_i}$$
converge to the same equilibrium measure $\mu$; i.e., that $\nu^k-\tilde{\nu}^k\rightharpoonup^*0$ as $k\to\infty$. To this end, fix $f\in Lip(\Sigma)$, and observe that
\begin{eqnarray*}
    |\int fd\tilde{\nu}^k-\sum_i\frac{\omega_i^k}{|\bomega^k|_{\ell^1}}f(p_i^k)|&=&\frac{1}{|\bomega^k|_{\ell^1}}\sum_i\omega_i^k\frac{1}{|B_{n_k^{-4}}(p_i^k)|}|\int_{B_{n_k^{-4}}(p_i^k)}(f-f(p_i^k))|\\
    &\leq &Lip(f)\sum_i \frac{\omega_i^k}{|\bomega^k|_{\ell^1}}n_k^{-4}\\
    &=&Lip(f)n_k^{-4}.
\end{eqnarray*}
Next, note that Lemma \ref{lem:tauIBarTau} gives an estimate of the form
$$\sum_i(\frac{\omega^k_i}{\overline{\bomega^k}}-1)^2\leq \frac{C}{(\overline{\bomega^k})^2} n_k^{-1/4}\leq C n_k^{3/4},$$
which together with an application of Cauchy-Schwarz yields
$$\sum_i|\frac {\omega_i^k}{\overline{\bomega^k}}-1|\leq C\sqrt{n_k^{3/4}\cdot n_k}=C n_k^{7/8}.$$
Combining this with the preceding computation, we see that
\begin{eqnarray*}
    |\int f d\tilde{\nu}^k-\int f d\nu^k|&\leq & Lip(f)n_k^{-4}+\sum_{i=1}^n|\frac{\omega_i^k}{|\bomega^k|_{\ell^1}}-\frac{1}{n_k}||f(p_i)|\\
    &=&Lip(f) n_k^{-4}+\frac{1}{n_k}\sum_{i=1}^n|\frac{\omega_i^k}{\overline{\bomega}^k}-1||f(p_i)|\\
    &\leq &Lip(f) n_k^{-4}+\|f\|_{C^0}\frac{1}{n_k}\cdot Cn_k^{7/8}\\
    &\leq &C\|f\|_{Lip(\Sigma)}n_k^{-1/8},
\end{eqnarray*}
so that $\tilde{\nu}^k-\nu^k\to 0$ in $Lip(\Sigma)^*$ as $k\to\infty$. Hence, the weak-* limits of $\nu^k$ and $\tilde{\nu}^k$ must coincide, and are given by an equilbrium measure $\mu$ for $\cE$.

\end{proof}

We close this subsection with a description of the equilibrium measures $\mu$ associated to any invertible, indefinite Schr\"odinger operator $J=d^*d-V$ on a closed surface $(\Sigma,g)$. For classical interaction energies modeled on the Green's function of the Laplacian, results characterizing equilibrium measures date back to Frostman's work in the 1930s, and nowadays a fairly robust existence and partial regularity theory is available, closely related to the study of obstacle problems (cf. \cite[Section 2]{SerfatyLectures} and references therein). Due to the idiosyncrasies of our setting--in particular, the \emph{indefiniteness} of the Schr\"odinger operator $J$--we opt here for a relatively self-contained treatment, which the reader may compare with the classical theory.

\begin{prop}\label{prop:eq.char}
    For any invertible Schr\"odinger operator $J=d^*d-V$ with $\lambda_1(J)<0$ as above, there exists an equilibrium measure $\mu$ for $\cE$ of the form
    $$\mu=-\cI V dA_g\lfloor_K$$
    for a compact set $K\subset \{V\geq 0\}$. Moreover, in the case where $V>0$ on $\Sigma$, we have that $K=\Sigma$ and $\cI=-1/\int_{\Sigma}VdA_g$ if and only if $J$ has index one.
\end{prop}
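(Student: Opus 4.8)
The plan is to produce $\mu$ from the direct method, extract the variational (obstacle-type) structure of its potential $\phi_\mu$, bootstrap its regularity by localizing to balls where $J$ is coercive, and then read off the stated form; the index-one dichotomy is then a separate, essentially linear-algebraic computation with the explicit competitor $\big(\int_\Sigma V\,dA_g\big)^{-1}V\,dA_g$.

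\emph{Structure of an equilibrium measure.} Let $\mu\in\cP$ realize $\cI$ (such $\mu$ exists by the direct method, as noted before \eqref{i.def}), and put $u:=\phi_\mu\in W^{1,2}(\Sigma)$; by Lemma~\ref{green.lem} and Fubini, $u\in L^1(dA_g)$ as well. Since $\cE$ is quadratic and $\cP$ convex, comparing $\mu$ with $(1-t)\mu+t\nu$ for finite-energy $\nu\in\cP$ gives the first-order condition $\int u\,d\nu\ge\cI$; taking $\nu=\mu\lfloor_A/\mu(A)$ and $\nu=|B_r(x)|^{-1}\chi_{B_r(x)}\,dA_g$ (both of finite energy because $G$ is bounded below), together with $\int u\,d\mu=\cI$ and Lebesgue differentiation, yields $u=\cI$ $\mu$-a.e.\ and $u\ge\cI$ $dA_g$-a.e. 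Combined with $Ju=\mu\ge0$ and $\int(u-\cI)\,d\mu=0$, this shows that on any small geodesic ball $B$ on which the first Dirichlet eigenvalue of $J$ is positive (hence $J$ is coercive on $W^{1,2}_0(B)$), the restriction $u|_B$ minimizes $v\mapsto\int_B(|dv|^2-Vv^2)$ over $\{v\ge\cI\}$ with its own boundary values: a local obstacle problem with bounded zeroth-order coefficient and constant obstacle. Standard regularity for such problems gives $u\in W^{2,p}_{\mathrm{loc}}(\Sigma)$ for all $p<\infty$, hence $u\in C^1_{\mathrm{loc}}$; in particular $u\ge\cI$ everywhere, $K:=\{u=\cI\}$ is compact, and $\supp\mu\subset K$ since $u>\cI$ on the open (hence $\mu$-null) complement. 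Finally $u\equiv\cI$ on $K$ forces $d^*du=0$ a.e.\ on $K$, so $\mu=Ju=d^*du-Vu=-\cI V$ a.e.\ on $K$ while $\mu=0$ off $K$; as $-\cI>0$ and $\mu\ge0$ this forces $V\ge0$ a.e.\ on $K$, and discarding the $\mu$-null set $K\cap\{V<0\}$ we may take $K\subset\{V\ge0\}$, giving $\mu=-\cI V\,dA_g\lfloor_K$.

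\emph{The index-one dichotomy.} Now suppose $V>0$, and set $\mu_0:=\big(\int_\Sigma V\,dA_g\big)^{-1}V\,dA_g\in\cP$. Since $J\mathbf{1}=-V$, we have $J^{-1}V=-\mathbf{1}$, so $\phi_{\mu_0}\equiv-1/\int_\Sigma V$ is constant and $\cE(\mu_0)=-1/\int_\Sigma V$. Because $\phi_{\mu_0}$ is constant, the cross term in $\cE(\nu)=\cE(\mu_0)+2\int\phi_{\mu_0}\,d(\nu-\mu_0)+\cE(\nu-\mu_0)$ vanishes for every $\nu\in\cP$, so $\cI=\cE(\mu_0)$ iff $\cE(\nu-\mu_0)\ge0$ for all finite-energy $\nu\in\cP$; writing $w=\phi_\nu-\phi_{\mu_0}\in W^{1,2}$, which satisfies $\int Vw=-\int J w\,dA_g=-(\nu(\Sigma)-1)=0$, and using \eqref{e.char}, this is equivalent to $\int(|dw|^2-Vw^2)\ge0$ for all $w\in W^{1,2}$ with $\int Vw=0$ — and, by density of smooth functions and the substitution $f=Jw$ (so $\int_\Sigma f=-\int Vw=0$), equivalent to $\int (J^{-1}f) f\,dA_g\ge0$ for all smooth $f$ with $\int_\Sigma f=0$. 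Diagonalizing $J$ in an $L^2$-orthonormal eigenbasis $\{e_i\}$ with eigenvalues $\lambda_1<0<\lambda_2\le\cdots$ (index one) and $e_1>0$, so $\beta_i:=\int_\Sigma e_i$ has $\beta_1\ne0$: one has $\sum_i\lambda_i\beta_i^2=\int_\Sigma \mathbf{1}\cdot J\mathbf{1}\,dA_g=-\int_\Sigma V<0$, hence $\sum_{i\ge2}\lambda_i\beta_i^2=|\lambda_1|\beta_1^2-\int_\Sigma V\ge0$, and for $f=\sum c_ie_i$ with $\sum c_i\beta_i=0$, Cauchy--Schwarz applied to $c_1\beta_1=-\sum_{i\ge2}c_i\beta_i$ gives $\sum_i c_i^2/\lambda_i\ge0$. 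Thus index one implies $\cI=-1/\int_\Sigma V$ and $\mu_0=-\cI V\,dA_g$, i.e.\ $K=\Sigma$. Conversely, if $J$ has index $\ge2$ (so $\lambda_2<0$), the function $f=e_2-(\beta_2/\beta_1)e_1$ has $\int_\Sigma f=0$ and $\int (J^{-1}f)f\,dA_g=\lambda_2^{-1}+(\beta_2/\beta_1)^2\lambda_1^{-1}<0$; since $V>0$ on compact $\Sigma$, $\mu_0+\epsilon f\,dA_g\in\cP$ for small $\epsilon>0$, with energy $\cE(\mu_0)+\epsilon^2\int(J^{-1}f)f\,dA_g<\cE(\mu_0)$, so $\cI<-1/\int_\Sigma V$; in particular $\cI\ne-1/\int_\Sigma V$, and since $K=\Sigma$ would force $-\cI\int_\Sigma V=\mu(\Sigma)=1$, the equality $K=\Sigma$ is excluded as well.

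The main obstacle is the regularity step in the first part: because $J$ is globally indefinite it violates the maximum principle on $\Sigma$, so the global obstacle-problem theory cannot be quoted directly, and one must check by hand that $\phi_\mu$ restricted to a small coercive ball is the (unique) solution of the local variational inequality and then patch the local $W^{2,p}$ estimates. Once $\phi_\mu\in W^{2,p}_{\mathrm{loc}}$ is in place, all remaining assertions are routine.
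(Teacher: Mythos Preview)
Your proof is correct, but both halves take a genuinely different route from the paper.

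\textbf{Part 1 (structure of $\mu$).} The paper avoids obstacle-problem regularity entirely. Instead, after establishing the same Frostman condition $\phi_\mu\ge\cI$, $\phi_\mu=\cI$ $\mu$-a.e., it applies Sard's theorem to the superlevel sets $\Omega_\epsilon=\{\phi_\mu>\cI+\epsilon\}$, integrates by parts on $\Omega_\epsilon$ and $\Sigma\setminus\Omega_\epsilon$, and uses the coarea formula to pass to the limit $\epsilon\to 0$. This yields the pointwise inequality $0\le\mu\le-\cI V\,dA_g\lfloor_K$ directly, so $\mu\in L^\infty$, and the $W^{2,p}$ regularity of $\phi_\mu$ then follows from elliptic estimates alone. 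Your localization to balls with $\lambda_1^D(J;B)>0$ and appeal to standard obstacle regularity is also valid---the key point you flag (that the complementarity conditions $Ju\ge0$, $u\ge\cI$, $(u-\cI)Ju=0$ together with local coercivity force $u|_B$ to solve the local variational inequality) is correct, though the pairing $\int_B(w-u)\,d\mu$ for merely $W^{1,2}$ competitors $w$ requires a short approximation argument you do not spell out. The paper's approach buys self-containment; yours buys conceptual clarity by placing the problem in a standard framework.

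\textbf{Part 2 (index-one dichotomy).} Here the paper diagonalizes $J$ with respect to the \emph{weighted} inner product $L^2(V\,dA_g)$, solving $Ju_i=\lambda_i V u_i$. Since $J\mathbf{1}=-V$, this makes the constant function an exact first eigenfunction with eigenvalue $-1$, and the orthogonality $\int u_iV=0$ for $i\ge 2$ is automatic. The reduction of $\cI$ to $-c_1^2+\inf\sum_{i\ge2}\lambda_ia_i^2$ under a positivity constraint is then immediate, and the index-one case drops out without any Cauchy--Schwarz manipulation. Your approach---working in the standard $L^2$ eigenbasis and using the identity $\sum_i\lambda_i\beta_i^2=-\int V$ together with Cauchy--Schwarz on $c_1\beta_1=-\sum_{i\ge2}c_i\beta_i$---is correct but more laborious; the key inequality you actually use is $\sum_{i\ge2}\lambda_i\beta_i^2\le|\lambda_1|\beta_1^2$ (equivalently $\int V>0$), not the ``$\ge 0$'' you wrote, though both are true. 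The paper's weighted diagonalization is worth knowing: it is the natural basis whenever the reference measure $\mu_0\propto V\,dA_g$ plays a distinguished role.
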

\begin{proof}
As we previously noted, using the characterization $\cE(\mu)=\sup_{\delta>0}\cE_{\delta}(\mu)$, it is easy to see that $\cE$ is lower semi-continuous with respect to weak-* convergence on $\cP$, which together with the Banach-Alaoglu theorem implies the existence of a minimizer $\mu\in \cP$ realizing
$$\cE(\mu)=\cI.$$
Denoting by $\phi=\phi_{\mu}\in W^{1,2}(\Sigma)$ the associated potential (so that $d\mu=J\phi_\mu dA_g$), we can then check in the usual way that $\phi$ satisfies a Frostman-like inequality: namely, for any other $\nu\in \cP$ and $t>0$, the minimality of $\mu$ implies that
$$0\leq\cE((1-t)\mu+t\nu)-\cE(\mu)=2t\int G(x,y)(d\mu d\nu-d\mu d\mu)+t^2\cE(\nu),$$
and taking $t\to 0$ yields
$$\cI\leq \int G(x,y)d\mu(x) d\nu(y)=\int \phi_{\mu}d\nu,$$
with equality when $\nu=\mu$. Thus, we deduce that
\begin{equation}\label{frosty}
    \phi_{\mu}\geq \cI \text{ on }\Sigma,\text{ while }\phi_{\mu}=\cI\text{ holds }\mu\text{-almost everywhere}.
\end{equation}
Moreover, decomposing $G(x,y)$ into its positive singular part $G_+(x,y)$ and Lipschitz-continuous negative part $G_-(x,y)$, it follows from the definition of $\phi_{\mu}$ and a simple application of Fatou's lemma that $\phi_{\mu}$ is lower semi-continuous on $\Sigma$. 

By \eqref{frosty} and the lower semi-continuity of $\phi=\phi_{\mu}$, we see that the set 
$$K:=\{\phi\equiv \cI\}$$ 
is a closed set containing the support $spt(\mu)$, so that $\phi$ is smooth with $J\phi=0$ on the open set $\{\phi>\cI\}.$ For a generic $\epsilon>0$, it then follows from Sard's theorem that 
$\Omega_{\epsilon}:=\{\phi>\cI+\epsilon\}$ is a smooth (or empty) domain on which $J\phi=0$. Thus, testing the weak equation $J\phi=\mu$ against an arbitrary $\psi\in C^{\infty}(\Sigma)$ and decomposing $\Sigma=\Omega_{\epsilon}\cup\Omega_{\epsilon}^c$, we see that
\begin{eqnarray*}
    \int \psi d\mu&=&\int_{\Sigma}\langle d\phi,d\psi\rangle-V\phi\psi\\
    &=&\int_{\Omega_{\epsilon}}div(\psi d\phi)+\psi J\phi\\
    &&+\int_{\Sigma\setminus\Omega_{\epsilon}}\langle d\phi,d\psi\rangle-V\phi\psi\\
    &=&-\int_{\partial\Omega_{\epsilon}}\psi |d\phi|+\int_{\Sigma\setminus\Omega_{\epsilon}}\langle d\phi,d\psi\rangle-V\phi\psi,
\end{eqnarray*}
where in the last line we have used the facts that $J\phi=0$ on $\Omega_{\epsilon}$ and $\frac{\partial \phi}{\partial\nu}=-|d\phi|$ on $\partial\{\phi>\cI+\epsilon\}$. Next, writing $\Sigma\setminus\Omega_{\epsilon}=\{\cI\leq \phi\leq \cI+\epsilon\}$, an application of the coarea formula gives
\begin{eqnarray*}
    |\int_{\Sigma\setminus\Omega_{\epsilon}}\langle d\phi,d\psi\rangle|&=&|\int_{\cI}^{\cI+\epsilon}\left(\int_{\partial \{\phi<t\}}\frac{\partial \psi}{\partial\nu}\right)dt|\\
    &\leq &|\int_{\cI}^{\cI+\epsilon}\left(\int_{\{\phi<t\}}div(d\psi)\right)dt|\\
    &\leq &\epsilon \|d^*d\psi\|_{L^1(\Sigma)}, 
\end{eqnarray*}
and it is easy to see that
$$\lim_{\epsilon\to 0}\int_{\Sigma\setminus\Omega_{\epsilon}}V\phi \psi=\cI\int_{K}V\psi$$
Thus, passing to the limit $\epsilon\to 0$ in the preceding computation for $\int \psi d\mu$, we see that
$$\int \psi d\mu=-\cI\int_{K}V\psi dA_g-\lim_{\epsilon \to 0}\int_{\partial\Omega_{\epsilon}}|d\phi|\psi.$$
In particular, for $\psi\geq 0$, it follows that
$$0\leq \int \psi d\mu\leq -\cI\int_KV\psi dA_g,$$
so that $0\leq \mu\leq -\cI V dA_g\lfloor_K$. This inequality is already sufficient to show that $\mu=J\phi\in L^{\infty}(\Sigma)$, and consequently
$$\phi\in \bigcap_{p\in [1,\infty)}W^{2,p}(\Sigma)\subset \bigcap_{\alpha\in (0,1)}C^{1,\alpha}(\Sigma).$$
Since $\phi\in W^{2,p}(\Sigma)$, it follows moreover that $Hess(\phi)$ vanishes $dA_g$-almost everywhere in $K$, justifying the computation
$$\mu=(J\phi)dA_g=(J\phi)dA_g\lfloor_K=-\cI V dA_g\lfloor_K,$$
the desired characterization of the equilibrium measure.

In the case where $V>0$ globally on $\Sigma$, we can diagonalize $J$ with respect to the inner product $\langle u,v\rangle_{L^2(VdA_g)}$ to obtain an $L^2(VdA_g)$-orthonormal basis for $W^{1,2}(\Sigma)$ of eigenfunctions
$$Ju_i=\lambda_i Vu_i,$$
and since $J(1)=-V$, it follows that we must have $\lambda_1=-1$ and 
$$u_1\equiv c_1:=\left(\int VdA_g\right)^{-1/2}.$$
Moreover, for the other eigenfunctions $u_i$, it follows from the $L^2(VdA_g)$-orthogonality that $\int u_i VdA_g=0$, and therefore
$$\int Ju_i dA_g=0\text{ for }i\geq 2.$$
In particular, writing any $u\in W^{2,2}(\Sigma)$ as 
$$u=\sum_{i=1}^{\infty}a_i u_i,$$
it follows that $Ju$ is a probability measure if and only if
\begin{equation}\label{cstrt}
-a_1c_1V+\sum_{i=2}^{\infty}a_i\lambda_i V u_i\geq 0\text{ and }-\int a_1c_1V dA_g=1,
\end{equation}
where the latter condition necessarily fixes $a_1=-c_1$.
Thus, we can realize $\cI$ in this case as the infimum
$$\cI=-c_1^2+\inf\{\sum_{i=2}^{\infty}\lambda_i a_i^2\mid c_1^2+\sum_{i=2}^{\infty}\lambda_ia_i\geq 0\}.$$
If $\lambda_2>0$, it follows immediately that
$$\cI=-c_1^2=-\frac{1}{\int VdA_g},$$
with equilibrium measure $\mu=-\cI V dA_g.$

On the other hand, if $J$ has index $k\geq 2$, then by choosing $a_2,\ldots,a_k$ sufficiently small, we can arrange that $\sum_{i=2}^k\lambda_i a_i^2<0$ while preserving the condition $c_1^2+\sum_{i=2}^k\lambda_ia_i\geq 0$, so that $\cI<-c_1^2$ in this case.

\end{proof}

\begin{remark}
Note that the preceding analysis implies that the necks of any minimal doubling constructed by $\hat{\cH}$-minimization over a surface of index $\geq 2$ \emph{never} equidistribute, concentrating instead in a proper subset $K\neq \Sigma$. Thus, for example, the periodic doublings of the Clifford torus constructed in \cite{KapYang} are not expected to arise from $\hat\cH$ minimization, and should instead correspond to higher-energy critical points for $\cH$.
\end{remark}

\subsection{Nondegeneracy in the weight parameters}\label{sect:wt.nondeg}

We come now to the proof of Proposition \ref{prop:wt.nondeg}, giving some useful a priori lower bounds on the Hessian $D^2\cH(\bp,\btau)$ of a critical point $(\bp,\btau)\in X_n$ for which $(\bp,\btau/|\btau|_{\ell^2})$ is $\cH$-minimizing in $\hat X_n$. 

Suppose once again that we're in the setting of Theorem \ref{thm:existGoodMinimizers}, and let $(\bp,\bomega)\in \hat X_n$ minimize $\cH$ on $\hat X_n$ for some $n\geq \bar{n}$, and let $(\bp,\btau)\in X_n$ be the associated critical point for $\cH$, so that
\begin{equation}\label{btau.char}
    \btau=e^{2\pi \inf_{\hat X_n}\cH-1/2}\bomega
\end{equation}
and
\begin{equation}\label{h.max.char}
    \cH(\bp,\btau)=\frac{1}{4\pi}|\btau|^2.
\end{equation}
Writing
$$\cP=T_{(\bp,\btau)}X_n=\bigoplus_{i=1}^nT_{p_i}\Sigma\oplus \mathbb{R}^n,$$
recall that the Hessian $D^2\cH(\bp,\btau)$ gives a self-adjoint map
$$L:=D^2\cH(\bp,\btau): \cP \to \cP$$
with respect to the natural $\ell^2$ inner product on $\cP$. Now, from the proof of Proposition \ref{prop:criticalPointH}, recall that
$$\nabla^{\sigma_i}\cH(\bq,\bsigma)=2\sum_{j\neq i}\sigma_jG(q_i,q_j)+2\sigma_iR(q_i,q_i)-\frac{\sigma_i}{\pi}\log(\tau_i/2),$$
and differentiating again and evaluating at the critical point $(\bp,\btau)$, we see that for any $\bs\in \mathbb{R}^n$ and $(\bv,\bt)\in \cP$,
\begin{eqnarray*}
    \langle L(0,\bs), (\bv,\bt)\rangle&=&\sum_{i=1}^n\sum_{j\neq i}2s_i\langle d\phi_{p_i}(p_j),\tau_j v_j\rangle\\
    &&+\sum_{i=1}^ns_i\left(\sum_{j\neq i}2\tau_j\langle d\phi_{p_j}(p_i),v_i\rangle+2\sum_{i=1}^n\tau_i\langle dR_D(p_i),v_i\rangle\right)\\
    &&+\sum_{i=1}^n\sum_{j\neq i}2G(p_i,p_j)s_it_j\\
    &&+\sum_{i=1}^n(2R_D(p_i)-\frac{1}{\pi}(\log(\tau_i/2)+1))s_it_i.
\end{eqnarray*}
Since $(\bp,\btau)$ is a critical point for $\cH$, note that Proposition \ref{prop:criticalPointH} gives
\begin{equation}\label{v.crit}
    \sum_{j\neq i}\tau_jd\phi_{p_j}(p_i)+\frac{\tau_i}{2}dR_D(p_i)=0,
\end{equation}
and applying this in the second line of the preceding computation, we get the simpler expression
\begin{eqnarray*}
\langle L(0,\bs), (\bv,\bt)\rangle&=&\sum_{i=1}^n\sum_{j\neq i}2s_i\langle d\phi_{p_i}(p_j),\tau_j v_j\rangle+\sum_{i=1}^ns_i\tau_i\langle d R_D(p_i),v_i\rangle\\
    &&+\sum_{i=1}^n\sum_{j\neq i}2G(p_i,p_j)s_it_j+\sum_{i=1}^n(2R_D(p_i)-\frac{1}{\pi}(\log(\tau_i/2)+1))s_it_i.
\end{eqnarray*}
Taking $\bt=0$ and recalling the definition of $S_{\btau}:\cP\to\cP$, it follows that
$$|\langle S_{\btau}LS_{\btau}(0,\bs),(\bv,0)\rangle|\leq |\sum_{i=1}^n\sum_{j\neq i}2s_i\langle d\phi_{p_i}(p_j),v_j\rangle|+|\sum_{i=1}^ns_i\langle dR_D(p_i),v_i\rangle|.$$
Applying Lemma \ref{green.lem} to estimate $d\phi_{p_i}(p_j)$ and $dR_D(p_i)$, and noting that $d(p_i,p_j)>n^{-3}$ by Theorem \ref{thm:existGoodMinimizers}, it follows that
\begin{eqnarray*}
    |\langle S_{\btau}LS_{\btau}(0,\bs),(\bv,0)\rangle|&\leq &\sum_{i=1}^n\sum_{j\neq i}\frac{C}{d(p_i,p_j)}|s_i||v_j|+C\sum_{i=1}^n|s_i||v_i|\\
    &\leq & Cn^3\sum_{i,j=1}^n|s_i||v_j|.
\end{eqnarray*}
A simple application of Cauchy-Schwarz then gives
\begin{equation}\label{sls.cross}
    |\langle S_{\btau}LS_{\btau}(0,\bs),(\bv,0)\rangle|\leq Cn^4|\bs||\btau|.
\end{equation}

Next, define an auxiliary function $F: X_n\to \mathbb{R}$ by $F(\bq,\bsigma):=\cH(\bq,\bsigma/|\bsigma|)$, and note that \eqref{eq:scaleTau} gives
$$F(\bq,\bsigma)=|\bsigma|^{-2}\cH(\bq,\bsigma)+\frac{1}{2\pi}\log |\bsigma|.$$
Since $(\bp,\btau)$ is a critical point for $\cH$, a straightforward computation gives
\begin{eqnarray*}
        D^2F(\bp,\btau)&=&|\btau|^{-2}D^2\cH(\bp,\btau)-\left(\frac{1}{2\pi|\btau|^2}-\frac{2\cH(\bp,\btau)}{|\btau|^4}\right)\Pi_{0\oplus \mathbb{R}^n}\\
        &&+\left(\frac{8\cH(\bp,\btau)}{|\btau|^2}-\frac{1}{\pi}\right)\frac{(0,\btau)\otimes (0,\btau)}{|\btau|^4},
    \end{eqnarray*}
    and combining this with \eqref{h.max.char}, it follows that
\begin{equation}\label{f.vs.h}
    D^2F(\bp,\btau)=|\btau|^{-2}D^2\cH(\bp,\btau)+\frac{1}{\pi|\btau|^2}\frac{(0,\btau)\otimes (0,\btau)}{|\btau|^2}.
\end{equation}

We now derive two important consequences of this computation.

\begin{lem}\label{lem:spanTau}
    At the critical point $(\bp,\btau)$, the vector $(0,\btau)\in \cP$ is an eigenvector for $L=D^2\cH(\bp,\btau)$ with
    $$S_{\btau}LS_{\btau}(0,\btau)=L(0,\btau)=-\frac{1}{\pi}(0,\btau).$$
\end{lem}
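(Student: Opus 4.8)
The plan is to read off the action of $L=D^2\cH(\bp,\btau)$ on the weight–scaling direction $(0,\btau)$ directly from the homogeneity identity \eqref{eq:scaleTau}. Since $S_{\btau}$ fixes the $\mathbb{R}^n$–factor, we have $S_{\btau}(0,\btau)=(0,\btau)$, so it suffices to establish the middle equality $L(0,\btau)=-\tfrac1\pi(0,\btau)$; applying $S_{\btau}$ to both sides then yields $S_{\btau}LS_{\btau}(0,\btau)=S_{\btau}L(0,\btau)=S_{\btau}\bigl(-\tfrac1\pi(0,\btau)\bigr)=-\tfrac1\pi(0,\btau)$, which is the full statement.

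To compute $L(0,\btau)$ I would pair it against an arbitrary $(\bv,\bt)\in\cP$. Since $(\bp,\btau)$ is a critical point of $\cH$, for any smooth two–parameter family $\gamma(s,r)\in X_n$ with $\gamma(0,0)=(\bp,\btau)$ one has $\partial_s\partial_r(\cH\circ\gamma)|_{(0,0)}=D^2\cH(\bp,\btau)\bigl(\partial_s\gamma(0,0),\partial_r\gamma(0,0)\bigr)$, the usual connection–dependent correction term vanishing because it is contracted against $\nabla\cH(\bp,\btau)=0$. Take $\gamma(s,r):=\bigl(\exp_{\bp}(r\bv),\,(1+s)(\btau+r\bt)\bigr)$, where $\exp_{\bp}(r\bv):=(\exp_{p_1}(rv_1),\dots,\exp_{p_n}(rv_n))$; then $\partial_s\gamma(0,0)=(0,\btau)$ and $\partial_r\gamma(0,0)=(\bv,\bt)$. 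Applying \eqref{eq:scaleTau} with scaling parameter $1+s$ gives
$$\cH(\gamma(s,r))=(1+s)^2A(r)-(1+s)^2\log(1+s)\,\frac{B(r)}{2\pi},$$
where $A(r):=\cH(\exp_{\bp}(r\bv),\btau+r\bt)$ and $B(r):=|\btau+r\bt|_{\ell^2}^2$, so that $A'(0)=\langle\nabla\cH(\bp,\btau),(\bv,\bt)\rangle=0$ and $B'(0)=2\langle\btau,\bt\rangle$. Differentiating this explicit expression in $s$ and then $r$ and evaluating at $(0,0)$ yields $\partial_s\partial_r(\cH\circ\gamma)|_{(0,0)}=2A'(0)-\tfrac{1}{2\pi}B'(0)=-\tfrac1\pi\langle\btau,\bt\rangle$. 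Since $-\tfrac1\pi\langle\btau,\bt\rangle=\langle-\tfrac1\pi(0,\btau),(\bv,\bt)\rangle_{\ell^2}$ for every $(\bv,\bt)\in\cP$, nondegeneracy of the $\ell^2$ inner product forces $L(0,\btau)=-\tfrac1\pi(0,\btau)$.

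As a consistency check, the same conclusion can be read off from the explicit formula for $\langle L(0,\bs),(\bv,\bt)\rangle$ derived just above: setting $\bs=\btau$, the position–criticality identity \eqref{v.crit} makes the first two sums cancel, while the $\tau_i$–criticality relation from Proposition \ref{prop:criticalPointH} collapses the remaining two sums to $-\tfrac1\pi\sum_i\tau_it_i=-\tfrac1\pi\langle\btau,\bt\rangle$. I do not expect any genuine obstacle here; the only point that wants a little care is the chain–rule bookkeeping for the mixed second derivative of $\cH$ along $\gamma$, and this is entirely handled by working at the critical point, where the correction term drops out.
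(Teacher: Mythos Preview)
Your proof is correct and is essentially the same as the paper's argument: both exploit the homogeneity of $\cH$ under scaling of $\btau$. The paper packages this via the auxiliary function $F(\bq,\bsigma)=\cH(\bq,\bsigma/|\bsigma|)$ and the relation \eqref{f.vs.h}, then uses scale-invariance of $F$ to see $D^2F(\bp,\btau)(0,\btau)=0$; you instead differentiate the scaling identity \eqref{eq:scaleTau} directly along a two-parameter family, which is slightly more elementary but amounts to the same computation.
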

\begin{proof}
Since the auxiliary function $F$ defined above is invariant under rescaling the $\bsigma$ parameter, we see that 
$$S_{\btau}D^2FS_{\btau}(0,\btau)=S_{\btau}D^2F(\bp,\btau)(0,\btau)=D^2F(\bp,\btau)=0,$$
which together with \eqref{f.vs.h} implies that
\begin{eqnarray*}
    S_{\btau}D^2\cH(\bp,\btau)S_{\btau}(0,\btau)&=&D^2\cH(\bp,\btau)(0,\btau)\\
    &=&|\btau|^2D^2F(\bp,\btau)(0,\btau)-\frac{1}{\pi}(0,\btau)\\
    &=&-\frac{1}{\pi}(0,\btau),
\end{eqnarray*}
as claimed.
\end{proof}

Next, we use the $\cH$-minimizing property of $(\bp,\bomega)$ in $\hat X_n$ to obtain nonnegativity of $S_{\btau}LS_{\btau}$ in directions orthogonal to $(0,\btau)$.

\begin{lem}\label{hess.nonneg}
    For $(\bp,\bomega)$ and $(\bp,\btau)$ as above, the Hessian $L=D^2\cH(\bp,\btau)$ is positive semidefinite on the hyperplane $\langle (0,\btau)\rangle^{\perp}\subset \cP$ orthogonal to $(0,\btau)$. In particular, $S_{\btau}LS_{\btau}\geq 0$ on $\langle (0,\btau)\rangle^{\perp}$ as well.
\end{lem}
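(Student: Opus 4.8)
The plan is to reduce the statement to the fact that the auxiliary function $F(\bq,\bsigma):=\cH(\bq,\bsigma/|\bsigma|)$, being invariant under positive rescalings of $\bsigma$, attains a \emph{global minimum} over the open set $X_n$ at $(\bp,\btau)$. Indeed, for any $(\bq,\bsigma)\in X_n$ the point $(\bq,\bsigma/|\bsigma|)$ lies in $\hat X_n$, so the $\cH$-minimality of $(\bp,\bomega)$ in $\hat X_n$ gives $F(\bq,\bsigma)=\cH(\bq,\bsigma/|\bsigma|)\geq \cH(\bp,\bomega)$; on the other hand, since $\btau=e^{2\pi\cH(\bp,\bomega)-1/2}\bomega$ is a positive multiple of $\bomega$ with $|\bomega|=1$ by \eqref{btau.char}, we have $\btau/|\btau|=\bomega$ and hence $F(\bp,\btau)=\cH(\bp,\bomega)$. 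Thus $F\geq F(\bp,\btau)$ on all of $X_n$, and since $(\bp,\btau)$ is an interior point of $X_n$ (all weights positive, all $p_i$ distinct), the second derivative test yields that the quadratic form $D^2F(\bp,\btau)$ is positive semidefinite on $\cP$.

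I would then combine this with the identity \eqref{f.vs.h}. For $(\bv,\bs)\in\langle(0,\btau)\rangle^{\perp}$, the rank-one term $(0,\btau)\otimes(0,\btau)$ annihilates $(\bv,\bs)$, so
$$0\leq \langle D^2F(\bp,\btau)(\bv,\bs),(\bv,\bs)\rangle_{\ell^2}=|\btau|^{-2}\langle L(\bv,\bs),(\bv,\bs)\rangle_{\ell^2},$$
which is exactly the asserted positive semidefiniteness of $L=D^2\cH(\bp,\btau)$ on the hyperplane $\langle(0,\btau)\rangle^{\perp}$.

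For the final claim about $S_{\btau}LS_{\btau}$, I would use that $S_{\btau}$ maps $\langle(0,\btau)\rangle^{\perp}$ into itself: writing $S_{\btau}(\bv,\bs)=(\tau_1^{-1}v_1,\ldots,\tau_n^{-1}v_n,s_1,\ldots,s_n)$, the $\mathbb{R}^n$-component $\bs$ is unchanged, so the condition $\langle\bs,\btau\rangle_{\ell^2}=0$ is preserved. Since $S_{\btau}$ is $\ell^2$-self-adjoint, for $(\bv,\bs)\in\langle(0,\btau)\rangle^{\perp}$ we obtain $\langle S_{\btau}LS_{\btau}(\bv,\bs),(\bv,\bs)\rangle_{\ell^2}=\langle L\,S_{\btau}(\bv,\bs),S_{\btau}(\bv,\bs)\rangle_{\ell^2}\geq 0$, since $S_{\btau}(\bv,\bs)$ again lies in $\langle(0,\btau)\rangle^{\perp}$, where $L\geq 0$ by the previous step. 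The argument is essentially formal once \eqref{f.vs.h} is in hand; the only point that needs to be stated with a little care is the identification of $(\bp,\btau)$ as a genuine \emph{minimizer} of $F$ on $X_n$ rather than merely a critical point, which is what licenses the second derivative test and rests on the ray-invariance of $F$ reducing its behavior on $X_n$ to its restriction to $\hat X_n$.
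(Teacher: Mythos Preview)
Your proof is correct and follows essentially the same route as the paper: both argue that $(\bp,\btau)$ is a global minimizer of the auxiliary function $F$ on $X_n$, invoke \eqref{f.vs.h} to kill the rank-one term on $\langle(0,\btau)\rangle^{\perp}$, and then pass to $S_{\btau}LS_{\btau}$. If anything, your justification of the last step---noting explicitly that $S_{\btau}$ preserves $\langle(0,\btau)\rangle^{\perp}$ because it fixes the $\mathbb{R}^n$ factor---is slightly more careful than the paper's one-line appeal to positive definiteness of $S_{\btau}$.
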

\begin{proof}
    Since $(\bp,\btau/|\btau|)$ minimizes $\cH$ on $\hat X_n$, we see that $(\bp,\btau)$ minimizes the auxiliary function $F$ on $X_n$, and it follows from \eqref{f.vs.h} that for any $(\bv,\bs)\in \langle (0,\btau)\rangle^{\perp}$,
    $$\langle L(\bv,\bs),(\bv,\bs)\rangle=|\btau|^2\langle D^2F(\bp,b\tau)(\bv,\bs),(\bv,\bs)\rangle \geq 0,$$
    as claimed. Since the map $S_{\btau}$ is obviously positive definite, the nonnegativity of $S_{\btau}LS_{\btau}$ on $\langle (0,\btau)\rangle^{\perp}$ follows immediately as well.    
\end{proof}

With these preliminary results out of the way, we now complete the proof of Proposition \ref{prop:wt.nondeg} by obtaining suitable lower bounds on terms of the form $\langle L(0,\bs),(0,\bs)\rangle$, via another mean field-type analysis.

\begin{proof}[Proof of Proposition \ref{prop:wt.nondeg}]

To prove the proposition, we need to establish--under the additional assumption that $J$ has index one--an estimate of the form
$$\langle S_{\btau}LS_{\btau}(\bv,\bs),(\bv,\bs)\rangle \geq \max\{0,c_0 n|\bs|^2-n^5|\bv||\bs|\}$$
for all $(\bv,\bs)\in \langle (0,\btau)\rangle^{\perp}$ and $n\geq n_0(g,V)$ is sufficiently large. 

Note that Lemma \ref{hess.nonneg} already shows that $S_{\btau}LS_{\btau}$ is nonnegative definite in directions orthogonal to $(0,\btau)$, and for any $(\bv,\bs)\in \langle (0,\btau)\rangle^{\perp}$, it also follows that
\begin{eqnarray*}
    \langle S_{\btau}LS_{\btau}(\bv,\bs),(\bv,\bs)\rangle&=&\langle S_{\btau}LS_{\btau}(\bv,0),(\bv,0)\rangle+2\langle S_{\btau}LS_{\btau}(0,\bs),(\bv,0)\rangle\\
    &&+\langle S_{\btau}LS_{\btau}(0,\bs),(0,\bs)\rangle\\
    &\geq &2\langle S_{\btau}LS_{\btau}(0,\bs),(\bv,0)\rangle+\langle S_{\btau}LS_{\btau}(0,\bs),(0,\bs)\rangle\\
    &\geq &Cn^4|\bs||\btau|+\langle S_{\btau}LS_{\btau}(0,\bs),(0,\bs)\rangle,
\end{eqnarray*}
where in the last line we use \eqref{sls.cross}. Denoting by $Q: \mathbb{R}^n\times \mathbb{R}^n\to \mathbb{R}$ the quadratic form
$$Q(\bs,\bs):=\langle S_{\btau}LS_{\btau}(0,\bs),(0,\bs)\rangle,$$
we then see that the desired estimate will follow once we have a bound of the form
$$\inf\{\frac{Q(\bs,\bs)}{|\bs|^2}\mid 0\neq \bs\in \langle \btau\rangle^{\perp}\}\geq c_0 n$$
for some $c_0(g,V)>0$. In particular, since $\btau$ must span the first eigenspace for $Q$ by Lemma \ref{lem:spanTau}, this is equivalent to proving a lower bound
\begin{equation}\label{lam.2.q}
    \lambda_2(Q)\geq c_0n
\end{equation}
for the second eigenvalue $\lambda_2(Q)$. 

To prove \eqref{lam.2.q}, recall that $d(p_i,p_j)>n^{-3}$ for $n$ sufficiently large, and--as in the proof of Theorem \ref{thm: asymptotics}--consider the $L^{\infty}$ functions
$$\varphi_i:=\frac{\chi_{B_{n^{-4}}(p_i)}}{|B_{n^{-4}}(p_i)|}.$$
Since the functions $\varphi_i$ have disjoint supports, the map $T: \mathbb{R}^n\to L^{\infty}(\Sigma)$ given by $T(\bs):=\sum_{i=1}^ns_i\varphi_i$ is clearly injective. 

Now, given $\bs\in \mathbb{R}^n$, recall that
$$Q(\bs,\bs)=\sum_{i=1}^n\sum_{j\neq i}2G(p_i,p_j)s_is_j+\sum_{i=1}^n(2R_D(p_i)-\frac{1}{\pi}(\log(\tau_i/2)+1))s_i^2.$$
Moreover, by \eqref{btau.char} and the estimates of Theorem \ref{thm:existGoodMinimizers}, we see that 
\begin{eqnarray*}
    \log(\tau_i/2)&=&2\pi\inf_{\hat X_n}\cH-1/2+\log(\omega_i)+\log(1/2)\\
    &\leq &2\pi n\cI+\overline{C}n^{3/4},
\end{eqnarray*}
and since $R_D\in C^0(\Sigma)$, it follows that
$$Q(\bs,\bs)\geq \sum_{i=1}^n\sum_{j\neq i}2G(p_i,p_j)s_is_j-n\cI|\bs|^2$$
for $n\geq n_0(g,V)$ sufficiently large.

As a consequence, for any $\bs\in \mathbb{R}^n$, setting 
$$u:=J^{-1}(T(\bs))=\sum_i s_i\int G(x,y)\varphi_i(y)dA_g(y)\in W^{2,2}(\Sigma),$$
we compute
\begin{eqnarray*}
    Q(\bs,\bs)-2\int u Ju dA_g&=&Q(\bs,\bs)-2\sum_{i,j=1}^ns_is_j\int_{\Sigma\times \Sigma} G(x,y)\varphi_i(x)\varphi_j(y)dA_g dA_g\\
    &\geq &2\sum_{i=1}^n\sum_{j\neq i}s_is_j\int_{\Sigma\times \Sigma} (G(p_i,p_j)-G(x,y))\varphi_i(x)\varphi_i(y)dA_gdA_g\\
    &&-n\cI |\bs|^2-\sum_{i=1}^ns_i^2\int_{\Sigma\times \Sigma}G(x,y)\varphi_i(x)\varphi_i(y)dA_gdA_g.
\end{eqnarray*}
Arguing much as in Step 3 of the proof of Theorem \ref{thm: asymptotics}, we note that since $\varphi_i$ vanishes outside of $B_{n^{-4}}(p_i)$ and $d(p_i,p_j)>n^{-3}$ for $i\neq j$, Lemma \ref{green.lem} gives an estimate of the form
$$\int_{\Sigma\times \Sigma}|G(p_i,p_j)-G(x,y)|\varphi_i(x)\varphi_j(y)\leq \frac{C}{n}$$
for every pair $i\neq j$. Likewise, a straightforward computation using Lemma \ref{green.lem}, similar to Step 4 of the proof of Theorem \ref{thm: asymptotics}, gives an estimate of the form
$$\int_{\Sigma\times \Sigma}G(x,y)\varphi_i(x)\varphi_i(y)dA_g dA_g\leq C \log n.$$
Using these in the estimate above, we find that
\begin{eqnarray*}
    Q(\bs,\bs)-2\int u Ju dA_g&\geq &-n\cI|\bs|^2-\frac{C}{n}\sum_{i,j=1}^ns_is_j-C|\bs|^2\log n\\
    &\geq &n|\cI||\bs|^2-C|\bs|^2-C(\log n)|\bs|^2\\
    &\geq &\frac{n}{2}|\cI||\bs|^2
\end{eqnarray*}
for $n\geq n_0(g,V)$ sufficiently large. Setting $c_0(g,V):=\frac{1}{2}|\cI(g,V)|>0$, we can now prove \eqref{lam.2.q} as follows. For any two-dimensional subspace $W\subset \mathbb{R}^n$, note that $J^{-1}T(W)\subset W^{2,2}(\Sigma)$ is also two-dimensional, by the injectivity of $J^{-1}T$. Thus, since $\lambda_2(J)>0$, there must be some $\bs\in W$ such that $u=J^{-1}T\bs$ satisfies
$$\int u Ju dA_g \geq 0,$$
and it follows from the preceding estimates that
$$\frac{Q(\bs,\bs)}{|\bs|^2}\geq c_0 n+2|\bs|^{-2}\int u Ju dA_g\geq c_0 n.$$
The bound \eqref{lam.2.q} then follows immediately from the min-max characterization of $\lambda_2(Q)$, completing the proof.

\end{proof}

\section{Proof of main theorems}\label{sect:MainProof}

In this section, we complete the proofs of Theorems \ref{thm:main_doubling} and \ref{thm: asymptotics}, and Corollary \ref{thm:maincor}, by proving genericity for the set of metrics such that the $\hat{\cH}$-minimizing critical points associated to the Jacobi operator of any index one minimal surface satisfy the hypotheses of Theorem \ref{thm:doubling.precise} along some subseqence $n_k\to\infty$. 

\subsection{Set-up}\label{sect:proofMainSetUp}
Let $N$ be a fixed closed, smooth 3-manifold. For each $k\in\{4,5,...\}$, let $\Gamma_k$ be the space of $C^k$ Riemannian metrics on $N$, and $\cM^k$  the set of all pairs $(g,\Sigma)$, where $g\in \Gamma_k$ and $\Sigma$ is any embedded minimal surface in $(N,g)$, endowed with the natural topology such that $(g_j,\Sigma_j)\to (g,\Sigma)$ in $\cM^k$ if and only if $\|g_j-g\|_{C^k(N,g)}\to 0$ and $\Sigma_j=F_j(\Sigma)$ for a sequence of maps $F_j:\Sigma\to N$ converging in $C^{k-1}(\Sigma)$ to the inclusion $\iota: \Sigma\to N$. Likewise, define $\Gamma_{\infty}:=\bigcap_{k\geq 4}\Gamma_k$ to be the space of smooth metrics and $\cM^{\infty}=\bigcap_{k\geq 4}\cM^k$ the space of smooth pairs $(g,\Sigma)$, endowed with the topology of smooth convergence. For $k\in \{4,5,\ldots\}\cup\{\infty\}$, let $\pi:\cM^k\to\Gamma_k$ denote the projection map given by $\pi(g,\Sigma):=g$. 

By White's results in \cite{Whi91,white2017bumpy}, for each integer $k\geq 4$, $\cM^k$ has the structure of a separable $C^{k-2}$ Banach manifold, and the map $\pi$ is a $C^{k-2}$ Fredholm map with Fredholm index 0. Moreover, $(g,\Sigma)$ is a critical point of $\pi$ if and only if $\Sigma$   has some non-trivial Jacobi field in $(N,g).$ Thus, for every $k\geq 4$, and  any {\it non-degenerate},  embedded minimal surface $\Sigma$ in $(N,g)$, there exist some open neighborhoods $\cU^k(g,\Sigma)\subset\cM^k$ around $(g,\Sigma)$ and $\cV^k(g,\Sigma)\subset\Gamma_k$ around $g$, such that $$\pi|_{\cU^k(g,\Sigma)}:\cU^k(g,\Sigma)\to \cV^k(g,\Sigma)$$ is a homeomorphism.
Note the obvious inclusions $\cM^4\supset\cM^5\supset ...\supset \cM^\infty$ and $\Gamma_4\supset\Gamma_5\supset ...\supset \Gamma_\infty$.

Now, for each $g\in \Gamma_\infty$, and each  non-degenerate, embedded minimal surface in $(N,g)$, we define the sets
$$\cU(g,\Sigma):=\cU^4(g,\Sigma)\cap \cM^\infty\subset\cM^\infty$$
and 
$$\cV(g,\Sigma):=\cV^4(g,\Sigma)\cap \Gamma_\infty\subset\Gamma_\infty,$$
and record the following properties.
\begin{itemize}
\item $\pi|_{\cU(g,\Sigma)}:\cU(g,\Sigma)\to \cV(g,\Sigma)$ is a homeomorphism in the $C^4$-topology.
\item $\cU(g,\Sigma)$ and $\cV(g,\Sigma)$ are open in the $C^{\infty}$ topology on $\cM^{\infty}$ and $\Gamma_{\infty}$.
\item For each $h\in \cV(g,\Sigma)$, denote by $\Sigma_h$ the unique embedded minimal surface in $(N,h)$ for which $(h,\Sigma_h)\in \cU(g,\Sigma)$. By assuming $\cU^4(g,\Sigma)$ and $\cV^4(g,\Sigma)$ to be sufficiently small, we can further ensure that $\Sigma_h$ remains non-degenerate, with the same Morse index as $\Sigma$, for each $h\in \cV(g,\Sigma)$.
\end{itemize}

Now, given any {\it two-sided}, embedded minimal surface $\Sigma$ in $(N,g)$, we may consider its Jacobi operator $J=J_{\Sigma}=d^*_gd-V$, where $V=|A|^2+\Ric(\nu,\nu)$ and $\nu$ is a unit normal vector field on $\Sigma$. With respect to the metric $g|_\Sigma$ and the Schr\"odinger operator $J$ on $\Sigma$, we may define the interaction energies  $\cH_{(g,\Sigma)}:X_n\to (0,\infty)$ as in Definition \ref{h.def} and $\hat \cH_{(g,\Sigma)}:\hat X_n\to (0,\infty)$, as in Definition \ref{hatH.def}. Note, the definition of these two functions depend not only on the Riemannian surface $(\Sigma,g|_\Sigma)$, but also the embedding of $\Sigma$ into $(N,g)$, through the potential $V=|A|^2+\Ric(\nu,\nu)$. Below, we sometimes omit the subscripts in  $\cH_{(g,\Sigma)}$ and $\hat \cH_{(g,\Sigma)}$ where there is no cause for confusion.

As a consequence of the preceding discussion, note that the subset
$$\cM_1^{\infty}:=\{(g,\Sigma)\subset \cM^{\infty}\mid \Sigma\text{ two-sided and }\lambda_1(J_{\Sigma})<0<\lambda_2(J_{\Sigma})\}$$
of $\cM^{\infty}$ consisting of pairs $(g,\Sigma)$ where $\Sigma$ is a two-sided, nondegenerate, embedded minimal surface with Morse index one is open in $\cM^{\infty}$.

We now introduce a few more definitions. For simplicity, we reserve the notation $|\cdot|$ for the $\ell^2$-norm throughout \S \ref{sect:MainProof}. 
\begin{definition}\label{def:propertyD}
For any positive integer $n$, we say that $(g,\Sigma)\in \cM_1^{\infty}$ satisfies {\it Property $\Db(g,\Sigma,n)$}  if the following hold:
\begin{enumerate}
    \item Any minimizing sequence for $\cH_{(g,\Sigma)}$ in $\hat X_n$ is precompact in $\hat X_n$. (Hence, there exists at least one critical point $(\bp,\btau)\in X_n$ of energy $\cH(\bp,\btau)=\inf_{\hat{X}_n}\hat \cH$.)
    \item {\it Every} critical point $(\bp,\btau)\in X_n$ of $\cH_{(g,\Sigma)}$ such that $\cH(\bp,\btau)=\inf_{\hat X_n}\hat \cH$ satisfies items (1) - (4) of Theorem \ref{thm:doubling.precise}.
\end{enumerate}
We then set
$$\cS_n:=\{(g,\Sigma)\in \cM_1^{\infty}\mid D(g,\Sigma,n)\text{ holds}\}.$$
\end{definition}

\begin{remark}\label{ind1.rk}
    By definition of $\cM_1^{\infty}$ and domain monotonicity of Dirichlet eigenvalues for the Jacobi operator $J$, note that for any $(g,\Sigma)\in \cM_1^{\infty}$, we have
    $$\lambda_2(J;\Omega)\geq \lambda_2(J;\Sigma)>0$$
    for any domain $\Omega\subset \Sigma$. Thus, item (4) of Theorem \ref{thm:doubling.precise} holds automatically for any critical point of $\cH_{(g,\Sigma)}$ satisfying items (1)-(3), so we can omit condition (4) from the definition of $\Db(g,\Sigma,n)$ and $\cS_n$ with no loss of generality.
\end{remark}

The main result of this section can then be expressed as follows.

\begin{thm}\label{thm:good.generic}
Let $\cG\subset \Gamma_{\infty}$ be the set of metrics such that, for every two-sided, index one minimal surface $\Sigma$ in $(N,g)$, we have 
$$(g,\Sigma)\in \bigcap_{k\in \mathbb{N}}\bigcup_{n\geq k}\cS_n.$$
Then $\cG$ is comeager in $\Gamma_{\infty}$.
\end{thm}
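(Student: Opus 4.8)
The plan is to realize $\cG$ as a countable intersection of open dense subsets of $\Gamma_\infty$, using the Baire category theorem. The key idea is a compactness argument reducing the problem to finitely many minimal surfaces at a time, combined with the perturbation construction described in the introduction to upgrade density. First I would fix $\epsilon \in \mathbb{Q}\cap(0,1)$ and $k\in\mathbb{N}$, and define $\cG_{\epsilon,k}\subset\Gamma_\infty$ to be the set of metrics $g$ such that every two-sided embedded minimal surface $\Sigma\subset(N,g)$ satisfying the quantitative index one bounds $\lambda_1(J_\Sigma)+\epsilon<0<\lambda_2(J_\Sigma)-\epsilon$ and $\area(\Sigma)+|A_\Sigma|_{C^0}<1/\epsilon$ has $(g,\Sigma)\in\bigcup_{n\geq k}\cS_n$. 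Since every two-sided index one minimal surface satisfies these bounds for some $\epsilon\in\mathbb{Q}\cap(0,1)$, intersecting $\bigcap_{\epsilon,k}\cG_{\epsilon,k}$ with White's generic set of bumpy metrics (using that for bumpy metrics all the relevant minimal surfaces are nondegenerate, hence lie in $\cM_1^\infty$) will give precisely $\cG$, up to the routine check that $\bigcap_{k}\bigcup_{n\geq k}\cS_n$ is exactly the condition appearing in the theorem. So it remains to show each $\cG_{\epsilon,k}$ is open and dense.

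\emph{Openness.} Here I would argue by contradiction: if $g_j\to g$ in $C^\infty$ with $g_j\notin\cG_{\epsilon,k}$, then for each $j$ there is a minimal surface $\Sigma_j\subset(N,g_j)$ meeting the $\epsilon$-bounds with $(g_j,\Sigma_j)\notin\bigcup_{n\geq k}\cS_n$. The area and curvature bounds, together with standard compactness for minimal surfaces with bounded area and second fundamental form (Choi–Schoen / Schoen's estimates), let me extract a subsequential limit $\Sigma\subset(N,g)$ that is minimal, two-sided, and satisfies the $\epsilon$-bounds; by the strict inequalities it is nondegenerate of index one, so $g\in\cG_{\epsilon,k}$ would force some $(g,\Sigma)\in\cS_{n}$ for $n\geq k$. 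The main work is a \emph{continuity statement}: the functionals $\cH_{(g_j,\Sigma_j)}$, their minimizers, and the spectral quantity in condition \eqref{quant.nondeg} of Theorem \ref{thm:doubling.precise} vary continuously under $C^\infty$-convergence of $(g_j,\Sigma_j)\to(g,\Sigma)$ in $\cM^\infty$. This follows from Remark \ref{robin.rk} (continuity of Green's/Robin's function data), the precompactness of $\hat\cH$-minimizers from Theorem \ref{thm:existGoodMinimizers}, and the fact that conditions (1)-(4) of Theorem \ref{thm:doubling.precise} are \emph{open} conditions on the critical point data. Hence $\cS_n$ is open in $\cM_1^\infty$, and pulling back through the homeomorphisms $\pi|_{\cU(g,\Sigma)}$ and using the finiteness of the relevant minimal surfaces (from compactness) gives openness of $\cG_{\epsilon,k}$ in $\Gamma_\infty$.

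\emph{Density.} This is where the perturbation construction of \S\ref{sect:MainProof} enters and is \textbf{the main obstacle}. Given $g_0\in\Gamma_\infty$ and a target neighborhood, by the openness argument and compactness there are only finitely many minimal surfaces $\Sigma^{(1)},\dots,\Sigma^{(m)}$ in $(N,g_0)$ satisfying the $\epsilon$-bounds; after a small perturbation I may assume $g_0$ is bumpy so each $\Sigma^{(\ell)}$ is nondegenerate of index one. Now for a single such $\Sigma=\Sigma^{(\ell)}$, I invoke the conformal perturbation construction: for $n$ large I build $g_n'\in[g_0]$ with $\Sigma$ still minimal, $g_n'\to g_0$ exponentially, such that every least-energy critical point of $\cH_{(g_n',\Sigma')}$ satisfies conditions (1)-(4) — the point being that small wells placed around the $\hat\cH$-minimizing points in the original metric force the new minimizers to inherit all the required estimates, while the quantitative nondegeneracy \eqref{quant.nondeg} in the position variables is arranged by hand (nondegeneracy in the weights being automatic from Proposition \ref{prop:wt.nondeg}). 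Doing this simultaneously for all $\ell=1,\dots,m$ — choosing the perturbation supported near $\bigcup\Sigma^{(\ell)}$ and $n$ large enough to work for every $\ell$ — produces a metric $g'$ arbitrarily $C^\infty$-close to $g_0$ with $g'\in\cS_n$ for some $n\geq k$ relative to each relevant surface, hence $g'\in\cG_{\epsilon,k}$. The delicate points to verify carefully are: (i) that the conformal change keeps $\Sigma^{(\ell)}$ minimal and perturbs its Jacobi operator in the controlled way described in the appendix; (ii) that the ``new wells'' construction genuinely forces \eqref{quant.nondeg} — this requires showing the Hessian $S_{\btau}D^2\cH S_{\btau}$ acquires a definite gap from the added potential bumps, without destroying the $\hat\cH$-minimizing property or conditions (1)-(2); and (iii) that the single perturbation can be made to work for all finitely many surfaces and a common large $n$ at once. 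Once density of each $\cG_{\epsilon,k}$ is established, Baire category finishes the proof that $\cG$ is comeager.
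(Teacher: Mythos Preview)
Your overall architecture matches the paper's: decompose $\cG$ as an intersection of sets $\cG_{\epsilon,k}$ indexed by $\epsilon\in\mathbb{Q}\cap(0,1)$ and $k\in\mathbb{N}$, show each is open and dense, and intersect with the bumpy metrics. Your openness sketch is essentially the paper's Lemma~\ref{sn.open} plus Lemma~\ref{meps.finite}, modulo one small slip: when $\Sigma_j\to\Sigma$ the limit surface only satisfies the \emph{non-strict} $\epsilon$-bounds, so to conclude $(g,\Sigma)\in\bigcup_{n\geq k}\cS_n$ from $g\in\cG_{\epsilon,k}$ you need a margin. The paper handles this by proving $\cG_{k,2\epsilon}$ is open using the neighborhood $\cW(g)$ in \eqref{w.nbd.def}, so that limits of surfaces with $2\epsilon$-bounds land among surfaces with $\epsilon$-bounds. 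This is an easy fix.

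The real gap is in your density argument, specifically your point (iii). You propose to build a \emph{single} conformal perturbation, supported near $\bigcup_\ell\Sigma^{(\ell)}$, that simultaneously puts every $\Sigma^{(\ell)}$ into $\cS_n$ for a common $n$. But the conformal change in Lemma~\ref{lem:prescribedChange} is tailored to keep \emph{one} fixed surface minimal; it has no reason to preserve minimality of the others, and distinct index-one minimal surfaces can intersect, so you cannot simply make the perturbations disjointly supported. Even if you perturb sequentially, the perturbation for $\Sigma^{(2)}$ will move the $g$-minimal surface near $\Sigma^{(1)}$, and you would need to re-verify Property~$D$ for the moved surface.

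The paper sidesteps this entirely. It first proves, at the level of pairs, that $\bigcup_{n\geq k}\cS_n$ is open (Lemma~\ref{sn.open}) and dense (Theorem~\ref{thm:perturb}, which only ever perturbs one surface) in $\cM_1^\infty$. Then in Proposition~\ref{geps.open.dense} it uses that $\pi|_{\cU(g,\Sigma_i)}:\cU(g,\Sigma_i)\to\cV(g,\Sigma_i)$ is a homeomorphism to push this down: each $\pi(\cU_{i,k})$ is open and dense in $\cV(g,\Sigma_i)$, and the \emph{finite} intersection $\cV_{\epsilon,k}(g)=\bigcap_{i=1}^m\pi(\cU_{i,k})$ is still open and dense. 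For any metric $h$ in this intersection, the unique nearby minimal surface $\Sigma_{i,h}$ automatically lies in $\bigcup_{n\geq k}\cS_n$ for every $i$, with no need for the $n$'s to agree. This finite-intersection-of-projections trick is what replaces your simultaneous perturbation, and it is the missing ingredient in your proposal.
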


Before discussing the proof, let us note how Theorems \ref{thm:main_doubling} and \ref{thm: asymptotics}, and Corollary \ref{thm:maincor} from the introduction follow.

\begin{proof}[Proof of Theorems \ref{thm:main_doubling} and \ref{thm: asymptotics}]
Let $g\in \cG$, and let $\Sigma$ be any two-sided, index one minimal surface in $(N,g)$. Since $(g,\Sigma)\in \bigcap_{k\in \mathbb{N}}\bigcup_{n\geq k}\cS_n,$ it follows that there exists a sequence of integers $n_k\to\infty$ such that $D(g,\Sigma,n_k)$ holds for every $k$. Thus, for $k$ large enough that $n_k\geq \tilde{n}(g,\Sigma)$, there exists at least one critical point $(\bp^k,\btau^k)\in X_{n_k}$ realizing $\cH(\bp^k,\btau^k)=\inf_{\hat X_{n_k}}\hat \cH$, which then satisfies the hypotheses of Theorem \ref{thm:doubling.precise}, since $D(g,\Sigma,n_k)$ holds. 

Thus, for $k$ sufficiently large, applying Theorem \ref{thm:doubling.precise} to this critical point $(\bp^k,\btau^k)$ yields a minimal doubling $M_k$ of $\Sigma$ with $n_k$ necks centered near the points $p_1,\ldots,p_{n_k}$, with an area estimate of the form
$$|\area(M_k)-(2\area(\Sigma)-\frac{1}{4}\cH(\bp^k,\btau^k))|\leq \frac{1}{n_k}\cH(\bp^k,\btau^k),$$
 and Theorem \ref{thm:main_doubling} follows immediately. Moreover, the convergence of the measures $\frac{1}{n_k}\sum_{i=1}^{n_k}\delta_{p_i}$ to an equilibrium measure $\mu$ then follows from Theorem \ref{thm:convMeasure}, while the preceding area estimate together with \eqref{hath.asymp} gives
 \begin{eqnarray*}
\lim_{k\to\infty}\frac{\log (2\area(\Sigma)-\area(M_k))}{n_k}&=&\lim_{k\to\infty}\frac{\log \cH(\bp^k,\btau^k)}{n_k}\\
&=&4\pi \cI,
 \end{eqnarray*}
completing the proof of Theorem \ref{thm: asymptotics}.
\end{proof}

The proof of Corollary \ref{thm:maincor} is then straightforward.

\begin{proof}[Proof of Corollary \ref{thm:maincor}]

By the proof of Theorem \ref{thm:good.generic}, the set $\cG$ is contained in the the set of bumpy metrics on $N$. For any $g\in \cG$, since $g$ is bumpy, the results of \cite{MN16,Zho20,MN21} (see in particular \cite[Theorem 8.3]{MN21}) imply that there exists some non-degenerate, two-sided, embedded minimal surface $\Sigma$ with Morse index one and area bounded above by the first Almgren-Pitts width $\omega_1(N,g)$ of $(N,g)$. Then, applying Theorem \ref{thm:main_doubling} to $(N,g)$ and $\Sigma$, we obtain a sequence of minimal surface doublings converging to $2\Sigma$ as varifolds, with genus going to infinity and area governed by Theorem \ref{thm: asymptotics}. This proves Corollary \ref{thm:maincor}.
\end{proof}

The rest of the section is devoted to the proof of Theorem \ref{thm:good.generic}. A key technical ingredient is a perturbation argument showing that $\bigcup_{n\geq k}\cS_n$ is dense in $\cM_1^{\infty}$. More precisely, we show the following.

\begin{thm}\label{thm:perturb}
    For every $(g,\Sigma)\in \cM_1^{\infty}$, there is a constant $n_0(g,\Sigma)$ such that for every $n\geq n_0$, there is a conformal metric $g_n\in [g]$ such that $(g_n,\Sigma)\in \cS_n$, and $g_n\to g$ smoothly as $n\to\infty$.
\end{thm}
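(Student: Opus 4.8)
\textbf{Proof proposal for Theorem \ref{thm:perturb}.}

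The plan is to construct, for each sufficiently large $n$, a conformal metric $g_n = e^{2f_n}g$ close to $g$ in which $\Sigma$ remains minimal, and for which the least-energy critical points of $\cH_{(g_n,\Sigma)}$ automatically satisfy conditions (1)--(3) of Theorem \ref{thm:doubling.precise} (condition (4) being automatic by Remark \ref{ind1.rk}). The key leverage is this: by Theorem \ref{thm:existGoodMinimizers}, any $\hat\cH$-minimizer for the \emph{original} metric already satisfies conditions (1) and (2) with lots of room to spare (the weights decay exponentially, separations are polynomially bounded below), and by Proposition \ref{prop:wt.nondeg} the Hessian $S_{\btau}D^2\cH S_{\btau}$ is already quantitatively nondegenerate in the weight directions. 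So the only thing that can fail for the unperturbed metric is nondegeneracy in the \emph{position} variables $\bp$. The idea is to choose $f_n$ supported in tiny disjoint balls around the points $p_1^n,\ldots,p_n^n$ obtained by $\hat\cH$-minimization in $(g,\Sigma)$, so that in the new metric each $p_i^n$ sits at the bottom of a sharp potential well; this forces any new $\hat\cH$-minimizer to have its points pinned near the old locations, and contributes a large positive definite block to the position Hessian.

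The steps, in order. \emph{Step 1: the conformal deformation keeping $\Sigma$ minimal.} Recall from the Appendix that an ambient conformal change $g \mapsto e^{2f}g$ leaves $\Sigma$ minimal provided $\nu(f)=0$ along $\Sigma$ (i.e. $df$ vanishes in the normal direction on $\Sigma$), and changes the Jacobi operator / potential $V = |A|^2 + \Ric(\nu,\nu)$ by a controlled amount depending on $f$ and its derivatives along $\Sigma$. So I will take $f_n$ to be a function on $N$ of the form $f_n = \sum_{i=1}^n \epsilon_n\, \eta(d_g(\cdot, P_i)/\rho_n)$ type bumps --- more precisely, pull back via the nearest-point projection onto $\Sigma$ a function $\bar f_n$ on $\Sigma$ supported in $\bigcup_i B_{\rho_n}(p_i^n)$, extended to be constant in the normal direction so that $\nu(f_n)=0$ on $\Sigma$. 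Here $\rho_n$ is chosen much smaller than the separation $\delta(\bp^n) \gtrsim n^{-2-\epsilon}$ (so the bumps are disjoint), and $\epsilon_n$ (the amplitude) and the profile are chosen so that: (a) $\|f_n\|_{C^k(N)} \to 0$, in fact exponentially; (b) the induced change $V \mapsto V_n$ in the potential creates, near each $p_i^n$, a term that makes $\bar V_n := V_n$ have a strict local structure forcing the modified interaction energy to prefer points near $p_i^n$.

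\emph{Step 2: effect on $\hat\cH$-minimizers.} Writing $\cH_{(g_n,\Sigma)}$ for the new interaction energy, I will show that its infimum over $\hat X_n$ is achieved and that any minimizer $(\bq^n,\bomega^n)$ must have each $q_i^n$ within $o(\rho_n)$ (say) of some $p_{\sigma(i)}^n$, by comparing $\cH_{(g_n,\Sigma)}(\bq^n,\bomega^n)$ with the value at the old configuration and exploiting that the perturbation strictly lowers energy for configurations sitting in the wells while the Green's-function/Robin-function terms change by a negligible amount (the Robin function $R$ and the bounds on $G$ vary continuously, indeed quantitatively controllably, under the conformal change, by Remark \ref{robin.rk}). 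This, combined with the a priori estimates of Theorem \ref{thm:existGoodMinimizers} applied to $g_n$ (whose constants are uniform since $g_n \to g$), gives conditions (1) and (2) for the new minimizers with the same room to spare. \emph{Step 3: nondegeneracy in the position variables.} This is the crux. The diagonal blocks $\nabla^{p_i}\nabla^{p_i}\cH_{(g_n,\Sigma)}$ pick up a contribution $\sim \tau_i^2\, \Hess(\text{well profile})(p_i)$ which, by the choice of the bump profile (e.g. a profile behaving like $c\, d_g(\cdot,p_i^n)^2$ near the bottom), is positive definite of size $\sim \tau_i^2 \epsilon_n \rho_n^{-2}$; choosing $\epsilon_n \rho_n^{-2}$ appropriately large relative to $\bar\btau^{1/1000}$ (but still with $\epsilon_n \to 0$) dominates all the off-diagonal Green's-function cross terms, which are bounded by $\tau_i\tau_j \cdot \mathrm{poly}(n)$. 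Together with Proposition \ref{prop:wt.nondeg} for the weight block and the cross-term estimate \eqref{sls.cross}-type bound, a Schur-complement / block-matrix argument yields condition (3): $\min\{|\lambda| : \lambda \in \mathrm{Spec}(S_{\btau}D^2\cH S_{\btau})\} > \bar\btau^{1/1000}$. Finally, arrange $\epsilon_n, \rho_n$ to decay fast enough that $g_n \to g$ smoothly, and set $n_0(g,\Sigma)$ large enough that all the "sufficiently large $n$" thresholds above are met.

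\emph{Main obstacle.} The delicate point is the simultaneous balancing of three small/large parameters: the amplitude $\epsilon_n$ and width $\rho_n$ of the wells must be small enough that $g_n \to g$ smoothly and that the Green's/Robin data of $(g_n,\Sigma)$ stay within the regime where Theorem \ref{thm:existGoodMinimizers} and Proposition \ref{prop:wt.nondeg} apply with uniform constants, yet $\epsilon_n \rho_n^{-2}$ must beat the polynomially-in-$n$ off-diagonal Hessian terms \emph{and} the threshold $\bar\btau^{1/1000}$ --- where $\bar\btau$ itself is only exponentially small. Since $\bar\btau \sim e^{-cn}$, the threshold $\bar\btau^{1/1000} \sim e^{-cn/1000}$ is still exponentially small, so there is in fact ample room: one can take $\epsilon_n$ itself exponentially small (like $e^{-c' n}$ with $c'$ much smaller than $c/1000$) and $\rho_n$ merely polynomially small, and everything closes. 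Verifying that the perturbed potential $V_n$ genuinely creates a \emph{strict} well --- i.e. that the new minimizers' points cannot drift or merge --- and that the conformal-change formulas from the Appendix give the Hessian contribution with the right sign, is where the real work lies; the rest is bookkeeping with the estimates already established in Sections \ref{sect:KMtoEnergyFunctional} and \ref{sect:minH}.
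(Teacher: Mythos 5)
Your overall strategy matches the paper's: conformally deform $g$ in a way that keeps $\Sigma$ minimal while creating a potential well at each point $p_i^n$ of an $\hat\cH_0$-minimizer, then argue the new $\hat\cH$-minimizers are pinned at those points and the position Hessian is boosted. You correctly identify the two key levers --- the conformal transformation rule $\cH_{(g_n,\Sigma)}(\bq,\bsigma)=\cH_0(\bq,\bsigma)+\frac{1}{2\pi}\sum_i\sigma_i^2 u_n(q_i)$ from Lemma~\ref{lem:confChangeH}, and the fact that the threshold $\bar\btau^{1/1000}$ is itself exponentially small, so exponentially small bump amplitudes suffice. But there are two genuine gaps in the sketch.

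\emph{The pinning step is incomplete.} You propose to show the new minimizers' points lie \emph{near} the old $p_i^n$ by an energy comparison, and flag that ruling out drift or merging ``is where the real work lies.'' The paper sidesteps this entirely: it takes $u_n\geq 0$ globally, constant $e^{-a_0 n}$ away from the balls and dipping to \emph{exactly} $0$ at the $p_i^n$ (a constant minus cusps, not a sum of negative bumps). Then $\cH_1\geq\cH_0$ pointwise with equality forcing $\bq=\bp^n$; since $\inf_{\hat X_n}\cH_1\leq\cH_1(\bp^n,\bomega)=\cH_0(\bp^n,\bomega)=\inf_{\hat X_n}\cH_0$, the infima coincide and every $\cH_1$-minimizer is exactly an $\cH_0$-minimizer at $\bq=\bp^n$ (Lemma~\ref{lem:newMinimizer}). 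This exact identification is essential, because it lets one apply Proposition~\ref{prop:wt.nondeg} at the same unperturbed critical point $(\bp^n,\btau)$. With negative bumps $f_n\leq 0$ as in your proposal, the infima drop and you only get approximate pinning, so you would additionally need a perturbed version of Proposition~\ref{prop:wt.nondeg} for the drifted minimizers; your sketch neither supplies nor acknowledges this missing piece.

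\emph{The Hessian domination claim is wrong as stated.} You write that $\tau_i^2\epsilon_n\rho_n^{-2}$ ``dominates all the off-diagonal Green's-function cross terms, which are bounded by $\tau_i\tau_j\cdot\mathrm{poly}(n)$.'' Condition~(\ref{quant.nondeg}) concerns $S_\btau D^2\cH S_\btau$, and after this conjugation the $\tau$-factors cancel on \emph{both} sides: the bump's diagonal contribution becomes $\epsilon_n\rho_n^{-2}=o(1)$ (it must vanish for $g_n\to g$), while the off-diagonal blocks become $\mathrm{poly}(n)$. The domination runs in exactly the wrong direction. The actual mechanism in \S\ref{sect:item_quant.nondeg} is that Proposition~\ref{prop:wt.nondeg} yields $S_\btau D^2\cH_0 S_\btau\geq 0$ on $\langle(0,\btau)\rangle^\perp$ unconditionally --- that is, $\hat\cH$-minimality already absorbs all the polynomially large off-diagonal and potentially negative diagonal terms --- and the bump's exponentially small contribution only needs to lift the near-null $\bv$-heavy directions past the exponentially small threshold $\bar\btau^{1/1000}$, while the $\bs$-heavy directions already enjoy an $O(n)$ lower bound. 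You do invoke Proposition~\ref{prop:wt.nondeg} and \eqref{sls.cross}, so you have the right tools in view, but without the nonnegativity observation the position-direction argument as you state it does not close.
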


In the next subsection, we explain how to prove Theorem \ref{thm:good.generic} assuming Theorem \ref{thm:perturb}, before turning to the proof of the perturbation result.

\subsection{Proof of Theorem \ref{thm:good.generic}}
As our first step in the proof of Theorem \ref{thm:good.generic}, we will check that, for every $k\in \mathbb{N}$, the set
$$\bigcup_{n\geq k}\cS_n$$
is open and dense in $\cM_1^{\infty}$. Assuming Theorem \ref{thm:perturb}, whose proof we postpone to a later subsection, the denseness of $\bigcup_{n\geq k}\cS_n$ follows automatically, so it remains to check the following.

\begin{lem}\label{sn.open}
    For each $n\in \mathbb{N}$, the set $\cS_n$ is open in $\cM_1^{\infty}$.
\end{lem}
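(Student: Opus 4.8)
The claim is that $\cS_n$ is open in $\cM_1^{\infty}$. Since $\cM_1^{\infty}$ is already open in $\cM^{\infty}$, it suffices to show that Property $\Db(g,\Sigma,n)$ is an open condition in $\cM_1^{\infty}$. The strategy is a straightforward (if somewhat tedious) continuity-and-compactness argument: all the data appearing in conditions (1)--(3) of Theorem \ref{thm:doubling.precise}—namely the weights $\tau_i$, the separations $d(p_i,p_j)$, and the spectrum of $S_{\btau}D^2\cH(\bp,\btau)S_{\btau}$—depend continuously on the pair $(g,\Sigma)$ and on the critical point $(\bp,\btau)$, and the set of lowest-energy critical points is itself stable under perturbation because it is the solution set of a variational problem whose minimizing sequences are precompact. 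I would organize the proof as follows.

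\textbf{Step 1: Continuity of the ingredients.} Fix $(g_0,\Sigma_0)\in \cM_1^{\infty}$. For $(g,\Sigma)$ near $(g_0,\Sigma_0)$ in $\cM^{\infty}$, the induced metric $g|_\Sigma$, the potential $V=|A|^2+\Ric(\nu,\nu)$, and hence the Schrödinger operator $J_{(g,\Sigma)}$, vary continuously in $C^\infty$. By Lemma \ref{green.lem} and Remark \ref{robin.rk}, the Green's function $G$, the Robin function $R$, the diagonal restriction $R_D$, and the bounds $\sup d_g(x,y)^k|\nabla^k G(x,y)|$ all vary continuously. Consequently $\cH_{(g,\Sigma)}$ and its first and second derivatives depend continuously on $(g,\Sigma)$, uniformly on compact subsets of $X_n$; in particular, $\inf_{\hat X_n}\hat\cH_{(g,\Sigma)}$ is a continuous function of $(g,\Sigma)$, and the map $(\bp,\btau)\mapsto \mathrm{Spec}(S_{\btau}D^2\cH_{(g,\Sigma)}(\bp,\btau)S_{\btau})$ is jointly continuous.

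\textbf{Step 2: Uniform precompactness of near-minimizers.} This is where I expect the main work to lie, and the key input is Proposition \ref{prop:piTauiAlmostMinimum} together with its consequences (Propositions \ref{prop:existenceMinimizer}, \ref{prop:tauIComparable}, \ref{prop:p_iLowerBoundDistance}, \ref{prop:infHatHLowerBound}), whose constants depend only on $(g|_\Sigma,V)$ and hence—by Step 1—can be taken uniform over a small neighborhood $\cW$ of $(g_0,\Sigma_0)$ in $\cM_1^{\infty}$. The point is: there exist constants $\Lambda,\gamma,C>0$ (uniform over $\cW$) such that for all $(g,\Sigma)\in\cW$ and all $n$, every $(\bp,\bomega)\in\hat X_n$ with $\cH_{(g,\Sigma)}(\bp,\bomega)\le (1-C/n)\alpha_n(g,\Sigma)$ satisfies $\omega_i>\gamma n^{-1/2}$ and $\min_{i\ne j}d_g(p_i,p_j)>\hat\gamma_n(g,\Sigma)$, with the latter bound again uniform over $\cW$. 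Hence the set $\cC(g,\Sigma)$ of lowest-energy critical points of $\cH_{(g,\Sigma)}$ is contained in a fixed compact subset $\hat K\subset\hat X_n$ (independent of $(g,\Sigma)\in\cW$), and the family of maps $(\bp,\bomega)\mapsto\cH_{(g,\Sigma)}(\bp,\bomega)$ converges uniformly on $\hat K$.

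\textbf{Step 3: Upper semicontinuity of the minimizer set and conclusion.} Suppose for contradiction that $\cS_n$ is not open: there is a sequence $(g_j,\Sigma_j)\to(g_0,\Sigma_0)$ in $\cM_1^{\infty}$ with $(g_0,\Sigma_0)\in\cS_n$ but $(g_j,\Sigma_j)\notin\cS_n$ for all $j$. Since item (1) of $\Db$ (precompactness of minimizing sequences) holds uniformly near $(g_0,\Sigma_0)$ by Step 2, the failure of $\Db(g_j,\Sigma_j,n)$ must be the failure of item (2): there is a lowest-energy critical point $(\bp^j,\bomega^j)\in\hat K$ of $\cH_{(g_j,\Sigma_j)}$ violating one of conditions (1)--(3) of Theorem \ref{thm:doubling.precise} (condition (4) being automatic by Remark \ref{ind1.rk}, since a neighborhood of $(g_0,\Sigma_0)$ lies in $\cM_1^{\infty}$). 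By compactness of $\hat K$, pass to a subsequence so that $(\bp^j,\bomega^j)\to(\bp,\bomega)\in\hat K$. Using the uniform $C^2$-convergence of $\cH_{(g_j,\Sigma_j)}$ to $\cH_{(g_0,\Sigma_0)}$ on $\hat K$ and the continuity of $\alpha_n$, one checks that $(\bp,\bomega)$ is a critical point of $\cH_{(g_0,\Sigma_0)}$ with $\cH_{(g_0,\Sigma_0)}(\bp,\bomega)=\alpha_n(g_0,\Sigma_0)$—so $(\bp,\bomega)$ is a lowest-energy critical point—and since $(g_0,\Sigma_0)\in\cS_n$, it satisfies conditions (1)--(3) with \emph{strict} inequalities. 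But strict inequalities in (1)--(3) are open conditions in the jointly continuous data (using that the associated critical points $(\bp^j,\bomega^j)$ converge and that $\bar{\btau}^{1/1000}$, $\bar{\btau}^{1/100{,}000}$, etc. vary continuously with $\btau$), so $(\bp^j,\bomega^j)$ must satisfy them for $j$ large, a contradiction. One minor care point: conditions (1)--(3) are stated for the \emph{critical point} $(\bp,\btau)\in X_n$ rather than the projectivized point $(\bp,\bomega)\in\hat X_n$; but by Lemma \ref{lem:tMax} and \eqref{crit.h.val} the passage $(\bp,\bomega)\mapsto(\bp,\btau)=(\bp,e^{2\pi\cH(\bp,\bomega)-1/2}\bomega)$ is continuous, so this causes no difficulty. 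The main obstacle is simply bookkeeping the uniformity in Step 2—ensuring every constant from \S\ref{sect:minH} can be taken locally uniform in $(g,\Sigma)$—which follows from Remark \ref{robin.rk} but requires tracing the dependencies.
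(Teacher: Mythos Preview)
Your proof is essentially correct and follows the same contradiction strategy as the paper (Steps 1 and 3 are virtually identical), but your Step 2 takes a more laborious route than necessary. The paper does \emph{not} re-derive the Section~\ref{sect:minH} estimates with locally uniform constants; instead it exploits the key observation that $\cH_{(g_j,\Sigma)}-\cH_{(g_0,\Sigma)}\to 0$ \emph{uniformly on all of} $\hat X_n$ (not merely on compacta), which follows from the global $C^0$ convergence $\|G_j-G_0\|_{C^0(\Sigma\times\Sigma)}\to 0$ of the Green's functions (Remark~\ref{smth.green.cvg}). With this in hand, any almost-minimizing sequence for $\cH_{(g_j,\Sigma)}$ escaping to $\partial\hat X_n$ is automatically an almost-minimizing sequence for $\cH_{(g_0,\Sigma)}$ doing the same, contradicting item (1) of $D(g_0,\Sigma_0,n)$ directly---no constants from Section~\ref{sect:minH} are needed. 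This sidesteps both the bookkeeping you flag and a small gap in your approach: Proposition~\ref{prop:piTauiAlmostMinimum} only applies for $n\ge C(g,V)$, so for small $n$ your Step~2 does not literally go through, whereas the paper's argument works for every $n$ since it merely transfers the hypothesized precompactness from the base point.
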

\begin{proof}
Let $(g_0,\Sigma_0)\in \cS_n$, and let $(g_j,\Sigma_j)\in \cM_1^{\infty}$ be a sequence converging smoothly to $(g_0,\Sigma_0)$. To prove the lemma, it suffices to show that $(g_j,\Sigma_j)\in \cS_n$ for $j$ sufficiently large. Since the convergence $\Sigma_j\to \Sigma_0$ is smooth, note that we can find a sequence of diffeomorphisms $\Phi_j:N\to N$ converging smoothly to the identity $I: N\to N$ such that $\Phi_j(\Sigma_0)=\Sigma_j$. Thus, up to replacing the metrics $g_j$ with the isometric ones $\tilde{g}_j=\Phi_j^*g_j$, we can assume without loss of generality that $\Sigma_j=\Sigma_0=:\Sigma$ for every $j$.

First, since $g_j$ converges smoothly to $g_0$, it follows from the analysis of Appendix \ref{green.app} (see Remark \ref{smth.green.cvg}) that the Green's functions $G_{(g_j,\Sigma)}$ and diagonal Robin's functions $R_D^j$ for the Jacobi operators $J_{\Sigma_j}$ satisfy
\begin{equation}\label{c0.green.cvg}
    \lim_{j\to\infty}\|G_{(g_j,\Sigma)}-G_{(g_0,\Sigma)}\|_{C^0(\Sigma\times \Sigma)}=0,
\end{equation}
while
\begin{equation}\label{eq:smth.green.cvg}
    \lim_{j\to\infty}\|G_{(g_j,\Sigma)}-G_{(g_0,\Sigma)}\|_{C^l(K)}+\|R_D^j-R_D^0\|_{C^l(\Sigma)}=0
\end{equation}
for any $l\in \mathbb{N}$ and any compact subset $K\subset \Sigma\times \Sigma\setminus \{(x,x)\mid x\in \Sigma\}$. In particular, it follows from the definition of $\cH$ that $\cH_{(g_j,\Sigma)}-\cH_{(g_0,\Sigma)}\to 0$ uniformly on $\hat{X}_n$, and $\cH_{(g_j,\Sigma)}\to \cH_{(g_0,\Sigma)}$ in $C^{\infty}_{\loc}(X_n).$ 

We now argue that $(g_j,\Sigma)$ satisfies item (1) of Definition \ref{def:propertyD} for large $j$. If this were not true, then after passing to a subsequence, we could find for each $j$ a point $$(\bp^j,\bomega^j)\in \hat{X}_n$$
satisfying 
$$\cH_{(g_j,\Sigma)}(\bp^j,\bomega^j)\leq \inf_{\hat{X}_n}\cH_{(g_j,\Sigma)}+\frac{1}{j}$$
and
\begin{equation}\label{bdry.cvg}
\dist((\bp^j,\bomega^j),\partial \hat{X}_n)<1/j.
\end{equation}
On the other hand, it follows from the \emph{uniform} convergence $\cH_{(g_j,\Sigma)}-\cH_{(g_0,\Sigma)}\to 0$ on $\hat{X}_n$ that
$$\lim_{j\to\infty}(\cH_{(g_j,\Sigma)}(\bp^j,\btau^j)-\cH_{(g_0,\Sigma)}(\bp^j,\bomega^j))=0$$
and
$$\lim_{j\to\infty}\inf_{\hat{X}_n}\cH_{(g_j,\Sigma)}=\inf_{\hat{X}_n}\cH_{(g_0,\Sigma)},$$
which implies that $(\bp^j,\bomega^j)\in \hat{X}_n$ must be a minimizing sequence for $\cH_{(g_0,\Sigma)}$. Since $D(g_0,\Sigma,n)$ holds, item (1) of Definition \ref{def:propertyD} then implies that $(\bp^j,\bomega^j)$ must be precompact in $\hat{X}_n$, contradicting \eqref{bdry.cvg}. Thus, $(g_j,\Sigma_j)$ must satisfy item (1) of Definition \ref{def:propertyD} for $j$ sufficiently large.

It remains to check item (2) of Property $D(g_j,\Sigma,n)$ for $j$ sufficiently large. Indeed, if this were not the case, then after passing to a subsequence, we could find a sequence $(\bp^j,\btau^j)\in X_n$ of least-energy critical points for $\cH_{(g_j,\Sigma)}$ for which one of the conditions (1)-(3) of Theorem \ref{thm:doubling.precise} fails. But since $(g_0,\Sigma)\in \cS_n$, it follows from item (1) of $D(g_0,\Sigma,n)$ as above that $(\bp^j,\btau^j/|\btau^j|)$ must converge subsequentially to a minimizer for $\cH_{(g_0,\Sigma)}$ in $\hat{X}_n$, and therefore $(\bp^j,\btau^j)$ converges to a least energy critical point $(\bp,\btau)$ for $\cH_{(g_0,\Sigma)}$. Moreover, since $D(g_0,\Sigma,n)$ holds, we know that $(\bp,\btau)$ satisfies items (1)-(3) of Theorem \ref{thm:doubling.precise}. Conditions (1) and (2) of Theorem \ref{thm:doubling.precise} are manifestly open, so these must also hold for $(\bp^j,\btau^j)$ for $j$ sufficiently large. Likewise, it follows from the smooth convergence $\cH_{(g_j,\Sigma)}\to \cH_{(g_0,\Sigma)}$ in a neighborhood of $(\bp,\btau)$ that $(\bp^j,\btau^j)$ must satisfy item (3) for $j$ sufficiently large as well, giving the desired contradiction.

\end{proof}

Combining this with Theorem \ref{thm:perturb}, we now see that $\bigcup_{n\geq k}\cS_n$ is indeed open and dense in $\cM_1^{\infty}$ for any $k\in \mathbb{N}$.

Next, for each $\epsilon>0$, consider the open subset of $\cM_1^{\infty}$ given by
\begin{eqnarray*}
    \cM_{1,\epsilon}^{\infty}:=\{(g,\Sigma)\in \cM_1^{\infty} &\mid \lambda_1(J_{\Sigma})+\epsilon<0<\lambda_2(J_{\Sigma})-\epsilon,\text{ and }\\
    &\area(\Sigma)+\|A_{\Sigma}\|_{C^0(\Sigma,g)}<\frac{1}{\epsilon}\}.
\end{eqnarray*}
Note that we can then write $\cM_1^{\infty}$ as the union
$$\cM_1^{\infty}=\bigcup_{\epsilon\in \mathbb{Q}_+}\cM_{1,\epsilon}^{\infty}$$
of $\cM_{1,\epsilon}^{\infty}$ over the positive rationals $\epsilon \in \mathbb{Q}_+$. The utility of partitioning $\cM_1^{\infty}$ in this way stems from the following finiteness result.

\begin{lem}\label{meps.finite}
For each $g\in \Gamma_{\infty}$, the cardinality 
$$m(g,\epsilon):=\#(\cM_{1,\epsilon}^{\infty}\cap \pi^{-1}(g))$$
is finite. 
\end{lem}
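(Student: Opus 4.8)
The plan is to prove the finiteness statement by a standard compactness argument for minimal surfaces with uniformly bounded area and curvature, combined with the nondegeneracy built into the definition of $\cM_{1,\epsilon}^\infty$. The key observation is that any $\Sigma$ with $(g,\Sigma)\in \cM_{1,\epsilon}^\infty$ is a smooth, embedded minimal surface in the fixed manifold $(N,g)$ satisfying the a priori bounds $\area(\Sigma) + \|A_\Sigma\|_{C^0} < 1/\epsilon$, and is nondegenerate with Morse index one. So $m(g,\epsilon)$ is the number of such surfaces; the claim is that this set is finite.

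First I would argue by contradiction: suppose there is an infinite sequence $\{\Sigma_j\}$ of distinct embedded minimal surfaces in $(N,g)$ with $(g,\Sigma_j)\in \cM_{1,\epsilon}^\infty$. The uniform bound on $\|A_{\Sigma_j}\|_{C^0}$ gives uniform curvature control, so standard compactness theory for minimal surfaces (e.g. the graphical/local-graph decomposition coming from a uniform second fundamental form bound, together with elliptic estimates and the area bound) shows that, after passing to a subsequence, $\Sigma_j$ converges smoothly with multiplicity one to a smooth embedded minimal surface $\Sigma_\infty$ in $(N,g)$; here multiplicity one follows because the $C^0$ bound on $A$ rules out the curvature blow-up that accompanies higher-multiplicity sheeting, or alternatively one invokes that the convergence is as $C^\infty$ graphs over $\Sigma_\infty$ once curvature is bounded. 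Passing the eigenvalue bounds $\lambda_1(J_{\Sigma_j}) + \epsilon < 0 < \lambda_2(J_{\Sigma_j}) - \epsilon$ to the limit via continuity of Dirichlet eigenvalues under smooth convergence of the surfaces (and hence of the Jacobi operators), we get $\lambda_1(J_{\Sigma_\infty}) < 0 < \lambda_2(J_{\Sigma_\infty})$, so $\Sigma_\infty$ is also a two-sided nondegenerate index one minimal surface; in particular $(g,\Sigma_\infty)\in \cM_1^\infty$.

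Next I would invoke the local structure of $\cM^\infty$ recalled at the start of Section \ref{sect:MainProof}: since $\Sigma_\infty$ is nondegenerate, the projection $\pi: \cM^4 \to \Gamma_4$ is a local homeomorphism near $(g,\Sigma_\infty)$, so there is a neighborhood $\cU(g,\Sigma_\infty) \subset \cM^\infty$ in which $(g,\Sigma_\infty)$ is the \emph{unique} pair lying over $g$. But the smooth convergence $\Sigma_j \to \Sigma_\infty$ means $(g,\Sigma_j) \in \cU(g,\Sigma_\infty)$ for $j$ large, forcing $\Sigma_j = \Sigma_\infty$ for all large $j$ — contradicting the assumption that the $\Sigma_j$ are distinct. (One should also note that each $\Sigma_j$ is genuinely two-sided, as recorded in the definition of $\cM_1^\infty$, so there is no subtlety about one- versus two-sidedness in passing to the limit; the Jacobi operator is well-defined throughout.) This contradiction shows $m(g,\epsilon)$ is finite.

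I expect the main obstacle to be the compactness step: justifying that a sequence of embedded minimal surfaces with uniformly bounded area and uniformly bounded $|A|$ subconverges \emph{smoothly and with multiplicity one} to a smooth embedded minimal limit. The area and curvature bounds give, via the usual Schoen-type local graph estimates, convergence in $C^\infty$ on compact sets away from a finite set of points; ruling out concentration points and higher multiplicity requires a short argument — e.g. that a uniform $C^0$ bound on $A$ together with a uniform area bound precludes the formation of necks or sheets collapsing onto a common limit, since any such degeneration would either violate the area bound or force the curvature to blow up. Once smooth multiplicity-one convergence is in hand, the eigenvalue continuity and the local homeomorphism property of $\pi$ close the argument routinely.
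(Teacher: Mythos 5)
Your proof is correct and follows essentially the same approach as the paper: assume infinitely many distinct $\Sigma_j$, use the uniform area and $\|A\|_{C^0}$ bounds to extract a smoothly converging subsequence with a nondegenerate index-one limit, and then derive a contradiction from local uniqueness. The only cosmetic difference is the final step: the paper observes directly that a smooth limit of distinct minimal surfaces in a fixed metric must carry a nontrivial Jacobi field (hence be degenerate), whereas you reach the same contradiction by invoking White's local-homeomorphism property of $\pi$ near the nondegenerate limit; these two formulations are logically equivalent.
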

\begin{proof}
Fix $g\in \Gamma_{\infty}$, and suppose, for a contradiction, that $(g,\Sigma_j)\in \cM_{1,\epsilon}^{\infty}$, with $\Sigma_j\subset N$ an infinite sequence of distinct surfaces. Since there is a uniform bound
$$\area(\Sigma_j)+\|A_{\Sigma_j}\|_{C^0}<\frac{1}{\epsilon},$$
on area and second fundamental forms, it follows that, after passing to a subsequence, $\Sigma_j$ must converge smoothly to a limit minimal surface $\Sigma\subset (N,g)$ satisfying
$$\lambda_1(J_{\Sigma})+\epsilon\leq 0\leq \lambda_2(J_{\Sigma})-\epsilon.$$
On the other hand, as a smooth limit of distinct minimal surfaces in $(N,g)$, $\Sigma$ must be degenerate, contradicting the inequalities $\lambda_1(J)<0<\lambda_2(J)$. Thus, this situation is impossible, and it follows that $m(g,\epsilon)<\infty.$

\end{proof}

Now, for each $\epsilon>0$ and $k\in \mathbb{N}$, denote by $\cG_{k,\epsilon}\subset \Gamma_{\infty}$ the set of metrics
$$\cG_{k,\epsilon}:=\{g\in\Gamma_{\infty}\mid \pi^{-1}(g)\cap \cM_{1,\epsilon}^{\infty}\subset \bigcup_{n\geq k}\cS_n\}.$$
That is, $g\in \cG_{k,\epsilon}$ if, for every two-sided minimal surface $\Sigma$ in $(N,g)$ with $\lambda_1(J_{\Sigma})+\epsilon< 0<\lambda_2(J_{\Sigma})-\epsilon$ and $area(\Sigma)+\|A_{\Sigma}\|_{C^0}<\frac{1}{\epsilon}$, property $\Db(g,\Sigma,n_k)$ holds for some $n_k\geq k$.
We then prove the following.

\begin{prop}\label{geps.open.dense}
For each $\epsilon>0$ and $k\in \mathbb{N}$, $\cG_{k,2\epsilon}$ is open and dense in $\Gamma_{\infty}$.   
\end{prop}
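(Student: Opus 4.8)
The plan is to prove openness and denseness separately. Openness of $\cG_{k,2\epsilon}$ should follow from a compactness argument: if $g_j \to g$ with $g_j \notin \cG_{k,2\epsilon}$, then for each $j$ there is a two-sided minimal surface $\Sigma_j \subset (N,g_j)$ with $\lambda_1(J_{\Sigma_j}) + 2\epsilon < 0 < \lambda_2(J_{\Sigma_j}) - 2\epsilon$ and $\area(\Sigma_j) + \|A_{\Sigma_j}\|_{C^0} < \frac{1}{2\epsilon}$ for which $\Db(g_j, \Sigma_j, n)$ fails for all $n \geq k$. By the uniform area and curvature bounds, after passing to a subsequence $\Sigma_j$ converges smoothly to a minimal surface $\Sigma \subset (N,g)$, which by the strict eigenvalue inequalities (closed under smooth limits since $\lambda_2 - 2\epsilon \geq 0$ on $\Sigma$ would contradict degeneracy, while $\lambda_1 + 2\epsilon \leq 0$) must satisfy $\lambda_1(J_\Sigma) + \epsilon < 0 < \lambda_2(J_\Sigma) - \epsilon$ and $\area(\Sigma) + \|A_\Sigma\|_{C^0} < \frac{1}{\epsilon}$, so $(g,\Sigma) \in \cM_{1,\epsilon}^\infty$. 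Since $g \in \cG_{k,2\epsilon}$ would be what we want to prove is open — actually we should take $g$ in the closure of the complement and derive that $(g,\Sigma) \notin \bigcup_{n \geq k}\cS_n$, contradicting $g \in \cG_{k,2\epsilon}$; this uses that $\bigcup_{n\geq k}\cS_n$ is open in $\cM_1^\infty$ (Lemma~\ref{sn.open}) together with the smooth convergence $(g_j,\Sigma_j) \to (g,\Sigma)$ in $\cM_1^\infty$. One must be slightly careful with the fact that $\cM_{1,\epsilon}^\infty$ is not closed, which is exactly why the $\epsilon$-to-$2\epsilon$ buffer is built into the statement.

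For denseness, fix $g \in \Gamma_\infty$ and a small $C^\infty$-neighborhood $\mathcal{W}$ of $g$; we must produce $g' \in \mathcal{W} \cap \cG_{k,2\epsilon}$. By Lemma~\ref{meps.finite}, $\pi^{-1}(g) \cap \cM_{1,\epsilon}^\infty = \{(g,\Sigma_1), \ldots, (g,\Sigma_m)\}$ is a finite set, and each $\Sigma_i$ is a nondegenerate index-one minimal surface. Using the neighborhoods $\cV(g,\Sigma_i)$ from the set-up (on which $\pi$ restricts to a homeomorphism and the continued minimal surface $\Sigma_i'$ remains nondegenerate of index one), I would apply Theorem~\ref{thm:perturb} to each $(g,\Sigma_i)$: for $n \geq n_0(g,\Sigma_i)$ there is a conformal perturbation $g_{i,n} \in [g]$ with $(g_{i,n},\Sigma_i) \in \cS_n$ and $g_{i,n} \to g$ smoothly. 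The key point is to combine these finitely many perturbations into a single metric $g'$ that works simultaneously for all $\Sigma_1, \ldots, \Sigma_m$: since the perturbations in Theorem~\ref{thm:perturb} are conformal and localized (they create small wells near the configuration points $p_i$, which for distinct surfaces $\Sigma_i$ can be taken to be disjoint regions of $N$, or at worst one can iterate the perturbation construction one surface at a time, noting that each successive perturbation is exponentially small and hence does not destroy the properties already achieved for the earlier surfaces by the openness of $\cS_n$). For $n$ large enough (larger than $k$ and all the $n_0(g,\Sigma_i)$), the resulting metric $g'$ lies in $\mathcal{W}$ and satisfies $(g',\Sigma_i') \in \cS_n$ for every $i$; moreover, for $g'$ sufficiently close to $g$, any minimal surface of $(N,g')$ lying in $\cM_{1,2\epsilon}^\infty$ must be one of the continued surfaces $\Sigma_i'$ (by the finiteness/compactness argument above applied within the neighborhood), so $g' \in \cG_{k,2\epsilon}$.

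The main obstacle I anticipate is the simultaneous-perturbation step: Theorem~\ref{thm:perturb} is stated for a single pair $(g,\Sigma)$, and one needs to verify that perturbing to fix one surface does not spoil the delicate conditions (1)--(3) of Theorem~\ref{thm:doubling.precise} for the others. The cleanest route is probably an inductive argument over $i = 1, \ldots, m$: having arranged $(g^{(i-1)}, \Sigma_j^{(i-1)}) \in \cS_{n}$ for $j < i$, apply Theorem~\ref{thm:perturb} to $(g^{(i-1)}, \Sigma_i^{(i-1)})$ — which is again in $\cM_1^\infty$ — to get $g^{(i)}$; since the perturbation $g^{(i)} - g^{(i-1)}$ can be taken arbitrarily small in $C^\infty$ (taking $n$ large), and $\cS_n$ is open (Lemma~\ref{sn.open}), the pairs $(g^{(i)}, \Sigma_j^{(i)})$ for $j < i$ remain in $\cS_n$ provided the continued surfaces $\Sigma_j^{(i)}$ stay close to $\Sigma_j^{(i-1)}$, which is guaranteed by the homeomorphism property of $\pi$ on $\cU(g,\Sigma_j)$. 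One caveat is that $n_0$ in Theorem~\ref{thm:perturb} depends on the pair, so at each stage one may need to increase $n$; since there are only finitely many surfaces this terminates. After $m$ steps, $g' := g^{(m)}$ is the desired metric. A second, more routine obstacle is confirming that no \emph{new} index-one minimal surfaces in $\cM_{1,2\epsilon}^\infty$ are created by the perturbation; this follows from the compactness argument of Lemma~\ref{meps.finite} applied uniformly in a neighborhood of $g$, using the strict $2\epsilon$ versus $\epsilon$ gap.
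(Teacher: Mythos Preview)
Your proposal is essentially correct, but the paper's route for density is cleaner and sidesteps the ``simultaneous perturbation'' obstacle you flagged as the main difficulty. Rather than iterating Theorem~\ref{thm:perturb} over $\Sigma_1,\ldots,\Sigma_m$, the paper exploits the local homeomorphism $\pi|_{\cU(g,\Sigma_i)}:\cU(g,\Sigma_i)\to\cV(g,\Sigma_i)$: since $\bigcup_{n\geq k}\cS_n$ is open and dense in $\cM_1^\infty$ (Lemma~\ref{sn.open} plus Theorem~\ref{thm:perturb}), each $\cU_{i,k}:=\cU(g,\Sigma_i)\cap\bigcup_{n\geq k}\cS_n$ is open and dense in $\cU(g,\Sigma_i)$, so $\pi(\cU_{i,k})$ is open and dense in $\cV(g,\Sigma_i)$. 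The \emph{finite intersection} $\cV_{\epsilon,k}(g):=\bigcap_{i=1}^m\pi(\cU_{i,k})$ is therefore open and dense in $\cV_\epsilon(g)=\bigcap_i\cV(g,\Sigma_i)$, and any $h\in\cV_{\epsilon,k}(g)\cap\cW(g)$ automatically has $(h,\Sigma_{i,h})\in\bigcup_{n\geq k}\cS_n$ for every $i$, hence $h\in\cG_{k,2\epsilon}$. Your $\epsilon$-versus-$2\epsilon$ compactness argument producing the neighborhood $\cW(g)$ (so that every surface in $\cM_{1,2\epsilon}^\infty$ over $\cW(g)$ is one of the continued $\Sigma_{i,h}$) matches the paper exactly.

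The upshot: your inductive construction---applying Theorem~\ref{thm:perturb} one surface at a time, then invoking openness of $\cS_{n_j}$ to check that the continued surfaces $\Sigma_j^{(i)}$ for $j<i$ remain in $\cS_{n_j}$---would go through, but the paper replaces all of it with a one-line finite-Baire argument that requires no bookkeeping about how perturbing for $\Sigma_i$ affects $\Sigma_j$, and makes no use of the conformal or localized nature of the perturbation in Theorem~\ref{thm:perturb}. The paper also does not separate openness from density as you do; both are read off at once from the local containment $\cV_{\epsilon,k}(g)\cap\cW(g)\subset\cG_{k,2\epsilon}$ with $\cV_{\epsilon,k}(g)\cap\cW(g)$ open and dense in $\cW(g)$.
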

\begin{proof}
Fix an arbitrary $g\in \Gamma_{\infty}$, and let $m=m(g,\epsilon)$ as in Lemma \ref{meps.finite}. We then enumerate by $\Sigma_1,\ldots,\Sigma_m$ the surfaces for which $(g,\Sigma_i)\in \cM_{1,\epsilon}^{\infty}$. Now, for each $1\leq i\leq m$, let $\cU(g,\Sigma_i)\subset \cM^{\infty}$ and $\cV(g,\Sigma_i)\subset \Gamma_{\infty}$ be as in Section \ref{sect:proofMainSetUp}, so that
$$\cV_{\epsilon}(g):=\bigcap_{i=1}^m\cV(g,\Sigma_i)$$
gives an open neighborhood of $g$ in $\Gamma_{\infty}$: See Figure \ref{fig:V_epsilon}. Setting
$$\cU_{i,k}:=\cU(g,\Sigma_i)\cap \bigcup_{n\geq k}\cS_n$$
and recalling that $\bigcup_{n\geq k}\cS_n$ is open and dense in $\cM_1^{\infty}$, it follows that $\cU_{i,k}$ is open and dense in $\cU(g,\Sigma_i)$, and therefore 
$$\cV_{\epsilon,k}(g):=\pi(\cU_{1,k})\cap \cdots \cap \pi(\cU_{m,k})$$
must be open and dense in $\cV_{\epsilon}(g)$. Recall that for every $h\in \cV(g,\Sigma_i)$, the set $\pi^{-1}(h)\cap \cU(g,\Sigma_i)$ has a unique element $(h,\Sigma_{i,h})$, and if $h\in \cV_{\epsilon,k}(g)$, it follows that $\Sigma_{i,h}\in \cU_{i,k}$. In particular, we see that
$$\pi^{-1}(\cV_{\epsilon,k}(g))\cap \cU(g,\Sigma_i)\subset \cU_{i,k}$$
for every $1\leq i\leq m$.

   \begin{figure}
        \centering
        \makebox[\textwidth][c]{\includegraphics[width=4.5in]{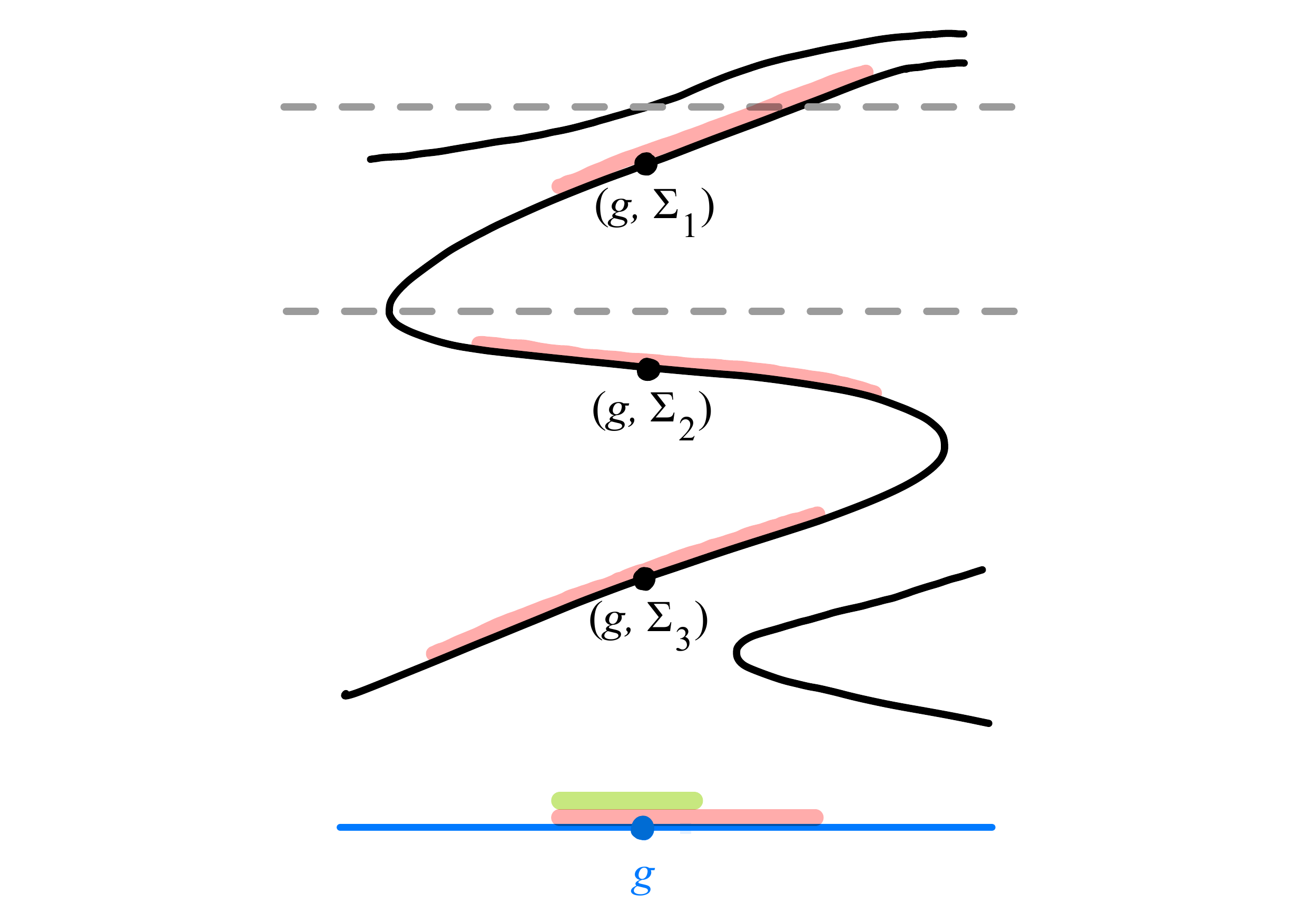}}
        \caption{The   black curves denote  $\cM^\infty_1$, in which the part below the upper dashed line is $\cM^{\infty}_{1,\epsilon}$, and the   part below the lower dashed line is $\cM^{\infty}_{1,2\epsilon}$. The blue line is $\Gamma_\infty$. 
        On $\cM^\infty_1$, the red parts denote $\cU(g,\Sigma_i)$. On $\Gamma_\infty$, the red part denotes $\cV_\epsilon(g)$, and the green part denotes $\cW(g)$.}
        \label{fig:V_epsilon}
    \end{figure}

Next, we argue that there is a smaller neighborhood $\cW(g)\subset \cV_{\epsilon}(g)$ of $g$ such that
\begin{equation}\label{w.nbd.def}
    \pi^{-1}(\cW(g))\cap \cM_{1,2\epsilon}^{\infty}\subset \bigcup_{i=1}^m\cU(g,\Sigma_i)
\end{equation}
(see Figure \ref{fig:V_epsilon}).
Indeed, this follows from the observation that, for any sequence $g_j\to g$ converging smoothly to $g$ and any $(g_j,S_j)\in \cM_{1,2\epsilon}^{\infty}$, the uniform bounds on $\area(S_j)$ and $|A_{S_j}|_{g_j}$ force $S_j$ to convergence smoothly, along a subsequence, to a minimal surface $S\subset (N,g)$ satisfying the non-strict inequalities
$$\lambda_1(J_S)+2\epsilon\leq 0\leq \lambda_2(J_S)-2\epsilon$$
and
$$\area(S)+\|A_S\|_{C^0(S,g)}\leq \frac{1}{2\epsilon}.$$
If no neighborhood $\cW(g)$ satisfying \eqref{w.nbd.def} existed, then we could find such a sequence $(g_j,S_j)$ for which the limit $S\notin \cU(g,\Sigma_i)$ for any $1\leq i\leq m$. On the other hand, any such limit $S$ clearly satisfies $(g,S)\in \cM_{1,\epsilon}^{\infty}$, so we must have $S=\Sigma_i$ for some $1\leq i\leq m.$ Thus, some neighborhood $\cW(g)$ satisfying \eqref{w.nbd.def} must exist.

We then see that $\cV_{\epsilon,k}(g)\cap \cW(g)$ is open and dense in $\cW(g)$, and satisfies
$$\pi^{-1}(\cV_{\epsilon,k}(g)\cap \cW(g))\cap \cM_{1,2\epsilon}^{\infty}\subset \bigcup_{i=1}^m\cU_{i,k}\subset \bigcup_{n\geq k}\cS_n.$$
Hence, $\cV_{\epsilon,k}(g)\cap \cW(g)\subset \cG_{k,2\epsilon}$, so in particular $\cG_{k,2\epsilon}\cap \cW(g)$ is open and dense in $\cW(g)$.

We have then shown that every metric $g\in \Gamma_{\infty}$ has a neighborhood $\cW(g)$ in which $\cG_{k,2\epsilon}$ is open and dense, from which the desired result follows.

\end{proof}

We can now complete the proof of Theorem \ref{thm:good.generic}.

\begin{proof}[Proof of Theorem \ref{thm:good.generic}]
First, it follows from Proposition \ref{geps.open.dense} that the set
$$\tilde{\cG}:=\bigcap_{k\in \mathbb{N}}\bigcap_{\epsilon\in \mathbb{Q}_+}\cG_{k,2\epsilon}$$
is a countable intersection of dense open subsets of $\Gamma_{\infty}$, hence is comeager in $\Gamma_{\infty}$. Unraveling the definition, we see that if $g\in \tilde{\cG}$, then for every $\epsilon>0$ and $k\in \mathbb{N}$,
$$\pi^{-1}(g)\cap \cM_{1,\epsilon}^{\infty}\subset \bigcup_{n\geq k}\cS_n,$$
so that
$$\pi^{-1}(g)\cap \cM_1^{\infty}\subset \bigcap_{k\in \mathbb{N}}\bigcup_{n\geq k}\cS_n.$$

Thus, each $g\in \tilde{\cG}$ has the property that for every two-sided, embedded, \emph{nondegenerate} minimal surface $\Sigma\subset N$ with Morse index one,
$$(g,\Sigma)\in \bigcap_{k\in \mathbb{N}}\bigcup_{n\geq k}\cS_n.$$
Finally, letting $\cG$ denote the intersection of $\tilde{\cG}$ with the comeager set of bumpy metrics, we see that $\cG$ is again comeager, and all embedded minimal surfaces in $(N,g)$ are nondegenerate for any $g\in \cG$, so that $(g,\Sigma)\in \bigcap_{k\in \mathbb{N}}\bigcup_{n\geq k}\cS_n$ for every two-sided, embedded minimal surface $\Sigma\subset (N,g)$ with Morse index one. Thus, $\cG$ has the desired properties.
\end{proof}

The remainder of the section is now devoted to the proof of Theorem \ref{thm:perturb}, carrying out the perturbation argument used  to establish the density of $\bigcup_{n\geq k}\cS_n$ in $\cM_1^{\infty}$.

\subsection{Effects of a conformal change on the variational problem}\label{sect:generalFactConformal}
\subsubsection{Interaction energies under a conformal change}

Fix a closed surface $\Sigma$ with a smooth metric $g_0$, and some positive function $V_0\in C^\infty(\Sigma)$. As in \S \ref{sect:KMtoEnergyFunctional},  we consider the associated Schrödinger operator $J_0:= d^*_{g_0}d-V_0$, Green's function $G_0$,  Robin's function $R_0$, and, for any $n$, the  functionals $\cH_0$ and $\hat\cH_0$ {\it associated to the pair $(g_0,V_0)$}.

To a given $u\in C^{\infty}(\Sigma)$, we then associate a new pair $(g_1,V_1)$ by setting $g_1:=e^{2u}g_0$ and $V_1:=e^{-2u}V_0$, with associated Schrödinger operator $J_1:= d^*_{g_1}d+V_1$. By the conformal covariance of the Laplacian in dimension two, we then see that
$$J_1\psi=e^{-2u}d_{g_0}^*d_{g_0}\psi-V_1\psi=e^{-2u}J_0\psi$$
for any $\psi\in C^{\infty}(\Sigma)$, and in particular,
\begin{equation}\label{j.covar}
   \int \phi J_0\psi dA_{g_0}=\int \phi e^{2u}J_1\psi dA_{g_0}=\int \phi J_1\psi dA_{g_1} 
\end{equation}
for any $\phi,\psi\in C^{\infty}(\Sigma)$. Defining the Green's function $G_1$, the Robin's function $R_1$, and interaction energies $\cH_1$ and $\hat \cH_1$ with respect to the pair $(g_1,V_1)$, the covariance \eqref{j.covar} leads to the following useful relations.

\begin{lem}\label{lem:confChangeH} In the notation of the preceding paragraphs, we have:
\begin{enumerate}
\item $G_0=G_1$.
\item For any $(\bp,\btau)\in X_n$,
\begin{align*}
\cH_1(\bp,\btau)&=\cH_0(\bp,\btau)+\frac 1{2\pi}\sum_{i=1}^n \tau^2_iu(p_i).  
\end{align*}
\item For any $(\bp,\bomega)\in\hat X_n$,
$$\hat\cH_1(\bp,\bomega)=\hat\cH_0(\bp,\bomega)\cdot e^{2\sum_{i=1}^n\omega_i^2u(p_i)}. $$
\end{enumerate}

\end{lem}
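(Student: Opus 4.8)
\textbf{Proof plan for Lemma \ref{lem:confChangeH}.}

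The strategy is to verify each of the three identities in turn, with (1) doing the essential work and (2)--(3) following by bookkeeping. For (1), the key point is the conformal covariance relation \eqref{j.covar}: $\int \phi\, J_0\psi\, dA_{g_0} = \int \phi\, J_1\psi\, dA_{g_1}$ for all $\phi,\psi\in C^\infty(\Sigma)$. First I would recall that $G_0(p,\cdot)$ is characterized as the unique distributional solution of $J_0\phi_p = \delta_p$ in the sense that $u(x) = \int_\Sigma G_0(x,y)(J_0 u)(y)\,dA_{g_0}(y)$ for all $u\in C^\infty(\Sigma)$ (Lemma \ref{green.lem}), and similarly $u(x) = \int_\Sigma G_1(x,y)(J_1 u)(y)\,dA_{g_1}(y)$. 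Since $J_1 u = e^{-2u}J_0 u$ and $dA_{g_1} = e^{2u}dA_{g_0}$, we get $(J_1 u)\,dA_{g_1} = (J_0 u)\,dA_{g_0}$ as measures; hence for every $u\in C^\infty(\Sigma)$,
$$\int_\Sigma G_1(x,y)(J_0 u)(y)\,dA_{g_0}(y) = u(x) = \int_\Sigma G_0(x,y)(J_0 u)(y)\,dA_{g_0}(y).$$
Because $J_0$ is surjective onto $C^\infty(\Sigma)$ (it is invertible by hypothesis), the functions $J_0 u$ range over all of $C^\infty(\Sigma)$, so the two integral kernels must agree: $G_0 \equiv G_1$ off the diagonal.

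For (2), I would use the definition of the Robin function: $R_i(x,y) = G_i(x,y) + \frac{1}{2\pi}\log d_{g_i}(x,y)$. Since $G_0 = G_1$ by (1), the two Robin functions differ only through the distance functions: $R_1(x,y) - R_0(x,y) = \frac{1}{2\pi}\big(\log d_{g_1}(x,y) - \log d_{g_0}(x,y)\big)$. On the diagonal, $d_{g_1}(x,y)/d_{g_0}(x,y) \to e^{u(x)}$ as $y\to x$ (since $g_1 = e^{2u}g_0$ rescales infinitesimal lengths at $x$ by $e^{u(x)}$), so $R_1(x,x) = R_0(x,x) + \frac{1}{2\pi}u(x)$; this is the standard computation for how the Robin/Green's function diagonal transforms under a conformal change, and I would cite or reproduce the short local argument. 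Plugging $G_1 = G_0$ and $R_1(p_i,p_i) = R_0(p_i,p_i) + \frac{1}{2\pi}u(p_i)$ into Definition \ref{h.def} for $\cH_1$, the cross terms $\sum_{i\neq j}\tau_i\tau_j G_1(p_i,p_j)$ and the nonlinear terms $\sum_i \frac{\tau_i^2}{4\pi}(1-2\log\frac{\tau_i}{2})$ are unchanged, while the Robin terms produce $\sum_i \tau_i^2 R_0(p_i,p_i) + \frac{1}{2\pi}\sum_i \tau_i^2 u(p_i)$, giving exactly $\cH_1(\bp,\btau) = \cH_0(\bp,\btau) + \frac{1}{2\pi}\sum_{i=1}^n \tau_i^2 u(p_i)$.

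For (3), I would invoke the Gibbs-type formula \eqref{hath.gibbs}, $\hat\cH_i(\bp,\bomega) = \frac{1}{4\pi e}e^{4\pi \cH_i(\bp,\bomega)}$, valid for $(\bp,\bomega)\in\hat X_n$ (where $|\bomega|_{\ell^2} = 1$). Applying (2) with $\btau = \bomega$ gives $\cH_1(\bp,\bomega) = \cH_0(\bp,\bomega) + \frac{1}{2\pi}\sum_i \omega_i^2 u(p_i)$, hence
$$\hat\cH_1(\bp,\bomega) = \frac{1}{4\pi e}e^{4\pi\cH_0(\bp,\bomega) + 2\sum_i \omega_i^2 u(p_i)} = \hat\cH_0(\bp,\bomega)\cdot e^{2\sum_{i=1}^n \omega_i^2 u(p_i)},$$
as claimed. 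The only step requiring genuine care is the transformation law for the diagonal Robin function in (2) — one must be slightly attentive to the local expansion of $d_{g_1}$ near the diagonal and confirm the conformal factor appears with the right sign and coefficient — but this is a standard fact (see Remark \ref{robin.rk} and Appendix \ref{green.app}), and everything else is direct substitution.
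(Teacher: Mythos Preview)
Your proposal is correct and follows essentially the same route as the paper's proof: identical use of the conformal covariance \eqref{j.covar} and invertibility of $J_0$ for (1), the same computation $R_1(x,x)-R_0(x,x)=\frac{1}{2\pi}u(x)$ via the diagonal limit of $\log(d_{g_1}/d_{g_0})$ for (2), and direct substitution into \eqref{hath.gibbs} for (3). The only cosmetic difference is that the paper cites \cite[Lemma 3.13]{kapouleasMcGrath2023generalDoubling} for the local expansion of $d_{g_1}/d_{g_0}$ near the diagonal, whereas you appeal to Remark \ref{robin.rk} and the appendix.
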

\begin{proof}
To show that $G_0=G_1$, note that \eqref{j.covar} holds whenever $\psi\in C^{\infty}(\Sigma)$ and $\phi\in L^1(\Sigma)$, and applying this with $\phi(y):=G_1(x,y)$ for an arbitrary $x\in \Sigma$ gives, by definition of the Green's function,
\begin{eqnarray*}
    \int G_1(x,y)J_0\psi(y) dA_{g_0}(y)&=&\int G_1(x,y) J_1\psi(y) dA_{g_1}(y)\\
    &=&\psi(x)\\
    &=&\int G_0(x,y) J_0\psi_0(y) dA_{g_0}(y).
\end{eqnarray*}
Since this holds for arbitrary $\psi$ and $J_0$ is invertible, it follows that $G_0=G_1$.

As a result, it follows directly from the definitions of $\cH_0$ and $\cH_1$ that
\begin{equation}\label{eq:H1H0R}
\cH_1(\bp,\btau)=\cH_0(\bp,\btau)+\sum_{i}\tau^2_i(R_1(p_i,p_i)-R_0(p_i,p_i))
\end{equation}
Moreover, note that
\begin{align*}
    R_1(x,y)-R_0(x,y)&=[G_1(x,y)+\frac{1}{2\pi}\log d_{g_1}(x,y)]-[G_0(x,y)+\frac{1}{2\pi}\log d_{g_0}(x,y)]\\
    &=\frac{1}{2\pi}\log \frac{d_{g_1}(x,y)}{d_{g_0}(x,y)}.
\end{align*}
Taking $y\to x$ and using the estimate
$$|\log\frac{d_{g_1}(x,y)}{d_{g_0}(x,y)}-u(x)-\frac 12 d_xu((\exp^{g_0}_x)^{-1}(y))|\leq C(g_0,u) d_{g_0}(x,y)^2$$
as in \cite[Lemma 3.13]{kapouleasMcGrath2023generalDoubling}, we obtain 
$$R_1(x,x)-R_0(x,x)=\frac{1}{2\pi}u(x).$$
Returning to \eqref{eq:H1H0R}, we arrive at
$$\cH_1(\bp,\btau)=\cH_0(\bp,\btau)+\frac 1{2\pi}\sum_{i=1}^n \tau^2_iu(p_i).$$

Finally, for any $(\bp,\bomega)\in\hat X_n$,  we can directly compute using  \eqref{hath.gibbs} and the above equation that
\begin{align*}\hat\cH_1(\bp,\bomega)&=\frac{1}{4\pi e}e^{4\pi\cH_1(\bp,\bomega) }\\&=\frac 1{4\pi e}e^{4\pi\cH_0(\bp,\bomega) +2\sum_i\omega^2_iu(p_i)}  \\&=\hat\cH_0(\bp,\btau)\cdot e^{2\sum_i\omega^2_i u(p_i) }.\end{align*}
\end{proof}


\subsubsection{Prescribing the Jacobi operator through an ambient conformal change}\label{sect:prescribed} 
Now, suppose we are given  a Riemannian 3-manifold $(N,g_0)$, in which  $\Sigma$ is an embedded minimal surface. Writing the Jacobi operator of $\Sigma$ as $d_{g_0}^*d-V_0$, the lemma below roughly says, given any $u\in C^\infty(\Sigma)$, the change $(g_0,V_0)\mapsto (e^{2u}g_0,e^{-2u}V_0)$ can be realized through a conformal change of the ambient metric $g_0$ on $N$.

\begin{lem}\label{lem:prescribedChange}
Let $(N,g_0)$ be a Riemannian $3$-manifold, and $\Sigma\subset N$ a two-sided embedded minimal surface. For any $u\in C^{\infty}(\Sigma)$, there exists a function  $\rho\in C^\infty(N)$ such that:
\begin{itemize}
\item $\rho|_{\Sigma}=u$.
\item $\Sigma$ stays minimal under the metric $e^{2\rho}g_0$ on $N$.
\item Denoting by $J_0$ the Jacobi operator for $\Sigma$ in $(N,g_0)$, the Jacobi operator $J_1$ for $\Sigma$ in $(N, e^{2\rho}g_0)$ is given by
$$J_1=e^{-2u}J_0.$$
\item  There exists some $\epsilon_0(g_0,\Sigma)>0$ such that for each $k=0,1,2,...$, $$\\\| \rho\|_{C^k(N,g_0)}<C(g_0,\Sigma,k)\epsilon_0^{-k}\|u\|_{C^{k+2}(\Sigma,g_0)}.$$ 
\end{itemize}
\end{lem}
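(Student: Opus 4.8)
\textbf{Proposal for the proof of Lemma \ref{lem:prescribedChange}.}

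The plan is to construct $\rho$ explicitly as an extension of $u$ off $\Sigma$ that kills the first-order effect of the conformal change on the mean curvature. Fix a tubular neighborhood of $\Sigma$ via the normal exponential map, giving Fermi coordinates $(x,t)$ with $x\in\Sigma$, $t$ the signed $g_0$-distance, valid for $|t|<\epsilon_0(g_0,\Sigma)$. First I would recall the standard formula for how the mean curvature of a hypersurface transforms under a conformal change $g_0\mapsto e^{2\rho}g_0$: if $H_0$ denotes the mean curvature of $\Sigma$ in $(N,g_0)$ and $H_1$ that in $(N,e^{2\rho}g_0)$, then
$$H_1 = e^{-\rho}\big(H_0 + (n-1)\,\partial_\nu \rho\big)$$
(with $n-1=2$ here, $\nu$ the unit normal), so since $\Sigma$ is $g_0$-minimal, $\Sigma$ remains minimal in $e^{2\rho}g_0$ precisely when $\partial_\nu\rho=0$ along $\Sigma$. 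Thus I would set, for a cutoff $\chi\in C_c^\infty((-\epsilon_0,\epsilon_0))$ with $\chi\equiv 1$ near $0$,
$$\rho(x,t) := \chi(t)\,u(x),$$
extended by $0$ outside the tubular neighborhood. This manifestly satisfies $\rho|_\Sigma=u$ and $\partial_t\rho|_{t=0}=\chi'(0)u(x)=0$, so $\Sigma$ stays minimal. The $C^k$ bound $\|\rho\|_{C^k(N,g_0)}\le C(g_0,\Sigma,k)\epsilon_0^{-k}\|u\|_{C^k(\Sigma,g_0)}$ (even with $\|u\|_{C^k}$, which is stronger than the claimed $\|u\|_{C^{k+2}}$, so certainly sufficient) follows from the product rule, since each $t$-derivative of $\chi$ contributes a factor $\le C\epsilon_0^{-1}$ while tangential derivatives fall on $u$; one also uses that the Fermi coordinate change has $C^k$ norms controlled by $(g_0,\Sigma)$. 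The extra room $C^{k+2}$ in the statement is harmless, and in fact one only needs the two extra derivatives to absorb the dependence of the Jacobi operator computation on $u$ (see below).

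The third bullet — identifying the Jacobi operator — is the step requiring the most care, but it is essentially forced once we know $\rho|_\Sigma=u$ and $\partial_\nu\rho|_\Sigma=0$. The point is that the Jacobi operator of a minimal surface in $(N,\tilde g)$ is intrinsic to the pair $(\tilde g|_\Sigma, \text{second fundamental form}, \text{ambient Ricci in normal direction})$: explicitly $J_1=d_{g_1}^*d-(|A_{g_1}|^2_{g_1}+\mathrm{Ric}_{g_1}(\nu_1,\nu_1))$ where $g_1=e^{2\rho}g_0$ and $g_1|_\Sigma=e^{2u}g_0|_\Sigma$. For the Laplacian term, conformal covariance in dimension two gives $d_{g_1}^*d = e^{-2u}d_{g_0}^*d$ on functions over $\Sigma$ — this is exactly \eqref{j.covar} in the earlier discussion. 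It then remains to check that the potential transforms the same way: $|A_{g_1}|^2_{g_1}+\mathrm{Ric}_{g_1}(\nu_1,\nu_1) = e^{-2u}\big(|A_{g_0}|^2_{g_0}+\mathrm{Ric}_{g_0}(\nu,\nu)\big)$. The cleanest route is to invoke the known conformal transformation laws: the unit normal scales as $\nu_1=e^{-\rho}\nu$, the second fundamental form (as a $(0,2)$-tensor, using $\partial_\nu\rho|_\Sigma=0$) satisfies $A_{g_1}=e^{\rho}A_{g_0}=e^{u}A_{g_0}$ on $\Sigma$, so raising one index with $g_1|_\Sigma=e^{2u}g_0|_\Sigma$ gives $|A_{g_1}|^2_{g_1}=e^{-2u}|A_{g_0}|^2_{g_0}$; and the Riccati/Gauss-equation computation for $\mathrm{Ric}_{g_1}(\nu_1,\nu_1)$, combined with the vanishing of $\partial_\nu\rho$ and the minimality $H_0=0$, yields $\mathrm{Ric}_{g_1}(\nu_1,\nu_1)=e^{-2u}\mathrm{Ric}_{g_0}(\nu,\nu)$ — note here the terms involving $\partial^2_\nu\rho$, $|\nabla^\Sigma\rho|^2$, etc. that would otherwise appear all get multiplied by $H_0=0$ or by $\partial_\nu\rho|_\Sigma=0$, or cancel against the corresponding correction in $|A_{g_1}|^2$. (This is precisely the reason the construction is tailored so that $\rho$ restricts to $u$ with vanishing normal derivative; any cross-terms in the conformal change of $J$ are proportional to the quantities we have arranged to vanish.) Putting these together gives $J_1=e^{-2u}J_0$ as claimed. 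I expect the main technical obstacle to be bookkeeping in this last computation — assembling the standard conformal variation formulas for $A$, $\mathrm{Ric}(\nu,\nu)$, and $\Delta$ and verifying that all terms not of the form $e^{-2u}(\text{original})$ are annihilated by $H_0=0$ and $\partial_\nu\rho|_\Sigma=0$. One can alternatively shortcut part of this by noting that the Jacobi operator is the second variation of area, and that the first variation of area under a conformal ambient change restricted to $\Sigma$ with normal variation $f$ produces $\int_\Sigma (H_1 e^{2u})\cdot(\ldots)$, so that matching the linearizations directly reproduces $J_1=e^{-2u}J_0$; but the direct tensorial computation is probably the most transparent to write up. The two extra derivatives $\|u\|_{C^{k+2}}$ in the final estimate simply account for the fact that $J_1-e^{-2u}J_0$, a priori, might involve up to second derivatives of $\rho$ transverse to $\Sigma$ — but as just argued these drop out, so in fact the estimate holds with $\|u\|_{C^k}$; we retain the weaker $C^{k+2}$ form since that is all we need downstream.
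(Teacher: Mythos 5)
Your extension $\rho(x,t)=\chi(t)u(x)$ does not yield $J_1=e^{-2u}J_0$, and the claim that the unwanted terms cancel after using $H_0=0$ and $\partial_\nu\rho|_\Sigma=0$ is the gap. Those two conditions are not enough; one must also prescribe the \emph{second} normal derivative of $\rho$ along $\Sigma$. With your choice $\chi\equiv 1$ near $0$ you have $\Hess(\rho)(\nu,\nu)|_\Sigma=0$, and then the conformal change formula for $J$ (Lemma~\ref{lem:derivativeWRTs} in the appendix) gives, on $\Sigma$,
\begin{align*}
J_1 &= e^{-2u}\bigl(J_0 + \Hess(\rho)(\nu,\nu)+\Delta_N\rho+|\nabla\rho|^2\bigr)\\
&= e^{-2u}\bigl(J_0 + \Delta_\Sigma u + |\nabla_\Sigma u|^2\bigr),
\end{align*}
where I used $\Delta_N\rho|_\Sigma=\Delta_\Sigma u + \Hess(\rho)(\nu,\nu)$ (valid since $H_0=0$ and $\partial_\nu\rho=0$). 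The surplus term $\Delta_\Sigma u+|\nabla_\Sigma u|^2$ does not vanish for general $u$, nor does it cancel against the correction to $|A|^2$ (that correction already vanishes by $H_0=0$, $\partial_\nu\rho=0$). Concretely, working through $\tilde\Ric(\tilde\nu,\tilde\nu)$ in dimension $3$ one finds a leftover $-2\Hess(\rho)(\nu,\nu)-\Delta_\Sigma u-|\nabla_\Sigma u|^2$ multiplying $e^{-2u}$, so the correct requirement is $\Hess(\rho)(\nu,\nu)|_\Sigma=-\tfrac12(\Delta_\Sigma u+|\nabla_\Sigma u|^2)$.

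The paper achieves this by taking
$$\rho(x)=u(P(x))-\tfrac14\bigl[(\Delta_\Sigma u)(P(x))+|(\nabla_\Sigma u)(P(x))|^2\bigr]\,d_\Sigma(x)^2$$
in a tubular neighborhood (then cut off). The $d_\Sigma^2$ term is precisely what prescribes the needed second normal derivative. This also explains the $\|u\|_{C^{k+2}}$ on the right side of the fourth bullet: the correction coefficient involves $\Delta_\Sigma u$ and $|\nabla_\Sigma u|^2$, so each $C^k$ bound for $\rho$ genuinely costs two extra derivatives of $u$. Your remark that the estimate ``in fact holds with $\|u\|_{C^k}$'' therefore does not survive once the construction is fixed.
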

\begin{proof} Denote by  $T_\epsilon(\Sigma)\subset N$ the closed $\epsilon$-neighborhood of $\Sigma$ in $N$. Fix a small $\epsilon_0>0$ such that the nearest-point projection $P:T_{2\epsilon_0}(\Sigma)\to\Sigma$ is smooth and well-defined, and let $d_\Sigma:T_{2\epsilon_0}(\Sigma)\to [0,\infty)$ denote the distance function to $\Sigma$. We first define $\rho$ on $T_{\epsilon_0}(\Sigma)$ by  
\begin{equation}\label{rho.def}
\rho(x):=u(P(x))-\frac{1}{4}[(\Delta_\Sigma u)(P(x))+|(\nabla_\Sigma u)(P(x))|^2)]d_\Sigma^2(x),
\end{equation}
where $\Delta_{\Sigma}=-d_{g_0|_{\Sigma}}^*d_{g_0|_{\Sigma}}$ is the negative-spectrum Laplacian on $\Sigma$. Denoting by $\nabla$ the gradient and $\Delta_N$ the negative-spectrum Laplacian with respect to the original metric $g_0$ on $N$, we then see that:
\begin{itemize}
        \item $\rho|_\Sigma=u$;
        \item On $\Sigma$, $(\nabla\rho)^\perp=0$;
        \item On $\Sigma$,  $\Hess(\rho)(\nu,\nu)=-\frac{1}{2}(\Delta_\Sigma u+|\nabla_{\Sigma}u|^2)$, where $\nu$ is a unit normal to $\Sigma$. 
    \end{itemize}

In general, the mean curvature vector of $\Sigma$ under the new metric $g_1:=e^{2\rho}g_0$ is given by 
$$e^{-2\rho}({\bf H}-2(\nabla\rho)^\perp),$$
where $\bf H$ is the mean curvature vector with respect to $g_0$ \cite[\S 1.163]{besse2007einstein}. Hence, by the minimality of $\Sigma$  in $g_0$ and the vanishing $(\nabla\rho)^\perp=0$ on $\Sigma$, we see that $\Sigma$ remains minimal in $T_{\epsilon_0}(\Sigma)$ with respect to $g_1$.

Let $J_0$ (resp. $J_1$) denote the Jacobi operator of $\Sigma$ under $g_0$ (resp. $g_1$).  For any $\Sigma$ and $\rho$ in general, a standard computation (see Appendix \ref{appendix:conf}) gives the following relation:
$$J_1=e^{-2\rho}(J_0+2\bH\cdot\nabla\rho+\Hess(\rho)(\nu,\nu)+\Delta_N\rho+|\nabla\rho|^2-3|(\nabla\rho)^\perp|^2).$$
Note again that $\rho|_{\Sigma}=u$, $(\nabla\rho)^\perp=0$, $\bH=0$, and by minimality of $\Sigma$, $\Delta_N\rho=\Delta_{\Sigma}u+\Hess(\rho)(\nu,\nu)$ along $\Sigma$, so that
\begin{eqnarray*}
    |\nabla \rho|^2+\Hess(\rho)(\nu,\nu)+\Delta_N\rho&=&|\nabla_{\Sigma}u|^2+\Delta_{\Sigma}u+2\Hess(\rho)(\nu,\nu)\\
    &=&|\nabla_{\Sigma}u|^2+\Delta_{\Sigma}u-(\Delta_{\Sigma}u+|\nabla_{\Sigma}u|^2)\\
    &=&0.
\end{eqnarray*}
Using this in the preceding computation, we see that $J_1=e^{-2u} J_0$, as desired.

Finally, we extend $\rho$ outside the tubular neighborhood $T_{\epsilon_0}(\Sigma)$ via a simple cutoff. More precisely, we can fix a cutoff function $\chi$ such that $\chi=0$ on $N\backslash T_{2\epsilon_0}(\Sigma)$, $\chi=1$ in $T_{\epsilon_0}$, and $|D^k\chi|\leq C \epsilon_0^{-k}$, then let $\rho$ be given by multiplying the right-hand side of \eqref{rho.def} by $\chi$. It is then straightforward to check that $\rho$ satisfies all the desired properties.
\end{proof}

\subsection{Proof of Theorem \ref{thm:perturb}}\label{proof_thm:perturb}

On the given closed $3$-manifold $N$, fix $(g,\Sigma)\in \cM_1^{\infty}$, so that $\Sigma$ is a non-degenerate, two-sided, embedded minimal surface in $(N,g)$ of Morse index one. Denote by $J_0=d^*d-V_0$ the Jacobi operator of $\Sigma$ in $(N,g)$.

To prove Theorem \ref{thm:perturb}, for each $n\geq n_0(g,\Sigma)$ sufficiently large, we need to find a conformal metric $g_n=e^{2\rho_n}g$ such that $(g_n,\Sigma)\in \cS_n$, with $\rho_n\to 0$ in $C^{\infty}(\Sigma)$ as $n\to\infty$.

\subsubsection{Defining $\rho_n$}
Since the Jacobi operator $J_0$ is an invertible operator with $\lambda_1(J_0)<0$, we can apply Theorem \ref{thm:existGoodMinimizers} and Proposition \ref{prop:piTauiAlmostMinimum} to the triple $(\Sigma,g|_{\Sigma},V_0)$ to obtain constants $\overline n(g,\Sigma), \overline C(g,\Sigma),\overline\Lambda(g,\Sigma)>0$ such that for every $n\geq  \overline n$, we have:
\begin{itemize}
\item 
$ n\cI - \overline C\log n<\inf_{\hat X_n}  \cH_0< n\cI+\overline C n^{3/4},$ where $\cI=\cI(g_0,V_0)$ is the infimum of $\cE(\mu):= \int G_0d\mu d\mu$ over probability measures on $\Sigma$.
\item Every minimizing sequence for $\cH_0$ on $\hat X_n$ is precompact in $\hat X_n$.
\item  Every minimizer $(\bp,\bomega)$ for $\cH_0$ on $\hat X_n$ satisfies $\frac 1{ \overline\Lambda\sqrt n} <\omega_i<\frac{\overline\Lambda}{\sqrt n}$ for each $i$,  and $d(p_i,p_j)> n^{-3}$ for each $i\ne j$.
\end{itemize}

Now, for each  $n\geq \overline n$, we fix some critical point $(\bp^n,\btau^n)$ of $\cH_0$ in $X_n$ such that $(\bp^n, {\btau^n}/{|\btau^n|})$ is a minimizer for $\cH_0$ on $\hat X_n$. Moreover, fix
$$a_0(g,\Sigma):=-10^{-6}\cI=10^{-6}|\cI|.$$

Next, fix a nondecreasing function $\chi\in C^{\infty}(\mathbb{R})$ satisfying $\chi(t)=t^2$ for $t<\frac{1}{2}$ and $\chi(t)=1$ for $t\geq 1$. Using the bound $\min_{i\ne j}d(p^n_i,p^n_j)>n^{-3}$,  we can then define $u_n\in C^{\infty}(\Sigma)$ by setting 
$$u_n(x):=e^{-a_0 n}\chi(2n^3d_{p_i^n})\text{ for }x\in B_{1/2n^3}(p^n_i),$$
$$u_n\equiv e^{-a_0 n}\text{ on }\Sigma\setminus \bigcup_{i=1}^nB_{1/2n^3}(p^n_i).$$
It is then straightforward to check that: 
\begin{itemize}
\item  \begin{equation}\label{eq:uC0Bound}
    0\leq u_n\leq 2e^{-a_0 n}.
\end{equation}
\item $u_n$ has exactly $n$  minimum points on $\Sigma$, located at $p^n_1,...,p^n_n$, with $u_n(p^n_i)=0$ for each $i$.
\item Each $p^n_i$ is a non-degenerate minimum point of $u_n$, with 
\begin{equation}\label{eq:D2u}
    D^2u_n(p^n_i)(v,v)= 8e^{-a_0 n}n^6h(v,v)
\end{equation} for every $v\in T_{p^n_i}\Sigma$.
\item For each $k=0,1,2,\ldots$, there is a constant $C_2(g,\Sigma,k)>0$, \begin{equation}\label{eq:uBound}
  \| u_n\|_{C^{k}(\Sigma,g)}<C_2e^{-a_0 n}n^{3k}.
\end{equation}
\end{itemize}

Applying Lemma \ref{lem:prescribedChange} to $(N,g)$, $\Sigma$, and $u_n\in C^\infty(\Sigma)$ for each $n$, we obtain a function $\rho_n\in C^\infty(N)$ such that:
\begin{itemize}
\item $\rho_n|_{\Sigma}=u_n$.
\item $\Sigma$ remains minimal in the metric $g_n=e^{2\rho_n}g$ on $N$.
\item The Jacobi operator  for $\Sigma$ in $(N, g_n)$ is given by $e^{-2u_n}J_0.$
\item There exists some $\epsilon_0(g,\Sigma)>0$ such that for each $k=0,1,2,...$, $$\max_{x\in N }|\nabla^k\rho_n|(x)<C(g,\Sigma,k)\epsilon_0^{-k}\|u\|_{C^{k+2}(\Sigma,g)}.$$
\end{itemize}
In particular, combining the last item with \eqref{eq:uBound}, we see that for each $k=0,1,2,...$, 
$$\|\rho_n\|_{C^k(N,g)}<C(g,\Sigma,k)\epsilon_0^{-k} e^{-a_0 n}n^{3(k+2)}.$$
In particular, since $e^{-a_0n}n^{3(k+2)}\to 0$ as $n\to\infty$, it follows that the metric $g_n:=e^{2\rho_n}g$ converges to $g$ in $C^k$ as $n\to \infty$; thus, $g_n\to g$ in the space $\Gamma_{\infty}$ of smooth metrics on $N$.

Now, fixing some $n\geq \overline n$, we denote by $\cH_1$ the functional $ \cH_{(g_n,\Sigma)}:X_n\to \mathbb{R}$, and by $\hat\cH_1$ the functional $\hat \cH_{(g_n,\Sigma)}:\hat X_n\to (0,\infty)$. In other words,  these  interaction energies for $\Sigma$ are defined with respect to the metric  $e^{2u_n}g|_{\Sigma}$ and the Schr\"odinger operator $J=d^*_{e^{2u_n}g|_{\Sigma}}d-e^{-2u_n}V_0=e^{-2u_n}J_0$.

Recall that, to prove Theorem \ref{thm:perturb}, we just need to show that for every $n$ sufficiently large, $(g_n,\Sigma)\in \cS_n$, which amounts to checking that Property $D(g_n,\Sigma,n)$ holds. 
 
We first characterize the minimizers of $\cH_1$ on $\hat X_n$ in terms of the chosen critical points $(\bp^n,\btau^n)$ of $\cH_0$.

\begin{lem}\label{lem:newMinimizer}
Fix $n\geq \overline n$, and let  $(\bq,\bomega)\in \hat X_n$. Then $(\bq,\bomega)$ is a minimizer for $\cH_1$ on $\hat X_n$ if and only if $\bq=\bp^n$ and $(\bq,\bomega)$ is a minimizer for $\cH_0$ on $\hat X_n$, with $\cH_1(\bq,\bomega)=\cH_0(\bq,\bomega).$ In particular, if $(\bp,\btau)\in X_n$ is the critical point for $\cH_1$ such that $(\bp,\btau/|\btau|)=(\bp,\bomega)$, then $(\bp,\btau)$ is also the critical point for $\cH_0$ associated to $(\bp,\bomega)$. 
\end{lem}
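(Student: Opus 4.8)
The plan is to use Lemma \ref{lem:confChangeH}(2), which tells us exactly how $\cH_1$ differs from $\cH_0$: for any $(\bp,\btau)\in X_n$,
$$\cH_1(\bp,\btau)=\cH_0(\bp,\btau)+\frac{1}{2\pi}\sum_{i=1}^n\tau_i^2 u_n(p_i),$$
and since $u_n\geq 0$ everywhere, the perturbation term is nonnegative. Restricting to $\hat X_n$ (so $|\btau|=|\bomega|$, $|\bomega|_{\ell^2}=1$), this gives $\cH_1(\bp,\bomega)\geq \cH_0(\bp,\bomega)\geq \inf_{\hat X_n}\cH_0$ for all $(\bp,\bomega)\in\hat X_n$, hence $\inf_{\hat X_n}\cH_1\geq \inf_{\hat X_n}\cH_0$. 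For the reverse inequality: the points $p_1^n,\ldots,p_n^n$ are exactly the zero set of $u_n$, so if $(\bp^n,\bomega^n)$ is the $\cH_0$-minimizer in $\hat X_n$ associated to the chosen critical point $(\bp^n,\btau^n)$ (i.e. $\bomega^n=\btau^n/|\btau^n|$), then the perturbation term vanishes and $\cH_1(\bp^n,\bomega^n)=\cH_0(\bp^n,\bomega^n)=\inf_{\hat X_n}\cH_0$. Therefore $\inf_{\hat X_n}\cH_1=\inf_{\hat X_n}\cH_0$, and this value is attained by $(\bp^n,\bomega^n)$.

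Next I would establish the ``only if'' direction, i.e. that every $\cH_1$-minimizer $(\bq,\bomega)$ has $\bq=\bp^n$ and is simultaneously an $\cH_0$-minimizer. If $(\bq,\bomega)$ minimizes $\cH_1$ on $\hat X_n$, then
$$\inf_{\hat X_n}\cH_0=\inf_{\hat X_n}\cH_1=\cH_1(\bq,\bomega)=\cH_0(\bq,\bomega)+\frac{1}{2\pi}\sum_{i=1}^n\omega_i^2 u_n(q_i)\geq \cH_0(\bq,\bomega)+\frac{1}{2\pi}\sum_{i=1}^n\omega_i^2 u_n(q_i),$$
and since $\cH_0(\bq,\bomega)\geq\inf_{\hat X_n}\cH_0$ and $\sum_i\omega_i^2 u_n(q_i)\geq 0$, both inequalities must be equalities. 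Equality in the first forces $(\bq,\bomega)$ to be an $\cH_0$-minimizer; equality in the second forces $\sum_i\omega_i^2 u_n(q_i)=0$, hence $u_n(q_i)=0$ for every $i$ (using $\omega_i>0$), i.e. $\{q_1,\ldots,q_n\}\subseteq\{p_1^n,\ldots,p_n^n\}$. Since both sets have exactly $n$ elements and the $q_i$ are distinct (as $(\bq,\bomega)\in X_n$) while the $p_i^n$ are distinct, this set inclusion is an equality of sets, so $\bq=(p_{\sigma(1)}^n,\ldots,p_{\sigma(n)}^n)$ for some permutation $\sigma\in S_n$. Since the energies $\cH_0,\cH_1$ are $S_n$-invariant and the statement ``$\bq=\bp^n$'' should be understood up to this permutation action (equivalently, we may relabel so that $\bq=\bp^n$), I would simply note that after relabeling we have $\bq=\bp^n$. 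The ``if'' direction is immediate from the computation in the first paragraph: if $\bq=\bp^n$ and $(\bq,\bomega)$ minimizes $\cH_0$, then $u_n(q_i)=0$ for all $i$, so $\cH_1(\bq,\bomega)=\cH_0(\bq,\bomega)=\inf_{\hat X_n}\cH_0=\inf_{\hat X_n}\cH_1$, making it an $\cH_1$-minimizer as well.

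Finally, for the last sentence: given an $\cH_1$-minimizer $(\bp,\bomega)\in\hat X_n$, the associated critical point of $\cH_1$ in $X_n$ is $(\bp,\btau)$ with $\btau=e^{2\pi\cH_1(\bp,\bomega)-1/2}\bomega$, by the discussion in Section \ref{sect:hatH.def}; similarly the critical point of $\cH_0$ associated to the $\cH_0$-minimizer $(\bp,\bomega)$ is $(\bp,\btau')$ with $\btau'=e^{2\pi\cH_0(\bp,\bomega)-1/2}\bomega$. Since we have just shown $\cH_1(\bp,\bomega)=\cH_0(\bp,\bomega)$ whenever $(\bp,\bomega)$ is a minimizer of either, the two scaling factors coincide, so $\btau=\btau'$; that is, the $\cH_1$-critical point and the $\cH_0$-critical point over $(\bp,\bomega)$ are literally the same point of $X_n$. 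I don't anticipate a genuine obstacle here — the entire argument is an elementary consequence of the nonnegativity of $u_n$ and the fact that its zero set is precisely $\{p_1^n,\ldots,p_n^n\}$; the only mild subtlety is the bookkeeping around the $S_n$-action, which is handled by relabeling.
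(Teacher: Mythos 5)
Your proof is correct and follows essentially the same line of reasoning as the paper's: apply Lemma \ref{lem:confChangeH}(2), use the nonnegativity of $u_n$ and the fact that its zero set is exactly $\{p_1^n,\ldots,p_n^n\}$ to characterize where equality can occur, and then observe that the scaling factor $e^{2\pi\cH_i(\bp,\bomega)-1/2}$ for the associated $\cH_i$-critical point depends only on the (shared) minimum value. Your handling of the $S_n$-ambiguity is slightly more explicit than the paper's, which is a fair point to flag; the one blemish is a typo in your displayed chain in the ``only if'' paragraph, where the trailing inequality repeats the same expression on both sides rather than dropping the nonnegative term to reach $\inf_{\hat X_n}\cH_0$.
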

\begin{proof} By definition, $u_n(x)\geq 0$ for all $x\in \Sigma$, with equality only when $x\in\{p_1^n,\ldots,p_n^n\}$.
Thus, by Lemma \ref{lem:confChangeH}, for any $(\bq,\bomega)\in \hat X_n$,  we have
$$\cH_1(\bq,\bomega)=\cH_0(\bq,\bomega)+\frac 1{2\pi}\sum_i \omega^2_iu_n(p_i)\geq \inf_{\hat X_n}\cH_0,$$
with equality if and only if $\bq=\bp^n$ and $\cH_1(\bq,\bomega)=\cH_0(\bq,\bomega)=\inf_{\hat X_n}\cH_0$. 

Since the $\cH_i$-critical point $(\bp,\btau)\in X_n$ associated to an $\hat\cH_i$-minimizer $(\bp,\bomega)$ is determined only by $\btau=e^{2\pi \cH_i(\bp,\bomega)-1/2}$, it follows from the equality $\cH_1(\bp,\bomega)=\cH_0(\bp,\bomega)$ that the associated critical points $(\bp,\btau)$ for $\cH_0$ and $\cH_1$ also coincide.
\end{proof}

Since any minimizing sequence for $\cH_0$ is precompact in $\hat{X}_n$, it also follows from the proof of Lemma \ref{lem:newMinimizer} that any minimizing sequence for $\cH_1$ in $\hat{X}_n$ is also precompact, and in particular converges to a minimizer of the form $(\bp^n,\bomega)$. Thus, to check   $D(g_n,\Sigma,n)$, it remains to prove the following: For each $n\geq  n_0$, with $n_0(g,\Sigma)$ to be determined, for every critical point of $\cH_1$ such that $(\bp,\btau/|\btau|)$ is a minimizer of $\cH_1$ on $\hat X_n$, the point $(\bp,\btau)$ satisfies items (\ref{tau.comparable}) - (\ref{quant.nondeg}) of Theorem \ref{thm:doubling.precise} with respect to the metric $g_n|_{\Sigma}$ and the Jacobi operator $e^{-2u_n}J_0$.

From now on, let  $(\bp,\btau)$ be {\it any} critical point of $\cH_1$ on $X_n$ such that $(\bp,\btau/|\btau|)$ is a minimizer of $\cH_1$ on $\hat X_n$. For convenience, we will denote $\hat \btau:=\btau/|\btau|$.  Note, by Lemma \ref{lem:newMinimizer}, we know $\bp=\bp^n$, and $(\bp,\hat \btau)$ is also a minimizer of the original energy $\cH_0$ on $\hat X_n$, with $(\bp,\btau)$ the associated critical point for $\cH_0$. We will check that $(\bp,\btau)$ satisfies items (\ref{tau.comparable}) - (\ref{quant.nondeg}) of Theorem \ref{thm:doubling.precise}, for every sufficiently large $n$.

\subsubsection{Item   (\ref{tau.comparable}) of  Theorem \ref{thm:doubling.precise}} Since $(\bp,\hat \btau)$ is a minimizer for $\cH_0$ on $\hat X_n$  and $n\geq \overline{n}$, we know already that
\begin{equation}\label{eq:hatTauComparable}
\frac{1}{\overline{\Lambda}\sqrt n}<\hat\tau_i<\frac{\overline{\Lambda}}{\sqrt n}.  
\end{equation} 
In particular, noting 
$\tau_i/\overline{\btau}=\hat\tau_i/\overline{\hat\btau}$, we see that
 $1/n<\tau_i/\overline{\btau}<n$ when $n>\overline{\Lambda}(g,\Sigma)^2$.

Now, since $\btau$ is given by
$$\btau=e^{ {2\pi \cH_0(\bp,\hat \btau)}  -1/2}\hat \btau,$$
and since $\cH_0(\bp,\hat\btau)=n\cI+O(n^{3/4})$ and $\cI<0$, we then have 
\begin{equation}\label{eq:btau}
   \btau=e^{2\pi n\cI+O(n^{3/4})}\hat\btau,
\end{equation}
where here and in the remainder of \S \ref{proof_thm:perturb}, $O(\epsilon_n)$ refers to any quantity bounded in absolute value by $C\epsilon_n$ for a constant $C=C(g,\Sigma)$ depending only on the initial pair $(g,\Sigma)\in \cM_1^{\infty}$. Together with (\ref{eq:hatTauComparable}), the preceding estimate implies that
\begin{equation}\label{eq:barTau}
    \bar \btau=e^{2\pi n\cI+O(n^{3/4})}.
\end{equation}
In particular, we can guarantee that $\bar\btau\in (0,n^{-100,000})$ for all sufficiently large $n$. 

In conclusion, we can ensure that condition (\ref{tau.comparable}) of Theorem \ref{thm:doubling.precise} holds for $n\geq n_0(g,\Sigma)$ sufficiently large.

\subsubsection{Item (\ref{del.low}) of Theorem \ref{thm:doubling.precise} } This is immediate, since $$\delta(\bp)=\min_{i\ne j}d(p_i,p_j)=\min_{i\ne j}d(p^n_i,p^n_j)>n^{-3}.$$ Thus, by \eqref{eq:barTau}, we certainly have $\delta(\bp)>\bar\btau^{1/100,000}$ when $n$ is large enough, so Theorem \ref{thm:doubling.precise} (\ref{del.low}) also holds.

\subsubsection{Item (\ref{quant.nondeg}) of Theorem \ref{thm:doubling.precise} }\label{sect:item_quant.nondeg}

Coming now to the heart of the proof, we need to prove that for every eigenvalue $\lambda$ of 
the self-adjoint linear map 
$$S_{\btau}D^2\cH_1(\bp,\btau)S_\btau: \cP\to \cP$$
on the space
$$\cP:=T_{(\bp,\btau)}X_n=(\bigoplus_{i=1}^n T_{p_i}\Sigma)\oplus \R^n$$
we have  $|\lambda|>\bar{\btau}^{1/1000}$. 

As a first step, note that since
$$\cH_1(\bq,\bsigma)=\cH_0(\bq,\bsigma)+\frac{1}{2\pi}\sum_{i=1}^n\sigma_i^2u_n(q_i),$$
and we have $u_n(p_i^n)=du_n(p_i^n)=0$ for every $1\leq i\leq n$, a straightforward computation gives
\begin{eqnarray*}
    D^2\cH_1(\bp,\btau)(\bv,\bs)&=& D^2\cH_0(\bp,\btau)(\bv,\bs)+\frac{1}{2\pi}\sum_{i=1}^n\tau_i^2D^2 u_n(p_i^n)(v_i,\cdot)\\
    &=&D^2\cH_0(\bp,\btau)(\bv,\bs)+\frac{1}{2\pi}8e^{-a_0n}n^6\sum_{i=1}^n\tau_i^2v_i
\end{eqnarray*}
for any $(\bv,\bs)\in \cP$, where in the last line we use equation \eqref{eq:D2u}. In particular, taking the composition $S_{\btau}D^2\cH_1(\bp,\btau)S_{\btau}$, we see that
\begin{equation}\label{hess.decomp}
S_{\btau}D^2\cH_1(\bp,\btau)S_{\btau}(\bv,\bs)=S_{\btau}D^2\cH_0(\bp,\btau)S_{\btau}(\bv,\bs)+\frac{4n^6e^{-a_0 n}}{\pi}(\bv,0).
\end{equation}

Now, as in Section \ref{sect:wt.nondeg}, we consider the orthogonal decomposition
$$\cP=\langle (0,\btau)\rangle\oplus\langle (0,\btau)\rangle^\perp$$
of $\cP$ into the 1-dimensional span $\langle (0,\btau)\rangle$, and its orthogonal complement $\langle (0,\btau)\rangle^\perp=T_{(\bp,\hat \btau) }\hat X_n$. 

By Lemma \ref{lem:spanTau}, we know already that $(0,\btau)$ is an eigenvector for $S_{\btau}D^2\cH_1(\bp,\btau)S_{\btau}$ with eigenvalue $-\frac{1}{\pi}$, so all that remains is to obtain suitable lower bounds for the spectrum of $S_{\btau}D^2\cH_1(\bp,\btau)S_{\btau}$ on $\langle (0,\btau)\rangle^{\perp}.$

To this end, fix an arbitrary $(\bv,\bs)\in \langle (0,\btau)\rangle^{\perp}$, and consider two possibilities.

First, suppose that $|\bv|<n^{-5}|\bs|$. Then applying Proposition \ref{prop:wt.nondeg} at $(\bp,\btau)$ with respect to the original Jacobi operator $J_0$, it follows that for $n\geq n_0(g,\Sigma)$ sufficiently large, 
\begin{eqnarray*}
    \langle S_{\btau}D^2\cH_0(\bp,\btau)S_{\btau}(\bv,\bs),(\bv,\bs)\rangle &\geq & c_0 n|\bs|^2-n^5|\bv||\bs|\\
    &\geq & (c_0 n-1)|\bs|^2\\
    &\geq &\frac{c_0 n}{2}(|\bv|^2+|\bs|^2).
\end{eqnarray*}
In the opposite case, where $|\bv|\geq n^{-5}|\bs|$, note that Proposition \ref{prop:wt.nondeg} still gives the nonnegativity
$$\langle S_{\btau}D^2\cH_0(\bp,\btau)S_{\btau}(\bv,\bs),(\bv,\bs)\rangle \geq 0,$$
so that, by \eqref{hess.decomp}, we have
\begin{eqnarray*}
    \langle S_{\btau}D^2\cH_1(\bp,\btau)S_{\btau}(\bv,\bs),(\bv,\bs)\rangle &\geq & \frac{4 n^6 e^{-a_0 n}}{\pi}|\bv|^2\\
    &\geq &n^{-4}e^{-a_0 n}(|\bv|^2+|\bs|^2).
\end{eqnarray*}
Thus, we see that
$$\langle S_{\btau}D^2\cH_1(\bp,\btau)S_{\btau}(\bv,\bs),(\bv,\bs)\rangle\geq \min\{n^{-4}e^{-a_0 n},\frac{c_0 n}{2}\}(|\bv|^2+|\bs|^2)$$
for all $(\bv,\bs)\in \langle (0,\btau)\rangle^{\perp}$. 

Combining the preceding observations, we now see that every eigenvalue $\lambda\in Spec(S_{\btau}D^2\cH_1(\bp,\btau)S_{\btau})$ must satisfy
$$|\lambda|\geq \min\{\frac{1}{\pi},n^{-4}e^{-a_0 n},\frac{c_0 n}{2}\}=n^{-4}e^{-a_0 n}$$
for $n\geq n_0(g,\Sigma)$ sufficiently large. Finally, recall from \eqref{eq:barTau} that
$$\bar{\btau}=e^{2\pi n\cI+O(n^{3/4})},$$
so that
$$\bar{\btau}^{1/1000}=e^{2\pi n 10^{-3}\cI+O(n^{3/4})},$$
while we chose $a_0(g,\Sigma)=-10^{-6}\cI,$ so that clearly
$$n^{-4}e^{-a_0 n}=n^{-4}e^{10^{-6}\cI}>e^{2\pi n 10^{-3}\cI+O(n^{3/4})}$$
for $n$ sufficiently large. 

Thus, we have confirmed that $(g_n,\Sigma)$ satisfies condition (\ref{quant.nondeg}) of Theorem \ref{thm:doubling.precise} for all sufficiently large $n$, completing the proof of Theorem \ref{thm:perturb}.

\appendix
\section{Basic properties of Green's and Robin functions} \label{green.app}

Here we briefly recall how to prove the key properties of the Green's functions $G(x,y)$ and Robin functions $R(x,y)$ used throughout the paper, proving Lemma \ref{green.lem} in particular. Everything that follows should be well-known to experts, but we have opted to give a brief, self-contained treatment for the reader's convenience.

Let $J=d^*d-V$ be a Schr\"odinger operator on $(\Sigma,g)$ with $V\in C^{\infty}(\Sigma)$ such that $\ker(J)=0$. Note that $J$ is self-adjoint and Fredholm, and by standard elliptic theory, $J$ defines an invertible map $W^{k+2,p}(\Sigma)\to W^{k,p}(\Sigma)$ with bounded inverse for any $k\in \mathbb{Z}$ and $p\in (1,\infty)$. 

Fixing any $p\in \Sigma$, let $d_p\in Lip(\Sigma)$ be the distance function $d_p(x)=\dist_g(p,x)$, and recall that standard computations (cf. \cite[Section 4.2]{Aubin}) give an estimate of the form 
$$\|d^*d(\frac{1}{2\pi}\log d_p)+\delta_p\|_{C^0(B_{r_0}(p))}\leq C(g),$$
where $r_0=\frac{1}{2}\InjRad(\Sigma,g)$, while the fact that $d_p$ is Lipschitz gives
$$d^*d \log d_p\in W^{-1,\infty}(\Sigma).$$
Since the potential $V$ is smooth, it follows that
$$J(\frac{1}{2\pi}\log d_p)+\delta_p\in W^{-1,\infty}(\Sigma)\subset \bigcap_{q\in [1,\infty)}W^{-1,q}(\Sigma)$$
as well. As a consequence, we can find a unique $\psi_p\in \bigcap_{1\leq q<\infty}W^{1,q}(\Sigma)$ solving
$$J\psi_p=J(\frac{1}{2\pi}\log d_p)+\delta_p,$$
so that $\phi_p:=-\frac{1}{2\pi}\log d_p+\psi_p\in L^q(\Sigma)$ solves 
$$J\phi_p=\delta_p.$$
We then claim that the Green's function $G(x,y):=\phi_x(y)$ and Robin function $R(x,y)=\psi_x(y)$ satisfy the conclusions of Lemma \ref{green.lem}. 

Indeed, since $J\phi_p=\delta_p$, the equation
$$u(x)=\int_{\Sigma}G(x,y)(Ju)(y)dA_g(y)=\int_{\Sigma}\phi_x Ju$$
for any $u\in C^{\infty}(\Sigma)$ follows immediately, and by self-adjointness of $J$,
$$\int_{\Sigma\times \Sigma} G(x,y)u(x)v(y)=\langle J^{-1}u,v\rangle_{L^2}=\langle u, J^{-1}v\rangle_{L^2}=\int_{\Sigma\times \Sigma}G(x,y)v(x)u(y)$$
for any $u,v\in C^{\infty}(\Sigma)$, giving the symmetry $G(x,y)=G(y,x)$. Likewise, the smoothness $G\in C^{\infty}_{loc}(\Sigma\times \Sigma\setminus diag(\Sigma))$ away from the diagonal $diag(\Sigma)$ follows easily from local elliptic regularity for $J$ and the fact that $J\phi_p=0$ on $\Sigma\setminus\{p\}$. 

Since $R(x,y)=G(x,y)+\frac{1}{2\pi}\log d(x,y),$ it follows immediately that $R\in Lip_{loc}(\Sigma\times \Sigma\setminus diag(\Sigma))$. Moreover, on $B_{r_0}(p)$, we see that
$$|J\psi_p|=|d^*d(\frac{1}{2\pi}\log d_p)+\delta_p-\frac{V}{2\pi}\log d_p)|\leq C(g)(1+|\log d_p|),$$
and since the right-hand side belongs to $L^q$ for any $q\in [1,\infty)$, it follows from local elliptic regularity that
$$\|\psi_p\|_{W^{2,q}(B_{r_0}(p))}\leq C_q(g,V).$$
In view of the embedding $W^{2,q}(B_{r_0}(p))\subset C^{1,1-2/q}(B_{r_0}(p))$ for $q>2$, the weak convergence $\psi_x\to \psi_p$ as $x\to p$, and the symmetry $R(x,y)=R(y,x)$, it is then straightforward to deduce that $R$ is $C^1$ on the neighborhood 
$$\{(x,y)\in \Sigma\times \Sigma\mid d(x,y)<r_0/2\}$$
of the diagonal.

Since $G\in C_{loc}^{\infty}(\Sigma\times \Sigma\setminus diag(\Sigma))$ and $R(x,y)$ is $C^1$ on a neighborhood of the diagonal, it follows immediately that $G=-\frac{1}{2\pi}\log (d(x,y))+R(x,y)$ satisfies an estimate of the form
$$|\nabla G|(x,y)\leq \frac{C(g,V)}{d(x,y)}.$$
Then, for any ball $B_r(x)\subset \Sigma\setminus \{p\}$, since $J\phi_p=0$, this bound together with standard elliptic estimates leads to bounds of the form
$$|D^k\phi_p|(x)\leq C_k(g,V)r^{-k},$$
which together with the symmetry of the Green's function implies that
$$|D^kG(x,y)|\leq \frac{C_k(g,V)}{d(x,y)^k}.$$

This completes the proof of Lemma \ref{green.lem}. Next, we explain how to establish the smoothness of the \emph{diagonal} Robin function $R_D(x):=R(x,x).$

\begin{lem}\label{robin.reg}
    For any Schr\"odinger operator $J=d_g^*d-V$ with smooth potential $V$ on the closed surface $(\Sigma,g)$ as above, $R_D\in C^{\infty}(\Sigma)$.
\end{lem}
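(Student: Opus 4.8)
The plan is to prove smoothness of the diagonal Robin function $R_D(x) = R(x,x)$ by localizing near an arbitrary point $x_0 \in \Sigma$ and showing that $R$ is $C^\infty$ up to the diagonal in suitable coordinates. First I would fix $x_0$ and work in geodesic normal coordinates $y = \exp_{x_0}^{g}(\cdot)$ on a small ball $B_{r_0}(x_0)$, so that the metric $g_{ij}(y) = \delta_{ij} + O(|y|^2)$ is smooth with $g_{ij}(0) = \delta_{ij}$. Relative to these coordinates, the key object is the function $(x,y) \mapsto \psi_x(y) = R(x,y)$, and I want to exhibit it as a solution of an elliptic equation with smooth coefficients and smooth right-hand side, with smooth dependence on the parameter $x$. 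The difficulty is that the naive equation $J\psi_x = J(\tfrac{1}{2\pi}\log d_x) + \delta_x$ has a right-hand side that, while in $L^q$ for every finite $q$ after the $\delta_x$ cancels, does \emph{not} obviously depend smoothly on $x$, because the function $y \mapsto \log d_g(x,y)$ has its singularity tracking $x$.

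The main idea, then, is to perform a change of variables that freezes the singularity. Write $d_g(x,y)^2 = \Gamma(x,y)$ where, in normal coordinates centered appropriately, $\Gamma$ is a smooth function with $\Gamma(x,y) = |y - x|^2(1 + O(|y-x|^2))$ and $\Gamma$ extends smoothly across the diagonal (this is the standard smoothness of the squared geodesic distance; cf. the estimates already used in Lemma \ref{green.lem} and \cite[Lemma 3.13]{kapouleasMcGrath2023generalDoubling}). Then $\tfrac{1}{2\pi}\log d_g(x,y) = \tfrac{1}{4\pi}\log \Gamma(x,y)$, and the subtraction of this explicit, $x$-smooth parametrix from $G(x,\cdot)$ reduces matters to analyzing the remainder. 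Concretely, set $H(x,y) := J_y\bigl(\tfrac{1}{4\pi}\log\Gamma(x,\cdot)\bigr)(y) + \delta_x(y)$; the point is that $d^*_y d_y\bigl(\tfrac{1}{4\pi}\log\Gamma(x,\cdot)\bigr)$ equals $-\delta_x$ plus a function that is \emph{smooth in both variables jointly}, since the log-singularity of $\tfrac{1}{4\pi}\log\Gamma$ exactly matches the fundamental solution of the flat Laplacian up to a smooth factor, and all the error terms are built from $\Gamma$, the metric coefficients, and $V$, all of which are smooth in $(x,y)$ near the diagonal. Hence $H \in C^\infty(B_{r_0}(x_0) \times B_{r_0}(x_0))$ (a genuine smooth function, no distributional part). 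Then $\psi_x = J^{-1}(H(x,\cdot))$, and since $x \mapsto H(x,\cdot)$ is a smooth map into $C^\infty(\Sigma)$ (one checks differentiability by differentiating under the solution operator, using that $J^{-1}: W^{k,p} \to W^{k+2,p}$ is bounded for all $k$), it follows that $(x,y) \mapsto \psi_x(y) = R(x,y)$ is smooth on $B_{r_0}(x_0) \times \Sigma$, in particular jointly smooth near the diagonal. Restricting to $y = x$ then gives $R_D \in C^\infty$ near $x_0$, and since $x_0$ was arbitrary, $R_D \in C^\infty(\Sigma)$.

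I would organize the write-up in three steps: (i) record the smoothness of $\Gamma(x,y) = d_g(x,y)^2$ across the diagonal and the resulting identity $d^*_y d_y(\tfrac{1}{4\pi}\log\Gamma(x,\cdot)) = -\delta_x + a(x,\cdot)$ with $a \in C^\infty$ jointly, by a direct computation in normal coordinates using $\log\Gamma = \log|y-x|^2 + \log(1 + O(|y-x|^2))$ and the explicit flat computation $d^*d(\tfrac{1}{2\pi}\log|z|) = -\delta_0$; (ii) deduce that $H(x,y) := -a(x,y) + V(y)\tfrac{1}{4\pi}\log\Gamma(x,y)$ lies in $C^\infty$ jointly near the diagonal and hence, after patching with the already-known interior smoothness of $G$ away from the diagonal, that $x \mapsto H(x,\cdot)$ is a smooth curve in $C^\infty(\Sigma)$; (iii) apply the bounded inverse $J^{-1}$ and differentiation under the integral/solution operator to conclude $R(x,y) = \psi_x(y)$ is jointly smooth near $\mathrm{diag}(\Sigma)$, then restrict to the diagonal. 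The main obstacle is Step (i): one must be careful that the $O(|y-x|^2)$ corrections to $\Gamma$ and to the volume form are genuinely smooth functions of $(x,y)$ together (not merely for fixed $x$), which is where the smooth dependence of geodesic normal coordinates on the base point is used; once this is in place, Steps (ii) and (iii) are routine applications of elliptic regularity already invoked elsewhere in the appendix. Finally, the continuity of the $C^k$ norms $\|R_D\|_{C^k}$ under smooth variation of $(g,V)$, asserted in Remark \ref{robin.rk}, follows from the same argument by tracking the dependence of $\Gamma$, $a$, $H$, and $J^{-1}$ on $(g,V)$.
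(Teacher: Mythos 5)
The proposal is not only a different route from the paper's; it has a genuine gap, and the intermediate claim on which it rests is false in general.

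Your strategy is to exhibit $R(x,y)=\psi_x(y)$ as a jointly smooth function near the diagonal and then restrict. But the paper's own Remark \ref{robin.rk} states that the regularity asserted in Lemma \ref{green.lem} (Lipschitz globally, $C^1$ near the diagonal) ``is more or less optimal in general,'' and indeed $R$ is typically \emph{not} $C^2$ up to the diagonal when $V(x)\neq 0$. You can see why from your own decomposition: after subtracting the log parametrix, what remains is the solution of $J\psi_x = H(x,\cdot)$, where $H(x,y)$ contains the term $\frac{V(y)}{4\pi}\log\Gamma(x,y)$. That term has a logarithmic singularity at $y=x$, so $H(x,\cdot)\in\bigcap_{q<\infty}L^q$ but not $L^\infty$, and elliptic regularity gives only $\psi_x\in W^{2,q}\subset C^{1,\alpha}$ for $\alpha<1$ near $y=x$. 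In fact the leading non-smooth behavior of $\psi_x$ is of the form $-\frac{V(x)}{8\pi}|y-x|^2\log|y-x|$, which is $C^{1,\alpha}$ but not $C^2$. So Step (ii), where you assert that $H$ ``lies in $C^\infty$ jointly near the diagonal'' after ``patching,'' is incorrect, and Step (iii)'s ``differentiation under the solution operator'' cannot be carried out: $x\mapsto H(x,\cdot)$ is not a smooth curve into $C^\infty(\Sigma)$ or even into $L^\infty(\Sigma)$, precisely because the log singularity moves with $x$. The whole program of deducing smoothness of $R_D$ from joint smoothness of $R$ cannot work, because the latter is false.

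Two secondary issues. First, even the claim that $d_y^*d_y(\tfrac{1}{4\pi}\log\Gamma(x,\cdot)) = -\delta_x + a(x,\cdot)$ with $a$ jointly smooth is not so easily obtained ``by a direct computation in normal coordinates'': in normal coordinates one gets, using the Gauss lemma, $\Delta_g\log|y| = 2\pi\delta_0 + \frac{\langle y,\nabla\log\sqrt{\det g}\rangle}{|y|^2}$, and the last term is a ratio whose numerator merely vanishes to second order — smoothness of the quotient is a genuinely nontrivial cancellation. (It does hold in dimension two, but the clean way to see it is exactly via isothermal coordinates and conformal covariance, i.e., the paper's route, not a direct normal-coordinate expansion.) Second, and more importantly, you are missing the paper's central device. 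The paper works in isothermal coordinates where $\sigma_x(y) = -\tfrac{1}{2\pi}\log|x-y|$ is an \emph{exact} fundamental solution of $d_g^*d_g$ (so the Laplacian part has no remainder at all), decomposes $\phi_x = \sigma_x + \xi_x$ with $J\xi_x = V\sigma_x$, and then studies the \emph{difference} $v_z(y) := \xi_z(y+z) - \xi_0(y)$. Because $\sigma_z(y+z) = \sigma_0(y)$ by translation invariance, the equation for $v_z$ has a right-hand side in which $\sigma_0(y)$ is always multiplied by a coefficient that vanishes at $y=0$ (a difference like $e^{2(u(y+z)-u(y))}V(y+z) - V(y)$). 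This structure persists under $\partial_z$-derivatives, yielding uniform $W^{2,q}$ (hence $C^1$) control on every $\partial_z^k v_z$, and therefore smoothness of $z\mapsto v_z(0)$, which directly gives $R_D\in C^\infty$. The differencing is what tames the moving singularity; your proposal has no substitute for it.
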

\begin{proof}
Fix an arbitrary point $p_0\in \Sigma$, and recall that we can find local isothermal coordinates $\Phi=(x_1,x_2):U\to B_{2\epsilon}^2(0)\subset\mathbb{R}^2$ on a neighborhood $U$ of $p_0$, with respect to which $\Phi(p_0)=0$ and the metric $g$ takes the form 
$$g=e^{2u}(dx_1^2+dx_2^2)=e^{2u}g_0$$
for some $u\in C^{\infty}(U)$. With respect to these coordinates, it follows from the computations of Section \ref{sect:generalFactConformal} that the Euclidean Green's functions
$$\sigma_x(y):=-\frac{1}{2\pi}\log |x-y|$$
solve $d_g^*d_g\psi_x=\delta_x$ on $U$, and we can decompose the Green's function $\phi_x(y)=G(x,y)$ for $J$ as 
$$\phi_x=\sigma_x+\xi_x,$$
where 
$$J\xi_x=J(\phi_x-\sigma_x)=V\sigma_x.$$
Note that elliptic regularity gives $\xi_x\in W^{2,q}(B_{2\epsilon}(0))$ for every $q\in [1,\infty)$.

Now, for $z\in B_{\epsilon}(0)$, define $u_z: B_{\epsilon}(0)\to \mathbb{R}$ by
$$v_z(y):=\xi_z(y+z)-\xi_0(y),$$
and observe that, for the Euclidean Laplacian $d_{g_0}^*d$, we have
$$d_{g_0}^*dv_z=(d_{g_0}^*d\xi_z)(y+z)-(d_{g_0}^*d\xi_0)(y);$$
using that $d_g^*d=e^{-2u}d_{g_0}^*d$ and $J\xi_x=V\sigma_x$ then gives
\begin{eqnarray*}
    d_g^*d v_z(y)&=&e^{-2u(y)}[(d_{g_0}^*d\xi_z)(y+z)-(d_{g_0}^*d\xi_0)(y)]\\
    &=&e^{2(u(y+z)-u(y))}(d_g^*d_g\xi_z)(y+z)-d_g^*d_g\xi_0(y)\\
    &=&e^{2(u(y+z)-u(y))}(V(y+z)\sigma_z(y+z)+V(y+z)\xi_z(y+z))\\
    &&-V(y)\sigma_0(y)-V(y)\xi_0(y).
\end{eqnarray*}
We will show next that $v_z(y)$ is infinitely differentiable with respect to the $z$ variable. By the translation-invariance of the Euclidean Green's function, we see that $\sigma_z(y+z)=\sigma_0(y)$, so that the preceding computation gives
\begin{eqnarray*}
    Jv_z(y)&=&[e^{2[u(y+z)-u(y)]}V(y+z)-V(y)]\sigma_0(y)+V(y+z)\xi_z(y+z)-V(y)\xi_0(y)\\
    &&-V(y)v_z(y)\\
    &=&[e^{2[u(y+z)-u(y)]}V(y+z)-V(y)]\sigma_0(y)\\
    &&+[V(y+z)-V(y)](v_z(y)+\xi_0(y)].
\end{eqnarray*}
It is then easy to see that $v_z\in W^{2,q}(B_{\epsilon}(0))$ for any $q\in [1,\infty)$, and applying the preceding computation to the difference quotients $\frac{v_{z+te_i}-v_z}{t}$ and taking $t\to 0$, we observe that the derivatives $\partial_{z_i}v_z$ of $v_z$ in the $z$ parameter solve
\begin{eqnarray*}
    J(\partial_{z_i}v_z)(y)&=&e^{2(u(y+z)-u(y))}(2\partial_iu(y+z)V(y+z)+\partial_iV(y+z))\sigma_0(y)\\
    &&+\partial_iV(y+z)(v_z(y)+\xi_0(y))+(V(y+z)-V(y))\partial_{z_i}v_z,
\end{eqnarray*}
and by the smoothness of $u$ and $V$, we again find $W^{2,q}$ estimates of the form
$$\|\partial_{z_i}v_z\|_{W^{2,q}(B_{\epsilon}(0)}\leq C_q(g,V).$$
Continuing in this way by induction, we deduce that the partial derivatives $D^k_zv_z$ of all orders in $z$ belong to $\bigcap_{q\in [1,\infty)}W^{2,q}(B_{\epsilon}(0))\subset C^1(B_{\epsilon}(0))$, with $C^1(B_{\epsilon}(0))$ estimates depending only on $g$, $V$, and $k$. Finally, for any small $0\neq y\in B_{\epsilon}(0)$, observe that
\begin{eqnarray*}
    R(y+z,z)-R(y,0)&=&G(y+z,z)-G(y,0)+\frac{1}{2\pi}\log(d(y+z,z)/d(y,0))\\
    &=&\xi_z(y+z)-\xi_0(y)+\frac{1}{2\pi}\log(d(y+z,z)/d(y,0))\\
    &=&v_z(y)+\frac{1}{2\pi}\log(d(y+z,z)/d(y,0)),
\end{eqnarray*}
and taking the limit $y\to 0$ gives
$$R(z,z)-R(0,0)=v_z(0)+\frac{1}{2\pi}\log(e^{u(z)}/e^{u(0)})=v_z(0)+\frac{1}{2\pi}(u(z)-u(0)).$$

The estimates we obtained for $v_z$ above imply smoothness of the function $z\mapsto v_z(0)$, so it follows from this formula that the diagonal Robin function $R_D(x)=R(x,x)$ must be smooth on a neighborhood of $p_0$. Since $p_0$ was arbitrary, this completes the proof.
    
\end{proof}

\begin{remark}\label{smth.green.cvg}
From the arguments above, it is easy to see that the key estimates of Lemmas \ref{green.lem} and \ref{robin.reg} hold uniformly under small smooth perturbations of the metric $g$ on $\Sigma$ and the potential $V\in C^{\infty}(\Sigma)$. Thus, given a sequence of metrics $g_j$ on $\Sigma$ and potentials $V_j\in C^{\infty}(\Sigma)$ converging smoothly $(g_j,V_j)\to (g_0,V_0)$, as in the proof of Lemma \ref{sn.open}, the associated Green's functions $G_j$ satisfy uniform $C^{k+1}$ estimates on any compact subset $K\subset \Sigma\times \Sigma\setminus diag(\Sigma)$, so that an Arzela-Ascoli argument gives $\lim_{k\to\infty}\|G_j-G_0\|_{C^k(K)}=0$. Likewise, uniform $C^k$ estimates for the associated diagonal Robin functions implies their smooth convergence on $\Sigma$ as $j\to\infty$. Finally, note that uniform Lipschitz estimates for the Robin functions $R_j(x,y)$ on $\Sigma\times \Sigma$ and another application of Arzela-Ascoli give
$$\lim_{j\to\infty}\|R_j-R_0\|_{C^0(\Sigma\times \Sigma)}=0,$$
which together with the uniform convergence
$$\frac{d_{g_j}(x,y)}{d_{g_0}(x,y)}\to 1$$
implies that the difference
$$G_j(x,y)-G_0(x,y)=-\frac{1}{2\pi}\log(d_{g_j}(x,y)/d_{g_0}(x,y))+R_j(x,y)-R_0(x,y)$$
satisfies $\|G_j-G_0\|_{C^0(\Sigma\times \Sigma)}\to 0$ as $j\to\infty$.
\end{remark}

\subsection{Jacobi operator under a conformal change}\label{appendix:conf}

Let $(N,g)$ be a smooth Riemannian $3$-manifold, with $\Sigma\subset N$ be an arbitrary embedded, two-sided surface; we are primarily interested in the case where $\Sigma$ is minimal, but we do not assume minimality in the following computations.

Write $h=g|_{\Sigma}$ for the induced metric on $\Sigma$, and consider on $\Sigma$ the Schr\"odinger operator 
$$J:=d^*_hd-V ,\quad V:=|A|^2+\Ric(\nu,\nu),$$
where  $\nu$ is a unit normal vector field and $A$ the second fundamental form defined by $ A(X,Y):= g( \nabla_X Y,\nu)$, so that $J$ gives the Jacobi operator when $\Sigma$ is minimal.

Let $\rho$ be a smooth  function on $N$, and consider the conformal metric $\tilde g:=e^{2\rho}g$. Let $\tilde J$ be the  operator $d_{\tilde h}^*d-(|\tilde A|_{\tilde{g}}^2+\tilde {\Ric}(\tilde \nu,\tilde\nu))$ on $\Sigma$, with $\tilde A$, $\tilde {\Ric}$, $\tilde \nu$, and $\tilde{h}=\tilde{g}|_{\Sigma}$ all defined with respect to  $\tilde g$.

Below,  ${\bf H}$ denotes the mean curvature vector of $\Sigma$ in $(N,g)$, $\Delta$ denotes $ -d^*_gd=\tr D^2$, and $\nabla$ is taken with respect to the metric $g$.
\begin{lem}\label{lem:derivativeWRTs}Under the above convention, we have 
\begin{align*} 
\tilde J&=e^{-2\rho}J+e^{-2\rho}(2  {\bf H}\cdot  \nabla\rho+|\nabla \rho|^2-3|(\nabla \rho)^{\perp}|^2+\Hess(\rho)(\nu,\nu)+\Delta\rho),
\end{align*}
where $(\nabla\rho)^\perp=g(\nabla \rho,\nu)\nu$.
\end{lem}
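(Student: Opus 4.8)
The plan is to derive the transformation formula for the Jacobi operator under a conformal change $\tilde g = e^{2\rho}g$ by assembling three standard ingredients: (i) the conformal transformation of the Laplace--Beltrami operator on the surface $(\Sigma,h)$, (ii) the transformation of the second fundamental form $|\tilde A|^2_{\tilde g}$ of $\Sigma$, and (iii) the transformation of the ambient Ricci curvature term $\tilde\Ric(\tilde\nu,\tilde\nu)$. All of these are classical (see, e.g., \cite[\S1.J]{besse2007einstein}), so the work is purely bookkeeping; the main point is to check that the first-order-in-$\nabla\rho$ terms, the Hessian terms, and the Laplacian terms combine into exactly the stated expression.

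First I would record that the induced metric transforms as $\tilde h = e^{2\rho|_\Sigma}h$, so that in dimension two the Laplacian obeys $d^*_{\tilde h}d = e^{-2\rho}d^*_hd$ on functions. This contributes the leading term $e^{-2\rho}d^*_hd$ and no lower-order corrections. Next, for the potential: under $\tilde g = e^{2\rho}g$ the unit normal is $\tilde\nu = e^{-\rho}\nu$, and the second fundamental form satisfies $\tilde A(X,Y) = e^{\rho}\bigl(A(X,Y) + g(\nabla\rho,\nu)\,h(X,Y)\bigr)$ for $X,Y$ tangent to $\Sigma$; tracing and taking norms with respect to $\tilde g$ gives $|\tilde A|^2_{\tilde g} = e^{-2\rho}\bigl(|A|^2 + 2\,\mathbf H\cdot\nabla\rho + 2\,(g(\nabla\rho,\nu))^2\bigr)$, where $\mathbf H = (\tr_h A)\nu$ is the (non-normalized) mean curvature vector. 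Here I would be careful about the normalization convention for $\mathbf H$ matching the paper's, absorbing any factor of $2$ appropriately.

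The remaining and most delicate ingredient is the ambient Ricci term. The standard conformal formula in dimension three gives, for a unit $g$-vector $e$, an identity of the schematic form $e^{2\rho}\tilde\Ric(e,e) = \Ric(e,e) - \Hess_g\rho(e,e) - \Delta\rho + (1-\text{something})\,|\nabla\rho|^2 + \bigl(\text{something}\bigr)(e(\rho))^2$, with numerical coefficients fixed by $n=3$. Applying this with $e = \nu$ and keeping track of the precise constants, one obtains $e^{-2\rho}\tilde\Ric(\tilde\nu,\tilde\nu)$ expanded in terms of $\Ric(\nu,\nu)$, $\Hess\rho(\nu,\nu)$, $\Delta\rho$, $|\nabla\rho|^2$, and $(g(\nabla\rho,\nu))^2 = |(\nabla\rho)^\perp|^2$. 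I expect this step to be the main obstacle, since the constants in the dimension-three conformal Ricci formula must be reproduced exactly for the $-3|(\nabla\rho)^\perp|^2$ and $+|\nabla\rho|^2$ terms to come out right; a sign error or a mismatched normalization here would propagate into the final formula.

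Finally I would combine the three pieces: $\tilde J = d^*_{\tilde h}d - |\tilde A|^2_{\tilde g} - \tilde\Ric(\tilde\nu,\tilde\nu)$. The Laplacian contributes $e^{-2\rho}d^*_hd$; the $|\tilde A|^2$ term contributes $-e^{-2\rho}(|A|^2 + 2\mathbf H\cdot\nabla\rho + 2|(\nabla\rho)^\perp|^2)$; and the Ricci term contributes $-e^{-2\rho}\tilde\Ric(\tilde\nu,\tilde\nu)$ with its expansion. Grouping the $|A|^2 + \Ric(\nu,\nu)$ pieces into $e^{-2\rho}J$ and collecting the rest, the $(\nabla\rho)^\perp$ coefficients should add to $-3$, the $|\nabla\rho|^2$ coefficient to $+1$, and the $\Hess\rho(\nu,\nu)$ and $\Delta\rho$ coefficients each to $+1$, yielding
$$\tilde J = e^{-2\rho}J + e^{-2\rho}\bigl(2\mathbf H\cdot\nabla\rho + |\nabla\rho|^2 - 3|(\nabla\rho)^\perp|^2 + \Hess(\rho)(\nu,\nu) + \Delta\rho\bigr),$$
as claimed. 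As a consistency check I would verify the minimal case $\mathbf H = 0$ against the special situation used in Lemma \ref{lem:prescribedChange}, where the choice of $\rho$ forces all the correction terms to vanish along $\Sigma$.
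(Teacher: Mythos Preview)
Your approach is exactly the paper's: decompose $\tilde J$ into the surface Laplacian, $|\tilde A|^2_{\tilde g}$, and $\tilde\Ric(\tilde\nu,\tilde\nu)$, compute each under $\tilde g=e^{2\rho}g$, and reassemble. However, your intermediate formula for the second fundamental form has a sign error. From the conformal change of the Levi--Civita connection $\tilde\nabla_X Y = \nabla_X Y + X(\rho)Y + Y(\rho)X - g(X,Y)\nabla\rho$, one finds for tangent $X,Y$ that
\[
\tilde A(X,Y) = e^{\rho}\bigl(A(X,Y) - g(\nabla\rho,\nu)\,h(X,Y)\bigr),
\]
with a minus sign, not a plus (this is also what Besse \cite[\S1.163]{besse2007einstein} gives for the vector-valued form). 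Consequently
\[
|\tilde A|^2_{\tilde g} = e^{-2\rho}\bigl(|A|^2 - 2\mathbf H\cdot\nabla\rho + 2|(\nabla\rho)^\perp|^2\bigr),
\]
so that $-|\tilde A|^2_{\tilde g}$ contributes the $+2\mathbf H\cdot\nabla\rho$ term. With your sign you would obtain $-2\mathbf H\cdot\nabla\rho$ in the final formula, contradicting the statement; your proposal as written is internally inconsistent on this point. (In the paper's application $\mathbf H=0$, so the error happens to be harmless there, but the lemma itself would be wrong.)

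You also leave the Ricci term schematic. The paper simply quotes the dimension-three formula $\tilde\Ric = \Ric - (\Hess_g\rho - d\rho\otimes d\rho) + (d_g^*d\rho - |d\rho|^2)g$ and evaluates on $\tilde\nu=e^{-\rho}\nu$; this produces exactly the $+\Hess(\rho)(\nu,\nu)$, $+\Delta\rho$, $+|\nabla\rho|^2$, and $-|(\nabla\rho)^\perp|^2$ contributions (the remaining $-2|(\nabla\rho)^\perp|^2$ coming from $-|\tilde A|^2$). Once the sign in $\tilde A$ is fixed and this Ricci expression is substituted, your outline matches the paper's proof line for line.
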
  
\begin{proof}
By definition,
$$\tilde J=d_{\tilde h}^*d-|\tilde A|^2-\tilde \Ric(\tilde\nu,\tilde\nu),$$
where  $\tilde\nu$ is the unit normal vector field on $\Sigma$ under $\tilde g$  defined by  $\tilde\nu=e^{-\rho}\nu$, and  $\tilde A(X,Y):=\tilde g(\tilde \nabla_X Y,\tilde\nu)$.

Recall (cf. \cite[Theorem 1.159]{besse2007einstein}) that the Ricci tensor $\tilde \Ric$ for $\tilde{g}=e^{2\rho}g$ on the $3$-manifold $N^3$ is given by
$$\tilde \Ric =\Ric-(\Hess_g(\rho)-d\rho\otimes d\rho) +(d^*_gd\rho-|d\rho|^2) g ,$$
so that
\begin{eqnarray*}
\tilde\Ric (\tilde\nu,\tilde\nu)&=&e^{-2\rho}\tilde\Ric(\nu,\nu)\\
&=&e^{-2\rho}(\Ric(\nu,\nu)-(\Hess_g(\rho)(\nu,\nu)-|(\nabla\rho)^{\perp}|^2))\\
&&+e^{-2\rho}(d_g^*d\rho-|d\rho|_g^2),
\end{eqnarray*}
while on the surface $\Sigma$, conformal covariance of the Laplacian gives $d^*_{\tilde h}d=e^{-2\rho}d_h^*d$ for the induced Laplacians. Therefore 
\begin{eqnarray*}
    \tilde J&=&e^{-2\rho}d^*_hd-|\tilde A|_{\tilde{g}}^2-e^{-2\rho}(\Ric(\nu,\nu)-(\Hess_g(\rho)(\nu,\nu)-|(\nabla\rho)^{\perp}|^2))\\
&&-e^{-2\rho}(d_g^*d\rho-|d\rho|_g^2)\\
&=&e^{-2\rho}(J+|A|_g^2)-|\tilde A|_{\tilde{g}}^2\\
&&+e^{-2\rho}(\Hess_g(\rho)(\nu,\nu)+|d\rho|_g^2-|(\nabla\rho)^{\perp}|^2-d_g^*d_g\rho)
\end{eqnarray*}
It remains to compute $|\tilde A|^2.$

Fix an arbitrary point $p\in\Sigma$ and choose an orthonormal frame $e_1,e_2$ in a neighborhood of $p$ with respect to the metric $g$, so that $\tilde e_1:=e^{-\rho}e_1,\tilde e_2:=e^{-\rho}e_2$ form an orthonormal frame with respect to $\tilde g$.
Now, let $\tilde h(X,Y):=\tilde g(\tilde \nabla_X Y,\tilde\nu)\tilde\nu.$ Then from \cite[\S 1.163]{besse2007einstein}, 
\begin{equation}\label{eq:2ndFF}
   \tilde  h(X,Y)=h(X,Y)-g(X,Y)(\nabla\rho)^\perp ,
\end{equation}
Hence,
\begin{align*}
    &\tilde g(\tilde h(\tilde e_i,\tilde e_j),\tilde\nu)\\
    &=\tilde g(h(\tilde e_i,\tilde e_j)-g(\tilde e_i,\tilde e_j)(\nabla\rho)^\perp  ,\tilde\nu)\\
    &=e^{2\rho}g(e^{-2\rho}h(e_i,e_j)-e^{-2\rho}g(e_i,e_j)(\nabla\rho)^\perp,e^{-\rho}\nu)\\
    &=e^{-\rho} g( h(e_i,e_j)- g(e_i,e_j)(\nabla\rho)^\perp,\nu)\\
    &=e^{-\rho}g(h(e_i,e_j),\nu)-e^{-\rho}g(e_i,e_j)g(\nabla\rho,\nu).
\end{align*} 
Hence, denoting $A_{ij}:=A(e_i,e_j)$,
\begin{align*}
    |\tilde A|_{\tilde{g}}^2&=e^{-2\rho}(A_{11}-g(\nabla\rho,\nu ))^2+e^{-2\rho}(A_{22}-g(\nabla\rho, \nu))^2+2e^{-2\rho}A_{12}\\
    &=e^{-2\rho}|A|_g^2-2e^{-2\rho}(A_{11}+A_{22})g(\nabla\rho,\nu)+2e^{-2\rho}g(\nabla\rho,\nu)^2\\
    &=e^{-2\rho}(|A|^2-2  {\bf H}\cdot  \nabla\rho +2 |(\nabla\rho)^\perp|^2).
\end{align*}

Combining this with the previous computations, we have
\begin{align*}
\tilde J=&e^{-2\rho}(J+|A|_g^2)-e^{-2\rho}(|A|^2-2  {\bf H}\cdot  \nabla\rho +2 |(\nabla\rho)^\perp|^2)\\
&+e^{-2\rho}(\Hess_g(\rho)(\nu,\nu)+|d\rho|_g^2-|(\nabla\rho)^{\perp}|^2-d_g^*d_g\rho)\\
=&e^{-2\rho}J+e^{-2\rho}(2  {\bf H}\cdot  \nabla\rho+|\nabla \rho|^2-3|(\nabla \rho)^{\perp}|^2)\\
&+e^{-2\rho}(\Hess(\rho)(\nu,\nu)+\Delta\rho),
\end{align*}
as claimed.
\end{proof}

\printbibliography

\end{document}